\documentclass[11pt, a4paper]{amsart}
\usepackage{amsmath,amssymb,amstext,amsgen,amsbsy,amsopn,amsfonts,bbm,graphicx,amsthm,hyperref,cleveref,mathrsfs,mathtools,todonotes}
\usepackage{enumitem}

\usepackage[left=3cm, right=3cm, top=3.5cm, bottom=3cm,footskip=1cm,headsep=1.5cm]{geometry}
\usepackage[all]{xy}
\usepackage{tikz-cd}
\usepackage[OT2,T1]{fontenc}
\usepackage{float}
\usepackage[alphabetic]{amsrefs}

\crefname{equation}{}{}
\numberwithin{equation}{section}

\crefname{section}{\S\!\!}{\S\!\!}
\Crefname{section}{\S\!\!}{\S\!\!}

\newtheorem{theorem}[equation]{Theorem}
\newtheorem{lemma}[equation]{Lemma}
\newtheorem{cor}[equation]{Corollary}
\Crefname{cor}{Corollary}{Corollaries}

\newtheorem{proposition}[equation]{Proposition}

\theoremstyle{remark}
\newtheorem{remark}[equation]{Remark}
\theoremstyle{definition}
\newtheorem{example}[equation]{Example}
\theoremstyle{definition}

\theoremstyle{definition}
\newtheorem{defi}[equation]{Definition}
\Crefname{defi}{Definition}{Definitions}
\theoremstyle{definition}
\newtheorem{notation}[equation]{Notation}
\theoremstyle{definition}

\theoremstyle{definition}
\newtheorem{assumption}[equation]{Assumption}

\DeclareSymbolFont{cyrletters}{OT2}{wncyr}{m}{n}
\DeclareMathSymbol{\Sha}{\mathalpha}{cyrletters}{"58}
\DeclareMathSymbol{\sha}{\mathalpha}{cyrletters}{"78}
\def\K{\ensuremath K}

\def\F{\ensuremath\mathbb{F}}

\newcommand{\art}[2]{\left(\frac{#1}{#2}\right)}

\def\QQ{\ensuremath\mathbb{Q}}
\def\ZZ{\ensuremath\mathbb{Z}}
\def\FFF{\ensuremath\mathscr{F}}
\def\CCC{\ensuremath\mathscr{C}}

\def\SSS{\ensuremath\mathscr{S}}
\def\gal{\ensuremath\operatorname{Gal}}

\newcommand{\sel}[1]{\operatorname{Sel}_{#1}}
\newcommand{\gp}[1]{\langle{#1}\rangle}
\newcommand{\set}[1]{\left\{{#1}\right\}}

\address{Max-Planck-Institut für Mathematik, Vivatsgasse 7, 53111 Bonn,
Germany}
\email{ajmorgan44@gmail.com}

\address{School of Mathematics and Statistics, University of Glasgow, University Place, Glasgow, G12 8QQ.}
\email{r.paterson.2@research.gla.ac.uk}
\setcounter{tocdepth}{1}

\subjclass[2010]{11G05 (11G10 11N45 14H52)}

\begin{document}

\title{On $2$-Selmer groups of twists after quadratic extension}

\author{Adam Morgan and Ross Paterson}

\hypersetup{
pdftitle={On 2-Selmer groups of twists after quadratic extension},
pdfauthor={Adam Morgan and Ross Paterson},
pdfsubject={11G05 (11G10 11N45 14H52)},
pdfkeywords={elliptic curves, abelian varieties, arithmetic statistics, Selmer groups, quadratic twists}
}
\makeatother

\begin{abstract}
Let $E/\mathbb{Q}$ be an elliptic curve with full rational $2$-torsion. As $d$ varies over squarefree integers, we study the behaviour of the quadratic twists $E_d$ over a fixed quadratic extension $K/\mathbb{Q}$.  
We prove that for 100\% of twists the dimension of the $2$-Selmer group over $K$ is given by an explicit local formula, and use this to show that this dimension follows an Erd\H{o}s--Kac type distribution.
This is in stark contrast to the distribution of the dimension of the corresponding $2$-Selmer groups over $\QQ$, and this discrepancy allows us to determine the distribution of the $2$-torsion in the Shafarevich--Tate groups of the $E_d$ over $K$ also.

As a consequence of our methods we prove that, for $100\%$ of twists $d$, the action of $\gal(K/\QQ)$ on the $2$-Selmer group of $E_d$ over $K$ is trivial, and the Mordell--Weil group $E_d(K)$ splits integrally as a direct sum of its invariants and anti-invariants.
On the other hand, we give examples of thin families of quadratic twists in which a positive proportion of the $2$-Selmer groups over $K$ have non-trivial $\gal(K/\QQ)$-action, illustrating that these previous results are genuinely statistical  phenomena.
\end{abstract}

\maketitle

\tableofcontents

\section{Introduction}
\label{sec:introduction}

Let $E/\mathbb{Q}$ be an elliptic curve with $E[2]\subseteq E(\mathbb{Q})$, and consider the family of quadratic twists of $E$ over $\mathbb{Q}$$\colon$
\[\{E_d ~~\colon ~~ d\in \mathbb{Z}~~\textup{squarefree}\}.\]
Let $K/\mathbb{Q}$ be a quadratic extension. In this paper, as $d$ varies we study the $2$-Selmer groups $\textup{Sel}^2(E_d/K)$ of  $E_d$ over $K$. 

\subsection{Erd\H{o}s--Kac  for $2$-Selmer}

Our first result, strongly reminiscent of the Erd\H{o}s--Kac theorem \cite{MR2374}, shows that the distribution of the quantity
\[\frac{\dim \textup{Sel}^2(E_d/K)-\log \log|d|}{\sqrt{2\log \log|d|}}\]
is standard normal. That is, we have the following:

\begin{theorem}[\Cref{cor:Erdos Kac For Selmer}] \label{intro: EK thm}
	For every $z\in\mathbb{R}$ we have 
	\[\lim_{X\to\infty}~\frac{
	\#\set{\vert d\vert\leq X \textnormal{ squarefree }: \ \frac{\dim \textup{Sel}^2(E_d/K)-\log\log|d|}{\sqrt{2\log\log|d|}}\leq z}
	}{
	\#\set{\vert d\vert\leq X \textnormal{ squarefree }}
	}=\frac{1}{\sqrt{2\pi}}\int_{-\infty}^ze^{-t^2/2}dt.\]
\end{theorem}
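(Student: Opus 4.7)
The plan is to combine the explicit local formula for $\dim\textup{Sel}^2(E_d/K)$ established earlier in the paper (the 100\%-result alluded to in the abstract) with a method-of-moments proof of an Erd\H{o}s--Kac-type limit for the resulting additive function of $d$.

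First, I would invoke the local formula to write
\[\dim\textup{Sel}^2(E_d/K) = C + f(d)\qquad \textup{for 100\% of squarefree } d,\]
where $C$ is a constant and $f$ is a strongly additive function $f(d)=\sum_{p\mid d} f_p(d)$, each local term depending only on local data (such as the behaviour of $p$ in $K$ and in some finite Galois extension $L/\QQ$ encoding the relevant Selmer conditions). Given that the target mean is $\log\log|d|$ and the variance is $2\log\log|d|$, the expected shape of the formula is
\[f(d) = 2\,\omega_S(d) + \bigo(1),\]
where $\omega_S(d)$ counts the prime divisors of $d$ lying in a Chebotarev class $S\subseteq\gal(L/\QQ)$ of density $\tfrac{1}{2}$; such an additive function automatically has mean $\log\log|d|$ and variance $2\log\log|d|$, matching the desired normalisation.

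The theorem then reduces to an Erd\H{o}s--Kac-type statement for $\omega_S(d)$, which I would prove by the method of moments in the spirit of Granville--Soundararajan. For each fixed integer $k\geq 1$, the moment sum
\[\sum_{|d|\leq X,\, d\textup{ sqfree}} \bigl(\omega_S(d)-\tfrac{1}{2}\log\log X\bigr)^k\]
expands into a multiple sum over $k$-tuples of primes, each of which is evaluated using the Chebotarev density theorem combined with the standard inclusion--exclusion for squarefree integers. The resulting asymptotic matches the $k$-th Gaussian moment after normalisation by $\sqrt{(1/2)\log\log X}$, and the theorem follows.

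The main obstacle is the exceptional set of squarefree $d$ on which the local formula fails: since a distributional limit is a more delicate statement than a 100\%-result, this set must contribute negligibly to every moment, not merely have density zero. This demands a quantitative version of the local formula, say with exceptional set of size $\bigo\bigl(X/(\log\log X)^A\bigr)$ for every $A$, combined with a crude bound on $\dim\textup{Sel}^2(E_d/K)$ of size $\bigo(\omega(d))$ (or even $\bigo(\log|d|)$), so that H\"older's inequality dominates the exceptional contribution. Secondary technical points --- truncating to medium-sized primes in a Kubilius-model fashion, and securing uniformity in the effective Chebotarev estimates --- are standard in the arithmetic-statistics literature and should not cause essential difficulty.
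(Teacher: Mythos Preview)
Your overall strategy is correct and matches the paper's: reduce via the local formula to a strongly additive function of shape $2\omega_S(d)+O(1)$ with $S$ a Chebotarev class of density $\tfrac12$, then prove the Gaussian limit by the Granville--Soundararajan method of moments (this is exactly \Cref{thm:genus theory is normally distributed} combined with \Cref{cor holds a lot}).

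However, the ``main obstacle'' you identify is a phantom, arising from a suboptimal ordering of the argument. You propose to compute moments of $\dim\textup{Sel}^2(E_d/K)$ directly, which indeed forces you to bound the exceptional contribution moment by moment via H\"older and a quantitative version of the 100\% result. The paper instead runs the method of moments on the function $g(d)$ of \Cref{the function g of d}, which is defined for \emph{every} squarefree $d$ and satisfies $g(d)=2\omega_{E,K}(d)+O(1)$ unconditionally (\Cref{Genus Theory as 2-Torsion Frobenii}) --- there is no exceptional set at this stage. Only after the distributional limit for $g(d)$ is established does the paper invoke the density-zero statement $\dim\textup{Sel}^2(E_d/K)=g(d)-2$ to transfer the result. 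Since the conclusion is a statement about a CDF (the probability of a single event) rather than about moments, mere density zero of the exceptional set suffices for this transfer; no quantitative saving, no H\"older, and no a priori bound on $\dim\textup{Sel}^2(E_d/K)$ are needed.
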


One immediate consequence  is that, for any fixed real number $z$, the proportion of $|d|\leq X$ for which  $\dim \textup{Sel}^2(E_d/K)$ is smaller than $z$ tends to $0$ as $X$ tends to infinity. We present this as \Cref{is_usually_large}.

By contrast, if we assume that $E$ has no cyclic $4$-isogeny defined over $\mathbb{Q}$, it is a result of Kane \cite{kane2013ranks}, building on work of Heath-Brown \cite{MR1292115} and Swinnerton-Dyer \cite{swinnerton2008effect}, that for any fixed interger $n\geq 2$  a positive proportion of twists $d$ have $\dim \textup{Sel}^2(E_d/\mathbb{Q})$ equal to $n$. Thus \Cref{intro: EK thm} shows that the groups $\textup{Sel}^2(E_d/K)$ exhibit significantly different behaviour to the corresponding groups over $\mathbb{Q}$. As a consequence of this discrepancy, we are able to show that, at least when $E$ has no cyclic $4$-isogeny defined over $\mathbb{Q}$, \Cref{intro: EK thm} remains true when $\dim \textup{Sel}^2(E_d/K)$ is replaced by  $\dim \Sha(E_d/K)[2]$ in the statement. We present this alternative perspective as \Cref{sha corollary}.

The distribution in \Cref{intro: EK thm} does show up in a slightly different setting over $\mathbb{Q}$ however, and is the same form as that appearing in work of  Klagsbrun--Lemke Oliver \cite{MR3430377} and Xiong--Zaharescu  \cite{xiong2008distribution}  concerning the distribution  of $2$-isogeny Selmer groups  in quadratic twist families of certain elliptic curves. We discuss a precise analogy explaining this similarity in \Cref{weil res intro} below.

\subsection{Structural results for $100\%$ of twists}

The  growth of the $2$-Selmer group  when passing from $\mathbb{Q}$ to $K$ apparent in \Cref{intro: EK thm} can be explained by work of Kramer  \cite{MR597871}. Write $G=\textup{Gal}(K/\mathbb{Q})$ for the Galois group of $K$ over $\mathbb{Q}$. For any elliptic curve $E'/\mathbb{Q}$, the $2$-Selmer group of $E'$ over $K$ is naturally a $G$-module. Roughly speaking, the work of Kramer identifies a quotient of the invariant subgroup $\textup{Sel}^2(E'/K)^G$ whose dimension is controlled by purely local invariants (this is implicit in \cite{MR597871}*{Theorem 1}, see also \Cref{wiles greenberg application,prop:ResCoresExactSequence} below). This is analogus to the situation for class groups of quadratic fields where the dimension of the $2$-torsion of the (narrow) class group admits an explicit description via genus theory.

In order to prove \Cref{intro: EK thm} we study, as $d$ varies,  the discrepancy between the `systematic' part of the $2$-Selmer group    $\textup{Sel}^2(E_d/K)$ alluded to above, and the full $2$-Selmer group. Ultimately, \Cref{intro: EK thm} is a consequence of the following result, giving a precise description of the full $2$-Selmer group for $100\%$ of twists.  

\begin{notation} \label{def:local_norm_map}
For each place $v$ of $\mathbb{Q}$, and  any place $w$ of $K$ extending $v$, define the local norm map
\[N_{K_w/\mathbb{Q}_v}\colon E(K_w) \longrightarrow E(\mathbb{Q}_v)\]
by the formula
\[N_{K_w/\mathbb{Q}_v}(P)=\sum_{\sigma \in\textup{Gal}(K_w/\mathbb{Q}_v)}\sigma(P).\]
\end{notation}

\begin{theorem}[\Cref{cor holds a lot}] \label{into norm cokernel theorem}
For $100\%$ of squarefree $d$ ordered by absolute value, the $\textup{Gal}(K/\mathbb{Q})$-action on $\textup{Sel}^2(E_d/K)$ is trivial, and 
 we have
\begin{equation} \label{explicit selmer formula}
\dim \textup{Sel}^2(E_d/K) =-2+ \sum_{v~\textup{place of }\mathbb{Q}}\dim E_d(\mathbb{Q}_v)/N_{K_w/\mathbb{Q}_v}E_d(K_w).
\end{equation}
Here $N_{K_w/\mathbb{Q}_v}$ is defined as in \Cref{def:local_norm_map} above. Since  $ E_d(\mathbb{Q}_v)/N_{K_w/\mathbb{Q}_v}E_d(K_w)$ is trivial if $v$ splits in $K$, the right hand side of \eqref{explicit selmer formula} does not depend on the choice of $w\mid v$.
\end{theorem}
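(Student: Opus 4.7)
My strategy is to sandwich $\dim \textup{Sel}^2(E_d/K)$ between an upper bound coming from the local conditions cutting out the Selmer group and a matching lower bound on the $G$-invariants coming from a Kramer-style exact sequence, then show that the two bounds coincide for $100\%$ of squarefree $d$. Once they match, the containment $\textup{Sel}^2(E_d/K)^G \subseteq \textup{Sel}^2(E_d/K)$ together with the resulting equality of dimensions forces the two groups to coincide; for an $\F_2[G]$-module with $|G|=2$, this is equivalent to trivial $G$-action, giving the structural assertion for free.

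\textit{Lower bound on invariants via Kramer.} I would apply the restriction–corestriction exact sequence \Cref{prop:ResCoresExactSequence} together with the Wiles–Greenberg style formalism of \Cref{wiles greenberg application}. Combined, these identify a canonical subgroup of $\textup{Sel}^2(E_d/K)^G$ whose dimension involves the $\QQ$-Selmer groups of $E_d$ and of the companion twist $E_{dD}$ (writing $K = \QQ(\sqrt{D})$), the rational $2$-torsion $E_d[2](\QQ) \cong (\F_2)^2$ (accounting for the $-2$ in the asserted formula), and the direct sum of local norm cokernels $\bigoplus_v E_d(\QQ_v)/N_{K_w/\QQ_v} E_d(K_w)$ (giving the $\sum_v c_v(d)$ contribution, where $c_v(d)$ denotes the dimension of the local cokernel). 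This yields an inequality $\dim \textup{Sel}^2(E_d/K)^G \geq -2 + \sum_v c_v(d)$, up to correction terms encoding excess from the $\QQ$-Selmer groups.

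\textit{Matching upper bound via duality.} For the reverse inequality $\dim \textup{Sel}^2(E_d/K) \leq -2 + \sum_v c_v(d)$, I would exploit that $E_d[2]$ is a constant $G_\QQ$-module, so that $H^1(K, E_d[2]) \cong (K^\times/(K^\times)^2)^{\oplus 2}$, and invoke Tate local duality to identify each $c_v(d)$ with the codimension imposed by the local Selmer condition at $v$. A Cassels–Poitou–Tate calculation together with the global Euler–Poincar\'e formula for $E_d[2]$ over $K$ and the contribution of $E_d[2](K) \cong (\F_2)^2$ then produces precisely the claimed upper bound.

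\textit{Matching for $100\%$ of $d$ — the main obstacle.} The heart of the proof is to show that the correction terms in the lower bound vanish outside a zero-density set of $d$. Crucially, one cannot simply invoke minimality of $\dim \textup{Sel}^2(E_d/\QQ)$, since by \cite{kane2013ranks} it fails on a positive density. Instead, I expect the argument to show that any excess in $\textup{Sel}^2(E_d/\QQ)$ or $\textup{Sel}^2(E_{dD}/\QQ)$ is absorbed into the local cokernel picture: generically, the images of these $\QQ$-Selmer classes in $\textup{Sel}^2(E_d/K)^G$ coincide with classes already accounted for by the local terms, so no net excess remains. Carrying this out seems to require quantitative, sieve-type control of local norm conditions at primes dividing $d$ as $d$ ranges over squarefree integers; this analytic input — presumably developed in preceding sections of the paper — is where the real work lies, and once it is in hand the three steps combine to give both the dimension formula and the triviality of the Galois action simultaneously.
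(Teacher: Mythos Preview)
Your ``matching upper bound'' step is where the argument breaks. The inequality $\dim \textup{Sel}^2(E_d/K) \leq -2 + \sum_v c_v(d)$ is false in general: the paper proves precisely the \emph{opposite} inequality unconditionally (\Cref{selmer_lower_bound_lemma}), and strict inequality occurs whenever the auxiliary group $\textup{Sel}_{\CCC_d}(\QQ,E_d[2])=\textup{Sel}^2(E_d/\QQ)\cap\textup{Sel}^2(E_{d\theta}/\QQ)$ is nonzero. Concretely, the prime-twist examples of \Cref{prop:FullCharacterisationForPrimeTwists} produce $d$ with $\dim\textup{Sel}^2(E_d/K)=6$ while $\sum_v c_v(d)$ is bounded. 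The issue with your duality argument is that the local norm cokernels $c_v(d)$ measure the gap between the Selmer structures $\FFF_d$ and $\CCC_d$ over $\QQ$, not the codimensions of the Kummer images $\SSS(E_d/K_w)$ inside $H^1(K_w,E_d[2])$; and since $E_d[2]$ is self-dual, the Greenberg--Wiles formula applied over $K$ yields $0=0$, no upper bound.

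The paper's route is different in structure. Rather than sandwiching, it shows directly (\Cref{main statistical theorem written in sec 5}) that $\textup{Sel}_{\CCC_d}(\QQ,E_d[2])=0$ for $100\%$ of $d$, and then invokes the purely algebraic \Cref{what happens when selc vanishes}, which gives the dimension formula and the triviality of the $G$-action simultaneously once $\textup{Sel}_{\CCC_d}$ vanishes. The analytic work is thus not about absorbing excess from $\textup{Sel}^2(E_d/\QQ)$ or $\textup{Sel}^2(E_{d\theta}/\QQ)$ individually---each is typically nontrivial by Kane's theorem---but about showing that their \emph{intersection} in $H^1(\QQ,E_d[2])$ is generically zero. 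This is carried out by making the local conditions for $\CCC_d$ explicit (\Cref{explicit selmer conditions 1}), expressing the order of (a group containing) $\textup{Sel}_{\CCC_d}$ as a sum of Jacobi symbols, and bounding those sums via Heath-Brown/Fouvry--Kl\"uners techniques.
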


 \begin{remark} 
In \Cref{the function g section} we study the behaviour of the right hand side of \Cref{explicit selmer formula}. Even when $E$ does not have all its $2$-torsion defined over $\mathbb{Q}$, we are still able to use this to gain partial control of the Selmer groups $\textup{Sel}^2(E_d/K)$ as $d$ varies. In particular, provided that $\mathbb{Q}(E[2])\cap K=\mathbb{Q}$ we show in \Cref{is_usually_large} that for any fixed real number $z$, the dimension of $\textup{Sel}^2(E_d/K)$ exceeds $z$ for $100\%$ of twists $d$. For the remainder of the introduction however, we continue to assume $E[2]\subseteq E(\mathbb{Q})$.
\end{remark}

\begin{remark}
The statement of \Cref{into norm cokernel theorem} is very reminiscent of a recent result of Fouvry--Koymans--Pagano \cite{FKP20}. There it is shown that, for $100\%$ of odd positive squarefree integers $n$, the class group of the Dirichlet biquadratic field $\mathbb{Q}(\sqrt{n},i)$ has $4$-rank equal to $\omega_3(n)-1$, where $\omega_3(n)$ is the number of primes dividing $n$ that are congruent to $3$ modulo $4$ (i.e. are inert in $\mathbb{Q}(i)$). The similarity with the statement of \Cref{into norm cokernel theorem} is made apparent by \Cref{Genus Theory as 2-Torsion Frobenii}. This can be viewed as an instance of the known analogy between $4$-ranks of class groups and   $2$-Selmer ranks of elliptic curves  apparent in the works of Heath--Brown \cites{MR1193603,MR1292115} and Fouvry--Kl\"uners  \cite{MR2276261}, and extended to higher $2$-power ranks in the recent work of Smith  \cite{S2017}.
\end{remark}

It is natural to ask if the description in \Cref{into norm cokernel theorem} simply holds for \textit{all} $d$. This is, however, not the case. We discuss examples where the Galois action is nontrivial in \Cref{intro: twist examples} below. 

Since the group $E_d(K)/2E_d(K)$ sits inside $\textup{Sel}^2(E_d/K)$, we can deduce  some consequences for Mordell--Weil groups from the above results. Specifically, for  a squarefree integer $d$, write $\Lambda(E_d/\mathbb{Q})$
 for the finite rank free $\mathbb{Z}$-module given by the quotient of $E_d(\mathbb{Q})$ by its torsion subgroup:
 \[\Lambda(E_d/\mathbb{Q})=E_d(\mathbb{Q})/E_d(\mathbb{Q})_\textup{tors}.\]
We view this as a $G$-module with trivial action. Write $\Lambda(E_d/\mathbb{Q})(-1)$ for the $G$-module with underlying abelian group $\Lambda(E_d/\mathbb{Q})$ on which the generator of $G$ acts as multiplication by $-1$. Further, write $K=\mathbb{Q}(\sqrt{\theta})$.  We  have the following result, giving a complete description of the $G$-module structure of $E_d(K)$ for $100\%$ of $d$.  
 
\begin{theorem}[\Cref{prop:MWGroupsStatistical}] \label{model_weil_split_intro}
For $100\%$ of squarefree $d$ ordered by absolute value, we have an isomorphism of $\ZZ[G]$-modules
	\begin{equation} 
	E_d(K)\cong (\mathbb{Z}/2\mathbb{Z})^2~\oplus ~\Lambda(E_d/\mathbb{Q}) ~\oplus ~\Lambda(E_{d\theta}/\mathbb{Q})(-1) ,
	\end{equation}
	where here $(\mathbb{Z}/2\mathbb{Z})^2$ carries trivial $G$-action. 
\end{theorem}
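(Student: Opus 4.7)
The plan is to use \Cref{into norm cokernel theorem} together with a standard generic-torsion-growth fact to reduce the statement, for $100\%$ of $d$, to an explicit $\mathbb{Z}[G]$-module decomposition of $E_d(K)$. First I would record that, for $100\%$ of $d$, both (a) $E_d(K)_{\textup{tors}} = E_d[2]$ (with trivial $G$-action, since $E[2]\subseteq E(\mathbb{Q})$) and (b) $G$ acts trivially on $\textup{Sel}^2(E_d/K)$ hold; the former is a standard consequence of the fact that only finitely many twists can acquire new torsion over $K$, and the latter is \Cref{into norm cokernel theorem}. In particular, (b) forces $G$ to act trivially on $E_d(K)/2E_d(K)$. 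Write $\sigma$ for the generator of $G$, $T := E_d[2]$, $A := E_d(\mathbb{Q})$, and $B := E_d(K)^- = \{P\in E_d(K)\colon \sigma P = -P\}$. The quadratic twist isomorphism $\phi\colon E_{d\theta, K} \xrightarrow{\sim} E_{d, K}$, which satisfies $\phi^\sigma = -\phi$, induces an isomorphism of abelian groups $E_{d\theta}(\mathbb{Q}) \cong B$, and a direct check gives $A\cap B = E_d(\mathbb{Q})[2] = T$.

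The heart of the argument is to show that $E_d(K) = A + B$. Writing $\Lambda := \Lambda(E_d/K)$, the hypothesis (b) passes to a trivial $G$-action on the torsion-free quotient $\Lambda/2\Lambda$, so for any $\lambda\in\Lambda$ one writes $\sigma\lambda - \lambda = 2\rho$. Applying $\sigma$ once more and using that $\Lambda$ is torsion-free forces $\sigma\rho = -\rho$; then $\lambda + \rho \in \Lambda^G$ and $-\rho\in\Lambda^-$, so $\Lambda = \Lambda^G \oplus \Lambda^-$. To lift this decomposition back to $E_d(K)$: given $\tilde{\lambda}\in E_d(K)$ whose class lies in $\Lambda^G$, one has $\sigma\tilde{\lambda} - \tilde{\lambda}\in T$ by definition of $\Lambda^G$, and $\sigma\tilde{\lambda} - \tilde{\lambda}\in 2E_d(K)$ by (b); but (a) yields $E_d(K)[4] = T$, whence $T\cap 2E_d(K) = 0$, so in fact $\tilde{\lambda}\in A$. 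The symmetric argument shows that classes in $\Lambda^-$ lift to $B$. For an arbitrary $P\in E_d(K)$, decomposing $[P]\in\Lambda^G\oplus\Lambda^-$ and lifting to $\alpha\in A$, $\beta\in B$ gives $P - \alpha - \beta\in T\subseteq A$, which we absorb into $\alpha$.

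The final step is to realise the decomposition on the level of $\mathbb{Z}[G]$-modules. From $E_d(K) = A + B$ and $A\cap B = T$ we obtain the short exact sequence
\[0 \longrightarrow T \longrightarrow A\oplus B \longrightarrow E_d(K) \longrightarrow 0,\]
whose first map is $t\mapsto (t, t)$ (using $2T = 0$). Since $A/T \cong \Lambda(E_d/\mathbb{Q})$ and $B/T \cong \Lambda(E_{d\theta}/\mathbb{Q})$ are $\mathbb{Z}$-free, we may choose $\mathbb{Z}$-splittings $A = T \oplus A'$ and $B = T \oplus B'$. The $G$-action on $A$ is trivial and on $B$ is multiplication by $-1$, so $A'$ inherits the trivial and $B'$ the sign action. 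Folding the two copies of $T$ along the diagonal then produces the desired $\mathbb{Z}[G]$-module isomorphism
\[E_d(K) \cong T \oplus A' \oplus B' \cong (\mathbb{Z}/2\mathbb{Z})^2 \oplus \Lambda(E_d/\mathbb{Q}) \oplus \Lambda(E_{d\theta}/\mathbb{Q})(-1).\]

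The main obstacle is the lifting step in the second paragraph: a priori the invariants $\Lambda^G$ can strictly exceed the image of $E_d(\mathbb{Q})$, as measured by the connecting homomorphism $\Lambda^G\to H^1(G,T)$ coming from $0\to T\to E_d(K)\to \Lambda\to 0$, and this map is typically non-zero. The combined input of (a) and (b) is precisely what kills this obstruction: (b) forces $\sigma\tilde{\lambda}-\tilde{\lambda}\in 2 E_d(K)$, and (a) forces $T\cap 2E_d(K) = 0$.
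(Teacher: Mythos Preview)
Your proof is correct and follows essentially the same strategy as the paper: both arguments use that (a) $E_d(K)_{\textup{tors}}=E_d[2]$ for all but finitely many $d$, and (b) the trivial $G$-action on $\textup{Sel}^2(E_d/K)$ (hence on $E_d(K)/2E_d(K)$) combined with the absence of $4$-torsion forces any lift of a $\Lambda^G$-class (resp.\ $\Lambda^-$-class) to be genuinely $G$-fixed (resp.\ anti-fixed). The one point of divergence is in how the decomposition $\Lambda(E_d/K)=\Lambda^G\oplus\Lambda^-$ is obtained. The paper invokes the classification of $\mathbb{Z}[G]$-lattices for $G$ cyclic of order $2$ (citing \cite{MR1038525}) to write $\Lambda\cong\mathbb{Z}^a\oplus\mathbb{Z}(-1)^b\oplus\mathbb{Z}[G]^c$ and then uses triviality of the $G$-action on $\Lambda/2\Lambda$ to force $c=0$; you instead give a direct elementary argument, writing $\sigma\lambda-\lambda=2\rho$, observing $\sigma\rho=-\rho$, and decomposing $\lambda=(\lambda+\rho)+(-\rho)$. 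Your route is more self-contained and makes transparent exactly why the $\mathbb{Z}[G]$-summand cannot appear, while the paper's route gives the structural picture up front. Either way, the lifting step and the final assembly via the short exact sequence $0\to T\to A\oplus B\to E_d(K)\to 0$ are the same in spirit as the paper's \Cref{prop:MWGroupsConcrete}.
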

 
  \subsection{Twists of the Weil restriction of scalars} \label{weil res intro}
 
 Write $A=\textup{Res}_{K/\mathbb{Q}}E$ for the Weil restriction of scalars of $E$ from $K$ to $\mathbb{Q}$. This is a principally polarised abelian surface over $\mathbb{Q}$. For each squarefree integer $d$ we have (see \Cref{weil_res_quad_twist})
 \[\textup{Sel}^2(E_d/K)\cong \textup{Sel}^2(A_d/\mathbb{Q}).\]
 
 In particular, we can view \Cref{intro: EK thm} as giving the distribution of $2$-Selmer groups in the quadratic twist family over $\mathbb{Q}$ of the abelian surface $A$. We state this formally as \Cref{main result for Weil restrictions}. 
 
 It is also possible to use this perspective to draw parallels between our work and existing work in the literature. Specifically, we show in \Cref{weil restriction subsection} that for each $d$, the twist $A_d$ admits an isogeny $\phi_d:A_d\rightarrow E_d\times E_{d\theta}$ whose kernel is a subgroup of $A_d[2]$.   The order of the Selmer group $\textup{Sel}^{\phi_d}(A_d/\mathbb{Q})$ associated to $\phi_d$ is then, up to a quantity bounded independent of $d$, a lower bound for the size of the Selmer group $\textup{Sel}^2(A_d/\mathbb{Q})$. In turn, writing $\widehat{\phi_d}$ for the dual isogeny, a lower bound for the size of $\textup{Sel}^{\phi_d}(A_d/\mathbb{Q})$ is given by the \textit{Tamagawa ratio}
 \[\mathcal{T}(\phi_d)=\frac{|\textup{Sel}^{\phi_d}(A_d/\mathbb{Q})|}{|\textup{Sel}^{\widehat{\phi_d}}(E_d\times E_{d\theta}/\mathbb{Q})|}.\]
For any isogeny between abelian varieties, the Tamagawa ratio is known to admit a local formula, and in our case  this is essentially given by the right hand side of   \Cref{explicit selmer formula} (see \Cref{weil restriction subsection} for details). Consequently, one explanation for the unbounded growth of $\dim \textup{Sel}^2(E_d/K)$ seen in \Cref{intro: EK thm} is that the Tamagawa ratios $\mathcal{T}(\phi_d)$ tend to grow with $d$. Similarly, growth of the relevant Tamagawa ratios is  the phenomenon underlying the behaviour of $2$-isogeny Selmer groups of quadratic twist families of certain elliptic curves seen in work of   Klagsbrun--Lemke Oliver \cite{MR3430377} and Xiong--Zaharescu  \cite{xiong2008distribution}. Thus the behaviour we uncover can be viewed as an extension of those works to a special class of abelian surfaces.

\subsection{Prime twists of the congruent number curve} \label{intro: twist examples}
 
 As a complement to our main results, we provide examples of thin subfamilies of quadratic twists in which significantly different behaviour occurs to that exhibited by the full family.   Specifically,   take $E$ to be the congruent number curve:
 \[E:y^2=x^3-x.\]
Further, take  $K=\QQ(\sqrt{\theta})$ to be an imaginary quadratic extension of class number 1 in which 2 is inert. Thus $\theta\in\set{-3, -11, -19, -43, -67, -163}$. For a prime $p$, define  non-negative integers $e_1(E_p/K)$ and $e_2(E_p/K)$ such that we have an $\mathbb{F}_2[G]$-module isomorphism
\[\textup{Sel}^2(E_p/K)\cong \F_2^{  e_1(E_p/K)}\oplus\F_2[G]^{  e_2(E_p/K)}.\]

\begin{theorem}[\Cref{thm:density_for_CNC}] \label{main intro theorem congruent}
	The natural density of primes $p$ for which $e_1(E_p/K)=e_1$ and $e_2(E_p/K)=e_2$ is as follows:
	\[\lim_{X\to\infty}\frac{\#\set{p\leq X\textnormal{ prime}~:~e_1(E_p/K)=e_1\textnormal{ and }e_2(E_p/K)=e_2}}{\#\set{p\leq X\textnormal{ prime}}}=
	\begin{cases}
		9/16&\textnormal{if }(e_1,e_2)=(4,0),
		\\1/16&\textnormal{if }(e_1,e_2)=(2,2),
		\\4/16&\textnormal{if }(e_1,e_2)=(2,1),
		\\2/16&\textnormal{if }(e_1,e_2)=(2,0).
	\end{cases}\]
	In particular, the proportion of prime twists for which the $G$-action on $\textup{Sel}^2(E_p/K)$ is non-trivial is equal to $5/16$.
\end{theorem}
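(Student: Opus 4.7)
The plan is to carry out an explicit Galois-equivariant $2$-descent on $E_p$ over $K$ and reduce the determination of $(e_1(E_p/K), e_2(E_p/K))$ to a finite congruence condition on the prime $p$; Dirichlet's theorem on primes in arithmetic progressions will then deliver the densities. Because all of $E_p[2]$ is rational over $\mathbb{Q}$, Kummer theory identifies $H^1(G_K, E_p[2])$ with $(K^\times/K^{\times 2})^2$ carrying the diagonal action of $G = \textup{Gal}(K/\mathbb{Q})$, and $\textup{Sel}^2(E_p/K)$ is cut out as the intersection of the local images of $E_p(K_w)/2E_p(K_w)$ at the places $w$ of $K$ above the finite set $S := \{\infty,\,2,\,p\} \cup \{q : q \mid \theta\}$. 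I would compute each local image explicitly as a sub-$\mathbb{F}_2[G_v]$-module: at the archimedean place, at the places above $2$ (where by hypothesis $K_w$ is the unramified quadratic extension of $\mathbb{Q}_2$), and at the places above $\theta$, the computation is fixed and independent of $p$; at the place(s) above $p$, the local image depends on the splitting behaviour $\leg{\theta}{p}$ of $p$ in $K$ together with the standard valuation formulas for the descent map attached to $E_p : y^2 = x(x-p)(x+p)$.

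Assembling these local images exhibits $\textup{Sel}^2(E_p/K)$ as an explicit $\mathbb{F}_2[G]$-submodule of $(K(S,2))^2$. To recover the pair $(e_1,e_2)$ from this submodule I would compute three numerical invariants: the total $\mathbb{F}_2$-dimension (equal to $e_1 + 2e_2$), the dimension of the $G$-fixed subspace (equal to $e_1 + e_2$), and the dimension of the image of the norm $1+\sigma$ (equal to $e_2$). Since $\mathbb{F}_2[G]$ is a local ring with unique maximal ideal generated by the nilpotent element $1+\sigma$, these three numbers determine the module up to isomorphism. A useful cross-check is supplied by \Cref{weil res intro}: the Weil-restriction isogeny $\phi_p : A_p \to E_p \times E_{p\theta}$ constrains the total dimension via the Tamagawa ratio, and combines with Heath--Brown's classical description of $\textup{Sel}^2(E_p/\mathbb{Q})$ for the congruent number curve to pin down the marginal distributions.

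Each of the three invariants is controlled by a bounded set of Legendre symbols involving $p$ and the primes dividing $2\theta$; by quadratic reciprocity all such symbols are determined by $p$ modulo a fixed integer $N = N(\theta)$, and by Dirichlet each admissible residue class has natural density $1/\varphi(N)$. Summing these densities over the classes yielding each prescribed $(e_1,e_2)$ produces the stated probabilities, and in particular the value $5/16$ for the total probability of non-trivial $G$-action. I expect the main obstacle to be organisational rather than conceptual: one must track three module-theoretic invariants simultaneously across all residue classes of $p$ modulo $N$, while handling with care the slightly delicate $2$-adic descent computation at the inert place $w \mid 2$, and verifying that the sixteen local branches really do recombine into only the four module isomorphism classes claimed.
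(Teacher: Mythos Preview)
Your overall strategy of explicit $2$-descent is correct and matches the paper, but there is a genuine gap in the final step. You assert that ``each of the three invariants is controlled by a bounded set of Legendre symbols involving $p$ and the primes dividing $2\theta$'' and hence determined by $p$ modulo a fixed integer $N$. This is false. When $p$ splits in $K$ and $p\equiv 1\pmod 8$, the $2$-descent shows that the module structure of $\textup{Sel}^2(E_p/K)$ depends on whether $\bar\epsilon$ is a square in $K_\epsilon^\times$, where $\epsilon\in\mathcal{O}_K$ has norm $p$. This is a quadratic residue condition \emph{in $K$}, not in $\mathbb{Q}$, and it is not governed by any congruence on $p$ modulo a fixed rational integer: it is a genuine ``$4$-rank'' type condition. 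Consequently Dirichlet's theorem on primes in arithmetic progressions cannot separate the two outcomes $(e_1,e_2)=(4,0)$ and $(e_1,e_2)=(2,2)$ within this residue class.

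What the paper does instead is identify the condition $\bar\epsilon\in K_\epsilon^{\times 2}$ with the triviality of the R\'edei symbol $[\theta,-\theta,p]$, which in turn is the Artin symbol of $p$ in a fixed dihedral extension $F_{\theta,-\theta}/\mathbb{Q}$ of degree $8$. Since this extension is non-abelian over $\mathbb{Q}$, the splitting of $p$ in it is not a congruence condition; one then appeals to the Chebotarev density theorem (not Dirichlet) applied to the degree-$16$ extension $F_{\theta,-\theta}(\zeta_8)/\mathbb{Q}$ to see that exactly half of the primes in the relevant class satisfy the condition, producing the $1/16$ versus $1/16$ split. Your write-up would need this extra ingredient to go through.
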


\subsection{Overview of the proofs}

The proofs of the results outlined above requires a combination of algebraic and analytic methods. Where possible we have tried to decouple these, so that the algebraic results stand alone. 

The algebraic work is largely carried out in \Cref{sec:seloverquad}  and \Cref{case of full 2-tors section}, and is based on  work of Kramer \cite{MR597871}. For a squarefree integer $d$, a key role in our results is played by the group $\textup{Sel}_{\CCC_d}(\mathbb{Q},E_d[2])$ of \Cref{the selmer structure f}. This is a subgroup of the $2$-Selmer group $\textup{Sel}^2(E_d/\mathbb{Q})$ of $E_d$ over $\mathbb{Q}$. Our key algebraic result is \Cref{what happens when selc vanishes} which shows that the Selmer group $\textup{Sel}^2(E_d/K)$ admits the explicit description of \Cref{into norm cokernel theorem} as soon as this auxiliary Selmer group $\textup{Sel}_{\CCC_d}(\mathbb{Q},E_d[2])$ vanishes. 

The main statistical theorems of the paper then depend  on proving \Cref{main statistical theorem written in sec 5}, which shows that $\textup{Sel}_{\CCC_d}(\mathbb{Q},E_d[2])$ is trivial for $100\%$ of $d$. To do this we draw on analytic techniques developed by Heath-Brown, and used to determine the distribution of the $2$-Selmer groups of quadratic twists of the congruent number curve \cites{MR1193603,MR1292115}. That work takes as a point of departure the explicit description of $2$-Selmer groups of elliptic curves with full $2$-torsion provided by $2$-descent. In \Cref{explicit selmer conditions 1} we similarly give an explicit description of $\textup{Sel}_{\CCC_d}(\mathbb{Q},E_d[2])$ as a subgroup of $(\mathbb{Q}^{\times}/\mathbb{Q}^{\times 2})^2$.
 
 In fact, for the analytic part of the argument we have opted to replace $\textup{Sel}_{\CCC_d}(\mathbb{Q},E_d[2])$ with a certain   subgroup $S_d$ of $\mathbb{Q}^{\times}/\mathbb{Q}^{\times 2}$ (see \Cref{definition of Sd}) whose vanishing implies the vanishing of $\textup{Sel}_{\CCC_d}(\mathbb{Q},E_d[2])$, but which admits a simpler explicit description. In \Cref{jacobi sums section} we give a formula for the order of $S_d$ as a sum of Jacobi symbols in a form which can be treated by the analytic tools of Heath-Brown mentioned above. An alternative method at this point might be to draw on the alternative approaches of Kane \cite{kane2013ranks} or Smith \cite{S2017}.

It is worth remarking that the passage from $\textup{Sel}_{\CCC_d}(\mathbb{Q},E_d[2])$ to $S_d$ is somewhat wasteful. By following the work of Heath-Brown \cite{MR1193603} more closely, one can similarly describe the order of $\textup{Sel}_{\CCC_d}(\mathbb{Q},E_d[2])$ as a sum of Jacobi symbols. This would likely lead to significant improvements to the error bounds in \Cref{main statistical theorem written in sec 5}. We have opted not to do this in favour of working with the simpler and more explicit sums arising from $S_d$. In this respect, the resulting analysis is  much closer to that carried out by Fouvry--Kl\"uners in \cite{MR2276261} to determine the distribution of $4$-ranks of class groups of quadratic fields. 

 \subsection{Layout of the paper}
 
 In \S2 we introduce some notation that will be in use throughout. 
 
In \S3 we review some basic properties of Selmer structures and their associated Selmer groups which we use in later sections.

In \S4 we study algebraically the behaviour of $2$-Selmer groups of elliptic curves in quadratic extensions, building on work of Kramer \cite{MR597871}. Along the way we give two reinterpretations of Kramer's work, one in the language of Selmer structures, and another in terms of the Weil restriction of scalars. 

In \S5 we study the analytic properties of the function $g(d)$ of \Cref{the function g of d} (essentially the right hand side of \Cref{explicit selmer formula}) which gives a lower bound for $\dim \textup{Sel}^2(E_d/K)$. In particular, we show in \Cref{thm:genus theory is normally distributed} that $g(d)$ follows an Erd\H{o}s--Kac type distribution.

In \S6 we state our main technical result, \Cref{main statistical theorem written in sec 5}, on the vanishing of the auxiliary Selmer group $\textup{Sel}_{\CCC_d}(\mathbb{Q},E_d[2])$ for $100\%$ of twists $d$. From this we deduce \Cref{intro: EK thm,into norm cokernel theorem,model_weil_split_intro}, along with related results. 
 
The proof of \Cref{main statistical theorem written in sec 5} is carried out across \S7 and \S8. In \S7 we give some algebraic preliminaries. In \S8 we build on this by describing the order of $\textup{Sel}_{\CCC_d}(\mathbb{Q},E_d[2])$  as a sum of Jacobi symbols, before following closely the strategy of \cite{MR2276261}*{\S5} to study the  behaviour of these sums as $d$ varies. 

In \S9 we prove \Cref{main intro theorem congruent} concerning the behaviour over certain quadratic extensions of the $2$-Selmer groups  of prime twists of the congruent number curve.

 \subsection{Acknowledgements}

  We would like to thank Alex Bartel for suggesting we look at the behaviour of $2$-Selmer groups in quadratic extensions from a statistical point of view, and for countless helpful comments. We would also like to thank Peter Koymans,  Carlo Pagano and Efthymios Sofos for helpful discussions, and the anonymous referee for carefully reading   the paper and providing several helpful corrections and suggestions.
    
 Throughout this work, AM was supported by the Max-Planck-Institut für Mathematik in Bonn, and RP was supported by a PhD scholarship from the Carnegie Trust for the Universities of Scotland.
  
 \section{Notation and conventions}

In this section we detail some notation and conventions which will be used throughout the paper. 

\subsection{Arithmetic functions}
  Given a positive integer $n$ we write $\omega(n)$ for the number of distinct prime factors of $n$.  We denote by $\mu$ the M\"obius function, and for coprime integers $m$ and $n$ with $n$ odd and positive, we write 
$\left(\frac{m}{n}\right)$
for  the corresponding Jacobi symbol.  

\subsection{Galois cohomology}
For a field $F$ of characteristic $0$ we write $\bar{F}$ for a (fixed once and for all) algebraic closure of $F$, and denote its absolute Galois group by  $G_F=\textup{Gal}(\bar{F}/F)$. For a positive integer $n$ we write $\boldsymbol \mu_n$ for the $G_F$-module of $n$-th roots of unity in $\bar{F}$, and write $\boldsymbol \mu=\cup_{n\geq 1}\boldsymbol \mu_n$.

 By a $G_F$-module $M$ we mean a discrete module $M$ on which $G_F$ acts continuously. For $i\geq 0$ we write $H^i(F,M)$ as a shorthand for the continuous cohomology groups $H^i(G_F,M)$. We define the \textit{dual} of $M$ to be 
\[M^*:=\textup{Hom}(M,\boldsymbol \mu).\]
  This is a $G_F$-module with action given by setting, for $\sigma \in G_F$ and $\phi\in M^*$, 
\[ {}^\sigma\phi(m)=\sigma \phi(\sigma^{-1}m).\]

For $i\geq 0$, if $L/F$ is a finite extension we denote the corresponding restriction and corestriction maps by
\[\textup{res}_{L/F}\colon H^i(F,M) \to H^i(L,M)\]
and 
\[\textup{cor}_{L/F}\colon H^i(L,M) \to H^i(F,M)\]
respectively. When $L/F$ is Galois and the action of $G_F$ on $M$ factors through $\textup{Gal}(L/F)$, we write $H^i(L/F,M)$ as a shorthand for the cohomology group $H^i(\textup{Gal}(L/F),M)$. 

\subsection{Number fields and completions}
For a number field $F$ and a place $v$  of $F$, we write $F_v$ for the completion of $F$ at $v$. 
 We implicitly fix embeddings $\bar{F}\hookrightarrow \bar{F}_v$ for each place $v$ and in this way view $G_{F_v}$ as a subgroup of $G_F$ for each $v$. In this way, for $M$ a $G_F$-module $M$, we obtain restriction maps on cohomology $\textup{res}_v:H^i(F,M)\rightarrow H^i(F_v,M)$ for each $v$. When $v$ is non-archimedean we denote by $F_v^{\textup{nr}}$ the maximal unramified extension of $F_v$, and write 
 \[H^1_{\textup{nr}}(F_v,M):=\ker\left(H^1(F_v,M)\stackrel{\textup{res}}{\longrightarrow}H^1(F_v^{\textup{nr}},M)\right)\]
 for the subgroup of unramified classes in $H^1(F_v,M)$.
 
 \subsection{The Kummer image for abelian varieties} \label{subsec:Kummer}
 Still taking $F$ to be a number field, for an abelian variety $A$ over $F$, and for a place $v$ of $F$, we denote by $\SSS(A/F_v)$ the image of the coboundary map
\begin{equation} \label{local_kummer_map}
\delta_v:A(F_v)/2A(F_v)\hookrightarrow H^1(F_v,A[2])
\end{equation}
arising from the short exact sequence of $G_{F_v}$-modules
\begin{equation} \label{elliptic curve kummer sequence}
0\longrightarrow A[2]\longrightarrow A(\bar{F}_v)\stackrel{2}{\longrightarrow}A(\bar{F}_v)\longrightarrow 0.
\end{equation}
  
  \subsection{Quadratic twists} \label{subsec:quadratic twists}
For a field $F$ of characteristic $0$, and for an element $d$ of $F^\times/F^{\times 2}$, we write $\chi_d$ for the associated quadratic character. Thus $\chi_d$ is the function from $G_F$ to $\{\pm 1\}$ defined by, for $\sigma \in G_F$,  the formula
\[\chi_d(\sigma)=\sigma(\sqrt{d})/\sqrt{d}.\] 
Given an abelian variety $A$ over $F$ we write $A_d$ for the quadratic twist of $A$ by $d$. That is, $A_d$ is an abelian variety over $F$, equipped with an $\bar{F}$-isomorphism 
\begin{equation}\label{eq:QuadraticTwistMap}\psi_d\colon A\stackrel{\sim}{\longrightarrow} A_d\end{equation}
such that for all $\sigma$ in $G_F$, $\psi_d^{-1}\circ{}^\sigma\psi_d $ is multiplication by $\chi_d(\sigma)$ on $A$. Here ${}^\sigma\psi_d$ is the unique isomorphism $A\rightarrow A_d$ sending any $P\in A(\bar{F})$ to $\sigma \psi_d( \sigma^{-1}P)$. Note in particular that $\psi_d$ is defined over $F(\sqrt{d})$, so that $A$ is isomorphic to $A_d$ over $F(\sqrt{d})$. 

\section{Selmer structures}  

In this section we review the properties of Selmer structures and their associated Selmer groups which will be used later. For details see e.g.  \cites{MR2031496,MR1638477} and the references therein. 

Throughout this section we take $F$ to be a number field. We take $M$ to be a finite $G_F$-module annihilated by $2$, so that $M$ is  a finite dimensional  $\mathbb{F}_2$-vector space. All dimensions will be taken over $\mathbb{F}_2$.

\subsection{Local duality} \label{local pairings}

For each place $v$ of $F$, we have the \textit{local Tate pairing}  
\[\left \langle~,~\right \rangle_v:H^1(F_v,M)\times H^1(F_v,M^*)\longrightarrow H^2(F_v,\boldsymbol \mu_2)=\textup{Br}(F_v)[2]\hookrightarrow \mathbb{Q}/\mathbb{Z}\] given by the composition of cup-product and the local invariant map.

\begin{theorem}[Tate local duality]
For each place $v$ of $F$ the pairing $\left \langle~,~\right \rangle_v$ is non-degenerate. Moreover, for each non-archimedean place $v\nmid 2$ such that the inertia group $I_{F_v}$ acts trivially on $M$, $H^1_{\textup{nr}}(F_v,M)$ and $H^1_{\textup{nr}}(F_v,M^*)$ are orthogonal complements under this pairing.
\end{theorem}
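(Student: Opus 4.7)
The plan is to decouple the two assertions and treat them separately. For non-degeneracy my strategy is the standard dévissage: choose a finite Galois extension $L/F_v$ over which the $G_{F_v}$-action on $M \oplus M^*$ becomes trivial, embed $M$ into the induced module $M' = \textup{Ind}_{G_L}^{G_{F_v}} M|_{G_L}$, and argue by induction on $|M|$ using the five-lemma applied to $0 \to M \to M' \to M'/M \to 0$. By Shapiro's lemma the cohomology groups of $M'$ and of its dual reduce to cohomology over $L$ against trivial $\mathbb{F}_2$-modules, and the Tate pairings match up compatibly, so the inductive step reduces to the base case $M = \boldsymbol\mu_2$ (which coincides with $\mathbb{F}_2$ as a Galois module since $\boldsymbol\mu_2 \subset F_v$). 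In this base case Kummer theory identifies the pairing
\[ H^1(L, \boldsymbol\mu_2) \times H^1(L, \boldsymbol\mu_2) \longrightarrow H^2(L, \boldsymbol\mu_2) \hookrightarrow \mathbb{Q}/\mathbb{Z} \]
with the mod-$2$ Hilbert symbol on $L^\times/L^{\times 2}$, whose non-degeneracy is precisely local class field theory; the archimedean cases $L = \mathbb{R}, \mathbb{C}$ are treated by direct computation with Tate cohomology. This base-case input is the one genuine obstacle: everything downstream is formal, whereas non-degeneracy for $\boldsymbol\mu_2$ pulls in the full apparatus of LCFT.

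For the second assertion I would first establish the inclusion $H^1_{\textup{nr}}(F_v, M) \subseteq H^1_{\textup{nr}}(F_v, M^*)^\perp$ by a direct cup-product computation. If $\alpha$ and $\beta$ are both unramified, their restrictions to $F_v^{\textup{nr}}$ vanish, whence so does $\textup{res}(\alpha) \cup \textup{res}(\beta) = \textup{res}(\alpha \cup \beta)$; thus $\alpha \cup \beta$ lies in $H^2_{\textup{nr}}(F_v, \boldsymbol\mu_2) := \ker\bigl(H^2(F_v, \boldsymbol\mu_2) \to H^2(F_v^{\textup{nr}}, \boldsymbol\mu_2)\bigr)$. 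Inflation–restriction identifies this kernel with $H^2(\textup{Gal}(F_v^{\textup{nr}}/F_v), \boldsymbol\mu_2) = H^2(\widehat{\mathbb{Z}}, \mathbb{F}_2)$, which vanishes since $\widehat{\mathbb{Z}}$ has cohomological dimension $1$.

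To promote the inclusion to an equality I would compare cardinalities. Since $v \nmid 2$ and $M$ is $2$-primary, the residue characteristic is coprime to $|M|$, so the local Euler–Poincaré formula gives $|H^0(F_v, M)| \cdot |H^2(F_v, M)| = |H^1(F_v, M)|$; combined with the non-degeneracy established above (which furnishes $|H^2(F_v, M)| = |H^0(F_v, M^*)|$), this yields $|H^1(F_v, M)| = |H^0(F_v, M)| \cdot |H^0(F_v, M^*)|$. On the other hand, triviality of the inertia action together with inflation–restriction for $F_v^{\textup{nr}}/F_v$ identifies
\[ H^1_{\textup{nr}}(F_v, M) \;=\; H^1(\widehat{\mathbb{Z}}, M) \;\cong\; M/(\textup{Frob}_v - 1)M, \]
which has the same cardinality as $M^{\textup{Frob}_v} = H^0(F_v, M)$ (the endomorphism $\textup{Frob}_v - 1$ of the finite group $M$ has equal-order kernel and cokernel); the analogous statement applies to $M^*$. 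Multiplying these gives $|H^1_{\textup{nr}}(F_v, M)| \cdot |H^1_{\textup{nr}}(F_v, M^*)| = |H^1(F_v, M)|$, and the non-degenerate pairing then forces the established inclusion to be an equality of orthogonal complements.
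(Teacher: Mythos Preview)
Your overall strategy is the standard one, and the paper itself simply cites Neukirch--Schmidt--Wingberg rather than reproducing any argument, so in that sense you have supplied more than the paper does. Two points deserve comment.

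First, the d\'evissage for non-degeneracy: the phrase ``argue by induction on $|M|$'' is not quite right, since in the exact sequence $0 \to M \to M' \to M'/M \to 0$ the quotient has order $|M|^{[L:F_v]-1} \geq |M|$, so no induction hypothesis is available for it. The standard argument instead exploits that every module is both a submodule and a quotient of an induced module, and bootstraps from the induced case via repeated applications of the four-lemma (or invokes the formalism of dualizing modules). This is a fixable imprecision.

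Second, and more seriously, your argument for the inclusion $H^1_{\textup{nr}}(F_v,M) \subseteq H^1_{\textup{nr}}(F_v,M^*)^\perp$ does not work as written. You correctly observe that $\textup{res}(\alpha \cup \beta) = 0$ in $H^2(F_v^{\textup{nr}},\boldsymbol\mu_2)$, but the claim that the kernel of this restriction map is identified with $H^2(\widehat{\mathbb{Z}},\boldsymbol\mu_2)$ via inflation--restriction is false: the five-term sequence only shows that the image of inflation lies inside the kernel of restriction, with equality requiring $H^1(F_v^{\textup{nr}},\boldsymbol\mu_2)=0$, which fails (a uniformizer is not a square in $F_v^{\textup{nr}}$). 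In fact $H^2(F_v^{\textup{nr}},\boldsymbol\mu_2)=0$ since $\textup{Br}(F_v^{\textup{nr}})=0$, so the kernel of restriction is all of $H^2(F_v,\boldsymbol\mu_2) \cong \tfrac{1}{2}\mathbb{Z}/\mathbb{Z}$ and your argument yields no information. The fix is to run the argument through inflation rather than restriction: since inertia acts trivially on $M$ and $M^*$, the unramified classes $\alpha,\beta$ are inflated from $H^1(\widehat{\mathbb{Z}},-)$, whence $\alpha \cup \beta = \textup{inf}(\alpha_0 \cup \beta_0)$ with $\alpha_0 \cup \beta_0 \in H^2(\widehat{\mathbb{Z}},\boldsymbol\mu_2) = 0$. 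Your subsequent cardinality count promoting the inclusion to equality is correct.
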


\begin{proof}
See \cite{MR2392026}*{Corollary 7.2.6} for non-archimedean $v$ and op. cit. Theorem 7.2.17 for archimedean $v$. The claim about the unramified subspaces is op. cit. Theorem 7.2.15.
\end{proof}

\begin{example} \label{hilbet symbol example}
Take $M=\boldsymbol \mu_2$, which is self-dual. For each place $v$ of $F$, Kummer theory gives a canonical isomorphism $H^1(F_v,\boldsymbol \mu_2)\cong F_v^\times/F_v^{\times 2}$ (and we have the corresponding isomorphism globally also). For any non-archimedean place $v\nmid 2$ of $F$  we have 
\[H^1_{\textup{nr}}(F_v,\boldsymbol \mu_2)= \mathcal{O}_{F_v}^\times/\mathcal{O}_{F_v}^{\times 2}\subseteq  F_v^\times/F_v^{\times 2}.\]
The local Tate pairing 
\[F_v^{\times}/F_v^{\times 2}\times F_v^{\times}/F_v^{\times 2}\longrightarrow \mathbb{Q}/\mathbb{Z}\]
is the  Hilbert symbol $(x,y)\mapsto (x,y)_v\in \{\pm 1\}\cong \frac{1}{2}\mathbb{Z}/\mathbb{Z}$. 
\end{example}

\subsection{Selmer structures}

\begin{defi}
A \textit{Selmer structure} $\mathcal{L}=\{\mathcal{L}_v\}_{v}$ for $M$ is a collection of subspaces 
\[\mathcal{L}_v\subseteq H^1(F_v,M)\]
 for each place $v$ of $F$, such that $\mathcal{L}_v=H^1_\textup{nr}(F_v,M)$ for all but finitely many places. The associated \textit{Selmer group} $\textup{Sel}_\mathcal{L}(F,M)$ is defined by the exactness of 
\[0\longrightarrow \textup{Sel}_\mathcal{L}(F,M)\longrightarrow H^1(F,M)\longrightarrow \prod_{v\textup{ place of }F}H^1(F_v,M)/\mathcal{L}_v.\]
It is a finite dimensional $\mathbb{F}_2$-vector space. 

For each place $v$ we write $\mathcal{L}_v^*$ for the orthogonal complement of $\mathcal{L}_v$ under the local Tate pairing, so that $\mathcal{L}_v^*$ is a subspace of $H^1(F_v,M^*)$. We   define the \textit{dual Selmer structure} $\mathcal{L}^*$ for $M^*$  by taking $\mathcal{L}^*=\{\mathcal{L}_v^*\}_v$. We refer to $\textup{Sel}_{\mathcal{L}^*}(F,M)$ as the \textit{dual Selmer group}. 
\end{defi}
 
\subsection{The  Greenberg--Wiles formula}

The following theorem  due to Greenberg and Wiles describes the difference in dimension between a Selmer group and its dual. 

\begin{theorem} \label{wiles greenberg formula}
Let $\mathcal{L}=\{\mathcal{L}_v\}_v$ be a Selmer structure for $M$. Then we have
\begin{eqnarray*}
 \dim \textup{Sel}_{\mathcal{L}}(F,M)-\dim \textup{Sel}_{\mathcal{L}^*}(F,M^*)\phantom{huge amounts of spaaaaaaaaaaaace}\\
\phantom{spaaaace}=~\dim M^{G_F}-\dim (M^*)^{G_F} +\sum_{v~\textup{place of }F}(\dim\mathcal{L}_v- \dim M^{G_{F_v}}).
\end{eqnarray*}
\end{theorem}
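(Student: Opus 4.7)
The plan is to derive the formula from Poitou-Tate global duality combined with Tate's local and global Euler-Poincaré characteristic formulas. The key input is the Cassels-Poitou-Tate exact sequence, a standard consequence of Poitou-Tate duality for finite Galois modules over number fields, which reads
\[0 \longrightarrow \textup{Sel}_{\mathcal{L}}(F,M) \longrightarrow H^1(F,M) \longrightarrow \bigoplus_v H^1(F_v,M)/\mathcal{L}_v \longrightarrow \textup{Sel}_{\mathcal{L}^*}(F,M^*)^\vee \longrightarrow \Sha^2(F,M) \longrightarrow 0,\]
where $\Sha^2(F,M) = \ker\bigl(H^2(F,M) \to \prod_v H^2(F_v,M)\bigr)$ and the connecting map is induced by the sum of local Tate pairings. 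The orthogonality of unramified subspaces under local duality, as recalled in the statement above, is precisely what ensures that only finitely many terms contribute non-trivially to the middle direct sum and is what encodes the duality between the Selmer group of $M$ and the dual Selmer group of $M^*$.

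Taking the alternating sum of $\mathbb{F}_2$-dimensions along this exact sequence yields
\[\dim \textup{Sel}_{\mathcal{L}}(F,M) - \dim \textup{Sel}_{\mathcal{L}^*}(F,M^*) = \dim H^1(F,M) - \sum_v \dim \bigl(H^1(F_v,M)/\mathcal{L}_v\bigr) - \dim \Sha^2(F,M).\]
I would then simplify each piece on the right by combining three ingredients: (i) local Tate duality, giving $\dim H^1(F_v,M) = \dim \mathcal{L}_v + \dim H^1(F_v,M)/\mathcal{L}_v$ and $\dim H^2(F_v,M) = \dim (M^*)^{G_{F_v}}$; (ii) the local Euler-Poincaré formula, which expresses $\dim H^1(F_v,M)$ in terms of $\dim H^0(F_v,M)$ and $\dim H^2(F_v,M)$ via a local Euler characteristic that vanishes at non-archimedean places away from $2$ and takes an explicit form at archimedean places and places above $2$; and (iii) Tate's global Euler-Poincaré formula, together with the Poitou-Tate identification of $\dim \Sha^2(F,M)$ in terms of $\dim (M^*)^{G_F}$ and contributions at infinity. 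Collectively these rewrite $\dim H^1(F,M)$ and each $\dim H^1(F_v,M)$ in terms of $\dim M^{G_{F_v}}$, $\dim (M^*)^{G_{F_v}}$, and $\dim \mathcal{L}_v$.

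The main obstacle is the bookkeeping: the local Euler characteristic defects at archimedean places and at places above $2$ must cancel exactly against the corresponding terms coming from the global Euler-Poincaré formula and from the $\Sha^2(F,M)$ contribution, so that only $\dim M^{G_F} - \dim (M^*)^{G_F} + \sum_v\bigl(\dim \mathcal{L}_v - \dim M^{G_{F_v}}\bigr)$ survives. For all but finitely many $v$ (those non-archimedean, coprime to $2$, at which $M$ is unramified and $\mathcal{L}_v = H^1_\textup{nr}(F_v,M)$), one has $\dim \mathcal{L}_v = \dim H^0(F_v,M) = \dim M^{G_{F_v}}$, so the local sum in the statement is in fact a finite sum. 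An alternative, slicker organization avoids explicitly invoking $\Sha^2$ altogether by computing the Euler characteristic of the two-term complex $H^1(F,M) \to \bigoplus_v H^1(F_v,M)/\mathcal{L}_v$ using local and global duality in tandem, so that only the clean right-hand side formula emerges at the end.
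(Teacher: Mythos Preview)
The paper does not actually prove this theorem: its proof consists entirely of citations to \cite{wiles1995modular}*{Proposition 1.6} and \cite{MR1638477}*{Theorem 2}, treating the Greenberg--Wiles formula as a known black box. Your proposal, by contrast, sketches the standard derivation of the formula from Poitou--Tate global duality together with the local and global Euler--Poincar\'e characteristic formulas; this is precisely the route taken in those references (Washington's exposition in particular follows this line), so in that sense your approach matches the literature the paper is invoking.

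Your outline is broadly correct, though one point deserves care: the five-term exact sequence you write down is most cleanly stated after passing to cohomology of $G_{F,S}$ for a suitable finite set $S$ (containing archimedean places, places above $2$, places where $M$ is ramified, and places where $\mathcal{L}_v \neq H^1_{\textup{nr}}(F_v,M)$), rather than full $G_F$-cohomology, and the identification of the final term involves the nine-term Poitou--Tate sequence rather than $\Sha^2$ directly. The bookkeeping you flag---cancellation of local Euler characteristic defects at archimedean places and at places above $2$ against the global formula---is exactly the substance of the computation, and you are right that it goes through to leave only the displayed right-hand side.
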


 \begin{proof}
This is \cite{wiles1995modular}*{Proposition 1.6}. See also  \cite{MR1638477}*{Theorem 2}.
 \end{proof}
 
 \begin{example}[The $2$-Selmer group of an elliptic curve] \label{selmer is max iso example}
Let $E/F$ be an elliptic curve.  For  each place $v$  of $F$ we have the Kummer image 
\[\SSS(E/F_v)\subseteq H^1(F_v,E[2])\]
defined in \Cref{subsec:Kummer}.   
The collection $\SSS=\{\SSS(E/F_v)\}_v$ defines a Selmer structure for $E[2]$. This is a consequence of the fact that,   for  a non-archimedean place $v\nmid 2$ of $F$ at which $E$ has good reduction, we have \[\SSS(E/F_v)=H^1_{\textup{nr}}(F_v,E[2]).\] 
Using the Weil pairing $(~,~)_{e_2}:E[2]\times E[2]\rightarrow \boldsymbol \mu_2$ to identify $E[2]$ with its dual, the resulting Selmer structure is self dual (see e.g. \cite{MR2833483}*{Proposition 4.10}). That is, each $\SSS(E/F_v)$ is a maximal isotropic subspace of $H^1(F_v,E[2])$ with respect to the local Tate pairing. We note  that \Cref{wiles greenberg formula} gives
\begin{equation} \label{sum of cokernel of 2 is trivial}
\sum_{v~\textup{place of }F}\left(\dim E(F_v)/2E(F_v)-\dim E(F_v)[2] \right)=0.
\end{equation}
One can also give an elementary proof of this by computing the local terms individually (see e.g. \cite{MR1370197}*{Proposition 3.9}).
\end{example}

\section{\texorpdfstring{$2$}{2}-Selmer groups over quadratic extensions} 
\label{sec:seloverquad}

For the rest of the paper we fix a quadratic extension $K/\mathbb{Q}$. Write $K=\mathbb{Q}(\sqrt{\theta})$ for a squarefree integer $\theta$, and write $G=\textup{Gal}(K/\mathbb{Q})$. Moreover we fix an elliptic curve $E/\mathbb{Q}$.  Note that at this point we make no assumption on the $2$-torsion of $E$, in later sections (\Cref{sec:main results} onwards) it will be necessary to reduce to the case of full $2$-torsion but we shall be clear when this restriction is made.  Denote by $\textup{Sel}^2(E/K)$ the $2$-Selmer group of $E/K$. The conjugation action of $G$ on $H^1(K,E[2])$ makes $\textup{Sel}^2(E/K)$  into an $\mathbb{F}_2[G]$-module.

The structure of $\textup{Sel}^2(E/K)$ has been studied by Kramer in \cite{MR597871}. In this section, since it will be useful for what follows, we give a  reinterpretation of part of this work in the language of Selmer structures (see also work of Mazur--Rubin \cites{MR2373150,MR2660452} for a similar perspective). The results in  this section can be adapted in a straightforward way to  general quadratic extensions of number fields (and this is the setting in which Kramer proves his results). However, we stick to quadratic extensions of $\mathbb{Q}$ since this is the setting in which all our applications are carried out.

As in \Cref{subsec:quadratic twists}, associated to the squarefree integer $\theta$ is the quadratic twist $E_\theta$, which comes equipped with the isomorphism $\psi=\psi_\theta$ from $E$ to $E_\theta$. Whilst this isomorphism is only defined over $K$, it restricts to an isomorphism of $G_\mathbb{Q}$-modules  from $E[2]$ to $E_\theta[2]$. We use this to identify $H^1(\mathbb{Q}_v,E[2])$ and $H^1(\mathbb{Q}_v,E_\theta[2])$ for each place of $v$, and identify the corresponding global cohomology groups similarly. In particular, for each place $v$ of $\mathbb{Q}$ we may view both the Kummer images $\SSS(E/\mathbb{Q}_v)$ and $\SSS(E_\theta/\mathbb{Q}_v)$ (cf. \Cref{subsec:Kummer}) as subgroups of $H^1(\mathbb{Q}_v,E[2])$. Similarly, we view both $\textup{Sel}^2(E/\mathbb{Q})$ and $\textup{Sel}^2(E_\theta/\mathbb{Q})$ as subgroups of $H^1(\mathbb{Q},E[2])$.

\subsection{Selmer structures associated to $E/K$}

We begin by defining two Selmer structures for $E[2]$ over $\mathbb{Q}$, each of which will capture a part of $\textup{Sel}^2(E/K)$. 

\begin{defi} \label{the selmer structure f}
Define the Selmer structure $\FFF$ for the $G_\mathbb{Q}$-module $E[2]$  by setting, for each place $v$ of $\mathbb{Q}$, 
\[\FFF_{v}=\FFF(E/\mathbb{Q}_v):=\textup{res}_{K_w/\mathbb{Q}_v}^{-1}(\SSS(E/K_w))\leq H^1(\mathbb{Q}_v,E[2])\]
where $w$ is any choice of place of $K$ extending $v$ (the definition does not depend on this choice). Let $\textup{Sel}_\FFF(\mathbb{Q},E[2])\leq H^1(\mathbb{Q},E[2])$ denote the resulting Selmer group. We further define the Selmer structure $\CCC$ for $E[2]$ as the dual of $\FFF$, and denote the local conditions by $\CCC(E/\mathbb{Q}_v)$. We denote the resulting Selmer group $\textup{Sel}_\CCC(\mathbb{Q},E[2])$. 
\end{defi}

\begin{lemma} \label{inverse image of restriction}
We have $\textup{Sel}_\FFF(\mathbb{Q},E[2])=\textup{res}_{K/\mathbb{Q}}^{-1}\left(\textup{Sel}^2(E/K)\right)$.
\end{lemma}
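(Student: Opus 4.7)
The plan is to prove the equality by unravelling both sides directly from the definitions, using nothing more than the compatibility of restriction maps in Galois cohomology. Concretely, I will verify that for a fixed class $c \in H^1(\mathbb{Q},E[2])$, the condition ``$\textup{res}_v(c)\in \FFF_v$ for every place $v$ of $\mathbb{Q}$'' is equivalent to the condition ``$\textup{res}_w(\textup{res}_{K/\mathbb{Q}}(c))\in \SSS(E/K_w)$ for every place $w$ of $K$''.

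The first step is to recast membership in $\textup{Sel}_\FFF(\mathbb{Q},E[2])$ using the definition $\FFF_v = \textup{res}_{K_w/\mathbb{Q}_v}^{-1}(\SSS(E/K_w))$: namely $c$ lies in $\textup{Sel}_\FFF(\mathbb{Q},E[2])$ if and only if, for every place $v$ of $\mathbb{Q}$ and some (equivalently, every) place $w\mid v$, one has
\[
\textup{res}_{K_w/\mathbb{Q}_v}\bigl(\textup{res}_v(c)\bigr)\in \SSS(E/K_w).
\]

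The second step is to observe the standard commutative square of restrictions,
\[
\textup{res}_{K_w/\mathbb{Q}_v}\circ \textup{res}_v \;=\; \textup{res}_w\circ \textup{res}_{K/\mathbb{Q}},
\]
which holds because, under the fixed embeddings $\bar{\mathbb{Q}}\hookrightarrow \bar{\mathbb{Q}}_v$ chosen in \S2, one has the chain of inclusions $G_{K_w}\subseteq G_{\mathbb{Q}_v}\subseteq G_\mathbb{Q}$ together with $G_{K_w} = G_K\cap G_{\mathbb{Q}_v}$, so both compositions agree with restriction along the subgroup $G_{K_w}\subseteq G_\mathbb{Q}$. Substituting this into the condition from the first step shows that $c\in \textup{Sel}_\FFF(\mathbb{Q},E[2])$ if and only if, for every $v$ and every $w\mid v$, $\textup{res}_w(\textup{res}_{K/\mathbb{Q}}(c))\in \SSS(E/K_w)$. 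Since every place of $K$ lies above some place of $\mathbb{Q}$, this is exactly the assertion that $\textup{res}_{K/\mathbb{Q}}(c)\in \textup{Sel}^2(E/K)$, which completes the proof.

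The only point requiring a brief justification is the claim that the local condition defining $\FFF_v$ is independent of the choice of place $w\mid v$; this is essentially noted in \Cref{the selmer structure f}, and follows from the fact that any two such $w,w'$ are $G_\mathbb{Q}$-conjugate, with the conjugation isomorphism $K_w\cong K_{w'}$ carrying $\SSS(E/K_w)$ to $\SSS(E/K_{w'})$ compatibly with the restrictions from $H^1(\mathbb{Q}_v,E[2])$. There is no serious obstacle here: the lemma is really a formal unwinding of definitions, and its role is to legitimise passing between the local-at-$\mathbb{Q}$ and local-at-$K$ perspectives in later arguments.
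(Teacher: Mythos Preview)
Your proof is correct and is precisely the argument the paper has in mind: its one-line proof reads ``This follows from the compatibility of local and global restriction maps,'' and you have simply spelled out that compatibility via the commutative square $\textup{res}_{K_w/\mathbb{Q}_v}\circ \textup{res}_v = \textup{res}_w\circ \textup{res}_{K/\mathbb{Q}}$.
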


\begin{proof}
This follows from the compatibility of local and global restriction maps. 
\end{proof}

Recall the definition of the local norm map from \Cref{def:local_norm_map}.

\begin{lemma} \label{the properties of the selmer structure C}
The following properties hold for the Selmer structure $\CCC$.
\begin{itemize}
\item[(i)] For each place $v$ of $\mathbb{Q}$ we have \[\CCC(E/\mathbb{Q}_v)=\textup{cor}_{K_w/\mathbb{Q}_v}(\SSS(E/K_w))\leq H^1(\mathbb{Q}_v,E[2]),\]
where $w$ is any choice of place of $K$ extending $v$.
\item[(ii)] For each place $v$ of $\mathbb{Q}$ we moreover have
\[\CCC(E/\mathbb{Q}_v)=\delta_v(N_{K_w/\mathbb{Q}_v}E(K_w))=\SSS(E/\mathbb{Q}_v)\cap \SSS(E_\theta/\mathbb{Q}_v),\]
where $\delta_v:E(\mathbb{Q}_v)/2E(\mathbb{Q}_v)\hookrightarrow H^1(\mathbb{Q}_v,E[2])$ is the local Kummer map \Cref{local_kummer_map} and the intersection takes place in $H^1(\mathbb{Q}_v,E[2])$.
\item[(iii)] Globally we have $\textup{Sel}_\CCC(\mathbb{Q},E[2])=\textup{Sel}^2(E/\mathbb{Q})\cap \textup{Sel}^2(E_\theta/\mathbb{Q})$.  Moreover,  we have
\[\textup{cor}_{K/\mathbb{Q}}\left(\textup{Sel}^2(E/K)\right)\subseteq \textup{Sel}_{\CCC}(\mathbb{Q},E[2]).\]
 \end{itemize}
 \end{lemma}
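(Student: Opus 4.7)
The plan is to handle the three parts in order. Part (i) is essentially a general fact about dual Selmer structures. Specifically, using the projection formula
\[\langle \res_{K_w/\mathbb{Q}_v}(x),y\rangle_w=\langle x,\cores_{K_w/\mathbb{Q}_v}(y)\rangle_v\qquad (x\in H^1(\mathbb{Q}_v,E[2]),~y\in H^1(K_w,E[2])),\]
together with the self-duality of $\SSS(E/K_w)$ recalled in \Cref{selmer is max iso example}, I would verify the identity $(\res^{-1}L)^\perp=\cores(L^\perp)$ applied to $L=\SSS(E/K_w)$. The inclusion $\cores(\SSS(E/K_w))\subseteq \CCC(E/\mathbb{Q}_v)$ is immediate from the projection formula, and equality then follows by comparing $\mathbb{F}_2$-dimensions using the standard cokernel-of-norm exact sequence associated to the cyclic extension $K_w/\mathbb{Q}_v$. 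Places $v$ splitting in $K$ are handled trivially since then $K_w=\mathbb{Q}_v$.

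Part (ii) splits into two equalities. The first, $\cores_{K_w/\mathbb{Q}_v}(\SSS(E/K_w))=\delta_v(N_{K_w/\mathbb{Q}_v}E(K_w))$, follows from the commutative square relating $\delta_w$ and $\delta_v$ via $N_{K_w/\mathbb{Q}_v}$ on Mordell--Weil groups and $\cores_{K_w/\mathbb{Q}_v}$ on cohomology, which is the standard functoriality of the connecting homomorphism attached to the Kummer sequence \eqref{elliptic curve kummer sequence}. For the second equality, the inclusion $\delta_v(N_{K_w/\mathbb{Q}_v}E(K_w))\subseteq \SSS(E/\mathbb{Q}_v)\cap \SSS(E_\theta/\mathbb{Q}_v)$ uses the twist isomorphism $\psi=\psi_\theta\colon E\to E_\theta$ defined over $K_w$: for $Q\in E(K_w)$ and $\sigma\in G_{\mathbb{Q}_v}$ lifting the non-trivial element of $\textup{Gal}(K_w/\mathbb{Q}_v)$ (when $v$ is inert or ramified; split $v$ is tautological), the twist relation $\sigma\psi=-\psi\sigma$ forces $\psi(Q-\sigma Q)\in E_\theta(\mathbb{Q}_v)$, and, after identifying $E[2]$ with $E_\theta[2]$ via $\psi$, that $\delta_v^{E_\theta}(\psi(Q-\sigma Q))$ agrees with $\delta_v^E(N_{K_w/\mathbb{Q}_v}(Q))$. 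For the reverse inclusion, given $\xi=\delta_v^E(P_1)=\delta_v^{E_\theta}(P_2)$, I would choose half-points $R_1\in E(\bar{\mathbb{Q}}_v)$ and $R_2\in E_\theta(\bar{\mathbb{Q}}_v)$ of $P_1$ and $P_2$, and form a candidate $R=R_1+\psi^{-1}(R_2)\in E(\bar{\mathbb{Q}}_v)$; the twist relation then forces $R$ to descend, after a suitable adjustment by a $2$-torsion element chosen to kill a coboundary ambiguity between the two cocycles representing $\res_{K_w/\mathbb{Q}_v}(\xi)$, to a point of $E(K_w)$ whose norm equals $P_1$ modulo $2E(\mathbb{Q}_v)$.

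Part (iii) is then essentially formal: the identity $\textup{Sel}_\CCC(\mathbb{Q},E[2])=\textup{Sel}^2(E/\mathbb{Q})\cap \textup{Sel}^2(E_\theta/\mathbb{Q})$ follows by applying (ii) place by place, since both sides are cut out inside $H^1(\mathbb{Q},E[2])$ by the same local conditions $\SSS(E/\mathbb{Q}_v)\cap \SSS(E_\theta/\mathbb{Q}_v)$ at every $v$. The inclusion $\cores_{K/\mathbb{Q}}(\textup{Sel}^2(E/K))\subseteq \textup{Sel}_\CCC(\mathbb{Q},E[2])$ is a consequence of the standard compatibility $\res_v\circ\cores_{K/\mathbb{Q}}=\sum_{w\mid v}\cores_{K_w/\mathbb{Q}_v}\circ\res_w$: for $\xi\in \textup{Sel}^2(E/K)$ and $w\mid v$, $\res_w(\xi)\in \SSS(E/K_w)$, whence $\cores_{K_w/\mathbb{Q}_v}(\res_w(\xi))\in \CCC(E/\mathbb{Q}_v)$ by (i), and the sum still lies in $\CCC(E/\mathbb{Q}_v)$. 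The main obstacle will be carefully managing the $2$-torsion ambiguities in the reverse inclusion of the second equality in (ii); this is the heart of Kramer's local computation, and an attractive alternative would be to argue by dimension count using the Greenberg--Wiles formula \Cref{wiles greenberg formula} together with the easy inclusion to force equality.
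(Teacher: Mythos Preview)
Your proposal is essentially correct and follows the same overall architecture as the paper, but there are two points where the paper's argument is cleaner and worth noting.

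For (i), you propose to get the reverse inclusion by a dimension count via a cokernel-of-norm sequence. The paper avoids this entirely by applying the adjointness (projection formula) a second time: if $x\in\cores_{K_w/\mathbb{Q}_v}(\SSS(E/K_w))^*$, then $\langle \res_{K_w/\mathbb{Q}_v}(x),y\rangle_w=\langle x,\cores_{K_w/\mathbb{Q}_v}(y)\rangle_v=0$ for all $y\in\SSS(E/K_w)$, so $\res_{K_w/\mathbb{Q}_v}(x)\in\SSS(E/K_w)^*=\SSS(E/K_w)$, whence $x\in\FFF_v$. Taking orthogonal complements gives $\CCC_v=\FFF_v^*\subseteq\cores_{K_w/\mathbb{Q}_v}(\SSS(E/K_w))$. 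This is symmetric and requires no bookkeeping of dimensions or of the kernel/cokernel of restriction.

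For the second equality in (ii), you sketch a direct cocycle-level construction, which is indeed Kramer's local computation; the paper simply cites \cite{MR597871}*{Proposition 7} for this. Your sketch is on the right track, though the ``adjustment by a $2$-torsion element'' step does need care. Note however that your proposed alternative of using the Greenberg--Wiles formula to force equality by dimension count does not apply here: that formula is global, comparing a Selmer group with its dual, whereas the statement you need is purely local at a single place $v$.

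Part (iii) is handled identically in both approaches.
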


\begin{proof}
(i): That $\textup{cor}_{K_w/\mathbb{Q}_v}(\SSS(E/K_w))$ and $\textup{res}_{K_w/\mathbb{Q}_v}^{-1}(\SSS(E/K_w))$ are orthogonal complements under the local Tate pairing is noted by Kramer in the paragraph following Equation 10 in \cite{MR597871}. Specifically, it follows from \cite{MR0219512}*{Proposition 9} and  \cite{MR2392026}*{Corollary 7.1.4} that $\textup{res}_{K_w/\mathbb{Q}_v}$ and $\textup{cor}_{K_w/\mathbb{Q}_v}$ are adjoints with respect to the local Tate pairings. It  follows  that we have inclusions 
\[\textup{cor}_{K_w/\mathbb{Q}_v}(\SSS(E/K_w))\subseteq\FFF_v^*\]
 and 
 \[\textup{res}_{K_w/\mathbb{Q}_v}\left(\textup{cor}_{K_w/\mathbb{Q}_v}(\SSS(E/K_w))^*\right)\subseteq \SSS(E/K_w)^*.\] 
 Since $\SSS(E/K_w)$ is its own orthogonal complement, the result follows.

(ii): The first equality follows from the fact that the coboundary maps arising from the respective Kummer sequences \Cref{elliptic curve kummer sequence} over $K_w$ and $\mathbb{Q}_v$ commute with corestriction. The second equality is \cite{MR597871}*{Proposition 7}.

(iii): The claim that $\textup{Sel}_\CCC(\mathbb{Q},E[2])=\textup{Sel}^2(E/\mathbb{Q})\cap \textup{Sel}^2(E_\theta/\mathbb{Q})$ is a formal consequence of (ii). The inclusion
\[\textup{cor}_{K/\mathbb{Q}}\left(\textup{Sel}^2(E/K)\right)\subseteq \textup{Sel}_{\CCC}(\mathbb{Q},E[2])\] follows from (i) and compatibility of the local and global corestriction maps. 
\end{proof}

\begin{remark} \label{f is sum remark}
Let $v$ be a place of $\mathbb{Q}$. Since the Selmer structure $\FFF$ is dual to $\CCC$, it follows formally from \Cref{the properties of the selmer structure C} and the fact that $\SSS(E/\mathbb{Q}_v)$ is its own orthogonal complement, that we have 
\[\FFF(E/\mathbb{Q}_v)=\SSS(E/\mathbb{Q}_v)+\SSS(E_\theta/\mathbb{Q}_v),\]
where the sum is taken inside $H^1(\mathbb{Q}_v,E[2])$.
\end{remark}

We may use \Cref{wiles greenberg formula} to determine the difference between the dimensions of the Selmer groups $\textup{Sel}_\FFF(\mathbb{Q},E[2])$ and  $\textup{Sel}_\CCC(\mathbb{Q},E[2])$.

\begin{lemma} \label{wiles greenberg application}
We have 
\[\dim \textup{Sel}_\FFF(\mathbb{Q},E[2])- \dim \textup{Sel}_\CCC(\mathbb{Q},E[2])=\sum_{v\textup{ place of }\mathbb{Q}} \dim E(\mathbb{Q}_v)/N_{K_w/\mathbb{Q}_v}E(K_w). \]
\end{lemma}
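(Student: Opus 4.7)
My approach is to apply the Greenberg--Wiles formula (\Cref{wiles greenberg formula}) to the Selmer structure $\FFF$ on $M = E[2]$ with dual $\FFF^* = \CCC$. The Weil pairing identifies $E[2]$ canonically with $E[2]^*$ as $G_\mathbb{Q}$-modules (cf.\ \Cref{selmer is max iso example}), so the global invariant terms $\dim M^{G_\mathbb{Q}} - \dim (M^*)^{G_\mathbb{Q}}$ cancel, and using $M^{G_{\mathbb{Q}_v}} = E(\mathbb{Q}_v)[2]$ the formula reduces to
\[
\dim \textup{Sel}_\FFF(\mathbb{Q}, E[2]) - \dim \textup{Sel}_\CCC(\mathbb{Q}, E[2]) = \sum_{v} \bigl(\dim \FFF_v - \dim E(\mathbb{Q}_v)[2]\bigr).
\]

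The bulk of the work is then the local analysis. For each $v$, fix $w\mid v$ and write $G_v = \textup{Gal}(K_w/\mathbb{Q}_v)$. Unwinding $\FFF_v = \textup{res}_{K_w/\mathbb{Q}_v}^{-1}(\SSS(E/K_w))$ via the commutative diagram of Kummer sequences over $\mathbb{Q}_v$ and $K_w$, I would identify $\FFF_v$ with $\ker\bigl(H^1(\mathbb{Q}_v, E[2]) \to H^1(K_w, E)\bigr)$ and, by a short diagram chase, produce a short exact sequence
\[
0 \longrightarrow \SSS(E/\mathbb{Q}_v) \longrightarrow \FFF_v \longrightarrow \ker\bigl(H^1(\mathbb{Q}_v, E) \to H^1(K_w, E)\bigr) \longrightarrow 0.
\]
Inflation--restriction identifies the rightmost kernel with $H^1(G_v, E(K_w))$, which is automatically killed by $2$ as $|G_v|\le 2$. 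At this point I would invoke the classical fact, due essentially to Tate and exploited throughout Kramer \cite{MR597871}, that $E(K_w)$ has trivial Herbrand quotient as a $G_v$-module, yielding
\[
|H^1(G_v, E(K_w))| \;=\; |\hat H^0(G_v, E(K_w))| \;=\; |E(\mathbb{Q}_v)/N_{K_w/\mathbb{Q}_v} E(K_w)|,
\]
and hence, at the level of $\mathbb{F}_2$-dimensions, $\dim \FFF_v = \dim E(\mathbb{Q}_v)/2E(\mathbb{Q}_v) + \dim E(\mathbb{Q}_v)/N_{K_w/\mathbb{Q}_v}E(K_w)$.

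Substituting into the Greenberg--Wiles identity, the summand $\dim \FFF_v - \dim E(\mathbb{Q}_v)[2]$ splits as $\bigl(\dim E(\mathbb{Q}_v)/2E(\mathbb{Q}_v) - \dim E(\mathbb{Q}_v)[2]\bigr) + \dim E(\mathbb{Q}_v)/N_{K_w/\mathbb{Q}_v}E(K_w)$. The first term sums to $0$ over all $v$ by identity \eqref{sum of cokernel of 2 is trivial} recorded in \Cref{selmer is max iso example}, leaving exactly the claimed local sum.

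The only nontrivial point in this plan is the local equality $\dim H^1(G_v, E(K_w)) = \dim E(\mathbb{Q}_v)/N_{K_w/\mathbb{Q}_v}E(K_w)$, i.e.\ the vanishing of the Herbrand quotient of $E(K_w)$. At non-archimedean $v$ this follows from the fact that an open subgroup of $E(K_w)$ is $G_v$-cohomologically trivial (via the formal group logarithm and normal basis theorem), while at archimedean $v$ it is a direct hands-on calculation on $E(\mathbb{R})$ or $E(\mathbb{C})$. Both are already carried out in Kramer \cite{MR597871}, and I would simply cite them rather than redo the computation.
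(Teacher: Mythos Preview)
Your proof is correct and follows the same global strategy as the paper: apply the Greenberg--Wiles formula to $\FFF$ with dual $\CCC$, reduce to computing $\dim\FFF_v$ locally, show $\dim\FFF_v = \dim E(\mathbb{Q}_v)/2E(\mathbb{Q}_v) + \dim E(\mathbb{Q}_v)/N_{K_w/\mathbb{Q}_v}E(K_w)$, and then kill the residual sum via \eqref{sum of cokernel of 2 is trivial}.

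The difference lies in the local computation of $\dim\FFF_v$. The paper works on the dual side: since $\FFF_v$ and $\CCC_v$ are orthogonal complements and $\SSS(E/\mathbb{Q}_v)$ is self-orthogonal, one has $\dim\FFF_v = 2\dim E(\mathbb{Q}_v)/2E(\mathbb{Q}_v) - \dim\CCC_v$, and then \Cref{the properties of the selmer structure C}(ii) gives $\CCC_v = \delta_v(N_{K_w/\mathbb{Q}_v}E(K_w))$ directly. Your route instead attacks $\FFF_v$ head-on via the exact sequence $0\to\SSS(E/\mathbb{Q}_v)\to\FFF_v\to H^1(G_v,E(K_w))\to 0$ and then invokes triviality of the Herbrand quotient of $E(K_w)$. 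Both inputs are in Kramer \cite{MR597871}, but they are different facts: the paper's argument is entirely internal to the duality framework already set up in \Cref{the properties of the selmer structure C}, while yours imports an additional (standard) cohomological computation not otherwise used in the paper. Either way the local identity is the same, and the rest of the argument is identical.
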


\begin{proof} 
Since for each place $v$ of $\mathbb{Q}$, the groups $\CCC(E/\mathbb{Q}_v)$ and $\FFF(E/\mathbb{Q}_v)$ are orthogonal complements under the local Tate pairing, we have 
\[\dim\FFF(E/\mathbb{Q}_v)=\dim H^1(\mathbb{Q}_v,E[2])-\dim \CCC(E/\mathbb{Q}_v).\]
Moreover, since $\SSS(E/\mathbb{Q}_v)$ is its own orthogonal complement we have \[\dim H^1(\mathbb{Q}_v,E[2])=2\dim E(\mathbb{Q}_v)/2E(\mathbb{Q}_v).\] Along with \Cref{the properties of the selmer structure C}(ii) this gives
\begin{eqnarray*}
\dim \FFF_v&=& 2\dim E(\mathbb{Q}_v)/2E(\mathbb{Q}_v)-\dim N_{K_w/\mathbb{Q}_v}E(K_w)/2E(\mathbb{Q}_v)\\ &=& \dim E(\mathbb{Q}_v)/2E(\mathbb{Q}_v)+ \dim E(\mathbb{Q}_v)/N_{K_w/\mathbb{Q}_v}E(K_w).
\end{eqnarray*}
\Cref{wiles greenberg formula} then gives
\begin{eqnarray*}
\dim \textup{Sel}_{\FFF}(\mathbb{Q},E[2])- \dim \textup{Sel}_\CCC(\mathbb{Q},E[2])&=&\sum_{v\textup{ place of }\mathbb{Q}} \dim E(\mathbb{Q}_v)/N_{K_w/\mathbb{Q}_v}E(K_w) 
\end{eqnarray*}
\[\phantom{move_over_the_page_some_moooreeeee}+\sum_{v\textup{ place of }\mathbb{Q}} \left(\dim E(\mathbb{Q}_v)/2E(\mathbb{Q}_v)- \dim E(\mathbb{Q}_v)[2]\right).\]
and the result follows from \Cref{sum of cokernel of 2 is trivial}.
\end{proof}

\subsection{The $2$-Selmer group of $E/K$}

We now apply the results above to study the $2$-Selmer group of $E/K$. 

 \begin{lemma}{\textup{(}cf. \cite{MR597871}*{Lemma 3}\textup{)}.}  \label{prop:ResCoresExactSequence}  
We have an exact sequence
\begin{equation} \label{res cores sequence}
0\longrightarrow H^1\left(K/\mathbb{Q},E(K)[2] \right)\stackrel{\textup{inf}}{\longrightarrow}\textup{Sel}_\FFF(\mathbb{Q},E[2])\stackrel{\textup{res}_{K/\mathbb{Q}}}{\longrightarrow}\textup{Sel}^2(E/K)\stackrel{\textup{cor}_{K/\mathbb{Q}}}{\longrightarrow} \textup{Sel}_\CCC(\mathbb{Q},E[2]).
\end{equation}
 \end{lemma}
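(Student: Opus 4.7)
The plan is to derive this exact sequence by combining the inflation-restriction sequence for $(G_\mathbb{Q}, G_K)$ with the characterisations of $\textup{Sel}_\FFF$ and $\textup{Sel}_\CCC$ established in the preceding lemmas. Since the Selmer-theoretic content is already packaged in \Cref{inverse image of restriction} and \Cref{the properties of the selmer structure C}, the remaining work is a piece of cohomological bookkeeping essentially present in Kramer's \cite{MR597871}*{Lemma 3}.

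First I would invoke the standard inflation-restriction sequence
\[
0 \longrightarrow H^1(K/\mathbb{Q}, E(K)[2]) \stackrel{\textup{inf}}{\longrightarrow} H^1(\mathbb{Q}, E[2]) \stackrel{\textup{res}_{K/\mathbb{Q}}}{\longrightarrow} H^1(K, E[2]).
\]
Combined with \Cref{inverse image of restriction}, which identifies $\textup{Sel}_\FFF(\mathbb{Q}, E[2])$ with the $\textup{res}_{K/\mathbb{Q}}$-preimage of $\textup{Sel}^2(E/K)$, this immediately gives exactness at the first two terms: any class in the image of $\textup{inf}$ has trivial restriction and thus lies in $\textup{Sel}_\FFF(\mathbb{Q},E[2])$, and that image is exactly the kernel of $\textup{res}_{K/\mathbb{Q}}$ restricted to $\textup{Sel}_\FFF$. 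That $\textup{cor}_{K/\mathbb{Q}}$ sends $\textup{Sel}^2(E/K)$ into $\textup{Sel}_\CCC(\mathbb{Q}, E[2])$ is precisely \Cref{the properties of the selmer structure C}(iii), and since $\textup{cor}_{K/\mathbb{Q}} \circ \textup{res}_{K/\mathbb{Q}}$ equals multiplication by $[K:\mathbb{Q}]=2$, which kills the 2-torsion group $H^1(\mathbb{Q}, E[2])$, the stated composition is zero.

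The main obstacle is establishing exactness at $\textup{Sel}^2(E/K)$. Given $c \in \textup{Sel}^2(E/K)$ with $\textup{cor}_{K/\mathbb{Q}}(c) = 0$, any preimage of $c$ under $\textup{res}_{K/\mathbb{Q}}: H^1(\mathbb{Q}, E[2]) \to H^1(K, E[2])$ automatically lies in $\textup{Sel}_\FFF(\mathbb{Q}, E[2])$ by \Cref{inverse image of restriction}, so it suffices to produce such a preimage. Extending inflation-restriction to its five-term form,
\[
H^1(\mathbb{Q}, E[2]) \stackrel{\textup{res}_{K/\mathbb{Q}}}{\longrightarrow} H^1(K, E[2])^G \stackrel{\partial}{\longrightarrow} H^2(K/\mathbb{Q}, E(K)[2]),
\]
the task reduces to showing $\partial(c) = 0$. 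First, the identity $\textup{res}_{K/\mathbb{Q}} \circ \textup{cor}_{K/\mathbb{Q}} = 1+\sigma$ together with the 2-torsion identity $-c=c$ forces $c$ to be $G$-invariant in the first place. Then, exploiting that $G$ is cyclic of order $2$ so that $H^2(K/\mathbb{Q}, E(K)[2]) \cong E(\mathbb{Q})[2]/N_{K/\mathbb{Q}} E(K)[2]$, one computes $\partial(c)$ on cocycle representatives and identifies it, under this isomorphism, with a class controlled by $\textup{cor}_{K/\mathbb{Q}}(c)$, hence vanishing by hypothesis. This last cocycle manipulation is the technical heart of the argument, and is carried out in essentially the same form in the proof of \cite{MR597871}*{Lemma 3}, whence the parenthetical attribution in the statement.
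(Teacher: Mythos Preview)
Your argument is correct, but the paper takes a different and somewhat slicker route to the key step of exactness at $\textup{Sel}^2(E/K)$. Rather than invoking the five-term inflation--restriction sequence and then relating the transgression $\partial(c)$ to $\textup{cor}_{K/\mathbb{Q}}(c)$ via a cocycle computation (which you defer to Kramer), the paper establishes directly that
\[
H^1(\mathbb{Q},E[2]) \stackrel{\textup{res}}{\longrightarrow} H^1(K,E[2]) \stackrel{\textup{cor}}{\longrightarrow} H^1(\mathbb{Q},E[2])
\]
is exact. This is done by tensoring the augmentation sequence $0 \to \mathbb{F}_2 \to \mathbb{F}_2[G] \to \mathbb{F}_2 \to 0$ with $E[2]$, taking $G_\mathbb{Q}$-cohomology, and using Shapiro's lemma to identify $H^1(\mathbb{Q}, E[2]\otimes_{\mathbb{F}_2}\mathbb{F}_2[G])$ with $H^1(K, E[2])$; the induced maps are then restriction and corestriction. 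Once this is in hand, the result follows from inflation--restriction together with \Cref{inverse image of restriction} and \Cref{the properties of the selmer structure C}(iii), exactly as in your first paragraph. The advantage of the paper's approach is that it is entirely conceptual and requires no cocycle manipulation; your approach has the virtue of being more explicit about how the vanishing of $\textup{cor}_{K/\mathbb{Q}}(c)$ feeds into the obstruction, at the cost of that final computation you leave to the reference.
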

 
 \begin{proof}
 We first claim that the sequence
 \[H^1(\mathbb{Q},E[2])\stackrel{\textup{res}}{\longrightarrow}H^1(K,E[2])\stackrel{\textup{cor}}{\longrightarrow}H^1(\mathbb{Q},E[2])\]
 is exact. To see this, consider the exact sequence of $G_\mathbb{Q}$-modules
 	\[0\longrightarrow \mathbb{F}_2\longrightarrow \mathbb{F}_2[G]\stackrel{\epsilon}{\longrightarrow}\mathbb{F}_2\longrightarrow 0,\] 
where $\epsilon$ is the augmentation map (sending $\sum_{g\in G}\lambda_g g$ to $\sum_{g\in G}\lambda$) and $G_\mathbb{Q}$ acts on $G$ via the quotient map $G_\mathbb{Q}\twoheadrightarrow G$.  Taking the tensor product over $\mathbb{F}_2$  with $E[2]$, and then taking Galois cohomology over $\mathbb{Q}$, gives an exact sequence of $G_\mathbb{Q}$-modules
\[H^1(\QQ, E[2])\longrightarrow H^1(\QQ, E[2]\otimes_{\mathbb{F}_2} \F_2[G])\longrightarrow H^1(\QQ,E[2]).\]
Using Shapiro's Lemma to identify $H^1(\mathbb{Q},E[2]\otimes_{\mathbb{F}_2} \mathbb{F}_2[G])$ with $H^1(K,E[2])$ yields the sought exact sequence.
 
 Having shown the claim, the result now follows  by  combining the inflation-restriction exact sequence with \Cref{inverse image of restriction} and \Cref{the properties of the selmer structure C}(iii).
 \end{proof}
 
\begin{cor} \label{what happens when selc vanishes}

If  $\textup{Sel}_\CCC(\mathbb{Q},E[2])=0$, then all of the following hold.
\begin{itemize}
\item[(i)] There is a short exact sequence
 \[0\longrightarrow H^1(K/\mathbb{Q},E(K)[2] )\stackrel{\textup{inf}}{\longrightarrow}\textup{Sel}_\FFF(\mathbb{Q},E[2])\stackrel{\textup{res}_{K/\mathbb{Q}}}{\longrightarrow}\textup{Sel}^2(E/K)\longrightarrow 0,\]
 where the first map is inflation.
\item[(ii)] We have
\[\dim \textup{Sel}^2(E/K) =- \dim \left(\frac{E(\mathbb{Q})[2]}{N_{K/\mathbb{Q}}(E(K)[2])} \right)+ \sum_{v~\textup{place of }\mathbb{Q}}\dim E(\mathbb{Q}_v)/N_{K_w/\mathbb{Q}_v}E(K_w).\]
\item[(iii)] The $G$-action on $\textup{Sel}^2(E/K)$ is trivial.
\end{itemize}
\end{cor}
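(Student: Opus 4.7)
The proof will be a short sequence of deductions from the exact sequence of \Cref{prop:ResCoresExactSequence} combined with \Cref{wiles greenberg application}, so the plan is organised around the three parts in order.

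For (i), the plan is to simply observe that under the hypothesis $\textup{Sel}_\CCC(\mathbb{Q},E[2])=0$, the rightmost term in the four-term exact sequence \eqref{res cores sequence} vanishes, so that $\textup{res}_{K/\mathbb{Q}}\colon\textup{Sel}_\FFF(\mathbb{Q},E[2])\to\textup{Sel}^2(E/K)$ is surjective with kernel $H^1(K/\mathbb{Q},E(K)[2])$.

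For (ii), I would take dimensions in the short exact sequence from (i) to get
\[\dim\textup{Sel}^2(E/K)=\dim\textup{Sel}_\FFF(\mathbb{Q},E[2])-\dim H^1(K/\mathbb{Q},E(K)[2]).\]
The first summand is given by \Cref{wiles greenberg application} (using again that $\textup{Sel}_\CCC(\mathbb{Q},E[2])=0$): it equals $\sum_v\dim E(\mathbb{Q}_v)/N_{K_w/\mathbb{Q}_v}E(K_w)$. For the second summand, I would use the standard description of cohomology of the cyclic group $G$ of order $2$: since $M=E(K)[2]$ is $2$-torsion, the operators $\sigma-1$ and $\sigma+1=N_{K/\mathbb{Q}}$ coincide on $M$, so Tate periodicity gives
\[H^1(K/\mathbb{Q},E(K)[2])\cong \ker(N_{K/\mathbb{Q}})/\textup{Im}(N_{K/\mathbb{Q}})=E(\mathbb{Q})[2]/N_{K/\mathbb{Q}}E(K)[2],\]
where $\ker(N_{K/\mathbb{Q}})=M^G=E(\mathbb{Q})[2]$ because for $P\in M$ the relation $\sigma P=-P=P$ is forced. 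Combining these identifications yields the formula in (ii).

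For (iii), the plan is to exploit the general fact that the image of $\textup{res}_{K/\mathbb{Q}}\colon H^1(\mathbb{Q},E[2])\to H^1(K,E[2])$ is contained in the $G$-invariants $H^1(K,E[2])^G$ (inner automorphisms act trivially on cohomology). In particular this image lands in $\textup{Sel}^2(E/K)^G$. But by (i) this restriction map is surjective onto $\textup{Sel}^2(E/K)$, so $\textup{Sel}^2(E/K)=\textup{Sel}^2(E/K)^G$, which is exactly the triviality of the $G$-action.

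The main obstacle, if any, is really just bookkeeping in part (ii): identifying $H^1(K/\mathbb{Q},E(K)[2])$ in the stated form requires the (easy) Tate cohomology computation for cyclic groups acting on $2$-torsion modules, where one has to be careful that $\sigma-1$ and the norm coincide. Everything else is a direct deduction from results already in place.
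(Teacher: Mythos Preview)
Your proposal is correct and follows essentially the same approach as the paper's proof: part (i) is the exact sequence \eqref{res cores sequence} with its last term killed, part (ii) combines (i) with \Cref{wiles greenberg application} and the cyclic-group identification $H^1(K/\mathbb{Q},E(K)[2])\cong E(\mathbb{Q})[2]/N_{K/\mathbb{Q}}(E(K)[2])$, and part (iii) uses that the image of restriction lies in the $G$-invariants. The only difference is that you spell out the Tate-cohomology computation in (ii) a bit more explicitly (via $\sigma-1=N$ on $2$-torsion), whereas the paper simply cites the standard description of cohomology of cyclic groups.
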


\begin{proof}
(i): follows immediately from \Cref{prop:ResCoresExactSequence}.

 (ii): follows from (i) and \Cref{wiles greenberg application} upon noting that, since $\textup{Gal}(K/\mathbb{Q})$ is cyclic, we have 
\[H^1(K/\mathbb{Q}, E(K)[2])\cong \frac{E(\mathbb{Q})[2]}{N_{K/\mathbb{Q}}(E(K))[2]}  .\]
(See e.g. \cite[Section 8]{MR0219512} for the description of the cohomology of cyclic groups we are using in the above.) 
 
 (iii): follows from (i) and the fact that the image of the restriction map from $H^1(\mathbb{Q},E[2])$ to $H^1(K,E[2])$ is contained in the invariant subspace $H^1(K,E[2])^G$. 
\end{proof}

 For a similar result to \Cref{what happens when selc vanishes} (ii) which holds  when $K/\mathbb{Q}$ is replaced by a cyclic degree $p$ extension for an odd prime $p$, see \cite[Theorem 1.2]{B2014}. 
 
 \begin{remark} \label{wiles greenberg formula 2-selmer}
Combining \Cref{wiles greenberg application} with \Cref{prop:ResCoresExactSequence} allows one to recover the formula for the rank of $E/K$ given in \cite{MR597871}*{Theorem 1}. In the second part of that theorem, Kramer studies the group $\textup{Sel}_\CCC(\mathbb{Q},E[2])/\textup{cor}_{K/\mathbb{Q}}(\textup{Sel}^2(E/K))$, which he refers to as the everywhere local/global norms group, and shows that it carries a non-degenerate alternating pairing given by the sum of the Cassels--Tate pairings on $\textup{Sel}^2(E/\mathbb{Q})$ and $\textup{Sel}^2(E_\theta/\mathbb{Q})$ (recall  from \Cref{the properties of the selmer structure C}(iii) that $\textup{Sel}_\CCC(\mathbb{Q},E[2])=\textup{Sel}^2(E/\mathbb{Q})\cap \textup{Sel}^2(E_\theta/\mathbb{Q})$). In particular, this group has even dimension. 
 \end{remark} 
 
 When $\textup{Sel}_\CCC(\mathbb{Q},E[2])$ is not necessarily trivial we still get a lower bound for the dimension of the $2$-Selmer group of $E$ over $K$. 
 
 \begin{lemma} \label{selmer_lower_bound_lemma}
We have
\begin{eqnarray*}
\dim \textup{Sel}^2(E/K) &\geq& -2+ \sum_{v\textup{ place of }\mathbb{Q}} \dim E(\mathbb{Q}_v)/N_{K_w/\mathbb{Q}_v}E(K_w).
 \end{eqnarray*}
\end{lemma}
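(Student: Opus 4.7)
The plan is to combine the four-term exact sequence of \Cref{prop:ResCoresExactSequence} with the Greenberg--Wiles-type identity of \Cref{wiles greenberg application}, discarding only the two positive (or at worst bounded) error terms that appear at each end of the sequence.

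First, I would extract from the exact sequence
\[0\longrightarrow H^1(K/\mathbb{Q},E(K)[2])\stackrel{\textup{inf}}{\longrightarrow}\textup{Sel}_{\FFF}(\mathbb{Q},E[2])\stackrel{\textup{res}}{\longrightarrow}\textup{Sel}^2(E/K)\stackrel{\textup{cor}}{\longrightarrow}\textup{Sel}_{\CCC}(\mathbb{Q},E[2])\]
the inequality
\[\dim\textup{Sel}^2(E/K)\;\geq\;\dim\textup{Sel}_{\FFF}(\mathbb{Q},E[2])-\dim H^1(K/\mathbb{Q},E(K)[2]),\]
since the image of $\textup{res}$ inside $\textup{Sel}^2(E/K)$ has dimension exactly $\dim\textup{Sel}_{\FFF}(\mathbb{Q},E[2])-\dim H^1(K/\mathbb{Q},E(K)[2])$ by exactness at the middle term.

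Next, \Cref{wiles greenberg application} rearranges to
\[\dim\textup{Sel}_{\FFF}(\mathbb{Q},E[2])=\dim\textup{Sel}_{\CCC}(\mathbb{Q},E[2])+\sum_{v}\dim E(\mathbb{Q}_v)/N_{K_w/\mathbb{Q}_v}E(K_w),\]
and since $\dim\textup{Sel}_{\CCC}(\mathbb{Q},E[2])\geq 0$ we obtain
\[\dim\textup{Sel}_{\FFF}(\mathbb{Q},E[2])\;\geq\;\sum_{v}\dim E(\mathbb{Q}_v)/N_{K_w/\mathbb{Q}_v}E(K_w).\]

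Finally, the constant $-2$ comes from bounding $\dim H^1(K/\mathbb{Q},E(K)[2])$. Because $G=\textup{Gal}(K/\mathbb{Q})$ is cyclic of order $2$, the cohomology computation already used in the proof of \Cref{what happens when selc vanishes}(ii) gives
\[H^1(K/\mathbb{Q},E(K)[2])\cong E(\mathbb{Q})[2]/N_{K/\mathbb{Q}}(E(K)[2]),\]
which is a subquotient of $E(\mathbb{Q})[2]\subseteq E[2]\cong(\mathbb{Z}/2\mathbb{Z})^2$, hence has $\mathbb{F}_2$-dimension at most $2$. Combining the three displays yields the desired lower bound. There is no serious obstacle here: the argument is purely a dimension count chained through the two previous results, and the only genuine input is the elementary remark that $E[2]$ is $2$-dimensional, which accounts for the $-2$.
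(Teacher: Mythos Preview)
Your proposal is correct and follows essentially the same route as the paper: extract an inequality from the four-term exact sequence of \Cref{prop:ResCoresExactSequence}, apply \Cref{wiles greenberg application}, and bound $\dim H^1(K/\mathbb{Q},E(K)[2])\leq 2$ via the cyclic cohomology description. The only cosmetic difference is bookkeeping: the paper writes the inequality from the exact sequence as $\dim\textup{Sel}^2(E/K)\geq\dim\textup{Sel}_{\FFF}-\dim\textup{Sel}_{\CCC}-\dim H^1$ and then cancels $\dim\textup{Sel}_{\CCC}$ exactly against the Greenberg--Wiles identity, whereas you drop $\dim\textup{Sel}_{\CCC}\geq 0$ after invoking that identity; the net effect is identical.
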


\begin{proof}
From \Cref{prop:ResCoresExactSequence} we find   
\begin{eqnarray*}
\dim \textup{Sel}^2(E_d/K)&\geq& \dim \textup{Sel}_{\FFF_d}(\mathbb{Q},E_d[2])- \dim \textup{Sel}_{\CCC_d}(\mathbb{Q},E[2]) -\dim H^1(K/\mathbb{Q},E(K)[2]).
\end{eqnarray*}
The result now follows from  \Cref{wiles greenberg application}, noting that  
\[\dim H^1(K/\mathbb{Q},E(K)[2])\leq 2,\]
which is a consequence of the explicit description of cohomology of cyclic groups.\end{proof}

\subsection{The Weil restriction of scalars} \label{weil restriction subsection}

Here we give a slight reinterpretation of the above material in terms of the restriction of scalars of $E$ from $K$ to $\mathbb{Q}$. The material of this section is closely related to, and inspired by, that appearing in \cite{MR2373150}*{\S3}. Following Milne \cite{MR0330174}*{\S2}, the restriction of scalars may be described as a special case of a general construction of twists of powers of $E$, which we now recall. In what follows, for abelian varieties $A$ and $B$ defined over $\mathbb{Q}$, we endow   $\textup{Hom}_{\bar{\mathbb{Q}}}(A,B)$ (the group of $\bar{\mathbb{Q}}$-homomorphisms from $A$ to $B$) with the  $G_\mathbb{Q}$ action $\varphi \mapsto {}^\sigma \varphi$, where for $\sigma \in G_\mathbb{Q}$  the homomorphism ${}^\sigma \varphi$ sends $P\in A(\bar{\mathbb{Q}})$ to $\sigma \varphi(\sigma^{-1}P)$.

\begin{defi}  \label{milnes forms}
Let $n\geq 1$. To each matrix  $M=(m_{i,j})$ in $\textup{Mat}_n(\mathbb{Z})$ we can associate an endomorphism of $E^n$ given by
\[(P_1,...,P_n)\longmapsto \bigg(\sum_{j=1}^nm_{1,j}P_j , ..., \sum_{j=1}^nm_{n,j}P_j\bigg).\]
In this way we view $\textup{GL}_n(\mathbb{Z})$ as a subgroup of $\textup{Aut}_{\bar{\mathbb{Q}}}(E^n)$. Now suppose that $\Lambda$ is a free rank-$n$ $\mathbb{Z}$-module equipped with a continuous $G_\mathbb{Q}$-action. Choosing a basis for $\Lambda$ gives rise to a homomorphism
\[\rho_\Lambda: G_\mathbb{Q}\longrightarrow \textup{GL}_n(\mathbb{Z}),\]
which we view as a $1$-cocycle valued in $\textup{Aut}_{\bar{\mathbb{Q}}}(E^n)$. The class of $\rho_\Lambda$ in $H^1(\mathbb{Q},\textup{Aut}_{\bar{\mathbb{Q}}}(E^n))$ does not depend on the choice of basis. Associated to this cocycle class is a twist of $E^n$, which we denote $\Lambda \otimes E$. This is an abelian variety over $\mathbb{Q}$ of dimension $n$, equipped with a $\bar{\mathbb{Q}}$-isomorphism 
$\varphi_\Lambda:E^n\rightarrow \Lambda \otimes E$
 satisfying $\varphi_\Lambda^{-1}\circ {}^\sigma\varphi_\Lambda=\rho_\Lambda(\sigma)$
for all $\sigma \in G_\mathbb{Q}$. 
\end{defi}

The relevant restriction of scalars can then be defined as follows. 

\begin{defi}\label{weil_restriction}
Denote by $\mathbb{Z}[G]$ the integral group ring of $G=\textup{Gal}(K/\mathbb{Q})$. We define the restriction of scalars of $E$ relative to $K/\mathbb{Q}$  to be the abelian surface $E\otimes \mathbb{Z}[G]$ . We denote it $\textup{Res}_{K/\mathbb{Q}}E$. By the above, it comes equipped with an isomorphism $\varphi:E\times E \rightarrow \textup{Res}_{K/\mathbb{Q}}E$, defined over $K$, and such that for all $\sigma \in G_\mathbb{Q}$, and all $P,Q\in E(\bar{\mathbb{Q}})$, we have
\[(\varphi^{-1}\circ {}^\sigma\varphi) (P,Q) =\begin{cases}  ( P, Q) ~~&~~\chi_\theta(\sigma)=1,\\ 
 ( Q,  P)~~&~~\chi_\theta(\sigma)=-1.\end{cases}\]
In particular, $\varphi^{-1}$ composed with projection onto the first coordinate gives an isomorphism
\[\textup{Res}_{K/\mathbb{Q}}E(\mathbb{Q})\cong E(K).\]
\end{defi}

\begin{remark}
The restriction of scalars $\textup{Res}_{K/\mathbb{Q}}(E)$ is more typically defined as the unique scheme over $\mathbb{Q}$ representing the functor  on $\mathbb{Q}$-schemes 
\[T\longmapsto E(T\times_{\mathbb{Q}}K).\]
As in \cite{MR2373150}*{Section 2}, this is equivalent to the construction given above.
\end{remark}

\begin{notation}
To ease notation, in what follows we write $A=\textup{Res}_{K/\mathbb{Q}}(E)$. Thus $A$ is an abelian surface defined over $\mathbb{Q}$. 
\end{notation}

One has  
\begin{equation}
\textup{Sel}^2(E/K)\cong \textup{Sel}^2\left(A/\mathbb{Q}\right).
\end{equation}
Indeed, the corresponding result for Shafarevich--Tate groups is given in \cite{MR0330174}*{Proof of Theorem 1}, and the same argument works here (see also \cite{MR2373150}*{Proposition 3.1}).
In particular, in the case that $\textup{Sel}_\CCC(\mathbb{Q},E[2])=0$, \Cref{what happens when selc vanishes} can be interpreted as giving a description of   the $2$-Selmer group over $\mathbb{Q}$ of the abelian surface $A$. 

Moreover, it turns out that the groups $\textup{Sel}_\CCC(\mathbb{Q},E[2])$ and $\textup{Sel}_\FFF(\mathbb{Q},E[2])$  are the Selmer groups associated to a certain isogeny between $A$ and $E\times E_\theta$  as we now explain.

\begin{defi} \label{construction of the isogeny}
Consider the isogeny $\phi_0: E\times E  \rightarrow E\times E$ given by the formula
\[\phi_0(P,Q)=(P+Q,P-Q).\]
Let $\varphi:E\times E \rightarrow A$ be as in \Cref{weil_restriction}, and let $\psi=\psi_\theta:E \stackrel{\sim}{\longrightarrow}E_\theta$ be as in \Cref{eq:QuadraticTwistMap}. Now define the  isogeny (a priori over $K$)
\[\phi=(1\times \psi)\circ \phi_0 \circ\varphi^{-1}:A \longrightarrow E\times E_\theta.\]
One readily computes that in fact $\phi$ is defined over $\mathbb{Q}$. 

We denote by $\textup{Sel}^\phi(A/\mathbb{Q})$   the Selmer group associated to $\phi$.  For each place $v$ of $\mathbb{Q}$, we denote by $\delta_{\phi,v}$ the coboundary map 
\[\delta_{\phi,v}: E(\mathbb{Q}_v)\times E_\theta(\mathbb{Q}_v)\rightarrow H^1(\mathbb{Q}_v,A[\phi])\] 
associated to the short exact sequence
\[0\longrightarrow A[\phi]\longrightarrow A(\bar{\mathbb{Q}}_v)\stackrel{\phi}{\longrightarrow}(E\times E_\theta)(\bar{\mathbb{Q}}_v)\longrightarrow 0.\]
Then the collection $\{\textup{im}(\delta_{\phi,v})\}_{v}$ defines a Selmer structure for $A[\phi]$, with associated Selmer group is $\textup{Sel}^\phi(A/\mathbb{Q})$.
\end{defi}

\begin{lemma} \label{geometric sel f}
We have a canonical isomorphism $\textup{Sel}^\phi(A/\mathbb{Q})\cong \textup{Sel}_\FFF(\mathbb{Q},E[2])$.
\end{lemma}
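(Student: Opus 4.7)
The plan is to verify that, under a canonical $G_\mathbb{Q}$-equivariant identification of $A[\phi]$ with $E[2]$, the local conditions defining $\textup{Sel}^\phi(A/\mathbb{Q})$ agree, inside $H^1(\mathbb{Q}_v,E[2])$, with the local conditions defining $\textup{Sel}_\FFF(\mathbb{Q},E[2])$. Given \Cref{f is sum remark}, it will suffice to show that for every place $v$ of $\mathbb{Q}$,
\[
\textup{im}(\delta_{\phi,v}) \;=\; \SSS(E/\mathbb{Q}_v)+\SSS(E_\theta/\mathbb{Q}_v)
\]
inside $H^1(\mathbb{Q}_v,E[2])$.

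The first step is to identify $A[\phi]$ with $E[2]$ as $G_\mathbb{Q}$-modules. Since $\phi_0(P,Q)=(P+Q,P-Q)$ has kernel $\{(P,P):P\in E[2]\}$, and since the cocycle $\varphi^{-1}\circ {}^\sigma\varphi$ describing the descent data of $\varphi$ either fixes points or swaps the two coordinates, it preserves the diagonal. Consequently the map $P\mapsto \varphi(P,P)$ is a $G_\mathbb{Q}$-equivariant isomorphism $E[2]\xrightarrow{\sim}A[\phi]$, inducing an isomorphism $H^1(\mathbb{Q}_v,A[\phi])\cong H^1(\mathbb{Q}_v,E[2])$ at every place, as well as globally.

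The key computation is that the inclusions
\[
i:=\hat\phi|_E\colon E\hookrightarrow A,\quad P\mapsto \varphi(P,P),\qquad j:=\hat\phi|_{E_\theta}\colon E_\theta\hookrightarrow A,\quad S\mapsto \varphi(\psi^{-1}(S),-\psi^{-1}(S)),
\]
satisfy $\phi\circ i=([2]_E,0)$ and $\phi\circ j=(0,[2]_{E_\theta})$ respectively, as one checks directly using the formulas for $\phi$, $\hat\phi_0$, and $\psi$. These identities yield morphisms of short exact sequences from the usual Kummer sequences of $E$ and $E_\theta$ into the $\phi$-Kummer sequence of $A$, in which the induced maps on kernels are precisely the canonical identifications $E[2]\xrightarrow{\sim}A[\phi]$ (the identity under $P\mapsto \varphi(P,P)$) and $E_\theta[2]\xrightarrow{\sim}A[\phi]$ (given by $\psi^{-1}$, which is $G_\mathbb{Q}$-equivariant on $2$-torsion since $[\chi_\theta(\sigma)]$ acts trivially there). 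Passing to cohomology gives $\delta_{\phi,v}(R,0)=i_*(\delta_{E,v}(R))$ and $\delta_{\phi,v}(0,S)=j_*(\delta_{E_\theta,v}(S))$, and summing over $(R,S)\in E(\mathbb{Q}_v)\times E_\theta(\mathbb{Q}_v)$ gives the desired equality of local conditions in $H^1(\mathbb{Q}_v,E[2])$ after invoking \Cref{f is sum remark}.

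Since Selmer groups are entirely determined by their local conditions, matching at every place produces the claimed isomorphism $\textup{Sel}^\phi(A/\mathbb{Q})\cong \textup{Sel}_\FFF(\mathbb{Q},E[2])$. The principal obstacle is the careful unwinding of the Galois-twist conventions defining $\varphi$ and $\psi$ which is required to verify the equivariance of $P\mapsto \varphi(P,P)$ and, most importantly, the compatibilities $\phi\circ i=([2],0)$ and $\phi\circ j=(0,[2])$; once these are in place the rest is formal diagram-chasing.
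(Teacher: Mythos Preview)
Your proposal is correct and takes essentially the same approach as the paper: your maps $i$ and $j$ are exactly the paper's $\varphi\circ\Delta_1$ and $\varphi\circ\Delta_2\circ\psi^{-1}$, and both arguments conclude by fitting these into morphisms of Kummer sequences and invoking \Cref{f is sum remark}. One minor remark: the labeling $i=\hat\phi|_E$, $j=\hat\phi|_{E_\theta}$ is not needed for the argument and is not justified here (the paper notes in \Cref{sel c remark} that identifying $\hat\phi$ takes ``more work''), but since you only use the explicit formulas this is harmless.
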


\begin{proof}
With $\varphi:E\times E \rightarrow A$  as in \Cref{weil_restriction}, one readily checks that 
$\varphi^{-1}$ restricts to a $G_\mathbb{Q}$-isomorphism between $A[\phi]$ and the diagonal embedding of $E[2]$ into $E\times E$. In this way we identify $H^1(\mathbb{Q},A[\phi])$ and $H^1(\mathbb{Q},E[2])$. We make corresponding identifications  locally at each place of $\mathbb{Q}$ also. We will show that this identification maps $\textup{Sel}^\phi(A/\mathbb{Q})$ onto $\textup{Sel}_\FFF(\mathbb{Q},E[2])$.

For $i=1,2$ write $\Delta_i:E \rightarrow E\times E$ for the homomorphisms defined by
\[\Delta_1(P)=(P,P)~~\textup{ and }~~\Delta_2(P)=(P,-P).\]
This gives maps
\[\varphi \circ \Delta_1 :E\longrightarrow A\quad \textup{ and }\quad \varphi \circ \Delta_2\circ \psi^{-1} : E_\theta \longrightarrow A,\]
which are readily checked to be defined over $\mathbb{Q}$. For each place $v$ of $\mathbb{Q}$ these maps fit into a commutative diagram
\[
\xymatrix{0\ar[r]&E[2] \ar[r]^{}\ar@{=}[d]&E(\bar{\mathbb{Q}}_v)\ar[r]^{2}\ar[d]& E(\bar{\mathbb{Q}}_v)\ar[r]\ar[d]&0\\0\ar[r]&A[2]\ar[r]^{}\ar@{=}[d]& A(\bar{\mathbb{Q}}_v)\ar[r]^{\phi\phantom{hellooo}}& E(\bar{\mathbb{Q}}_v) \times E_\theta(\bar{\mathbb{Q}}_v)\ar[r]&0\\ 0\ar[r]&E_\theta[2]\ar[r]&
E_\theta(\bar{\mathbb{Q}}_v)\ar[u]\ar[r]^{2} &E_\theta(\bar{\mathbb{Q}}_v)\ar[r]\ar[u]&0,}
\] 
where the right-most vertical maps are induced by the natural inclusions into the respective factors. On cohomology this induces a commutative diagram
\[
\xymatrix{E(\bar{\mathbb{Q}}_v) ~~\ar[r]^{\delta_v}\ar[d]&H^1(\mathbb{Q}_v,E[2])\ar@{=}[d]\\
 E(\bar{\mathbb{Q}}_v) \times E_\theta(\bar{\mathbb{Q}}_v)\ar[r]^{\delta_{\phi,v}}&H^1(\mathbb{Q}_v,A[\phi])\ar@{=}[d]\\
E_\theta(\bar{\mathbb{Q}}_v)~~\ar[u]\ar@{^{(}->}[r]^{\phantom{hihihi}\delta_v}&H^1(\mathbb{Q}_v,E_\theta[2]).}
\]
The result now follows from \Cref{f is sum remark}. 
\end{proof}

\begin{remark} \label{sel c remark}
One can show that the product polarisation on $E\times E$ descends to a polarisation on $A$ defined over $\mathbb{Q}$ rather than just $K$ as is a priori the case (this follows from the material in \cite{MR1827021}*{\S2}). Thus $A$ is a principally polarised abelian surface. We can then view the dual isogeny to $\phi$ as an isogeny 
\[\widehat{\phi}: E\times E_\theta \longrightarrow A.\]
Denote by $\textup{Sel}^{\widehat{\phi}}(E\times E_\theta/\mathbb{Q})$ the associated Selmer group. 
It follows formally from  \Cref{geometric sel f} and the fact that the Selmer structure $\CCC$ is dual to $\FFF$, that we have 
\[\textup{Sel}^{\widehat{\phi}}(E\times E_\theta/\mathbb{Q}) \cong \textup{Sel}_\CCC(\mathbb{Q},E[2]).\]
With more work, one can show that the composition (in either direction) of $\phi$ and $\widehat{\phi}$ is multiplication by $2$, and that the maps 
\[A\stackrel{\phi}{\longrightarrow}E\times E_\theta \stackrel{\widehat{\phi}}{\longrightarrow}A\]
induce the sequence \Cref{res cores sequence}. 
\end{remark}

\begin{remark}
In the terminology of  \cite{MR3605974}*{\S2}, the quantity
\[\frac{|\textup{Sel}^\phi(A/\mathbb{Q})|}{|\textup{Sel}^{\widehat{\phi}}(E\times E_\theta/\mathbb{Q})|}\stackrel{\Cref{geometric sel f}}{=}\frac{|\textup{Sel}_\FFF(\mathbb{Q},E[2])|}{|\textup{Sel}_\CCC(\mathbb{Q},E[2])|}\]
is called the \textit{Tamagawa ratio} associated to the isogeny $\phi$. 
That the Tamagawa ratio for elliptic curves is given by a local formula goes back to Cassels \cite{MR179169}*{Theorem 1.1}. The corresponding result for abelian varieties, which in particular can be applied to $A$ and $\phi$,  is given by Milne in \cite{MR2261462}*{\S I.7}. This gives an alternative approach to the local formula of \Cref{wiles greenberg application}. We remark though that Milne's result is very closely related to \Cref{wiles greenberg formula}, so this is not really a different proof.
\end{remark}

In the next section  we will consider the $2$-Selmer groups $\textup{Sel}^2(E_d/K)$ associated to quadratic twists of $E$ by squarefree integers $d$. As the next lemma shows, this is equivalent to considering the $2$-Selmer groups associated to the quadratic twist family over $\mathbb{Q}$ of $A$. 

\begin{lemma} \label{weil_res_quad_twist}
Let $d$ be a square free integer. Let $E_d$ denote the quadratic twist of $E$ by $d$, and let $A_d$ denote the quadratic twist of $A$ by $d$. Then we have a $\mathbb{Q}$-isomorphism 
\[\textup{Res}_{K/\mathbb{Q}}(E_d)\cong A_d\]
of abelian surfaces. In particular, we have
\[\textup{Sel}^2(E_d/K)\cong \textup{Sel}^2\left(A_d/\mathbb{Q}\right).\]
\end{lemma}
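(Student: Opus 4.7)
The plan is to identify both $\textup{Res}_{K/\mathbb{Q}}(E_d)$ and $A_d$ as twists of $E\times E$ by the \emph{same} cocycle in $\textup{Aut}_{\bar{\mathbb{Q}}}(E\times E)$, and then appeal to the standard fact that two twists with cohomologous cocycles are $\mathbb{Q}$-isomorphic. The second assertion about $2$-Selmer groups should then follow immediately, combined with the isomorphism $\textup{Sel}^2(E_d/K)\cong \textup{Sel}^2(\textup{Res}_{K/\mathbb{Q}}(E_d)/\mathbb{Q})$ obtained by applying the argument of \cite{MR0330174}*{Proof of Theorem 1} (already cited in the paper just above) with $E_d$ in place of $E$.

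Concretely, I will work with the two $\bar{\mathbb{Q}}$-isomorphisms
\[
\Phi_1 \;:=\; \psi_d^A \circ \varphi \;\colon\; E\times E \longrightarrow A_d,
\qquad
\Phi_2 \;:=\; \varphi_d \circ (\psi_d\times\psi_d) \;\colon\; E\times E \longrightarrow \textup{Res}_{K/\mathbb{Q}}(E_d),
\]
where $\varphi$, $\varphi_d$ are the maps of \Cref{weil_restriction} (for $E$ and $E_d$ respectively), $\psi_d^A$ is the twisting isomorphism of \Cref{eq:QuadraticTwistMap} for $A$, and $\psi_d\colon E\to E_d$ is the twisting isomorphism for $E$. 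Using the defining cocycle relations, a direct computation gives
\[
\Phi_1^{-1}\circ {}^\sigma\Phi_1 \;=\; \chi_d(\sigma)\cdot \rho_{\mathbb{Z}[G]}(\sigma)
\qquad\text{for every }\sigma\in G_\mathbb{Q}.
\]
For $\Phi_2$ the analogous computation yields $(\psi_d\times\psi_d)^{-1}\circ\rho_{\mathbb{Z}[G]}(\sigma)\circ{}^\sigma(\psi_d\times\psi_d)$, and here the key point is that $\rho_{\mathbb{Z}[G]}(\sigma)\in\{I,\text{swap}\}$ commutes with the diagonal map $\psi_d\times\psi_d$ (applying the swap and then $\psi_d$ componentwise is the same as applying $\psi_d$ componentwise and then the swap). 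This commutation lets me pull $\rho_{\mathbb{Z}[G]}(\sigma)$ past the twist to obtain $\Phi_2^{-1}\circ {}^\sigma\Phi_2 = \chi_d(\sigma)\cdot \rho_{\mathbb{Z}[G]}(\sigma)$ as well. Consequently, $\Phi_2\circ\Phi_1^{-1}\colon A_d\to \textup{Res}_{K/\mathbb{Q}}(E_d)$ is $G_\mathbb{Q}$-equivariant, hence defined over $\mathbb{Q}$, giving the desired isomorphism.

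The main (and essentially only) subtlety is the commutation step: one must be careful that $\rho_{\mathbb{Z}[G]}$ takes values in the subgroup of $\textup{Aut}_{\bar{\mathbb{Q}}}(E^2)$ coming from integer matrices (the permutation matrices, in this case), so it makes sense simultaneously on $E\times E$ and $E_d\times E_d$ and is compatible with any homomorphism applied componentwise. Once this is checked, the cocycle identification is a mechanical cocycle manipulation. For the Selmer group consequence, applying the Weil-restriction identification to the twisted curve $E_d$ (rather than $E$) gives $\textup{Sel}^2(E_d/K)\cong\textup{Sel}^2(\textup{Res}_{K/\mathbb{Q}}(E_d)/\mathbb{Q})$, and then transporting across the $\mathbb{Q}$-isomorphism $\textup{Res}_{K/\mathbb{Q}}(E_d)\cong A_d$ gives $\textup{Sel}^2(E_d/K)\cong \textup{Sel}^2(A_d/\mathbb{Q})$, completing the proof.
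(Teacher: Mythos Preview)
Your proposal is correct and follows essentially the same approach as the paper: both identify $\textup{Res}_{K/\mathbb{Q}}(E_d)$ and $A_d$ as twists of $E\times E$ and verify that the associated cocycles in $\textup{Aut}_{\bar{\mathbb{Q}}}(E\times E)$ agree, with your $\Phi_1,\Phi_2$ matching the paper's $\Upsilon',\Upsilon$ up to notation. The only cosmetic difference is that the paper writes out the cocycle explicitly on points $(P,Q)$ rather than invoking the commutation of the swap with the diagonal map $\psi_d\times\psi_d$, but these are the same computation.
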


\begin{proof}
Both $\textup{Res}_{K/\mathbb{Q}}(E_d)$ and $A_d$ are twists of $E\times E$, so we need only show that the resulting classes in $H^1(G_\mathbb{Q},\textup{Aut}_{\bar{\mathbb{Q}}}(E\times E))$ agree. Write $\chi_d$ and $\chi_\theta$ for the quadratic characters associated to $\mathbb{Q}(\sqrt{d})/\mathbb{Q}$ and $K/\mathbb{Q}$ respectively.

Fix $\sigma \in G_\mathbb{Q}$. Fix an isomorphism $\phi_1:E\stackrel{\sim}{\longrightarrow} E_d$ such that $\varphi_1^{-1}\circ {}^\sigma\varphi_1=\chi_d(\sigma)$, and write $\varphi_2$ for the isomorphism $\ E_d\times E_d\stackrel{\sim}{\longrightarrow} \textup{Res}_{K/\mathbb{Q}}(E_d)$  of \Cref{weil_restriction}. This gives a $\bar{\mathbb{Q}}$-isomorphism
\[\Upsilon=\varphi_2\circ (\varphi_1\times \varphi_1)\colon E\times E  \stackrel{\sim}{\longrightarrow}\textup{Res}_{K/\mathbb{Q}}(E_d).\]
The resulting cocycle satisfies, for $P,Q\in E(\bar{\mathbb{Q}})\times E(\bar{\mathbb{Q}})$,
\[(\Upsilon^{-1}\circ {}^\sigma\Upsilon)(P,Q)=\begin{cases}\left(\chi_d(\sigma)P,\chi_d(\sigma)Q\right)~~&~~\chi_\theta(\sigma)=1,\\ \left(\chi_d(\sigma)Q,\chi_d(\sigma)P\right)~~&~~\chi_\theta(\sigma)=-1.\end{cases}\]

On the other hand, fix $\psi_1: E\times E \stackrel{\sim}{\longrightarrow}A$ as in \Cref{weil_restriction}, and fix also $\psi_2:A\stackrel{\sim}{\longrightarrow} A_d$ such that $\psi_2^{-1}\psi_2=\chi_d$. Writing $\Upsilon'=\psi_2\circ \psi_1$, one readily computes that $\Upsilon'^{-1}\circ {}^\sigma\Upsilon'$ is given by the same formula as $\Upsilon^{-1}{}^\sigma \Upsilon$, giving the result.
\end{proof}

\begin{remark} \label{the isogeny phi_d}
For each squarefree integer $d$, \Cref{construction of the isogeny} gives an isogeny 
\[\phi_d: \textup{Res}_{K/\mathbb{Q}}(E_d) \longrightarrow E_d\times E_{d\theta}.\]
Via \Cref{weil_res_quad_twist} we view $\phi_d$  as an isogeny from $A_d$ to $E_d\times E_{d\theta}.$ One readily checks that  the standard identification of $A[2]$ with $A_d[2]$ identifies $A[\phi]$ and $A_d[\phi_d]$. 
\end{remark}

\section{Quadratic twists and a distribution result} \label{the function g section}

Recall that $K=\mathbb{Q}(\sqrt{\theta})/\mathbb{Q}$ is a quadratic extension, $G=\gal(K/\QQ)$ and $E/\mathbb{Q}$ is an elliptic curve. We now  consider the effect of replacing $E/\mathbb{Q}$ by its quadratic twist $E_d/\mathbb{Q}$, for a squarefree integer $d$.  We denote by $\FFF_d$ and $\CCC_d$ the Selmer structures of the previous section with local conditions $\FFF(E_d/\mathbb{Q}_v)$ and $\CCC(E_d/\mathbb{Q}_v)$ respectively. We have  associated Selmer groups   $\textup{Sel}_{\FFF_d}(\mathbb{Q},E_d[2])$ and $\textup{Sel}_{\CCC_d}(\mathbb{Q},E_d[2])$. For a squarefree integer $d$ we write $\chi_d:G_\mathbb{Q}\rightarrow \{\pm 1\}$ for the associated quadratic character defined by
\[\chi_d(\sigma)=\sigma(\sqrt{d})/\sqrt{d}.\]
 
 \subsection{The cokernel of the local norm map}
 
It turns out that the cokernel of the local norm map varies in a predictable way as we vary $d$. First, we fix some notation.
 
\begin{notation} \label{Notation:Sigma}
Fix a choice $\Sigma$ of a finite set of places of $\mathbb{Q}$ containing the real place, $2$, all primes which ramify in $K/\mathbb{Q}$, and all primes at which  $E$ has bad reduction.
\end{notation}

We begin with the following observation. 

\begin{lemma}\label{lem:No4Torsion}
Let $p\notin \Sigma$ be a prime divisor of $d$. Then $E_d(\mathbb{Q}_p^\textup{nr})$ has no points of exact order 4.  In particular, the same is true of $E_d(\mathbb{Q}_p)$.
\end{lemma}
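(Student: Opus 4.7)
The plan is to combine the unramifiedness of $E[4]$ at $p$ with the ramification of the twisting quadratic character $\chi_d$ at $p$. Since $p\notin\Sigma$, the prime $p$ is odd, is unramified in $K/\mathbb{Q}$, and $E$ has good reduction at $p$. As $p\mid d$ and $d$ is squarefree, the extension $\mathbb{Q}_p(\sqrt{d})/\mathbb{Q}_p$ is a ramified quadratic extension, so the character $\chi_d$ restricts non-trivially to the inertia subgroup $I_p=G_{\mathbb{Q}_p^{\textup{nr}}}$. On the other hand, by the N\'eron--Ogg--Shafarevich criterion applied to the $2$-adic Tate module (permissible because $p$ is odd), good reduction of $E$ at $p$ ensures $E[4]\subseteq E(\mathbb{Q}_p^{\textup{nr}})$.

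Next I will exploit the twisting isomorphism $\psi=\psi_d\colon E\to E_d$ from \Cref{subsec:quadratic twists}. Suppose $P\in E_d[4]\cap E_d(\mathbb{Q}_p^{\textup{nr}})$, and set $Q=\psi^{-1}(P)$, so that $Q\in E[4]\subseteq E(\mathbb{Q}_p^{\textup{nr}})$ by the previous paragraph. For $\sigma\in I_p$, the defining relation ${}^\sigma\psi(R)=\sigma\psi(\sigma^{-1}R)=\chi_d(\sigma)\psi(R)$ applied at $R=\sigma Q$, together with $\sigma Q=Q$, yields
\[\sigma P=\sigma\psi(Q)=\chi_d(\sigma)\psi(\sigma Q)=\chi_d(\sigma)\psi(Q)=\chi_d(\sigma) P.\]
Choosing $\sigma\in I_p$ with $\chi_d(\sigma)=-1$ (which exists by the first paragraph), the hypothesis $P\in E_d(\mathbb{Q}_p^{\textup{nr}})$ forces $P=-P$, hence $2P=0$, so $P\in E_d[2]$. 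This proves $E_d[4]\cap E_d(\mathbb{Q}_p^{\textup{nr}})=E_d[2]$, i.e., $E_d(\mathbb{Q}_p^{\textup{nr}})$ contains no point of exact order $4$. The corresponding statement for $E_d(\mathbb{Q}_p)$ is immediate from the inclusion $E_d(\mathbb{Q}_p)\subseteq E_d(\mathbb{Q}_p^{\textup{nr}})$.

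The argument is essentially a bookkeeping exercise and I anticipate no serious obstacle; the only non-elementary input is N\'eron--Ogg--Shafarevich. The conceptual point is that, when restricted to inertia at $p$, the trivial Galois action on $E[4]$ coming from good reduction is twisted by the non-trivial character $\chi_d|_{I_p}$, and this sign twist eliminates all order-$4$ points upon passing to the twist~$E_d$.
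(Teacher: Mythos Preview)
Your proof is correct and follows essentially the same approach as the paper's: both use that good reduction at $p$ makes $E[4]$ unramified, so inertia acts on $E_d[4]$ through $\chi_d$, and then the ramification of $\chi_d$ at $p$ forces $E_d[4]^{I_p}=E_d[2]$. The paper states this in one line, while you unwind the same computation explicitly via the twisting isomorphism $\psi_d$; the content is identical.
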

\begin{proof}
	By assumption $E$ has good reduction at $p$, so $E[4]$ is unramified at $p$ (that is, the inertia group $I_p$ at $p$ acts trivially on $E[4]$).  Thus any element $\sigma$  of $I_p$ acts on $E_d[4]$ as multiplication by $\chi_d(\sigma)$. Since $\chi_d$ is ramified at $p$ by assumption, the restriction of $\chi_d$ to $I_p$ is non-trivial and one has
	\[E_d[4]^{I_p} =\{P\in E_d[4]~~\mid ~~P=-P\}=E_d[2],\]
giving the result.
\end{proof}

\begin{lemma} \label{explicit_coker}
Let $d$ be a squarefree integer, let $p\notin \Sigma$ be a prime, and let $\mathfrak{p}$ be a prime of $K$ lying over $p$. Then
\[\dim E_d(\mathbb{Q}_p)/N_{K_\mathfrak{p}/\mathbb{Q}_p}E_d(K_\mathfrak{p})=\begin{cases} 2~~&~~ p\mid d, ~p\textup{ inert in }K/\mathbb{Q},~\dim E(\mathbb{Q}_p)[2]=2, \\ 0~~&~~\textup{otherwise}.\end{cases}\]
\end{lemma}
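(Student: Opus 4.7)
The plan is to split into cases according to the splitting of $p$ in $K/\mathbb{Q}$ and whether $p\mid d$. If $p$ splits in $K/\mathbb{Q}$ then $K_{\mathfrak{p}}=\mathbb{Q}_p$, the local norm map is the identity, and the cokernel is trivial. From now on assume $p$ is inert in $K$ and write $G=\gal(K_\mathfrak{p}/\mathbb{Q}_p)$. The starting point for the remaining analysis is the identity
\[\dim E_d(\mathbb{Q}_p)/N_{K_\mathfrak{p}/\mathbb{Q}_p} E_d(K_\mathfrak{p})=\dim \FFF(E_d/\mathbb{Q}_p)-\dim \SSS(E_d/\mathbb{Q}_p),\]
which is implicit in the proof of \Cref{wiles greenberg application} (noting that $\dim \SSS(E_d/\mathbb{Q}_p)=\dim E_d(\mathbb{Q}_p)/2E_d(\mathbb{Q}_p)$ since $p$ is odd).

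Suppose first that $p\nmid d$. Then $E_d$ has good reduction at $p$ (the twist character is unramified), so by \Cref{selmer is max iso example} we have $\SSS(E_d/\mathbb{Q}_p)=H^1_{\textup{nr}}(\mathbb{Q}_p,E_d[2])$ and likewise $\SSS(E_d/K_\mathfrak{p})=H^1_{\textup{nr}}(K_\mathfrak{p},E_d[2])$. Since $K_\mathfrak{p}^{\textup{nr}}=\mathbb{Q}_p^{\textup{nr}}$, it is then immediate that $\FFF(E_d/\mathbb{Q}_p)=\SSS(E_d/\mathbb{Q}_p)$, so the cokernel vanishes.

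Suppose next that $p\mid d$. Then $E_d$ has additive reduction at $p$ with potential good reduction over the ramified extension $\mathbb{Q}_p(\sqrt{d})$. Let $\mathcal{E}_d/\O_{\mathbb{Q}_p}$ be the N\'eron model of $E_d$. Since $K_\mathfrak{p}/\mathbb{Q}_p$ is unramified, N\'eron models commute with the base change to $\O_{K_\mathfrak{p}}$, and we obtain a short exact sequence of $G$-modules
\[0\to \mathcal{E}_d^0(\O_{K_\mathfrak{p}})\to E_d(K_\mathfrak{p})\to \Phi_d(\mathbb{F}_{p^2})\to 0,\]
where $\Phi_d$ is the component group. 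I would then verify that $\hat{H}^i(G,\mathcal{E}_d^0(\O_{K_\mathfrak{p}}))=0$ for all $i$, as follows. The identity component of the special fibre contributes $\tilde{\mathcal{E}}_d^0(\mathbb{F}_{p^2})\cong \mathbb{F}_{p^2}\cong \mathbb{F}_p[G]$ (normal basis theorem for finite fields), which is cohomologically trivial as the regular representation; and the formal group, via the logarithm (available since $p$ is odd), is $\O_{\mathbb{Q}_p}[G]$-isomorphic to $\mathfrak{m}_{K_\mathfrak{p}}=p\O_{K_\mathfrak{p}}$, which is cohomologically trivial because $\O_{K_\mathfrak{p}}$ is a free $\O_{\mathbb{Q}_p}[G]$-module of rank one by the integral normal basis theorem for unramified extensions. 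The sequence above thus reduces the problem to computing $\hat{H}^0(G,\Phi_d(\mathbb{F}_{p^2}))$.

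The crux, and what I expect to be the main obstacle, is the identification $\Phi_d\cong E[2]$ as $G_{\mathbb{Q}_p}$-modules. Tate's algorithm applied to a Weierstrass model for $E_d$ shows that $E_d$ has Kodaira type $I_0^*$ with $\Phi_d\cong(\mathbb{Z}/2)^2$ geometrically; one then has to check that the four non-identity components are $G_{\mathbb{Q}_p}$-equivariantly identified with $E[2]\setminus\{O\}$, via the specialisation map from $E_d[2]$ to $\Phi_d$ attached to the N\'eron model. Granted this, $\Phi_d(\mathbb{F}_{p^2})=E(K_\mathfrak{p})[2]$ and one concludes by analysing the three possible orders of the image of Frobenius in $\textup{GL}_2(\mathbb{F}_2)\cong S_3$:
\begin{itemize}
\item Order $1$ (equivalently $\dim E(\mathbb{Q}_p)[2]=2$): $G$ acts trivially on $E[2]=(\mathbb{Z}/2)^2$, the norm is zero, and $\hat{H}^0(G,E[2])=(\mathbb{Z}/2)^2$ has dimension $2$.
\item Order $2$ (equivalently $\dim E(\mathbb{Q}_p)[2]=1$): Frobenius squared is trivial, so $E(K_\mathfrak{p})[2]=E[2]$; as a $G$-module this is the regular representation $\mathbb{F}_2[G]$ (the unique non-trivial order $2$ representation on a two-dimensional $\mathbb{F}_2$-space), which is cohomologically trivial.
\item Order $3$ (equivalently $\dim E(\mathbb{Q}_p)[2]=0$): Frobenius squared still has order $3$, so $E(K_\mathfrak{p})[2]=0$.
\end{itemize}
Only the first case contributes, giving the stated value of $2$.
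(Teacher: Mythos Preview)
Your proof is correct, but it takes a genuinely different route from the paper's in both of the nontrivial cases.

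For $p\nmid d$ (with $p$ inert), the paper simply quotes Mazur's surjectivity result for the local norm on an abelian variety with good reduction in an unramified extension \cite{MR0444670}*{Corollary 4.4}. Your argument via $\FFF(E_d/\mathbb{Q}_p)=\textup{res}^{-1}(H^1_{\textup{nr}}(K_\mathfrak{p},E_d[2]))=H^1_{\textup{nr}}(\mathbb{Q}_p,E_d[2])=\SSS(E_d/\mathbb{Q}_p)$ is a neat self-contained replacement that avoids the external reference and stays entirely within the Selmer-structure framework already set up in the paper.

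For $p\mid d$ (with $p$ inert), the paper argues directly with $2$-torsion: \Cref{lem:No4Torsion} forces $E_d(K_\mathfrak{p})[2]\xrightarrow{\sim}E_d(K_\mathfrak{p})/2E_d(K_\mathfrak{p})$, so the norm cokernel is computed by the elementary exact sequence $0\to E_d(\mathbb{Q}_p)[2]\to E_d(K_\mathfrak{p})[2]\xrightarrow{1+\sigma}N(E_d(K_\mathfrak{p})[2])\to 0$, giving $\dim\textup{coker}=2\dim E(\mathbb{Q}_p)[2]-\dim E(K_\mathfrak{p})[2]$ and then the same three-way case split on Frobenius. Your N\'eron-model approach is more structural: you reduce to $\hat{H}^0(G,\Phi_d(\mathbb{F}_{p^2}))$ by killing the cohomology of the identity component, then identify $\Phi_d$ with $E[2]$. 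This is heavier machinery (you need the $I_0^*$ classification, the normal basis theorems, and the specialisation map on $2$-torsion), and the step you flag as the crux---that $E_d[2]\to\Phi_d$ is a $G_{\mathbb{F}_p}$-equivariant isomorphism---does require a short argument (e.g.\ that $\mathcal{E}_d^0$ has no $2$-torsion since the formal group is torsion-free for odd $p$ and the additive special fibre has only $p$-torsion). The paper's approach is quicker and uses only \Cref{lem:No4Torsion}; yours explains more transparently \emph{why} the answer is governed by the $G$-structure of $E[2]$, and would generalise more readily to other reduction types.
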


\begin{proof}
If $p$ splits in $K$, then the local extension $K_\mathfrak{p}/\mathbb{Q}_p$ is trivial, so that $N_{K_\mathfrak{p}/\mathbb{Q}_p}$ is the identity map on $E_d(\mathbb{Q}_p)$.  

Next, suppose that $p\nmid d$. Since also $p\notin \Sigma$, $E_d$ has good reduction at $p$, and $K_\mathfrak{p}/\mathbb{Q}_p$ is unramified.  It follows from \cite{MR0444670}*{Corollary 4.4} that $N_{K_w/\mathbb{Q}_p}$ is surjective, giving the result.

Now suppose that $p\mid d$ and $p$ is inert in $K/\mathbb{Q}$. In particular, the local extension $K_\mathfrak{p}/\mathbb{Q}_p$ is unramified of degree $2$. \Cref{lem:No4Torsion} and a dimension count then show that the horizontal maps (induced by the inclusion of $E_d(K_\mathfrak{p})[2]$ into $E_d(K_\mathfrak{p})$) in the commutative square
\[\xymatrix{E_d(K_\mathfrak{p})[2]\ar[r]^{\sim\phantom{ello}} \ar[d]^{N_{K_\mathfrak{p}/\mathbb{Q}_p}}&E_d(K_\mathfrak{p})/2E_d(K_\mathfrak{p})\ar[d]^{N_{K_\mathfrak{p}/\mathbb{Q}_p}}\\
E_d(\mathbb{Q}_p)[2]\ar[r]^{\sim\phantom{ello}}&E_d(\mathbb{Q}_p)/2E_d(\mathbb{Q}_p),}\]
are isomorphisms.  Let $\sigma$ denote the non-trivial element of $\textup{Gal}(K_\mathfrak{p}/\mathbb{Q}_p)$.  Since $-1$ acts trivially on $E_d(K_\mathfrak{p})[2]$,  we have a short exact sequence
\[0\rightarrow E_d(\mathbb{Q}_p)[2] \longrightarrow E_d(K_\mathfrak{p})[2] \stackrel{1+\sigma}{\longrightarrow} N_{K_\mathfrak{p}/\mathbb{Q}_p}\left(E_d(K_\mathfrak{p})[2]\right)\rightarrow 0.\]
We thus have 
\begin{eqnarray*}
\dim E_d(\mathbb{Q}_p)/N_{K_\mathfrak{p}/\mathbb{Q}_p}E_d(K_\mathfrak{p})&=&\dim E_d(\mathbb{Q}_p)[2]/N_{K_\mathfrak{p}/\mathbb{Q}_p}\left(E_d(K_\mathfrak{p})[2]\right)\\
&=&2\dim E_d(\mathbb{Q}_p)[2]-\dim E_d(K_\mathfrak{p})[2]\\
 &=&2\dim E(\mathbb{Q}_p)[2]-\dim E(K_\mathfrak{p})[2].
\end{eqnarray*}
It remains to break into cases according to $\dim E(\mathbb{Q}_p)[2]=0,1,2$.  If $\dim E(\QQ_p)[2]\neq 1$ then $\dim E(\QQ_p)[2]=\dim E(K_\mathfrak{p})[2]$ since the $2$-torsion is either already full over $\QQ_p$ or given by the splitting of an irreducible cubic.  In the case that $\dim E(\QQ_p)[2]=1$, noting that since $E$ has good reduction at $p$, $\mathbb{Q}_p(E[2])/\mathbb{Q}_p$ is unramified, we have $\dim E(K_\mathfrak{p})[2]=2$, completing the proof.
\end{proof}

\begin{remark}
At primes $p\in \Sigma$ the cokernel of the local norm map is more complicated and depends on the reduction type of $E_d/\mathbb{Q}_p$. See \cite{MR597871} or \cite{MR664648} for more details. However, since the isomorphism class of $E_d$ over $\mathbb{Q}_p$ depends only on the class of $d$ in $\mathbb{Q}^{\times}_p/\mathbb{Q}_p^{\times 2}$, the same is true of the cokernel of the local norm map.  
\end{remark}
 
  To ease notation in what follows, we make the following definition. 
 
\begin{notation} \label{the function g of d}
For a squarefree integer $d$, write 
\[g(d):=\sum_{v\textup{ place of }\mathbb{Q}} \dim E_d(\mathbb{Q}_v)/N_{K_w/\mathbb{Q}_v}E_d(K_w) \]
where for a place $v$ of $\QQ$, we denote by $w$ a choice of extension of $v$ to $K$. Further, write 
	\[\omega_{E,K}(d):=\#\set{p\mid d~:~\substack{p\not\in \Sigma\\p\textnormal{ inert in }K/\QQ\\\dim E(\QQ_p)[2]=2}}.\]
\end{notation}

Note that by \Cref{selmer_lower_bound_lemma}, the function $g(d)-2$ gives a lower bound for $\dim \textup{Sel}^2(E_d/K)$. 

\begin{proposition} \label{Genus Theory as 2-Torsion Frobenii}
As $d$ varies in squarefree integers, we have
	\[g(d)=2\omega_{E,K}(d)+O(1)\]
where the implied constant depends only on the initial  curve $E$ and the quadratic field $K$. 
\end{proposition}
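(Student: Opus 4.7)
The plan is to decompose $g(d)$ according to the set $\Sigma$ of \Cref{Notation:Sigma}, handling the contribution from places outside $\Sigma$ via the explicit computation of \Cref{explicit_coker}, and bounding the contribution from places inside $\Sigma$ by a constant using a finiteness argument on $\mathbb{Q}_v^\times/\mathbb{Q}_v^{\times 2}$.

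First, I would write
\[
g(d) \;=\; \sum_{v\in \Sigma} \dim E_d(\mathbb{Q}_v)/N_{K_w/\mathbb{Q}_v}E_d(K_w) \;+\; \sum_{p\notin \Sigma} \dim E_d(\mathbb{Q}_p)/N_{K_\mathfrak{p}/\mathbb{Q}_p}E_d(K_\mathfrak{p}).
\]
For the second sum, \Cref{explicit_coker} pins down exactly which primes contribute: a prime $p\notin \Sigma$ gives a nonzero term if and only if $p\mid d$, $p$ is inert in $K/\mathbb{Q}$, and $\dim E(\mathbb{Q}_p)[2]=2$, in which case the term equals $2$. By definition of $\omega_{E,K}(d)$, the second sum therefore equals $2\omega_{E,K}(d)$ exactly.

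For the first sum, I would argue that each summand is bounded by a constant depending only on $E$ and $K$. For a fixed place $v\in\Sigma$, the quadratic twist $E_d/\mathbb{Q}_v$ depends, up to $\mathbb{Q}_v$-isomorphism, only on the class of $d$ in $\mathbb{Q}_v^\times/\mathbb{Q}_v^{\times 2}$, and the same is true of the extension $K\otimes_{\mathbb{Q}}\mathbb{Q}_v$ (independent of $d$ altogether). Hence $\dim E_d(\mathbb{Q}_v)/N_{K_w/\mathbb{Q}_v}E_d(K_w)$ depends only on $d$ modulo squares in $\mathbb{Q}_v^\times$. Since $\mathbb{Q}_v^\times/\mathbb{Q}_v^{\times 2}$ is a finite group (for archimedean $v$ it has order at most $2$; for non-archimedean $v$ it is finite with order depending on residue characteristic), there is a uniform bound $C_v$ for this local dimension valid for all squarefree $d$. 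Taking $C = \sum_{v\in \Sigma} C_v$, which is finite because $|\Sigma|<\infty$ and depends only on $E$ and $K$, bounds the first sum by $C$.

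Combining these two estimates yields
\[
g(d) \;=\; 2\omega_{E,K}(d) + O(1),
\]
with the implied constant $C$ depending only on $E$ and $K$. There is no real obstacle here; the whole proof is a straightforward bookkeeping argument given \Cref{explicit_coker}. The only point worth double-checking is that the local norm cokernels at places in $\Sigma$ are genuinely finite (they are: $E_d(\mathbb{Q}_v)/N_{K_w/\mathbb{Q}_v}E_d(K_w)$ is a continuous image of the compact group $E_d(\mathbb{Q}_v)/2E_d(\mathbb{Q}_v)$, which is finite by local Tate duality), so that $C_v<\infty$ for each $v$.
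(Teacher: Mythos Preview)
Your proof is correct and follows essentially the same approach as the paper: split $g(d)$ into the contribution from $\Sigma$ and from primes outside $\Sigma$, handle the latter exactly via \Cref{explicit_coker}, and bound the former using that $E_d/\mathbb{Q}_v$ depends only on the class of $d$ in the finite group $\mathbb{Q}_v^\times/\mathbb{Q}_v^{\times 2}$. The paper's proof is terser (two sentences), but the reasoning you supply for the $O(1)$ bound is exactly what is indicated in the remark immediately preceding the proposition.
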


\begin{proof}
Since the places in $\Sigma$ contribute $O(1)$ to $g(d)$, we may ignore them. The result now follows from \Cref{explicit_coker}.
\end{proof}

\subsection{The distribution of $g(d)$}

\begin{notation}
Let $\delta_{E,K}$ be the natural density of primes $p$ such that $\omega_{E,K}(p)=1$. 
\end{notation}

The possible values of $\delta_{E,K}$ may be computed by applying the Chebotarev density theorem to the extension $K(E[2])/\mathbb{Q}$ and are given by the following table:

\begin{table}[h]
	\begin{center}
	\begin{tabular}{|| c | c c c c c c ||}
				\hline
				$\gal(\QQ(E[2])/\QQ)$&$\set{1}$&$\substack{\ZZ/2\ZZ\\K\neq\QQ(E[2])}$&$\substack{\ZZ/2\ZZ\\K= \QQ(E[2])}$&$\ZZ/3\ZZ$&$\substack{S_3\\K\not\subseteq \QQ(E[2])}$&$\substack{S_3\\K\subseteq \QQ(E[2])}$\\
				\hline\hline
				$\delta_{E,K}$&$1/2$&$1/4$&$0$&$1/6$&$1/12$&$0$\\
				\hline
	\end{tabular}
	\end{center}
\end{table}
In the following result of  Erd\H{o}s--Kac type, we determine the asymptotic distribution of the function $g(d)$ when the 2-torsion field of $E$ does not interact with $K$.  Since  $\dim \textup{Sel}^2(E_d/K)\geq g(d)-2$ by \Cref{selmer_lower_bound_lemma}, this shows that $\dim  \textup{Sel}^2(E_d/K)$ is (in a precise sense) typically at least as large as a constant times $\log \log (d)$.

\begin{proposition}\label{thm:genus theory is normally distributed}
Suppose that $\QQ(E[2])\cap K=\QQ$. Further, for a squarefree integer $d$ write
	\begin{equation*}
		\mu(d):=2\delta_{E,K}\log\log|d| \quad\textup{ and } 
		 \quad\sigma(d):=\sqrt{4\delta_{E,K}\log\log|d|}.
	\end{equation*}
Then the quantity
\[\frac{g(d)-\mu(d)}{\sigma(d)}\]
follows a standard normal distribution.   That is, for all $z\in \mathbb{R}$ we have 
	\[\lim_{X\to\infty}\frac{
	\#\set{\vert d\vert\leq X \textnormal{ squarefree}~:~\frac{g(d)-\mu(d)}{\sigma(d)}\leq z}
	}{\#\set{\vert d\vert\leq X \textnormal{ squarefree}}}=\frac{1}{\sqrt{2\pi}}\int_{-\infty}^ze^{-t^2/2}dt.\]
\end{proposition}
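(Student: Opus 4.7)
The plan is to reduce the statement to a classical Erd\H{o}s--Kac theorem for the strongly additive function $\omega_{E,K}(d)$, and then to apply the method of moments. By \Cref{Genus Theory as 2-Torsion Frobenii} we have $g(d)=2\omega_{E,K}(d)+\bigo(1)$ with implied constant independent of $d$. Since $\sigma(d)\to\infty$ as $|d|\to\infty$, a straightforward approximation argument shows that the statement reduces to proving that
\[
\frac{\omega_{E,K}(d)-\delta_{E,K}\log\log |d|}{\sqrt{\delta_{E,K}\log\log|d|}}
\]
is asymptotically standard normal as $|d|\to\infty$ over squarefree $d$.

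The function $\omega_{E,K}$ is strongly additive: $\omega_{E,K}(d)=\sum_{p\mid d} f(p)$, where $f(p)=1$ if $p\notin\Sigma$, $p$ is inert in $K/\QQ$, and $\dim E(\QQ_p)[2]=2$, and $f(p)=0$ otherwise. The hypothesis $\QQ(E[2])\cap K=\QQ$ gives a natural identification $\gal(K(E[2])/\QQ)\cong G\times \gal(\QQ(E[2])/\QQ)$, so that the two Chebotarev conditions defining $f(p)=1$ (``$\textup{Frob}_p$ is nontrivial in $G$'' and ``$\textup{Frob}_p$ is trivial in $\gal(\QQ(E[2])/\QQ)$'') are independent. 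By the Chebotarev density theorem the set $\{p:f(p)=1\}$ has natural density exactly $\delta_{E,K}>0$ (as tabulated), and by partial summation
\[
\sum_{p\leq X} f(p)/p = \delta_{E,K}\log\log X + \bigo(1).
\]

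With this Chebotarev input in hand, I would carry out a standard Erd\H{o}s--Kac style argument via the method of moments, paralleling closely \cite{MR2276261}*{\S5}. Concretely, one truncates at a slowly growing $y=y(X)$ (say $y=(\log X)^{A}$) and writes $\omega_{E,K}(d)=\omega_{E,K,\leq y}(d)+\omega_{E,K,>y}(d)$; a $L^{2}$ (variance) estimate based on the sum above shows the tail is negligible relative to $\sigma(d)$ for all but $o(X)$ squarefree $d\leq X$. For the truncated piece, one expands
\[
\sum_{\substack{|d|\leq X\\d\text{ squarefree}}}\bigl(\omega_{E,K,\leq y}(d)-\delta_{E,K}\log\log X\bigr)^{k}
\]
into a sum over $k$-tuples of primes $p_{1},\ldots,p_{k}\leq y$, and evaluates each inner sum by M\"obius inversion on the squarefree condition. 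The near-independence of the events $p\mid d$ for squarefree $d$ up to $X$, together with the boundedness $f(p)\in\{0,1\}$, ensures that after dividing by $\sigma(X)^{k}$ and letting $X\to\infty$ these moments tend to the Gaussian moments $\mathbb{E}[Z^{k}]$ for $Z$ standard normal.

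The main technical obstacle is uniformly controlling the error terms that arise from the Chebotarev density theorem (effective versions) and from the squarefree sieve, over the range of primes and tuples relevant to each fixed moment. However, because one only needs convergence of each fixed moment $k\geq 0$, rather than quantitative rates, the classical unconditional error terms in Chebotarev combined with the Rankin upper-bound sieve are amply sufficient; this is essentially the content of the Kubilius--Shapiro generalization of Erd\H{o}s--Kac to strongly additive functions supported on a positive-density set of primes, and (as noted in the paper's overview) was carried out for a directly analogous arithmetic problem in \cite{MR2276261}*{\S5}.
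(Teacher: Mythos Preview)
Your strategy is the same as the paper's: reduce from $g(d)$ to $2\omega_{E,K}(d)$ via \Cref{Genus Theory as 2-Torsion Frobenii}, then prove an Erd\H{o}s--Kac theorem for the strongly additive function $\omega_{E,K}$ by the method of moments with a truncation. The paper packages the moment computation by quoting \cite{granville2007sieving}*{Prop.~4} as a black box (with $\mathcal{A}$ the squarefree integers up to $X$ and truncation at $X^{\epsilon(X)}$, $\epsilon(X)=1/\log\log\log X$), rather than writing out the expansion into $k$-tuples of primes directly as you sketch; but the underlying argument is the same.

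One correction: your reference to \cite{MR2276261}*{\S5} is misplaced. That section of Fouvry--Kl\"uners treats the large character-sum estimates used in the paper's \Cref{big analysis section} to show $\textup{Sel}_{\CCC_d}(\mathbb{Q},E_d[2])$ vanishes generically; it is not an Erd\H{o}s--Kac moment computation for a strongly additive function. For the present proposition the appropriate citations are the Granville--Soundararajan framework \cite{granville2007sieving} (as the paper does) or the classical Kubilius--Shapiro theorem you allude to.
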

\begin{proof}
	Let $\gamma(d):=2\omega_{E,K}(d)$. Since by \Cref{Genus Theory as 2-Torsion Frobenii} this differs from $g(d)$ by a bounded amount, it is enough to prove the same assertion with $g$ replaced by $\gamma$.  Moreover, since this function satisfies $\gamma(d)=\gamma(-d)$, it is enough to prove that $\gamma$ has this distribution on the positive squarefree integers.  We will do this by combining the method of moments with \cite{granville2007sieving}*{Prop. 4}. Specifically, in the notation of that proposition, take 
\[\mathcal{A}:=\set{d\textnormal{ squarefree}~:~1\leq d\leq X}\]
and 
\[\mathcal{P}:=\set{p\textnormal{ prime}: p\leq X^{\epsilon(X)}}\]
 for a function $\epsilon(X)=o(1)$ to be chosen later.  Further, let $\gamma_{\mathcal{P}}$ be the strongly additive function  which agrees with $\gamma$ for $p\in\mathcal{P}$, and takes the value $0$ on primes $p\not\in\mathcal{P}$.  Note that, still using the notation of \cite{granville2007sieving}*{Prop. 4} we can take
 \[h(d)=\prod_{p\mid d}\frac{p}{p+1}, \quad\quad   r_d\ll d\sqrt{X},   \quad\quad x=\frac{6X}{\pi^2}+O(\sqrt{X}), \quad \textup{ and }\quad M=2,\]
 along with 
 \[\mu_{\mathcal{P}}(\gamma) =\sum_{p\in\mathcal{P}}2\omega_{E,K}(p)\frac{1}{p+1}\]
 and 
 \[\sigma_{\mathcal{P}}(\gamma)^2 =\sum_{p\in\mathcal{P}}4\omega_{E,K}(p)\frac{p}{(p+1)^2}.\]
Using the explicit form of the Chebotarev density theorem given in \cite{MR0447191}, standard arguments give 
\[\mu_\mathcal{P}(\gamma)=2\delta_{E,K} \log \log (X)+O(\log \epsilon(X))\quad \textup{ and }\quad \sigma_{\mathcal{P}}(\gamma)^2 = 4\delta_{E,K}\log \log(X)+O(\log \epsilon(X)).\]
	 
Taking $X$ sufficiently large in the conclusion of \cite{granville2007sieving}*{Prop. 4}  shows that for any $k\geq 0$ we have
	\begin{align*}
	&\frac{1}{\#\mathcal{A}}\sum_{d\in\mathcal{A}}\left(\gamma_{\mathcal{P}}(d)-\mu_{\mathcal{P}}(\gamma)\right)^k
	\\&\begin{cases}
	=(k-1)!!\sigma_{\mathcal{P}}(\gamma)^k+O_k\left(\sigma_{\mathcal{P}}(\gamma)^{k-2}+\log\log(X)^kX^{2k\epsilon(X)-1/2}\right)&k\textnormal{ even,}
	\\\ll_k\sigma_{\mathcal{P}}(\gamma)^{k-1}+\log\log(X)^kX^{2k\epsilon(X)-1/2}&k\textnormal{ odd.}
	\end{cases}
	\end{align*}
In particular, the $k$th moments of $(\gamma_{\mathcal{P}}-\mu_{\mathcal{P}}(\gamma))/\sigma_{\mathcal{P}}(\gamma)$ converge to those of a normal random variable with mean 0 and variance 1.  Note that for $n\leq X$ we have
	\[\gamma(n)-\gamma_{\mathcal{P}}(n)\leq 2\#\set{p\mid n~:~p>X^{\epsilon(X)}}\leq \frac{\log(n)}{\epsilon(X)\log(X)}\leq \epsilon(X)^{-1}.\]
Induction on $k$ (cf. \cite{granville2007sieving}*{Deduction of Theorem 1}) now shows that, taking $\epsilon(X)=\log\log\log(X)^{-1}$, we have
	\[\frac{1}{\#\mathcal{A}}\sum_{d\in\mathcal{A}}(\gamma(d)-2\delta_{E,K}\log \log(X))^k=\frac{1}{\#\mathcal{A}}\sum_{d\in\mathcal{A}}(\gamma_{\mathcal{P}}(d)-\mu_{\mathcal{P}}(\gamma))^k+o(\log\log(X)^{k/2}).\]
Thus the $k$th moments of $(\gamma-2\delta_{E,K}\log\log(X))/\sqrt{4\delta_{E,K}\log\log(X)}$ converge as $X\to\infty$ to those of the standard normal distribution.  It then follows from \cite{MR1324786}*{Theorem 30.2, Example 30.1} that $\gamma$ becomes normally distributed with mean $2\delta_{E,K}\log\log(X)$ and variance $4\delta_{E,K}\log\log(X)$ in the limit $X\to \infty$, i.e.
	\[\lim_{X\to\infty}\frac{
	\#\set{\vert d\vert\leq X \textnormal{ squarefree}~:~\frac{\gamma(d)-\delta_{E,K}\log\log(X)}{\sqrt{4\delta_{E,K}\log\log(X)}}\leq z}
	}{\#\set{\vert d\vert\leq X \textnormal{ squarefree}}}=\frac{1}{\sqrt{2\pi}}\int_{-\infty}^ze^{-t^2/2}dt.
	\]
The result now follows.
\end{proof}

\begin{remark}
In the last step of the proof  we have used the standard result that a function $f$ becomes normal as $X\to\infty$ with mean $\mu(X):=C_0\log\log(X)$ and variance $\sigma^2(X):=C_1\log\log(X)$ for some constants $C_0,C_1>0$ if and only if the function $(f(d)-\mu(d))/\sigma(d)$ becomes normal as $X\to\infty$ with mean $0$ and variance $1$.  This can be proved directly. 
\end{remark}

\begin{remark}
	In the case that $K\subseteq\QQ(E[2])$, the function $\gamma(d)$ in the proof of \Cref{thm:genus theory is normally distributed} is $0$.  In particular, by  \Cref{Genus Theory as 2-Torsion Frobenii}, we have that the $k$th moments of $g(d)$ are bounded.
\end{remark}

We have the following basic corollary showing that, for $100\%$ of $d$, $\dim\textup{Sel}^2(E_d/K)$ is larger than any fixed integer whenever the 2-torsion of $E$ field does not interact with $K$. This is in stark contrast with the situation for the Selmer groups $\textup{Sel}^2(E_d/\mathbb{Q})$, whose distribution is determined by Kane in \cite{kane2013ranks}*{Thm. 3}.

\begin{cor} \label{is_usually_large}
	If $K\cap \QQ(E[2])=\QQ$, then for any $z\in\mathbb{R}$ we have
	\[\lim_{X\to\infty}\frac{\#\set{\vert d\vert\leq X\textnormal{ squarefree}~:~\dim(\sel{2}(E_d/K))\leq z}}{\#\set{\vert d\vert\leq X\textnormal{ squarefree}}}=0.\]
\end{cor}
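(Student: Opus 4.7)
The plan is to deduce this corollary directly from the lower bound \Cref{selmer_lower_bound_lemma} combined with the distributional result \Cref{thm:genus theory is normally distributed}. Since $\dim \sel{2}(E_d/K)\geq g(d)-2$ for every squarefree $d$, it suffices to show that for any fixed real number $M$ the density of squarefree $d$ with $g(d)\leq M$ is zero. Thus the corollary is really a statement about the function $g$ alone.

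First I would check that the hypothesis $K\cap \QQ(E[2])=\QQ$ forces $\delta_{E,K}>0$. Running through the table of possible images of $\gal(\QQ(E[2])/\QQ)$, the only cases giving $\delta_{E,K}=0$ are the two rows labelled ``$K=\QQ(E[2])$'' and ``$K\subseteq \QQ(E[2])$'', each of which implies $K\subseteq \QQ(E[2])$ and hence $K\cap \QQ(E[2])=K\neq \QQ$, contradicting the assumption. In every other case the displayed value of $\delta_{E,K}$ is strictly positive, so the hypotheses of \Cref{thm:genus theory is normally distributed} are satisfied with a \emph{positive} constant $\delta_{E,K}$.

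With $\delta_{E,K}>0$, set $\mu(d)=2\delta_{E,K}\log\log|d|$ and $\sigma(d)=\sqrt{4\delta_{E,K}\log\log|d|}$ as in \Cref{thm:genus theory is normally distributed}, and fix $z\in \mathbb{R}$. By \Cref{selmer_lower_bound_lemma}, the condition $\dim \sel{2}(E_d/K)\leq z$ implies $g(d)\leq z+2$, which rearranges to
\[
\frac{g(d)-\mu(d)}{\sigma(d)}\;\leq\; \frac{z+2-\mu(d)}{\sigma(d)}.
\]
For any fixed $z_0\in\mathbb{R}$, as $|d|\to\infty$ the right hand side tends to $-\infty$ (because $\mu(d)\to\infty$ while $\sigma(d)\to\infty$ only like the square root of $\log\log|d|$), so for $X$ large enough the event $\dim\sel{2}(E_d/K)\leq z$ is contained in the event $(g(d)-\mu(d))/\sigma(d)\leq z_0$ for all $|d|\leq X$ outside a set of size $o(X)$. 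Applying \Cref{thm:genus theory is normally distributed} to the right hand side gives
\[
\limsup_{X\to\infty}\frac{\#\{|d|\leq X\text{ squarefree}:\dim\sel{2}(E_d/K)\leq z\}}{\#\{|d|\leq X\text{ squarefree}\}}\;\leq\;\frac{1}{\sqrt{2\pi}}\int_{-\infty}^{z_0}e^{-t^2/2}\,dt.
\]
Letting $z_0\to -\infty$ forces the limsup to be $0$, yielding the claim.

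There is no significant obstacle: the corollary is essentially a one-sided tail estimate for an Erdős--Kac type distribution. The only point that deserves attention is the case analysis verifying that the hypothesis $K\cap \QQ(E[2])=\QQ$ guarantees $\delta_{E,K}>0$, without which the normalisation in \Cref{thm:genus theory is normally distributed} would be vacuous.
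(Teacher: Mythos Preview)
Your proof is correct and follows the same approach as the paper: combine the lower bound $\dim\sel{2}(E_d/K)\geq g(d)-2$ from \Cref{selmer_lower_bound_lemma} with the distributional result \Cref{thm:genus theory is normally distributed}. The paper's version is terser, leaving the tail argument and the verification that $\delta_{E,K}>0$ implicit, but your explicit treatment of these points is sound.
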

\begin{proof}
By \Cref{selmer_lower_bound_lemma} we have $\dim \textup{Sel}^2(E_d/K)\geq g(d)-2$. The result now follows from \Cref{thm:genus theory is normally distributed}.
\end{proof}

\begin{remark}
By \Cref{weil_res_quad_twist}, \Cref{is_usually_large} also applies with $\textup{Sel}^2(E_d/K)$ replaced by the Selmer groups $\textup{Sel}^2\left((\textup{Res}_{K/\mathbb{Q}}E)_d/\mathbb{Q}\right)$ associated to the quadratic twists of the Weil restriction of $E$ from $K$ to $\mathbb{Q}$. 
\end{remark}

\section{Main results}
\label{sec:main results}

Recall that $K=\QQ(\sqrt{\theta})/\QQ$ is a quadratic extension with $G=\gal(K/\QQ)$.  From this section onwards, we make the restriction that our choice of elliptic curve $E/\QQ$ has $E[2]\subseteq E(\QQ)$.

 For a squarefree integer $d$, a consequence of \Cref{wiles greenberg application,prop:ResCoresExactSequence}   is that, roughly speaking, the auxiliary Selmer group $\textup{Sel}_{\CCC_d}(\mathbb{Q},E_d[2])$ controls  the discrepancy between $\dim \textup{Sel}^2(E_d/K)$ and the function $g(d)$ of \Cref{the function g of d}. Thus to improve on \Cref{thm:genus theory is normally distributed} and gain full control of the Selmer groups $\textup{Sel}^2(E_d/K)$ as $d$ varies, it suffices to control these auxiliary groups. We achieve this under the assumption that all $2$-torsion of $E$ is   defined over $\mathbb{Q}$. Specifically, across Sections \ref{case of full 2-tors section} and \ref{big analysis section} we will prove that, under this assumption, the Selmer group $\textup{Sel}_{\CCC_d}(\mathbb{Q},E_d[2])$ is trivial for $100\%$ of $d$. That is:

\begin{theorem} \label{main statistical theorem written in sec 5}
We have
\[\lim_{X\rightarrow \infty}\frac{\#\{d\textup{ squarefree }\mid ~|d|<X,~\textup{Sel}_{\CCC_d}(\mathbb{Q},E_d[2])=0\}}{\#\{d\textup{ squarefree }\mid ~|d|<X\}}=1.\]
\end{theorem}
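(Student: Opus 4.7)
The plan is to reduce the triviality of $\textup{Sel}_{\CCC_d}(\mathbb{Q},E_d[2])$ to an analytically tractable character-sum problem. Since $E[2]\subseteq E(\mathbb{Q})$, full $2$-descent realises $\textup{Sel}^2(E_d/\mathbb{Q})$ as an explicit subgroup of $(\mathbb{Q}^\times/\mathbb{Q}^{\times 2})^2$ cut out by Hilbert-symbol conditions, and similarly for $\textup{Sel}^2(E_{d\theta}/\mathbb{Q})$. By \Cref{the properties of the selmer structure C}(iii), $\textup{Sel}_{\CCC_d}(\mathbb{Q},E_d[2])$ is their intersection, whose classes are supported (away from the fixed set $\Sigma$ of \Cref{Notation:Sigma}) on the prime divisors of $d\theta$, with the local conditions at these primes governed by the cokernel-of-local-norm computation of \Cref{explicit_coker}.

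To make the resulting system combinatorially clean, I would replace $\textup{Sel}_{\CCC_d}$ by a coarser subgroup $S_d\subseteq \mathbb{Q}^\times/\mathbb{Q}^{\times 2}$ obtained by projecting the explicit $(\mathbb{Q}^\times/\mathbb{Q}^{\times 2})^2$-description onto a single factor after imposing a maximal subset of the local constraints that remains easy to describe, arranged so that non-vanishing of $\textup{Sel}_{\CCC_d}$ forces non-vanishing of $S_d$. Since $|S_d|$ is a power of $2$ with $|S_d|\ge 1$, it suffices to show
\[
\sum_{\substack{|d|\le X \\ d\text{ squarefree}}}\bigl(|S_d|-1\bigr) \;=\; o(X) \qquad (X\to\infty).
\]

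Expanding $|S_d|-1$ as a count of candidate non-trivial classes, and translating each defining local condition at a prime $p\mid d$ into a Legendre symbol via Hilbert-symbol formulas (for the $2$-torsion structure of $E_d$ over $\mathbb{Q}_p$), the sum above rewrites as a bounded linear combination, indexed by factorisations $d = abm$ with $m$ a divisor of a modulus depending only on $\Sigma$, of sums of the shape
\[
\sum_{\substack{|d|\le X,\, d = abm \\ d\text{ squarefree}}} \prod_{\substack{p\mid a \\ q\mid b}} \biggl(\frac{p}{q}\biggr),
\]
subject to congruence conditions modulo $8\theta\cdot(\text{disc}\,E)$. These are precisely the Jacobi sums to which the Heath-Brown/Fouvry--Kl\"uners machinery applies; this is also the reason for passing to $S_d$ rather than working with $\textup{Sel}_{\CCC_d}$ directly, as the latter would produce more tangled character sums.

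The main obstacle is the uniform estimation of these sums. Following the strategy of \cite{MR2276261}*{\S5}, I would use quadratic reciprocity to interchange $a$ and $b$, thereby reducing the inner summation to a short sum of a non-trivial real character modulo $a$; such sums admit cancellation by Polya--Vinogradov (or more efficiently by the large sieve for real characters). The contribution from "bad" $a$, for which the induced character is principal, must be controlled separately: these $a$ are products of primes with a prescribed splitting behaviour in a fixed abelian extension, and a Chebotarev-density count shows that they form a thin subfamily contributing $o(X)$. Combining the generic and thin contributions yields $\sum_{|d|\le X}|S_d| = \#\{|d|\le X\text{ squarefree}\}+o(X)$, so $S_d=0$ (hence $\textup{Sel}_{\CCC_d}(\mathbb{Q},E_d[2])=0$) for $100\%$ of squarefree $d$.
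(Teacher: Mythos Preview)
Your overall strategy---reduce to a one-variable proxy $S_d\subseteq\mathbb{Q}^\times/\mathbb{Q}^{\times 2}$, expand $|S_d|$ as a sum of Jacobi symbols, and apply Heath-Brown/Fouvry--Kl\"uners analysis---is exactly what the paper does. But there is a genuine gap in the plan as written.

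The claim ``it suffices to show $\sum_{|d|\le X}(|S_d|-1)=o(X)$'' is false. In projecting to a single coordinate you necessarily discard the Selmer conditions at primes $p\mid d$ lying in one of the Chebotarev classes (in the paper's notation, the set $\mathcal{P}_0$ of primes split in $K$ but non-split in $\mathbb{Q}(\sqrt{\alpha\beta})$). Twists $d$ with abnormally many prime factors in $\mathcal{P}_0$ then have $|S_d|$ artificially large, and a direct calculation of the ``principal'' term in the Jacobi-sum expansion shows the unweighted first moment is $\asymp X$, not $o(X)$. The paper's fix is to compute instead the \emph{weighted} moment
\[
\sum_{|d|\le X}\gamma^{\omega_2(d)-\omega_0(d)}\bigl(|S_d|-1\bigr)
\]
for a suitable $\gamma>1$, which damps the contribution of such $d$; one then shows separately (via Shiu's theorem on multiplicative functions in short intervals) that $\omega_2(d)>\omega_0(d)$, and hence the weight exceeds $1$, for $100\%$ of $d$. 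Without this two-step device the argument does not close.

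Two smaller points. First, your statement ``non-vanishing of $\textup{Sel}_{\CCC_d}$ forces non-vanishing of $S_d$'' is not quite right: a nontrivial element of $\textup{Sel}_{\CCC_d}\subseteq(\mathbb{Q}^\times/\mathbb{Q}^{\times 2})^2$ could have trivial first coordinate. The paper handles this by running the argument twice, swapping the labels of the $2$-torsion points to interchange the roles of the coordinates. Second, the analytic toolkit actually used is not Polya--Vinogradov/large sieve but rather Heath-Brown's double-oscillation estimate for bilinear character sums together with the Siegel--Walfisz theorem; the former handles pairs of linked variables that are both large, the latter handles the case of one large and one small linked variable. Your ``bad $a$'' heuristic via Chebotarev is morally in the right direction but would need to be replaced by this finer family decomposition.
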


\begin{remark}
We will in fact show that the number of squarefree $d$ with $|d|<X$ for which $\textup{Sel}_{\CCC_d}(\mathbb{Q},E_d[2])\neq 0$ is  $\ll X\log(X)^{-0.0394}$. See \Cref{main statistical theorem}. It is likely that with more work this bound could be improved significantly, however we have not attempted to do so.
\end{remark}

\begin{remark}
By \Cref{the properties of the selmer structure C} we have  
\[\textup{Sel}_{\CCC_d}(\mathbb{Q},E_d[2])=\textup{Sel}^2(E_d/\mathbb{Q})\cap \textup{Sel}^2(E_{d\theta}/\mathbb{Q})\]
where the intersection is taken inside $H^1(\mathbb{Q},E[2])$. Thus \Cref{main statistical theorem written in sec 5} shows that for $100\%$ of squarefree $d$, the groups $\textup{Sel}^2(E_d/\mathbb{Q})$ and $\textup{Sel}^2(E_{d\theta}/\mathbb{Q})$ share only the identity element. 
\end{remark}

Before embarking on the proof, we use the results of previous sections to draw several consequences of this theorem. 

\subsection{Statistical results for $2$-Selmer groups}

An immediate consequence of \Cref{main statistical theorem written in sec 5} is that the conclusion of \Cref{what happens when selc vanishes} holds for $100\%$ of squarefree $d$ when we have full 2-torsion.

\begin{cor} \label{cor holds a lot}
For $100\%$ of squarefree $d$ (ordered by absolute value), the $\textup{Gal}(K/\mathbb{Q})$-action on $\textup{Sel}^2(E_d/K)$ is trivial, and 
 we have
\begin{equation} \label{Selmer formula 100 percent}
\dim \textup{Sel}^2(E_d/K) =-2+ \sum_{v~\textup{place of }\mathbb{Q}}\dim E_d(\mathbb{Q}_v)/N_{K_w/\mathbb{Q}_v}E_d(K_w).
\end{equation}
\end{cor}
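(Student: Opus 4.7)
The plan is to deduce this corollary directly from the combination of Theorem \ref{main statistical theorem written in sec 5} and Corollary \ref{what happens when selc vanishes} applied to the quadratic twist $E_d$. Since Theorem \ref{main statistical theorem written in sec 5} asserts that $\textup{Sel}_{\CCC_d}(\mathbb{Q},E_d[2])=0$ for $100\%$ of squarefree $d$, we may restrict attention to such $d$. For any such $d$, parts (ii) and (iii) of Corollary \ref{what happens when selc vanishes} (applied to $E_d$ in place of $E$) immediately give triviality of the $G$-action on $\textup{Sel}^2(E_d/K)$ together with the formula
\[
\dim \textup{Sel}^2(E_d/K) = - \dim \left(\frac{E_d(\mathbb{Q})[2]}{N_{K/\mathbb{Q}}(E_d(K)[2])} \right)+ \sum_{v}\dim E_d(\mathbb{Q}_v)/N_{K_w/\mathbb{Q}_v}E_d(K_w).
\]

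It therefore remains only to identify the first term on the right with $-2$. The key observation is that the standing assumption $E[2]\subseteq E(\mathbb{Q})$, combined with the fact (recalled in \Cref{subsec:quadratic twists}) that the twisting isomorphism $\psi_d\colon E\to E_d$ restricts to an isomorphism of $G_\mathbb{Q}$-modules on $2$-torsion, implies that $E_d[2]\subseteq E_d(\mathbb{Q})$ as well, so that $\dim E_d(\mathbb{Q})[2]=2$. Moreover, since each point $P\in E_d(K)[2]=E_d[2]$ is already $G$-invariant, the local norm acts as multiplication by $2$ on $2$-torsion and hence $N_{K/\mathbb{Q}}(E_d(K)[2])=0$. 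Thus the quotient $E_d(\mathbb{Q})[2]/N_{K/\mathbb{Q}}(E_d(K)[2])$ has dimension $2$, giving the claimed $-2$.

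Combining these observations yields the stated formula \eqref{Selmer formula 100 percent} and the triviality of the Galois action for $100\%$ of squarefree $d$. The only substantive ingredient is Theorem \ref{main statistical theorem written in sec 5}, whose proof is deferred to later sections; the remainder of the argument is a direct bookkeeping exercise, and no further obstacle arises at this stage.
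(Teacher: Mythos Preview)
Your proposal is correct and takes essentially the same approach as the paper, which simply notes that the corollary is an immediate consequence of Theorem~\ref{main statistical theorem written in sec 5} combined with Corollary~\ref{what happens when selc vanishes}. Your additional explanation of why the constant equals $-2$ (namely that $E_d(\mathbb{Q})[2]=E_d[2]$ has dimension $2$ and $N_{K/\mathbb{Q}}$ kills all of $E_d(K)[2]$ since each $2$-torsion point is $G$-fixed) is a helpful unpacking of a detail the paper leaves implicit.
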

 
 As a consequence, we can upgrade \Cref{thm:genus theory is normally distributed} to the following Erd\H{o}s--Kac type result  determining the distribution of the full $2$-Selmer group.

\begin{cor}\label{cor:Erdos Kac For Selmer}  
	The quantity 
	\[\frac{\dim \textup{Sel}^2(E_d/K)-\log\log|d|}{\sqrt{2\log\log|d|}}\] follows a standard normal distribution. That is,  for every $z\in\mathbb{R}$ we have 
	\[\lim_{X\to\infty}\frac{
	\#\set{\vert d\vert\leq X \textnormal{ squarefree }: \ \frac{\dim \textup{Sel}^2(E_d/K)-\log\log|d|}{\sqrt{2\log\log|d|}}\leq z}
	}{
	\#\set{\vert d\vert\leq X \textnormal{ squarefree }}
	}=\frac{1}{\sqrt{2\pi}}\int_{-\infty}^ze^{-t^2/2}dt.\]
\end{cor}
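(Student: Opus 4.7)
The plan is to combine \Cref{cor holds a lot}, which gives an exact formula for $\dim \textup{Sel}^2(E_d/K)$ on a density-one set, with the Erd\H{o}s--Kac type result \Cref{thm:genus theory is normally distributed} for the function $g(d)$. First I would observe that since $E[2]\subseteq E(\mathbb{Q})$, we have $\textup{Gal}(\mathbb{Q}(E[2])/\mathbb{Q}) = \{1\}$, so a priori $K\cap \mathbb{Q}(E[2])=\mathbb{Q}$, and the table immediately preceding \Cref{thm:genus theory is normally distributed} gives $\delta_{E,K}=1/2$. Substituting this into the statement of that proposition shows that $g(d)$ is asymptotically normal with mean $\log\log|d|$ and variance $2\log\log|d|$, precisely matching the normalisation in the statement of the corollary.

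Next, \Cref{cor holds a lot} provides, on a set $\mathcal{E}\subseteq \mathbb{Z}$ of squarefree integers of density one, the equality $\dim \textup{Sel}^2(E_d/K) = g(d)-2$. The additive constant $-2$ is absorbed into the error since $(g(d)-2-\log\log|d|)/\sqrt{2\log\log|d|}$ differs from $(g(d)-\log\log|d|)/\sqrt{2\log\log|d|}$ by $-2/\sqrt{2\log\log|d|}\to 0$ uniformly for $|d|\to\infty$.

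Finally I would put the two ingredients together. For $X$ large, split the counting function
\[
\#\Bigl\{|d|\leq X \text{ squarefree} : \tfrac{\dim\textup{Sel}^2(E_d/K)-\log\log|d|}{\sqrt{2\log\log|d|}}\leq z\Bigr\}
\]
according to whether $d\in\mathcal{E}$ or not. The complement of $\mathcal{E}$ has density zero and so contributes $o(X)$. On $\mathcal{E}$, the above ratio equals $(g(d)-2-\log\log|d|)/\sqrt{2\log\log|d|}$, which by the bounded shift argument above has the same asymptotic distribution as $(g(d)-\log\log|d|)/\sqrt{2\log\log|d|}$. Applying \Cref{thm:genus theory is normally distributed} (with $\delta_{E,K}=1/2$) then yields exactly the normal limit.

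There is no real obstacle, as the heavy lifting is already done: the algebraic input (the exact local formula on a density-one set) comes from \Cref{cor holds a lot}, which in turn rests on the main analytic \Cref{main statistical theorem written in sec 5}, and the analytic input (normality of the local quantity $g(d)$) is \Cref{thm:genus theory is normally distributed}. The only minor technical point is the elementary verification that replacing a quantity by one differing from it by $O(1)$ on a density-one subsequence does not perturb an Erd\H{o}s--Kac limit with $\sqrt{\log\log|d|}$ scaling.
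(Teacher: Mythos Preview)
Your proposal is correct and follows essentially the same route as the paper: invoke \Cref{cor holds a lot} to get $\dim \textup{Sel}^2(E_d/K)=g(d)-2$ on a density-one set, note that $E[2]\subseteq E(\mathbb{Q})$ forces $\delta_{E,K}=1/2$, and then apply \Cref{thm:genus theory is normally distributed}. The paper's proof is simply a terser version of what you wrote, omitting the explicit justification that the $-2$ shift and the density-zero exceptional set are harmless.
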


\begin{proof}
	By \Cref{cor holds a lot}, amongst all squarefree integers $d$ with $|d|<X$, outside a set of cardinality   $o(X)$  we have
	\begin{eqnarray*} 
	\dim \textup{Sel}^2(E_d/K)=-2+ \sum_{v~\textup{place of }\mathbb{Q}}\dim E_d(\mathbb{Q}_v)/N_{K_w/\mathbb{Q}_v}E_d(K_w)=g(d)-2.
	\end{eqnarray*}
	The result now follows from \Cref{thm:genus theory is normally distributed} noting that since $E[2]\subseteq E(\QQ)$, we have that $\delta_{E,K}=1/2$.
\end{proof}

\subsection{Statistical results for Shafarevich--Tate groups}
A consequence of \Cref{cor:Erdos Kac For Selmer} is that $\dim \textup{Sel}^2(E_d/K)$ typically has size around $\log \log|d|$. By contrast, when $E$ has no cyclic $4$-isogeny defined over $\mathbb{Q}$, the dimensions of the $2$-Selmer groups of the $E_d$ over $\mathbb{Q}$ are known to be bounded on average thanks to a result of Kane \cite{kane2013ranks}*{Thm. 3}. Thus in this case   the majority of $\dim \textup{Sel}^2(E_d/K)$ is attributable to the Shafarevich--Tate group. Formalising this observation allows us to prove the analogue of  \Cref{cor:Erdos Kac For Selmer} for Shafarevich--Tate groups also. 
 
 \begin{cor}\label{sha corollary}
Assume that $E$ has no cyclic $4$-isogeny defined over $\mathbb{Q}$. Then the quantity 
\[\frac{\dim \Sha(E_d/K)[2]-\log\log|d| }{\sqrt{2\log\log|d|}}\]
follows a standard normal distribution.   That is,  for all $z\in \mathbb{R}$ we have 
	\[\lim_{X\to\infty}\frac{
	\#\set{\vert d\vert\leq X \textnormal{ squarefree}~:~\frac{\dim \Sha(E_d/K)[2]-\log\log|d|}{\sqrt{2\log\log|d|}}\leq z}
	}{\#\set{\vert d\vert\leq X \textnormal{ squarefree}}}=\frac{1}{\sqrt{2\pi}}\int_{-\infty}^ze^{-t^2/2}dt.\]
\end{cor}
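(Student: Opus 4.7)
The plan is to leverage the Erd\H{o}s--Kac distribution for $2$-Selmer groups over $K$ established in \Cref{cor:Erdos Kac For Selmer}, and translate it into a statement about Shafarevich--Tate groups by bounding the Mordell--Weil contribution $\dim E_d(K)/2E_d(K)$. Concretely, the tautological short exact sequence
\[0 \longrightarrow E_d(K)/2E_d(K) \longrightarrow \textup{Sel}^2(E_d/K) \longrightarrow \Sha(E_d/K)[2] \longrightarrow 0\]
yields the identity
\[\dim \Sha(E_d/K)[2] = \dim \textup{Sel}^2(E_d/K) - \dim E_d(K)/2E_d(K),\]
so it will suffice to show that for every $\epsilon > 0$, $\dim E_d(K)/2E_d(K) \leq \epsilon\sqrt{\log\log|d|}$ for $100\%$ of squarefree $d$. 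Granted this, \Cref{cor:Erdos Kac For Selmer} together with a Slutsky-type argument finishes the proof: the normalised Selmer dimensions converge in distribution to the standard normal, while the Mordell--Weil contribution divided by $\sqrt{2\log\log|d|}$ converges to $0$ in probability.

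Since $E[2] \subseteq E(\QQ)$ we have $E_d(K)[2] = E_d[2]$ of $\F_2$-dimension $2$, and combined with the decomposition of $E_d(K) \otimes \QQ$ into $G$-invariants and $G$-anti-invariants this yields
\[\dim E_d(K)/2E_d(K) = \rk E_d(\QQ) + \rk E_{d\theta}(\QQ) + 2.\]
Each rank is bounded above by the corresponding $2$-Selmer dimension over $\QQ$. Thus the task is further reduced to showing that, for every $\epsilon > 0$, both $\dim \textup{Sel}^2(E_d/\QQ)$ and $\dim \textup{Sel}^2(E_{d\theta}/\QQ)$ are at most $\epsilon\sqrt{\log\log|d|}$ for $100\%$ of squarefree $d$.

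Under the hypothesis that $E$ has no cyclic $4$-isogeny over $\QQ$---a Galois-module condition on $E[4]$ unchanged by quadratic twist---Kane's theorem \cite{kane2013ranks}*{Thm. 3} provides a genuine limiting probability distribution on $\ZZ_{\geq 0}$ for $\dim \textup{Sel}^2(E_d/\QQ)$ as $d$ ranges over squarefree integers ordered by absolute value. By tightness, for every $\delta > 0$ there exists $N$ such that the proportion of squarefree $|d|\leq X$ with $\dim \textup{Sel}^2(E_d/\QQ) > N$ is eventually at most $\delta$; since $\epsilon\sqrt{\log\log X}$ eventually exceeds $N$, this establishes the claim for the $E_d$ family. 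To transfer it to $\{E_{d\theta}\}$, one notes that $E_{d\theta}$ depends on $d$ only through the squarefree part of $d\theta$, and that the map $d \mapsto d\theta/\gcd(d,\theta)^2$ is at most $2^{\omega(\theta)}$-to-one from squarefree integers with $|d|\leq X$ into squarefree integers of absolute value at most $|\theta|X$, so Kane's bound on the latter family implies the analogous bound on the former up to a multiplicative constant.

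There is no substantive obstacle here: the argument is essentially bookkeeping on top of \Cref{cor:Erdos Kac For Selmer} and Kane's theorem. The one point meriting care is the reduction to Kane's theorem for $\{E_{d\theta}\}$, requiring both the change of variables just described and the observation that the no-cyclic-$4$-isogeny hypothesis is preserved by quadratic twist.
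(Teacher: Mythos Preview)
Your proposal is correct and follows essentially the same approach as the paper: both combine the tautological Selmer--Sha exact sequence with the rank decomposition $\dim E_d(K)/2E_d(K)=2+\rk E_d(\QQ)+\rk E_{d\theta}(\QQ)$, control the $\QQ$-ranks via Kane's theorem, and conclude from \Cref{cor:Erdos Kac For Selmer}. The only cosmetic difference is that the paper packages the argument via an explicit $M$-cutoff rather than invoking Slutsky, and handles the $E_{d\theta}$ family by writing $E_{d\theta}=(E_\theta)_d$ and applying Kane directly to $E_\theta$ (noting, as you do, that the no-cyclic-$4$-isogeny hypothesis passes to $E_\theta$), which is slightly cleaner than your change-of-variables.
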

\begin{proof}
Since $\dim\Sha(E_d/K)[2]\leq \dim \textup{Sel}^2(E_d/K)$ for all $d$, by \Cref{cor:Erdos Kac For Selmer} we need only show that the limit in the statement (or more precisely the limit superior of the left hand side of the statement)  is bounded above by $\Phi(z)=\tfrac{1}{\sqrt{2\pi}}\int_{-\infty}^ze^{-t^2/2}dt$. 

This  follows from \Cref{cor:Erdos Kac For Selmer} thanks to \cite{kane2013ranks}*{Thm. 3}, which gives adequate control of the Mordell--Weil component of $\textup{Sel}^2(E_d/K)$.  First, for any squarefree integer $d$, the standard short exact sequence 
\[0\longrightarrow E_d(K)/2E_d(K)\longrightarrow \textup{Sel}^2(E_d/K)\longrightarrow \Sha(E_d/K)[2]\longrightarrow 0\]
gives 
\[\dim \Sha(E_d/K)[2]=\dim \textup{Sel}^2(E_d/K)- \dim E_d(K)/2E_d(K).\]
Since $K=\QQ(\sqrt{\theta})$ and $\dim E_d(K)[2]=2$ we have 
\[\dim E_d(K)/2E_d(K)=2+\textup{rk}(E_d/\mathbb{Q})+\textup{rk}(E_{d\theta}/\mathbb{Q}),\] 
giving the equality \[\dim \Sha(E_d/K)[2]=\dim \textup{Sel}^2(E_d/K)-\textup{rk}(E_d/\mathbb{Q})-\textup{rk}(E_{d\theta}/\mathbb{Q})-2.\]

Now fix a real number $z$ and a positive real number $M$.  Partitioning into cases according to 
\[\textup{rk}(E_d/\mathbb{Q})+\textup{rk}(E_{d\theta}/\mathbb{Q})\leq M\quad \textup{ or }\quad \textup{rk}(E_d/\mathbb{Q})+\textup{rk}(E_{d\theta}/\mathbb{Q})>M\]  we find
\begin{align*}
	&\#\set{\vert d\vert\leq X\textnormal{ squarefree}~:~\frac{\dim \Sha(E_d/K)[2]-\log\log|d|}{\sqrt{2\log\log|d|}}\leq z}
	\\ &\leq \#\set{\vert d\vert\leq X\textnormal{ squarefree}~:~ \frac{\dim \textup{Sel}^2(E_d/K)-\log\log|d|}{\sqrt{2\log\log|d|}}\leq z+\frac{M+2}{\sqrt{2\log\log|d|}} } \\&\quad+ \#\Bigg \{\vert d\vert\leq X\textnormal{ squarefree}~:~\textup{Sel}^2(E_d/\mathbb{Q})> M/2\Bigg \}\\&\quad+ \#\Bigg \{\vert d\vert\leq X\textnormal{ squarefree}~:~\textup{Sel}^2((E_\theta)_d/\mathbb{Q})> M/2\Bigg \} .
	 \end{align*}
Dividing through by the number of squarefree integers $d$ with $|d|\leq X$, taking the limsup $X\to \infty$, and applying Kane's theorem \cite{kane2013ranks}*{Thm. 3} to both $E$ and $E_\theta$ (since $E$ has no cyclic $4$-isogeny defined over $\mathbb{Q}$ the same is true for $E_\theta$, allowing us to apply Kane's result without further assumptions), we find as a consequence of  \Cref{cor:Erdos Kac For Selmer} that
\begin{align*}
	&\limsup_{X\to\infty}\frac{\#\set{\vert d\vert\leq X\textnormal{ squarefree}~:~\frac{\dim \Sha(E_d/K)[2]-\log\log|d|}{\sqrt{2\log\log|d|}}\leq z}}
	{\#\set{\vert d\vert\leq X\textnormal{ squarefree}}}
	 &\leq \Phi(z)+2\sum_{r\geq M/2}\alpha_r,
\end{align*}
where the $\alpha_r$ are defined in Kane's Theorem 2.  Since the $\alpha_r$ determine a probability distribution on the set of $r\in\ZZ_{\geq0}$, taking the limit $M\to\infty$ gives the result.
\end{proof}

\begin{remark}
It seems reasonable to expect that \Cref{sha corollary} remains true without the assumption that $E$ has no cyclic $4$-isogeny defined over $\mathbb{Q}$. However, since no analogue of Kane's result is known in this setting we have not been able to prove this.
\end{remark}

\subsection{Statistical Results for Mordell--Weil groups}
We now give some consequences   for the Mordell--Weil groups of the $E_d/K$. We begin with the following algebraic results. Write $G=\textup{Gal}(K/\mathbb{Q})$.

\begin{notation}
We write
	\[\Lambda(E_d/K):=E_d(K)/E_d(K)_{\operatorname{tors}}.\]
	We refer to this as the Mordell--Weil lattice. The action of $G$ on $E_d(K)$ makes $\Lambda(E_d/K)$ into a $G$-module. 
\end{notation}

For a $G$-module $M$, we denote by $M(-1)$ the $G$-module which is isomorphic to $M$ as an abelian group but with $G$-action twisted by multiplication by $-1$. That is, the new $G$-action of the generator $\sigma$ of $G$ is given by
\[m\longmapsto -\sigma(m).\]
\begin{lemma}\label{lem:MWLatticesConcrete}
	If $\sel{\CCC_d}(\QQ, E_d[2])=0$ then there is an isomorphism of $\ZZ[G]$-modules
	\[\Lambda(E_d/K)\cong \Lambda(E_d/\QQ)\oplus \Lambda(E_{d\theta}/\QQ)(-1).\]
\end{lemma}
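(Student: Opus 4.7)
The plan is to combine the triviality of the $G$-action on $\textup{Sel}^2(E_d/K)$ (available under the hypothesis) with the standard classification of $\ZZ[G]$-lattices for $G \cong \ZZ/2\ZZ$. The main (though minor) technical step will be transferring this triviality from the Kummer quotient $E_d(K)/2E_d(K)$ down to $\Lambda(E_d/K)/2\Lambda(E_d/K)$.

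First I would invoke \Cref{what happens when selc vanishes}(iii) to conclude that the hypothesis $\textup{Sel}_{\CCC_d}(\QQ, E_d[2])=0$ forces $G$ to act trivially on $\textup{Sel}^2(E_d/K)$. Since the Kummer inclusion $E_d(K)/2E_d(K) \hookrightarrow \textup{Sel}^2(E_d/K)$ is $G$-equivariant, $G$ acts trivially on $E_d(K)/2E_d(K)$. The torsion-freeness of $\Lambda(E_d/K)$ then gives a $G$-equivariant short exact sequence
\[0 \to E_d(K)_\textup{tors}/2E_d(K)_\textup{tors} \to E_d(K)/2E_d(K) \to \Lambda(E_d/K)/2\Lambda(E_d/K) \to 0,\]
and trivial action on the middle forces trivial action on the right.

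Next I would appeal to the classification of finitely generated $\ZZ[G]$-modules that are $\ZZ$-free: every such module decomposes uniquely as $\ZZ^a \oplus \ZZ(-1)^b \oplus \ZZ[G]^c$. A direct check on the three indecomposable summands shows that the $G$-action on $M/2M$ is trivial precisely when $c = 0$ (the crux is that $\sigma$ swaps the basis of $\ZZ[G]/2\ZZ[G]$, whereas on $\ZZ(-1)/2\ZZ(-1)$ the action $\sigma = -1 \equiv 1 \pmod 2$ is trivial). Applied to $M = \Lambda(E_d/K)$, this yields an isomorphism of $\ZZ[G]$-modules
\[\Lambda(E_d/K) \cong \ZZ^a \oplus \ZZ(-1)^b\]
for some $a,b \geq 0$.

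Finally I would identify $a$ and $b$ via rank computations. Since $E_d(K)^G = E_d(\QQ)$, the image of $E_d(\QQ)$ in $\Lambda(E_d/K)^G$ is of finite index, and both groups have $\ZZ$-rank equal to $\textup{rk}(E_d/\QQ)$; hence $a = \textup{rk}(E_d/\QQ)$. For $b$, I would use the $K$-isomorphism $\psi_\theta\colon E_d \stackrel{\sim}{\longrightarrow} E_{d\theta}$ from \Cref{subsec:quadratic twists}, which satisfies $\psi_\theta^{-1}\circ {}^\sigma\psi_\theta = [-1]$ for the non-trivial element $\sigma$ of $G$. A short computation shows that $\psi_\theta$ restricts to an isomorphism of abelian groups $E_d(K)^- \stackrel{\sim}{\longrightarrow} E_{d\theta}(\QQ)$, where $E_d(K)^- := \{P \in E_d(K) : \sigma P = -P\}$; hence $b = \textup{rk}(E_{d\theta}/\QQ)$. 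Combining everything yields
\[\Lambda(E_d/K) \cong \ZZ^{\textup{rk}(E_d/\QQ)} \oplus \ZZ(-1)^{\textup{rk}(E_{d\theta}/\QQ)} \cong \Lambda(E_d/\QQ) \oplus \Lambda(E_{d\theta}/\QQ)(-1),\]
as required.
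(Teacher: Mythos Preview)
Your proof is correct and follows essentially the same approach as the paper: classify $\ZZ[G]$-lattices as $\ZZ^a\oplus\ZZ(-1)^b\oplus\ZZ[G]^c$, use the triviality of the $G$-action on $\textup{Sel}^2(E_d/K)$ (from \Cref{what happens when selc vanishes}(iii)) to force $c=0$ via the mod-$2$ reduction, and then identify $a,b$ with the ranks of $E_d$ and $E_{d\theta}$ over $\QQ$. Your transfer of the trivial action from $E_d(K)/2E_d(K)$ to $\Lambda(E_d/K)/2\Lambda(E_d/K)$ via the short exact sequence is a clean variant of the paper's argument (which instead embeds $\Lambda(E_d/K)/2\Lambda(E_d/K)$ into $\textup{Sel}^2(E_d/K)/\delta(E_d[2])$), and your explicit justification of $a=\textup{rk}(E_d/\QQ)$ and $b=\textup{rk}(E_{d\theta}/\QQ)$ spells out what the paper leaves implicit.
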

\begin{proof} 
	By \cite{MR1038525}*{Theorem 34.31}, there exist unique $a,b,c\in\ZZ_{\geq 0}$ such that
	\[\Lambda(E_d/K)\cong \ZZ^a\oplus\ZZ(-1)^{b}\oplus \ZZ[G]^c,\]
	where $\ZZ$ denotes a rank 1 free $\ZZ$-module with trivial $G$-action.  Note that we have an inclusion of $G$-modules
	\[\Lambda(E_d/K)/2\Lambda(E_d/K)\subseteq \sel{2}(E_d/K)/\delta(E_d[2]).\] 
	The right hand side has trivial $G$-action, as follows from the vanishing of  $\sel{\CCC_d}(\QQ, E_d[2])$ combined with Corollary \ref{what happens when selc vanishes} (iii). Thus    $\Lambda(E_d/K)/2\Lambda(E_d/K)$ 	has trivial $G$-action also.  Thus, $c=0$. Via the natural $K$-isomorphism $E_d\cong E_{d\theta}$, we  can identify the points of $E_d(K)$ on which the generator of $G$ acts as multiplication by $-1$ with $E_{d\theta}(\mathbb{Q})$. The result follows.
\end{proof}

\begin{proposition}\label{prop:MWGroupsConcrete} 
	Suppose we have $E_d(K)_{\operatorname{tors}} = E_d[2]$ and $\sel{\CCC_d}(\QQ, E_d[2])=0$. Then there is an isomorphism of $\ZZ[G]$-modules
	\[E_d(K)\cong \F_2^2\oplus \Lambda(E_d/\QQ)\oplus \Lambda(E_{d\theta}/\QQ)(-1).\]
\end{proposition}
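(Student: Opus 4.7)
The plan is to use Lemma~\ref{lem:MWLatticesConcrete} to reduce the proposition to showing that the short exact sequence of $\ZZ[G]$-modules
\[0 \to E_d[2] \to E_d(K) \to \Lambda(E_d/K) \to 0\]
splits, where $E_d[2]\cong \F_2^2$ carries trivial $G$-action since $E[2]\subseteq E(\QQ)$. Both hypotheses of the proposition will be used: the Selmer vanishing to produce a $2$-divisibility relation, and the torsion hypothesis to convert it into an equality.

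The crux is the following claim that I will establish: \emph{if $P\in E_d(K)$ satisfies $\sigma P - \epsilon P \in E_d[2]$ for some $\epsilon\in\{\pm 1\}$, then $\sigma P = \epsilon P$.} To prove it I will invoke \Cref{what happens when selc vanishes}(iii): the vanishing of $\sel{\CCC_d}(\QQ,E_d[2])$ forces trivial $G$-action on $\sel{2}(E_d/K)$, and hence, via the Kummer embedding $E_d(K)/2E_d(K)\hookrightarrow \sel{2}(E_d/K)$, trivial $G$-action on $E_d(K)/2E_d(K)$. This means $(\sigma-1)E_d(K)\subseteq 2E_d(K)$, and together with the identity $\sigma P-\epsilon P=(\sigma P-P)+(1-\epsilon)P$ where $1-\epsilon\in\{0,2\}$, we obtain $\sigma P-\epsilon P\in 2E_d(K)$. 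On the other hand, the hypothesis $E_d(K)_{\operatorname{tors}}=E_d[2]$ implies $E_d[4]\cap E_d(K)=E_d[2]$, whence $2E_d(K)\cap E_d[2]=0$: indeed, if $T=2S$ with $T\in E_d[2]$ and $S\in E_d(K)$, then $4S=2T=0$, so $S\in E_d[2]$ and $T=2S=0$. Combining, $\sigma P-\epsilon P=0$.

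With the claim in hand, I will construct the splitting explicitly. First, the claim shows that $E_d(\QQ)/E_d[2]\twoheadrightarrow \Lambda(E_d/K)^G$: any lift $P\in E_d(K)$ of a $G$-invariant class satisfies $\sigma P-P\in E_d[2]$, and hence lies in $E_d(\QQ)$. Analogously, writing $\psi\colon E_d\to E_{d\theta}$ for the $K$-isomorphism of \Cref{eq:QuadraticTwistMap}, $\psi^{-1}(E_{d\theta}(\QQ))$ is exactly the $\sigma=-1$ eigenspace of $E_d(K)$, and the claim yields $E_{d\theta}(\QQ)/E_{d\theta}[2]\twoheadrightarrow \Lambda(E_d/K)^{\sigma=-1}$. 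I then pick $\ZZ$-linear sections of the projections $E_d(\QQ)\twoheadrightarrow \Lambda(E_d/\QQ)$ and $E_{d\theta}(\QQ)\twoheadrightarrow \Lambda(E_{d\theta}/\QQ)$, obtaining $\ZZ[G]$-submodules $M_+\subseteq E_d(\QQ)$ with trivial $G$-action and $M_-\subseteq \psi^{-1}(E_{d\theta}(\QQ))$ on which $\sigma$ acts as $-1$, isomorphic respectively to $\Lambda(E_d/\QQ)$ and $\Lambda(E_{d\theta}/\QQ)(-1)$. The decomposition $E_d(K)=E_d[2]\oplus M_+\oplus M_-$ is then a routine verification: torsion-freeness of $M_\pm$ handles injectivity, while surjectivity follows from the already established $\Lambda(E_d/K)=\Lambda(E_d/K)^G\oplus \Lambda(E_d/K)^{\sigma=-1}$ implicit in \Cref{lem:MWLatticesConcrete}. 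The main obstacle is the key claim, which is the point where the two hypotheses of the proposition interact; everything else is formal.
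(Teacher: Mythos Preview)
Your proof is correct and follows essentially the same approach as the paper. Both arguments reduce to the same key observation: using \Cref{what happens when selc vanishes}(iii) to get trivial $G$-action on $E_d(K)/2E_d(K)$, and the absence of $4$-torsion to deduce that any lift to $E_d(K)$ of a $\sigma$-eigenvector in $\Lambda(E_d/K)$ is itself a $\sigma$-eigenvector; the paper phrases this for a chosen basis $\tilde{\mathcal{B}}$ while you state it as a general claim about $\sigma P-\epsilon P$, but the content is identical.
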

\begin{proof}
	By \Cref{lem:MWLatticesConcrete} we must have 
	\begin{equation}\label{eq:splitting of lambda/K in MWGroupsConcrete proof}\Lambda(E_d/K)\cong \Lambda(E_d/\QQ)\oplus\Lambda(E_{d\theta}/\QQ)(-1).\end{equation}
	As a consequence, take $\mathcal{B}$ to be a $\ZZ$-basis for $\Lambda(E_d/K)$ such that for all $v\in \mathcal{B}$ we have $\sigma(v)\in\set{v,-v}$.  Let $\tilde{\mathcal{B}}$ be a lift of $\mathcal{B}$ to $E_d(K)$.  Note that $E_d(K)/2E_d(K)$ has a basis comprising of the images of the elements of $\tilde{\mathcal{B}}$ and two linearly independent vectors from the submodule $E_d(K)_{\operatorname{tors}}=E_d[2]\cong\F_2^2$.

	For each $v\in\tilde{\mathcal{B}}$, we have $\sigma(v)=\pm v+u$ for some $u\in E_d[2]$. Since $\sel{\CCC_d}(\QQ, E_d[2])=0$, the $G$-action on $E_d(K)/2E_d(K)$ is trivial by \Cref{what happens when selc vanishes}(iii).  In particular $\pm v+u=\sigma(v)\equiv v$ in $E_d(K)/2E_d(K)$, and so $u\in 2E_d(K)$.  Since $E_d(K)$ has no $4$--torsion, $u=0$ and so $\sigma(v)=\pm v$.  Thus the morphism of abelian groups $\Lambda(E_d/K)\to E_d(K)$ induced by the lift $\tilde{\mathcal{B}}$ of $\mathcal{B}$ is one of $\ZZ[G]$--modules, so we have
	\[E_d(K)\cong E_d[2]\oplus\Lambda(E_d/K).\]
	The result then follows from \Cref{eq:splitting of lambda/K in MWGroupsConcrete proof}.
\end{proof}

\begin{cor}\label{prop:MWGroupsStatistical}
	For $100\%$ of squarefree $d$, there is an isomorphism of $\ZZ[G]$-modules
	\begin{equation}\label{eq:DecompOfMWForStatistical}
	E_d(K)\cong \F_2^2\oplus \Lambda(E_d/\QQ)\oplus \Lambda(E_{d\theta}/\QQ)(-1).
	\end{equation}
	More precisely, we have
	\[\lim_{X\rightarrow \infty}\frac{\#\{d\textup{ squarefree }\mid ~|d|<X,~\eqref{eq:DecompOfMWForStatistical}\textnormal{ holds}\}}{\#\{d\textup{ squarefree }\mid ~|d|<X\}}=1.\]
\end{cor}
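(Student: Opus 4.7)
The plan is to reduce to \Cref{prop:MWGroupsConcrete} and \Cref{main statistical theorem written in sec 5}. By \Cref{prop:MWGroupsConcrete}, the decomposition \eqref{eq:DecompOfMWForStatistical} holds whenever both $E_d(K)_{\textup{tors}}=E_d[2]$ and $\textup{Sel}_{\CCC_d}(\QQ,E_d[2])=0$. Since the second condition is exactly the content of \Cref{main statistical theorem written in sec 5}, it holds for $100\%$ of squarefree $d$, so it remains only to establish the torsion condition $E_d(K)_{\textup{tors}}=E_d[2]$ for $100\%$ of $d$.

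I would in fact aim to prove the stronger statement that this torsion condition fails for only \emph{finitely many} squarefree $d$. The key tool is the twist isomorphism $\psi_d\colon E\to E_d$ of \Cref{eq:QuadraticTwistMap}, which is defined over $\QQ(\sqrt{d})$ and satisfies $\psi_d^{-1}\circ{}^\sigma\psi_d=\chi_d(\sigma)$. Since every point of $E_d(\bar\QQ)$ is of the form $\psi_d(P)$ for a unique $P\in E(\bar\QQ)$, a direct calculation shows that $\psi_d(P)\in E_d(K)$ if and only if $\chi_d(\sigma)\sigma(P)=P$ for every $\sigma\in G_K$. Now fix a torsion point $P\in E(\bar\QQ)$ of order $n>2$. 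The displayed condition forces $\sigma(P)\in\{P,-P\}$ for every $\sigma\in G_K$, and therefore pins down $\chi_d|_{G_K}$ as the quadratic character $\chi_P\colon\sigma\mapsto\sigma(P)/P$. Since two squarefree integers $d_1,d_2$ give $\chi_{d_1}|_{G_K}=\chi_{d_2}|_{G_K}$ precisely when $d_1d_2\in (K^{\times})^2$, which for squarefree integers forces $d_1\in\{d_2,d_2\theta\}$, at most two squarefree $d$ can work for each such $P$. By the Kenku--Momose--Kamienny bound on torsion of elliptic curves over quadratic fields, the order $n$ is restricted to a finite set of integers; as $E$ has only finitely many torsion points of any fixed order, only finitely many squarefree $d$ can give rise to a $K$-rational torsion point of $E_d$ of order strictly greater than $2$.

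The mildly delicate point in the above is the well-definedness of the character $\chi_P$, which relies crucially on both $P\neq -P$ (guaranteed by $n>2$) and on the fact that the equation for $d$ itself forces $\sigma(P)\in\{P,-P\}$ so that $\chi_P$ is a genuine $\{\pm 1\}$-valued function; if either fails for a given $P$, then that $P$ simply contributes no exceptional $d$ and the analysis is easier. Once the torsion condition is proven outside a finite set, intersecting with the density-one set from \Cref{main statistical theorem written in sec 5} and applying \Cref{prop:MWGroupsConcrete} yields the corollary at once.
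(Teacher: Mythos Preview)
Your proposal is correct and follows the same overall architecture as the paper: reduce to \Cref{prop:MWGroupsConcrete}, invoke \Cref{main statistical theorem written in sec 5} for the Selmer vanishing, and then argue that $E_d(K)_{\textup{tors}}=E_d[2]$ fails for only finitely many squarefree $d$. The difference lies entirely in how this last finiteness is established. The paper splits into odd and $2$-primary torsion: for odd primes $p$ it observes that $E_d(K)[p]$ decomposes into $E_d(\QQ)[p]\oplus E_{d\theta}(\QQ)[p]$, so that $K$-rational $p$-torsion forces $\QQ$-rational $p$-torsion on one of two twists, and then invokes Mazur's theorem over $\QQ$; for $4$-torsion it uses the local argument of \Cref{lem:No4Torsion}. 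You instead run a single uniform argument, pinning down $\chi_d|_{G_K}$ from any fixed higher-order torsion point and then appealing to the Kenku--Momose--Kamienny bound over quadratic fields. Your route is more streamlined and avoids the odd/even case split, at the cost of invoking a strictly deeper theorem than Mazur's. Two cosmetic remarks: the notation $\sigma(P)/P$ is awkward in an additive group (you mean the sign $\epsilon$ with $\sigma(P)=\epsilon P$), and the claim $d_1\in\{d_2,d_2\theta\}$ should really read that $d_1$ is either $d_2$ or the squarefree part of $d_2\theta$, though the conclusion that at most two squarefree $d$ arise is unaffected.
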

\begin{proof}
Note that for each odd prime $p$, at most 2 quadratic twists of $E$ have rational $p$-torsion (otherwise $E$ would have at least $3$ dimensional $p$-torsion over a multiquadratic extension, which is impossible). In particular, for each odd prime $p$, only finitely many twists of $E$ can have $p$-torsion over $K$. Consequently, by Mazur's torsion theorem \cite{MR488287}*{Theorem 8}, outside of a finite set of $d$ we have $E_d(K)_{\operatorname{tors}}\subseteq E[2^\infty]$.  Moreover, by Lemma \ref{lem:No4Torsion}, only finitely many quadratic twists have a point of order 4.  The result now follows from Theorem \ref{main statistical theorem} and Proposition \ref{prop:MWGroupsConcrete}.
\end{proof}

\subsection{Twists of the Weil Restriction of Scalars}

In light of \Cref{weil restriction subsection} we can recast the above results in terms of the restriction of scalars $\textup{Res}_{K/\mathbb{Q}}(E)$ of $E$ from $K$ to $\mathbb{Q}$. To ease notation we write $A$ in place of $\textup{Res}_{K/\mathbb{Q}}(E)$. Thus $A$ is a principally polarised abelian surface over $\mathbb{Q}$. For a squarefree integer $d$, we write $A_d$ for the quadratic twist of $A$ by $d$. For each $d$, write 
\[\phi_d: A_d \longrightarrow E_d\times E_{d\theta}\]
for the isogeny of \Cref{the isogeny phi_d}, write $\widehat{\phi}_d$ for its dual, and denote by $\textup{Sel}^\phi(A_d/\mathbb{Q})$ and $\textup{Sel}^{\widehat{\phi}_d}(E_d\times E_{d\theta}/\mathbb{Q})$   the associated Selmer groups.  

\begin{theorem} \label{main result for Weil restrictions}
In the notation above, we have the following.
\begin{itemize}
\item[(i)] The quantity 
\[\frac{\dim \textup{Sel}^2(A_d/\mathbb{Q})-\log \log |d|}{\sqrt{2\log\log|d|}}\]
follows a standard normal distribution.  That is,  for every $z\in\mathbb{R}$ we have 
	\[\lim_{X\to\infty}\frac{
	\#\set{\vert d\vert\leq X \textnormal{ squarefree }: \ \frac{ \dim\textup{Sel}^2(A_d/\mathbb{Q})-\log\log|d|}{\sqrt{2\log\log|d|}}\leq z}
	}{
	\#\set{\vert d\vert\leq X \textnormal{ squarefree }}
	}=\frac{1}{\sqrt{2\pi}}\int_{-\infty}^ze^{-t^2/2}dt.\]
\item[(ii)] For $100\%$ of squarefree $d$ ordered by absolute value, the  group $\textup{Sel}^{\widehat{\phi}_d}(E_d\times E_{d\theta}/\mathbb{Q})$ is trivial, and we have 
\[\dim \textup{Sel}^2(A_d/\mathbb{Q})=\dim \textup{Sel}^{\phi_d}(A_d/\mathbb{Q})-2,\]
and $\dim \textup{Sel}^2(A_d/\mathbb{Q})$ is given by the formula on the right hand side of \Cref{Selmer formula 100 percent}.
\end{itemize}
\end{theorem}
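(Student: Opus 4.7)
The whole theorem is essentially a translation of results already established for $\textup{Sel}^2(E_d/K)$ and the auxiliary Selmer structures $\FFF_d$, $\CCC_d$ into statements about the Weil restriction and its $\phi_d$-Selmer groups. No new analytic input is required: the bridge is \Cref{weil_res_quad_twist} together with the Kummer-theoretic identifications of \Cref{geometric sel f} and \Cref{sel c remark}, suitably twisted via \Cref{the isogeny phi_d}.

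For part (i), \Cref{weil_res_quad_twist} gives a canonical isomorphism $\textup{Sel}^2(A_d/\mathbb{Q}) \cong \textup{Sel}^2(E_d/K)$ for every squarefree $d$. The distribution statement is therefore immediate from \Cref{cor:Erdos Kac For Selmer}.

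For part (ii), the first step is to check that \Cref{geometric sel f} and \Cref{sel c remark} go through for the twisted isogeny $\phi_d$ of \Cref{the isogeny phi_d}: using the identification $A_d[\phi_d] \cong A[\phi]$ noted there, the diagram chase of \Cref{geometric sel f} applies verbatim and yields canonical isomorphisms
\[
\textup{Sel}^{\phi_d}(A_d/\mathbb{Q}) \;\cong\; \textup{Sel}_{\FFF_d}(\mathbb{Q}, E_d[2]), \qquad \textup{Sel}^{\widehat{\phi}_d}(E_d\times E_{d\theta}/\mathbb{Q}) \;\cong\; \textup{Sel}_{\CCC_d}(\mathbb{Q}, E_d[2]).
\]
\Cref{main statistical theorem written in sec 5} then gives the vanishing of $\textup{Sel}^{\widehat{\phi}_d}(E_d\times E_{d\theta}/\mathbb{Q})$ for $100\%$ of $d$. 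For all such $d$, \Cref{prop:ResCoresExactSequence} provides a short exact sequence
\[
0 \longrightarrow H^1(K/\mathbb{Q}, E_d(K)[2]) \longrightarrow \textup{Sel}_{\FFF_d}(\mathbb{Q}, E_d[2]) \longrightarrow \textup{Sel}^2(E_d/K) \longrightarrow 0.
\]
Since $E[2]\subseteq E(\mathbb{Q})$ forces $E_d[2]\subseteq E_d(\mathbb{Q})$ (multiplication by $-1$ is trivial on $2$-torsion), the $G$-module $E_d(K)[2]$ is trivial of $\mathbb{F}_2$-dimension $2$, so the left hand term has dimension $2$. Combining with the displayed isomorphisms and \Cref{weil_res_quad_twist} yields $\dim \textup{Sel}^{\phi_d}(A_d/\mathbb{Q}) = \dim \textup{Sel}^2(A_d/\mathbb{Q}) + 2$ on this set of full density. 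The explicit local formula is then precisely \Cref{cor holds a lot} transported to $A_d$ via \Cref{weil_res_quad_twist}.

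The main obstacle, such as it is, amounts only to checking the twisted analogues of \Cref{geometric sel f} and \Cref{sel c remark}; since the proof of \Cref{geometric sel f} used nothing specific to the untwisted case beyond the structure of $\phi$ recalled in \Cref{construction of the isogeny}, and since \Cref{the isogeny phi_d} records the compatibility of $A[\phi]$ with $A_d[\phi_d]$ under the canonical identification of two-torsion, this verification is routine. Thus the theorem is best viewed as a dictionary between the two perspectives on $\textup{Sel}^2(E_d/K)$ developed in \S4, packaging \Cref{cor:Erdos Kac For Selmer}, \Cref{cor holds a lot}, and \Cref{main statistical theorem written in sec 5} into statements about the abelian surface $A$.
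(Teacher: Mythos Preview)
Your proposal is correct and follows essentially the same route as the paper's proof: part (i) via \Cref{weil_res_quad_twist} and \Cref{cor:Erdos Kac For Selmer}, and part (ii) via the identifications of \Cref{geometric sel f} and \Cref{sel c remark} (transported to the twisted setting by \Cref{the isogeny phi_d}) combined with \Cref{main statistical theorem written in sec 5}, the exact sequence of \Cref{prop:ResCoresExactSequence}/\Cref{what happens when selc vanishes}(i), and \Cref{cor holds a lot}. You spell out the computation $\dim H^1(K/\mathbb{Q},E_d(K)[2])=2$ explicitly, which the paper leaves implicit, but otherwise the arguments coincide.
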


\begin{proof}
The first part follows from \Cref{cor:Erdos Kac For Selmer}  and \Cref{weil_res_quad_twist}.  Using \Cref{geometric sel f}, \Cref{sel c remark} and \Cref{weil_res_quad_twist} the second part is then an immediate consequence of \Cref{main statistical theorem written in sec 5}, \Cref{what happens when selc vanishes}(i) and \Cref{cor holds a lot}.
\end{proof}

\begin{remark}
Assume that $E$ has no cyclic $4$-isogeny defined over $\mathbb{Q}$. Then, since $\Sha(A_d/\mathbb{Q})\cong \Sha(E_d/K)$, as a consequence of \Cref{sha corollary} we can replace $ \textup{Sel}^2(A_d/\mathbb{Q})$ with $\Sha(A_d/\mathbb{Q})[2]$ in \Cref{main result for Weil restrictions}.
\end{remark}

\section{Explicit local conditions for full \texorpdfstring{$2$}{2}-torsion} \label{case of full 2-tors section}

In this section we make preparations for the proof of \Cref{main statistical theorem written in sec 5} by making the results of \Cref{sec:seloverquad} explicit in the case that $E$ has full rational $2$-torsion. 

Recall that $K=\mathbb{Q}(\sqrt{\theta})/\mathbb{Q}$ is a quadratic extension and $E/\mathbb{Q}$ is a fixed elliptic curve with $E[2]\subseteq E(\mathbb{Q})$.  Further, we fix a Weierstrass equation
\begin{equation} \label{choice of weierstrass equation}
E/\mathbb{Q}:y^2=(x-a_1)(x-a_2)(x-a_3)
\end{equation}
for $E$ where, without loss of generality, $a_1, a_2, a_3\in \mathbb{Z}$. Set $\alpha=a_1-a_2$, $\beta=a_1-a_3$, and $\gamma=a_2-a_3$. Note that the primes of bad reduction for $E$ all divide $2\alpha \beta \gamma$, and that $E[2]=\{O,P_{1},P_{2},P_{3}\}$ where $P_{i}=(a_i,0)$.

As in \Cref{Notation:Sigma} we fix a finite set $\Sigma$ of places of $\mathbb{Q}$ containing the real place, the prime $2$, all primes which ramify in $K/\mathbb{Q}$, and all primes at which $E$ has bad reduction. Note in particular that $\Sigma$ contains all primes dividing $2\alpha \beta \gamma$.

\subsection{Quadratic twists}
Let $d$ be a squarefree integer. The quadratic twist $E_d/\mathbb{Q}$ is given by the Weierstrass equation
\[E_d:y^2=(x-da_1)(x-da_2)(x-da_3).\]
We have $E_d[2]=\{O,P_{1,d},P_{2,d},P_{3,d}\}$ where $P_{i,d}=(da_i,0)$.

The following lemma describes the local conditions $\CCC(E_d/\mathbb{Q}_v)$ of \Cref{the selmer structure f}  at primes $p\notin \Sigma$. For a place $v$ of $\mathbb{Q}$, we denote by $\delta_{d,v}:E_d(\mathbb{Q}_v)/2E_d(\mathbb{Q}_v)\hookrightarrow H^1(\mathbb{Q}_v,E_d[2]) $ the coboundary map associated to the sequence \Cref{elliptic curve kummer sequence} with $A=E_d$     and $F=\mathbb{Q}$.

\begin{lemma} \label{the local conditions lemma}
Let $p$ be a prime with $p\notin \Sigma$. Then 
\begin{itemize}
\item[(i)] if $p\nmid d$, we have
\[\CCC(E_d/\mathbb{Q}_p)=\SSS(E_d/\mathbb{Q}_p)=H^1_{\textup{nr}}(\mathbb{Q}_p,E_d[2]),\]
\item[(ii)] if $p\mid d$ is split in $K/\mathbb{Q}$, we have
\[\CCC(E_d/\mathbb{Q}_p)=\SSS(E_d/\mathbb{Q}_p)=\delta_{d,p}(E_d[2]),\]
\item[(iii)] if $p\mid d$ is inert in $K/\mathbb{Q}$, we have 
\[\CCC(E_d/\mathbb{Q}_p)=0.\]
\end{itemize} 
\end{lemma}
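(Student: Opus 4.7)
The plan is to leverage \Cref{the properties of the selmer structure C}(ii), which identifies
\[\CCC(E_d/\mathbb{Q}_p) = \delta_{d,p}\bigl(N_{K_w/\mathbb{Q}_p}E_d(K_w)\bigr) = \SSS(E_d/\mathbb{Q}_p)\cap \SSS(E_{d\theta}/\mathbb{Q}_p).\]
At each prime $p\notin\Sigma$ the question thus reduces to pinning down the image of the local norm map (or equivalently both Kummer images), for which the main inputs are the surjectivity of norm on good reduction points in unramified extensions (\cite{MR0444670}*{Cor. 4.4}, already invoked in \Cref{explicit_coker}), the absence of $4$-torsion from \Cref{lem:No4Torsion}, and the fact that $\SSS(E_d/\mathbb{Q}_p)=H^1_{\operatorname{nr}}(\mathbb{Q}_p,E_d[2])$ whenever $E_d$ has good reduction at an odd prime $p$.

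For part (i), since $p\notin\Sigma$ and $p\nmid d$, $E_d$ has good reduction at $p$, giving $\SSS(E_d/\mathbb{Q}_p)=H^1_{\operatorname{nr}}(\mathbb{Q}_p,E_d[2])$ at once. If $p$ splits in $K$ then $K_w=\mathbb{Q}_p$ so $\CCC(E_d/\mathbb{Q}_p)=\SSS(E_d/\mathbb{Q}_p)$ trivially; if $p$ is inert then $K_w/\mathbb{Q}_p$ is unramified and $E_d$ still has good reduction, so the cited surjectivity of the local norm yields $N_{K_w/\mathbb{Q}_p}E_d(K_w)=E_d(\mathbb{Q}_p)$, and hence $\CCC(E_d/\mathbb{Q}_p)=\delta_{d,p}(E_d(\mathbb{Q}_p))=\SSS(E_d/\mathbb{Q}_p)$.

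Part (ii) is again immediate from $K_w=\mathbb{Q}_p$: the norm is the identity and $\CCC(E_d/\mathbb{Q}_p)=\SSS(E_d/\mathbb{Q}_p)$. The remaining task is to identify $\SSS(E_d/\mathbb{Q}_p)$ with $\delta_{d,p}(E_d[2])$. Since $E[2]\subseteq E(\mathbb{Q})$, we have $E_d[2]\subseteq E_d(\mathbb{Q}_p)$; by \Cref{lem:No4Torsion} there is no $4$-torsion in $E_d(\mathbb{Q}_p)$, so the composition $E_d[2]\hookrightarrow E_d(\mathbb{Q}_p)\twoheadrightarrow E_d(\mathbb{Q}_p)/2E_d(\mathbb{Q}_p)$ is injective. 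Both groups are $\mathbb{F}_2$-vector spaces of dimension $2$ (the right-hand side has the same order as $E_d(\mathbb{Q}_p)[2]$ since $p$ is odd), so this map is an isomorphism, and applying the injective $\delta_{d,p}$ gives $\SSS(E_d/\mathbb{Q}_p)=\delta_{d,p}(E_d[2])$.

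Part (iii) is the only place where something substantive needs to be said, and the argument in the proof of \Cref{explicit_coker} already does essentially all the work: under the present hypotheses ($p\mid d$ with $p$ inert in $K$ and $\dim E(\mathbb{Q}_p)[2]=2$) one has $\dim E_d(\mathbb{Q}_p)/N_{K_w/\mathbb{Q}_p}E_d(K_w)=2$. Combined with $\dim E_d(\mathbb{Q}_p)/2E_d(\mathbb{Q}_p)=2$ this forces $N_{K_w/\mathbb{Q}_p}E_d(K_w)=2E_d(\mathbb{Q}_p)$, and therefore $\CCC(E_d/\mathbb{Q}_p)=\delta_{d,p}(2E_d(\mathbb{Q}_p))=0$. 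No genuine obstacle is anticipated: all three cases are routine assemblies of results already in the paper, with the only mild subtlety being to keep track of the dimension counts that hinge on the full rational $2$-torsion assumption.
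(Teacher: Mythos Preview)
Your proof is correct and follows essentially the same approach as the paper's: all three parts rest on \Cref{the properties of the selmer structure C}(ii) together with \Cref{explicit_coker} and \Cref{lem:No4Torsion}, and the dimension counts you give are exactly those the paper uses. The only cosmetic difference is that in (i) you split into the split/inert subcases explicitly, whereas the paper invokes \Cref{explicit_coker} once to get surjectivity of the norm in both cases simultaneously.
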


\begin{proof}
Let $\mathfrak{p}$ be a prime of $K$ lying over $p$. 
(i): By \Cref{explicit_coker} we have  $N_{K_\mathfrak{p}/\mathbb{Q}_p}E_d(K_\mathfrak{p})=E_d(\mathbb{Q}_p)$.  The first equality in \Cref{the properties of the selmer structure C}(ii) thus gives
\[\CCC(E_d/\mathbb{Q}_p)=\delta_p(E_d(\mathbb{Q}_p))=\SSS(E_d/\mathbb{Q}_p).\]  The second equality follows from the fact that $p$ is odd and $E_d$ has good reduction at $p$. 

(ii): when $p$ splits in $K/\mathbb{Q}$ the local extension $K_\mathfrak{p}/\mathbb{Q}_p$ is trivial, so $\CCC(E_d/\mathbb{Q}_p)=\SSS(E_d/\mathbb{Q}_p)$ by definition. For the second equality, since $p\nmid 2\infty$,  we have $\dim\SSS(E_d/\mathbb{Q}_p)=\dim E_d[2]$. In particular, it suffices to show that the restriction of $\delta_{d,p}$ to $E_d[2]$ is injective, which follows from \Cref{lem:No4Torsion}.

(iii): by \Cref{explicit_coker} and the fact that $E$ has full $2$-torsion, it follows from a dimension count that $N_{K_\mathfrak{p}/\mathbb{Q}_p}E(K_\mathfrak{p})=2E(\mathbb{Q}_p)$. The result now follows from \Cref{the properties of the selmer structure C}.
\end{proof}

\begin{remark}
Taking orthogonal complements, the above result also determines the local groups $\FFF(E_d/\mathbb{Q}_p)$ for $p\notin \Sigma$.
\end{remark}

\subsection{Explicit local conditions}  \label{explicit conditions subsection}
We now use the fact that $E_d$ has full rational $2$-torsion to give an explicit description of $\textup{Sel}_{\CCC_d}(\mathbb{Q},E_d[2])$ as a subgroup of  $(\mathbb{Q}^\times/\mathbb{Q}^{\times 2})^2$.

Let $\lambda_{i,d}:E_d[2]\rightarrow \boldsymbol \mu_2$ be the map $P\mapsto (P,P_{i,d})_{e_2}$, where $(~,~)_{e_2}:E_d[2]\times E_d[2]\rightarrow \boldsymbol \mu_2$ is the Weil pairing. This induces an isomorphism
\[(\lambda_{1,d},\lambda_{2,d}):E_d[2]\stackrel{\sim}{\longrightarrow}\boldsymbol \mu_2 \times \boldsymbol \mu_2.\]
Via this map, we identify $H^1(\mathbb{Q},E_d[2])$ with $H^1(\mathbb{Q},\boldsymbol \mu_2)\oplus H^1(\mathbb{Q},\boldsymbol \mu_2)=(\mathbb{Q}^\times/\mathbb{Q}^{\times 2})^2 $ (cf. \Cref{hilbet symbol example}).
 We similarly identify $H^1(\mathbb{Q}_v,E_d[2])$ with $(\mathbb{Q}_v^\times/\mathbb{Q}_v^{\times 2})^2$ for each place $v$ of $\mathbb{Q}$. In this description, for each place $v$ of $\mathbb{Q}$, the local Tate pairing
\[\left \langle ~,~\right \rangle_v: H^1(\mathbb{Q}_v,E_d[2]) \times H^1(\mathbb{Q}_v,E_d[2])\rightarrow \mathbb{Q}/\mathbb{Z}\]
becomes the pairing $(\mathbb{Q}_v^\times/\mathbb{Q}_v^{\times 2})^2\times (\mathbb{Q}_v^\times/\mathbb{Q}_v^{\times 2})^2\rightarrow \frac{1}{2}\mathbb{Z}/\mathbb{Z}  \cong \boldsymbol \mu_2  $ given by
\begin{equation} \label{explicit tate pairing}
\left((x_1,x_2),(y_1,y_2)\right)\mapsto (x_1,y_2)_v(x_2,y_1)_v,
\end{equation}
where $(~,~)_v$ denotes the quadratic Hilbert symbol. The Kummer map
$\delta_{d,v}:E_d(\mathbb{Q}_v)/2E_d(\mathbb{Q}_v)\hookrightarrow H^1(\mathbb{Q}_v,E_d[2]) $
then becomes the map  
 \begin{equation} \label{elliptic curve kummer map}
(x,y)\longmapsto  \begin{cases}  (x-da_1,x-da_2)~~&~~x\notin \{da_1,da_2\}, \\\left(\alpha\beta,d\alpha\right) & (x,y)=(da_1,0),\\\left(-d\alpha,-\alpha \gamma\right) & (x,y)=(da_2,0).
\end{cases}\end{equation}
See, for example, \cite{MR2514094}*{Proposition X.1.4}.

\subsection{The group $\textup{Sel}_{\tilde{\CCC}_d}(\mathbb{Q},E_d[2])$}

We now define a further Selmer structure, whose associated Selmer group contains $\textup{Sel}_{\CCC_d}(\mathbb{Q},E_d[2])$ as a subgroup, and which admits a cleaner explicit description. 

\begin{defi} \label{sel c tilde defi}
Define the Selmer structure $\widetilde{\CCC}_d$ for $E_d[2]$ (viewed as a $G_\mathbb{Q}$-module) via the local conditions
\[\widetilde{\CCC}(E_d/\mathbb{Q}_v)=\begin{cases}\CCC(E_d/\mathbb{Q}_v)~~&~~v\notin \Sigma, \\H^1(\mathbb{Q}_v,E[2])~~&~~v\in \Sigma. \end{cases}\]
Denote by $\textup{Sel}_{\tilde{\CCC}_d}(\mathbb{Q},E_d[2])$ the associated Selmer group. 
\end{defi}

Note that by construction,  $\textup{Sel}_{\widetilde{\CCC}_d}(\mathbb{Q},E_d[2])$ contains  $\textup{Sel}_{\CCC_d}(\mathbb{Q},E_d[2])$ as a subgroup. In particular, if $\textup{Sel}_{\widetilde{\CCC}_d}(\mathbb{Q},E_d[2])$ is trivial,  then so is $\textup{Sel}_{\CCC_d}(\mathbb{Q},E_d[2])$. The advantage of considering $\textup{Sel}_{\widetilde{\CCC}_d}(\mathbb{Q},E_d[2])$ is that now \Cref{the local conditions lemma} describes all  non-trivial Selmer conditions.  

\begin{notation} \label{preliminary_notat}
Write $N$ for the squarefree product of all (finite) primes $p\in \Sigma$. Further, write $d=ad'd''$, where $d'$ is the product of all primes $p\mid d$ such that both $p\notin \Sigma$ and $p$ splits in $K/\mathbb{Q}$, and $d''$ is the product of all primes $p\mid d$ such that both $p\notin \Sigma$ and $p$ is inert in $K/\mathbb{Q}$.  

For  $d\in \mathbb{Z}$ squarefree, we identify $H^1(\mathbb{Q},E_d[2])$ with  $(\mathbb{Q}^\times/\mathbb{Q}^{\times 2})^2$ as in \Cref{explicit conditions subsection}, and further identify $(\mathbb{Q}^\times/\mathbb{Q}^{\times 2})^2$ with the set of pairs of squarefree integers.  For a prime $p$ and an integer $n$ coprime to $p$, we write 
$\left(\frac{n}{p}\right)$ for the Legendre symbol taking value $1$ if $n$ is a square modulo $p$, and $-1$ else. 
\end{notation}

\begin{proposition} \label{explicit selmer conditions 1}
With the notation and identifications of \Cref{preliminary_notat}, the Selmer group $\textup{Sel}_{\widetilde{\CCC}_d}(\mathbb{Q},E_d[2])$ consists of pairs $(x_1,x_2)$ of squarefree integers such that the following conditions all hold:
\begin{itemize}
\item[(i)] we have $x_i \mid Nd'$ for $i=1,2$,
\item[(ii)] we have $\left(\frac{x_i}{p}\right)=1$ for all $p\mid d''$ and for $i=1,2$,
\item[(iii)] for all $p\mid d'$ we have \[(x_1,d\alpha)_p(x_2,\alpha \beta)_p=1=(x_1,-\alpha \gamma)_p(x_2,-d\alpha)_p.\] 
\end{itemize}
\end{proposition}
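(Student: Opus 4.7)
The plan is to unwind the definition of $\textup{Sel}_{\widetilde{\CCC}_d}(\mathbb{Q},E_d[2])$ prime by prime using the identification of $H^1(\mathbb{Q},E_d[2])$ with $(\mathbb{Q}^\times/\mathbb{Q}^{\times 2})^2$ from \Cref{explicit conditions subsection}, combined with the explicit determination of the local conditions $\CCC(E_d/\mathbb{Q}_p)$ at primes $p\notin\Sigma$ supplied by \Cref{the local conditions lemma}. By construction there is no condition imposed at places $v\in\Sigma$, so the entire argument concerns primes $p\notin\Sigma$, which split naturally into three cases according to whether $p\nmid d$, $p\mid d'$, or $p\mid d''$.

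First I would handle the primes $p\notin\Sigma$ with $p\nmid d$. For such $p$ the local condition is $H^1_{\textup{nr}}(\mathbb{Q}_p,E_d[2])$ by \Cref{the local conditions lemma}(i). Under the identification $H^1(\mathbb{Q}_p,\boldsymbol\mu_2)\cong\mathbb{Q}_p^\times/\mathbb{Q}_p^{\times 2}$ the unramified subgroup is $\mathcal{O}_{\mathbb{Q}_p}^\times/\mathcal{O}_{\mathbb{Q}_p}^{\times 2}$, and since $p$ is odd and the $x_i$ are squarefree this forces $p\nmid x_i$ for $i=1,2$. Next, for $p\mid d''$ the local condition is trivial by \Cref{the local conditions lemma}(iii), which translates to both $x_i$ being squares in $\mathbb{Q}_p^\times$; for odd $p$ coprime to the $x_i$-part of the squarefree representative this is precisely $p\nmid x_i$ together with the Legendre symbol condition $\bigl(\tfrac{x_i}{p}\bigr)=1$. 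Combining these two cases gives that $x_i$ can only be divisible by primes of $\Sigma$ or primes dividing $d'$, i.e.\ $x_i\mid Nd'$, which yields (i) and (ii).

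For the split primes $p\mid d'$ the local condition is $\SSS(E_d/\mathbb{Q}_p)=\delta_{d,p}(E_d[2])$ by \Cref{the local conditions lemma}(ii). Since $\SSS(E_d/\mathbb{Q}_p)$ is maximal isotropic for the local Tate pairing (cf.\ \Cref{selmer is max iso example}), membership in this subgroup is equivalent to pairing trivially with each of its generators. Using the Kummer formula \Cref{elliptic curve kummer map} these generators are $(\alpha\beta,d\alpha)$ (image of $P_{1,d}$) and $(-d\alpha,-\alpha\gamma)$ (image of $P_{2,d}$). Applying the explicit form \Cref{explicit tate pairing} of the local Tate pairing to $(x_1,x_2)$ against these two generators produces exactly the two Hilbert symbol equations of condition (iii).

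The main obstacle is little more than bookkeeping: the core content is already encapsulated in \Cref{the local conditions lemma} and in the explicit Kummer formula \Cref{elliptic curve kummer map}, so the task reduces to systematically translating local conditions into arithmetic conditions on squarefree representatives, using self-duality of the Kummer image at the split primes to replace a containment condition by a Hilbert-pairing condition. Care should be taken only to confirm that the two local requirements at primes $p\mid d''$ (triviality of the class, and triviality of the Hilbert symbols) amount to exactly the Legendre symbol statement in (ii), and to record that no condition is imposed at $v\in\Sigma$ by definition of $\widetilde{\CCC}_d$.
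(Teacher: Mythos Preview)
Your proposal is correct and follows essentially the same approach as the paper: you split into the three types of primes outside $\Sigma$, invoke \Cref{the local conditions lemma} in each case, and at primes $p\mid d'$ use self-duality of the Kummer image to convert membership into a Hilbert-pairing condition against the images of the $2$-torsion points via \Cref{elliptic curve kummer map} and \Cref{explicit tate pairing}. This matches the paper's argument step for step.
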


\begin{proof}
By \Cref{the local conditions lemma} and the definition of the local groups  $\widetilde{\CCC}(E_d/\mathbb{Q}_v)$, we have $\widetilde{\CCC}(E_d/\mathbb{Q}_p)=0$ for all primes $p$ with $p\notin \Sigma$ such that both $p\mid d$ and $p$ is inert in $K/\mathbb{Q}$, and $\widetilde{\CCC}(E_d/\mathbb{Q}_p)=H^1_{\textup{nr}}(\mathbb{Q}_p,E_d[2])$ for each prime $p$ such that both $p\notin \Sigma$ and $p\nmid d$. These conditions are equivalent to conditions (i) and (ii) in the statement. Since in the definition of $\textup{Sel}_{\widetilde{\CCC}_d}(\mathbb{Q},E_d[2])$ there are no conditions imposed at primes $p\in \Sigma$, in light of \Cref{the local conditions lemma}(ii) it suffices to show that condition (iii) is equivalent to the condition that \[(x_1,x_2)\in \SSS(E_d/\mathbb{Q}_p)=\delta_{d,p}(E_d[2])\] for each prime $p\mid d$ such that both $p\notin \Sigma$ and $p$ splits in $K/\mathbb{Q}$. Since $\SSS(E_d/\mathbb{Q}_p)$ is its own orthogonal complement under the local Tate pairing, $(x_1,x_2)$ is in $\SSS(E_d/\mathbb{Q}_p)$ if and only if it pairs trivially with each element of $\delta_{d,p}(E_d[2])$. Now $P_{d,1}=(da_1,0)$ and $P_{d,2}=(da_2,0)$ is a basis for $E_d[2]$, and by  \Cref{elliptic curve kummer map} we have
\[\delta_{d,p}(P_{d,1})=(\alpha\beta,d\alpha)\in \left(\mathbb{Q}_p^{\times}/\mathbb{Q}_{p}^{\times 2}\right)^2\quad\textup{ and }\quad\phantom{h}~~\delta_{d,p}(P_{d,2})=(-d\alpha,-\alpha\gamma)\in \left(\mathbb{Q}_p^{\times}/\mathbb{Q}_{p}^{\times 2}\right)^2.\]
By \Cref{explicit tate pairing}, $(x_1,x_2)$ pairs trivially with both of these elements under the local Tate pairing at $p$ if and only if 
\[(x_1,d\alpha)_p(x_2,\alpha \beta)_p=1=(x_1,-\alpha \gamma)_p(x_2,-d\alpha)_p.\] 
The result follows. 
\end{proof}

\section{Proof of \texorpdfstring{\Cref{main statistical theorem written in sec 5}}{main theorem}} \label{big analysis section}

Recall that $K=\mathbb{Q}(\sqrt{\theta})/\mathbb{Q}$ is a quadratic extension with Galois group $G$, and $E/\mathbb{Q}$ is an elliptic curve over $\mathbb{Q}$ with $E[2]\subseteq E(\mathbb{Q})$, and given by a Weierstrass equation
\begin{equation}  
E/\mathbb{Q}:y^2=(x-a_1)(x-a_2)(x-a_3)
\end{equation}
for $a_1, a_2, a_3\in \mathbb{Z}$. Recall also that we have defined integers $\alpha=a_1-a_2$, $\beta=a_1-a_3$, and $\gamma=a_2-a_3$, and that the integer $N$ is taken to be the product of all primes in the set $\Sigma$ of \Cref{Notation:Sigma}.

The aim of this section is to prove \Cref{main statistical theorem written in sec 5}.  Specifically, we will show the following, strictly stronger, result. 

\begin{theorem} \label{main statistical theorem}  
We have
\[\#\{d\textup{ squarefree }\colon ~|d|<X,~\textup{Sel}_{\CCC_d}(\mathbb{Q},E_d[2])\neq 0\}\ll X\log(X)^{-0.0394}.\]
In particular 
\[\lim_{X\rightarrow \infty}\frac{\#\{d\textup{ squarefree }\colon ~|d|<X,~\textup{Sel}_{\CCC_d}(\mathbb{Q},E_d[2])=0\}}{\#\{d\textup{ squarefree }\colon ~|d|<X\}}=1.\]
\end{theorem}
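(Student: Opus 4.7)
The plan is to reduce the vanishing of $\textup{Sel}_{\CCC_d}(\mathbb{Q},E_d[2])$ to the triviality of a simpler auxiliary group $S_d \subseteq \mathbb{Q}^\times/\mathbb{Q}^{\times 2}$ whose vanishing is a sufficient condition. Using the inclusion $\textup{Sel}_{\CCC_d}(\mathbb{Q},E_d[2]) \subseteq \textup{Sel}_{\widetilde{\CCC}_d}(\mathbb{Q},E_d[2])$ (\Cref{sel c tilde defi}) and the explicit description in \Cref{explicit selmer conditions 1} of the latter as pairs $(x_1,x_2)$ of squarefree integers dividing $Nd'$ satisfying $\left(\frac{x_i}{p}\right)=1$ for each $p \mid d''$ together with certain Hilbert symbol relations at $p \mid d'$, one isolates $S_d$ as a one-variable slice of this group (essentially dropping the Hilbert conditions at split primes and keeping one coordinate). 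Vanishing of $S_d$ then forces $\textup{Sel}_{\widetilde{\CCC}_d}(\mathbb{Q},E_d[2])=0$ and hence $\textup{Sel}_{\CCC_d}(\mathbb{Q},E_d[2])=0$.

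The next step is to express $|S_d|$ as a character sum. Writing the indicator of $\left(\frac{x}{p}\right)=1$ as $\tfrac{1}{2}\bigl(1+\left(\frac{x}{p}\right)\bigr)$ and expanding the product over $p \mid d''$ produces an identity of the rough shape
\[
|S_d| \;=\; \frac{1}{2^{\omega(d'')}}\sum_{y \mid d''}\sum_{\substack{x \mid Nd'\\ x\ \textup{squarefree}}} \left(\frac{x}{y}\right).
\]
Summing over squarefree $|d|\leq X$ and decomposing $d = ad'd''$ with $a \mid N$, the count of $d$ with $S_d \neq 0$ is controlled by $\sum_{|d|\leq X}(|S_d|-1)$ (since $|S_d|-1 \in \mathbb{Z}_{\geq 0}$ and Markov's inequality applies), which reduces to a multilinear sum of Jacobi symbols $\left(\frac{x}{y}\right)$ weighted by arithmetic functions supported on coprime pairs. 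This is precisely the form of sum analysed by Fouvry--Kl\"uners in \cite{MR2276261}*{\S5}.

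The analytic core adapts the Fouvry--Kl\"uners method. Quadratic reciprocity converts $\left(\frac{x}{y}\right)$ into $\left(\frac{y}{x}\right)$ up to $\pmod{8}$ correction factors, after which the expression becomes a bilinear form in real characters to moduli $x$, summed over primes in arithmetic progressions. Combining Siegel--Walfisz-type estimates, the large sieve for quadratic characters, and dyadic range-splitting on the two factors of $d''$ yields a bound
\[
\sum_{\substack{|d|\leq X\\d\ \textup{squarefree}}}(|S_d|-1) \;\ll\; X \log(X)^{-\delta}
\]
for an explicit $\delta > 0$. The main obstacle is to obtain the specific numerical exponent $\delta \geq 0.0394$: this forces a careful book-keeping of the exponents produced by the large sieve, together with Heath-Brown's device of restricting the range of the auxiliary modulus parameter to sidestep Siegel zeros, and a verification of uniformity in the fixed data $(N,\alpha,\beta,\gamma)$. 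A secondary technicality is that the local conditions at the bad places $v \in \Sigma$ are absorbed freely in passing from $\CCC_d$ to $\widetilde{\CCC}_d$, but must be shown to contribute only a bounded number of auxiliary characters so as not to destroy the log-power saving; this is automatic since $\Sigma$ is fixed independently of $d$.
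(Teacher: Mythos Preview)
Your outline has the right overall architecture—pass to $\textup{Sel}_{\widetilde{\CCC}_d}$, project to a one-variable group $S_d$, express $|S_d|$ via Jacobi symbols, and apply Fouvry--Kl\"uners/Heath-Brown machinery—but there are two genuine gaps.

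First, the unweighted first-moment bound $\#\{d:S_d\neq 0\}\leq\sum_{|d|\leq X}(|S_d|-1)$ does \emph{not} give $o(X)$. In passing from $\textup{Sel}_{\widetilde{\CCC}_d}$ to $S_d$ you discard the Selmer conditions at primes of $d$ lying in $\mathcal{P}_0$ (split in $K$, nonsplit in $\mathbb{Q}(\sqrt{\alpha\beta})$); for $d$ with many such prime factors, $|S_d|$ is artificially inflated, and these $d$ contribute a main term of size $\asymp X$ to the first moment. The paper repairs this by computing instead a \emph{weighted} average $\sum_d\gamma^{\omega_2(d)-\omega_0(d)}(|S_d|-1)$ with $\gamma>1$, which suppresses the bad $d$, and then separately shows that $\omega_2(d)>\omega_0(d)$ for $100\%$ of $d$ so that the weight exceeds $1$ on the set one cares about. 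The specific choice $\gamma=\tfrac{1}{4}+\tfrac{\sqrt{17}}{4}$ is what produces the exponent $0.0394$. (Relatedly, your displayed formula for $|S_d|$ drops the Hilbert-symbol condition $(x,d\alpha)_p=1$ at primes $p\mid d_1$ in $\mathcal{P}_1$, which the paper's $S_d$ retains; your even-simpler $S_d$ makes the first-moment blow-up worse.)

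Second, vanishing of $S_d$ does \emph{not} force $\textup{Sel}_{\widetilde{\CCC}_d}(\mathbb{Q},E_d[2])=0$. It only forces the $x_1$-coordinate of any element $(x_1,x_2)$ to be trivial (\Cref{x-coordinate lemma trivial}). The paper handles the $x_2$-coordinate by a symmetry argument: relabeling $a_1\leftrightarrow a_2$ in the Weierstrass equation swaps the roles of $x_1$ and $x_2$ and replaces $\alpha\beta$ by $-\alpha\gamma$, so one runs the whole argument a second time for the relabelled curve.
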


\subsection{First reduction}

In order to prove \Cref{main statistical theorem} it suffices to prove the identical result for 
$\textup{Sel}_{\widetilde{\CCC}_d}(\mathbb{Q},E_d[2])$ (cf. \Cref{sel c tilde defi}) in place of $\textup{Sel}_{\CCC_d}(\mathbb{Q},E_d[2])$, since the latter is a subgroup of the former. We begin by defining a further group $S_d$ determined by simpler local conditions. Specifically, we wish to `decouple' the variables $x_1$ and $x_2$ appearing in \Cref{explicit selmer conditions 1}. 
We first introduce some notation. 

\begin{notation}
We introduce the following $3$ sets of primes:
\[\mathcal{P}_0:=\{ p\notin \Sigma,~~p \textup{ split in }K/\mathbb{Q}, \textup{ and }p\textup{ non-split }\mathbb{Q}(\sqrt{\alpha \beta})/\mathbb{Q} \},\]
\[\mathcal{P}_1:=\{ p\notin \Sigma,~~p \textup{ split in }K/\mathbb{Q}, \textup{ and }p\textup{ split in }\mathbb{Q}(\sqrt{\alpha \beta})/\mathbb{Q} \},\]
\[\mathcal{P}_2:=\{ p\notin \Sigma,~~p \textup{ inert in }K/\mathbb{Q}\}.\]
(If $\alpha\beta$ is a square in $\mathbb{Q}$ we take $\mathcal{P}_0:=\emptyset$ and $\mathcal{P}_1$ the collection of primes not in $\Sigma$ which split in $K/\mathbb{Q}$.)
Note that the sets $\Sigma, \mathcal{P}_0, \mathcal{P}_1$ and $\mathcal{P}_2$ give a partition of the set of all primes into $4$ pairwise disjoint subsets. 

For $i=0,1,2$, we  define $\mathcal{F}_i$ to be the set of positive squarefree integers $n$ all of whose prime factors lie in $\mathcal{P}_i$. Note that for $i\neq j$ we have $\mathcal{F}_i\cap \mathcal{F}_j=\{1\}$.  We write $\mathcal{F}_i\cdot \mathcal{F}_j$ for the collection of squarefree integers $n$ which can be written as a product $n=n_i n_j$ for some $n_i\in \mathcal{F}_i$ and $n_j\in \mathcal{F}_j$. Note that such a decomposition is necessarily unique.
\end{notation}

\begin{remark} \label{density remark}
Note that provided $\mathbb{Q}(\sqrt{\alpha\beta})\nsubseteq K$, $\mathcal{P}_0$ and $\mathcal{P}_1$ have Dirichlet density  $1/4$, and $\mathcal{P}_2$ has density $1/2$. If $\mathbb{Q}(\sqrt{\alpha \beta})\subseteq K$ then $\mathcal{P}_0=\emptyset$ and $\mathcal{P}_1$ and $\mathcal{P}_2$ both have Dirichlet density $1/2$.
\end{remark}

\begin{defi} \label{definition of Sd}
For $d$ a squarefree integer, define the subgroup $S_d$  of $\mathbb{Q}^\times/\mathbb{Q}^{\times 2}$ as follows. 
First, write (uniquely) $d=ad_0d_1d_2$ where $a\mid N$, $d_0\in \mathcal{F}_0$, $d_1\in \mathcal{F}_1$, and $d_2\in \mathcal{F}_2$. Now define $S_d$ to be the set of squarefree integers 
\[S_d:=\set{x~\textup{sq. free}~:~\substack{x\mid Nd_0d_1,\\\left(\frac{x}{p}\right)=1\textup{ for all } p\mid d_2,\\(x,d\alpha)_p=1\textup{  for all }p\mid d_1.}}.\]
 We allow $x$ to be either positive or negative.
\end{defi}

\begin{lemma} \label{x-coordinate lemma trivial}
If a pair of squarefree integers $(x_1,x_2)$ is in $\textup{Sel}_{\widetilde{\CCC}_d}(\mathbb{Q},E_d[2])$, then $x_1\in S_d$.
\end{lemma}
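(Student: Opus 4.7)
The plan is to deduce this directly from the explicit description of $\textup{Sel}_{\widetilde{\CCC}_d}(\mathbb{Q},E_d[2])$ given in \Cref{explicit selmer conditions 1}, by showing that on the set of primes $\mathcal{P}_1$ the Hilbert symbol constraints involving $x_2$ become vacuous, so that the constraints on $x_1$ alone yield exactly the defining conditions of $S_d$.

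Write $d = ad'd''$ as in \Cref{preliminary_notat}. With the partition of primes into $\Sigma, \mathcal{P}_0, \mathcal{P}_1, \mathcal{P}_2$, we have $d' = d_0 d_1$ (the primes of $d$ outside $\Sigma$ that split in $K/\mathbb{Q}$) and $d'' = d_2$ (the primes of $d$ outside $\Sigma$ that are inert in $K/\mathbb{Q}$). Suppose $(x_1,x_2) \in \textup{Sel}_{\widetilde{\CCC}_d}(\mathbb{Q},E_d[2])$. From \Cref{explicit selmer conditions 1}(i) we get $x_1 \mid Nd' = Nd_0d_1$, and from (ii) we get $\left(\frac{x_1}{p}\right)=1$ for every $p \mid d_2$. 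These are the first two defining conditions of $S_d$.

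It remains to verify that $(x_1,d\alpha)_p = 1$ for every $p \mid d_1$. By \Cref{explicit selmer conditions 1}(iii) we have
\[(x_1,d\alpha)_p \cdot (x_2,\alpha\beta)_p = 1\]
for every prime $p \mid d' = d_0 d_1$. The key observation is that if $p \in \mathcal{P}_1$, then $p$ splits in $\mathbb{Q}(\sqrt{\alpha\beta})/\mathbb{Q}$; since $p \notin \Sigma$, in particular $p$ is odd and $p \nmid \alpha\beta$, so $\alpha\beta$ is a square in $\mathbb{Z}_p^\times$, hence a square in $\mathbb{Q}_p^\times$. Consequently $(x_2,\alpha\beta)_p = 1$ for any $x_2$, and the displayed relation forces $(x_1,d\alpha)_p = 1$. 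This is the third defining condition of $S_d$, so $x_1 \in S_d$ as required.

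There is essentially no obstacle here: the statement is a packaging lemma that extracts a decoupled condition on $x_1$ from the coupled Hilbert-symbol conditions of \Cref{explicit selmer conditions 1}, and the only arithmetic input is the trivial fact that a Hilbert symbol $(y,z)_p$ vanishes whenever $z$ is a local square. The role of the definition of $\mathcal{P}_1$ is precisely to isolate the primes where this triviality applies, so that discarding the $\mathcal{P}_0$-conditions (where $\alpha\beta$ is genuinely non-square locally) is the only source of wastefulness alluded to in the introduction.
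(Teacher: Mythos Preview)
Your proof is correct and follows essentially the same approach as the paper: you invoke \Cref{explicit selmer conditions 1} and observe that for $p\mid d_1$ the splitting of $p$ in $\mathbb{Q}(\sqrt{\alpha\beta})/\mathbb{Q}$ makes $\alpha\beta$ a local square, collapsing the coupled condition $(x_1,d\alpha)_p(x_2,\alpha\beta)_p=1$ to $(x_1,d\alpha)_p=1$. The paper's proof is the same one-line argument, and your additional unpacking of the notation $d'=d_0d_1$, $d''=d_2$ is accurate.
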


\begin{proof}
Immediate from \Cref{explicit selmer conditions 1}, noting that for $p\mid d_1$, since $p$ is split in $\mathbb{Q}(\sqrt{\alpha \beta})/\mathbb{Q}$ by assumption, the condition $(x_1,d\alpha)_p(x_2,\alpha \beta)_p=1$ simply becomes $(x_1,d\alpha)_p=1$.
\end{proof}

We will show the following. As explained below, this is sufficient to prove \Cref{main statistical theorem}.

\begin{theorem} \label{thing we average}
We have
\[\#\{ d\textup{ squarefree }\mid ~|d|<X,~S_d\neq 0\}\ll X \textup{log}(X)^{-0.0394}.\]
\end{theorem}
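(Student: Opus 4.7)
The proof will proceed in two stages: first, an algebraic expansion converting ``$S_d \neq 0$'' into a bound by character sums, then an analytic estimation of those sums in the style of \cite{MR2276261}. Since $S_d$ is an $\mathbb{F}_2$-vector space containing $1$, a union bound gives
\[\#\{|d| < X \,:\, S_d \neq 0\} \leq \sum_{x \neq 1} \#\{|d| < X \text{ squarefree} \,:\, x \in S_d\}.\]
The constraint $x \mid Nd_0d_1$ forces a unique decomposition $x = \epsilon x_N x_0 x_1$ with $\epsilon \in \{\pm 1\}$, $x_N \mid N$ (finitely many choices), $x_0 \in \mathcal{F}_0$ and $x_1 \in \mathcal{F}_1$; moreover $x_0 \mid d_0$ and $x_1 \mid d_1$. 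Writing $d_0 = x_0 e_0$ and $d_1 = x_1 e_1$ reduces the problem to counting quadruples $(a, e_0, e_1, d_2)$ satisfying appropriate coprimality constraints and $|a x_0 e_0 x_1 e_1 d_2| < X$.

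For fixed $x$, I would then expand the conditions defining ``$x \in S_d$'' by orthogonality. The condition $\left(\frac{x}{p}\right) = 1$ for every $p \mid d_2$ becomes
\[\prod_{p \mid d_2} \frac{1 + \left(\frac{x}{p}\right)}{2} = \frac{1}{2^{\omega(d_2)}} \sum_{\delta_2 \mid d_2} \left(\frac{x}{\delta_2}\right),\]
and the Hilbert symbol conditions $(x, d\alpha)_p = 1$ for $p \mid d_1$ reduce (via the explicit formula for the tame Hilbert symbol at odd primes $p \notin \Sigma$) to analogous Legendre symbol expressions, expanded similarly as sums over $\delta_1 \mid d_1$. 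The case $p \mid x_1$ (so $p$ divides both $x$ and $d\alpha$) produces a factor of $(-1)^{(p-1)/2}$ times a shifted Legendre symbol, which is manageable but requires bookkeeping.

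After interchanging the order of summation, the task reduces to bounding sums of the form
\[\sum_{|d| < X/|x|} \Bigl(\tfrac{u(x)}{\delta_1}\Bigr)\Bigl(\tfrac{x}{\delta_2}\Bigr) \cdot (\text{coprimality indicators}),\]
with $\delta_1 \mid d_1$ and $\delta_2 \mid d_2$. These are exactly the kind of double Jacobi symbol sums treated by Fouvry--Kl\"uners in \cite{MR2276261}*{\S5}: quadratic reciprocity swaps the roles of $\delta_i$ and $u(x)$, after which P\'olya--Vinogradov or the large sieve produces a power-saving cancellation. The contribution of the trivial character ($\delta_1 = \delta_2 = 1$) must be isolated and controlled via the Chebotarev density theorem in the multiquadratic extension $\mathbb{Q}(\sqrt{x}, \sqrt{\theta}, \sqrt{\alpha\beta})/\mathbb{Q}$: since $x \neq 1$ imposes a non-trivial Galois condition on the primes of $d$, this contribution also satisfies a $\log(X)^{-c}$ saving.

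The main obstacle will be the \emph{uniformity in $x$} of these analytic estimates, so that the outer sum over $x_N$, $x_0 \in \mathcal{F}_0$ and $x_1 \in \mathcal{F}_1$ converges and still yields the stated saving. The specific exponent $0.0394$ is not intrinsic to the problem but arises from a numerical optimization in the Fouvry--Kl\"uners double-oscillation bound; as the authors remark in the introduction, a more careful Heath-Brown--type treatment of the original Selmer group $\textup{Sel}_{\CCC_d}(\mathbb{Q}, E_d[2])$ would presumably yield a larger constant.
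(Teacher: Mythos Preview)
Your opening union bound is exactly the unweighted first moment
\[\#\{|d| < X : S_d \neq 0\} \leq \sum_{|d|<X}(\#S_d - 1),\]
and the right-hand side is \emph{not} $o(X)$: it is genuinely of order $X$. To see this, carry out the orthogonality expansion you describe and isolate the trivial-character term (your ``$\delta_1=\delta_2=1$'', which in the paper's variables is $D_2=D_4=D_6=1$). For each fixed $x=x_Nx_0x_1$ this term is indeed $\ll \tfrac{X}{x_0x_1}\,2^{-\omega(x_1)}(\log X)^{-3/8}$, so you do get a logarithmic saving pointwise in $x$. But the outer sum over $x$ diverges: one has $\sum_{x_0\in\mathcal{F}_0}x_0^{-1}\asymp(\log X)^{1/4}$ and $\sum_{x_1\in\mathcal{F}_1}2^{-\omega(x_1)}x_1^{-1}\asymp(\log X)^{1/8}$, and these exactly cancel the $-3/8$. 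The underlying reason is that the definition of $S_d$ imposes \emph{no} condition at primes in $\mathcal{P}_0$, so the main term of $\#S_d$ behaves like $2^{\omega_0(d)-\omega_2(d)+O(1)}$, which by \Cref{shiu lemma} averages over $|d|<X$ to a positive constant. Thus the ``uniformity in $x$'' you flag as an obstacle is fatal, not merely technical: your claim that $x\neq 1$ always imposes a nontrivial Chebotarev condition on the primes of $d$ fails precisely for the $\mathcal{P}_0$-part of $d$.

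The paper's remedy (\Cref{weighted average} and the remark following it) is to compute instead the \emph{weighted} first moment $\sum_{|d|<X}\gamma^{\omega_2(d)-\omega_0(d)}(\#S_d-1)$ for a parameter $\gamma>1$. The weight penalises twists with abnormally many $\mathcal{P}_0$-factors relative to $\mathcal{P}_2$-factors, pushing the exponent of the main term from $0$ down to $\tfrac{(\gamma-1)(\gamma-2)}{4\gamma}<0$. One then shows separately, by an elementary Shiu-type bound, that $\omega_2(d)>\omega_0(d)$ (hence the weight exceeds $1$) for all $d$ outside a set of size $\ll X(\log X)^{-0.042}$; combining the two estimates gives the theorem. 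The specific constant $0.0394$ arises from optimising $\gamma$ between two competing families of terms (see \Cref{optimal gamma remark}), not from the double-oscillation bound as you suggest.
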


\begin{proof}[Proof of  \Cref{main statistical theorem} assuming \Cref{thing we average}.]
\Cref{thing we average} combined with \Cref{x-coordinate lemma trivial} shows that the $x_1$-coordinate of any element of $\textup{Sel}_{\widetilde{\CCC}_d}(\mathbb{Q},E_d[2])$ is trivial for $100\%$ of squarefree $d$. By symmetry, the same must then be true of the $x_2$-coordinate since we can relabel $a_1$ and $a_2$ in the equation \Cref{choice of weierstrass equation} for our elliptic curve in order to interchange the roles of $x_1$ and $x_2$. This shows the limit statement of \Cref{main statistical theorem}, and running the same argument but keeping track of error terms proves the general result.
\end{proof}

We now begin preparations for the proof of \Cref{thing we average}.

\subsection{Notation and preparations}

\begin{notation}
Given a positive integer $n$ we write $\omega(n)$ for the number of distinct prime factors of $n$. For $i=0,1,2$ we write $\omega_i(n)$ for the number of distinct prime factors of $n$ which lie in $\mathcal{P}_i$. We denote by $\mu$ the M\"obius function. 
\end{notation}

We will use frequently the following lemma controlling generalised divisor sums. 

\begin{lemma} \label{shiu lemma}
Let $a_0$, $a_1$, and $a_2$ be non-negative real numbers. Then we have 
\[\sum_{\substack{X-Y<n\leq X\\n~\textup{sq. free}}}a_0^{\omega_0(n)}a_1^{\omega_1(n)}a_2^{\omega_2(n)}\ll \begin{cases} Y \log(X)^{\frac{a_0}{4}+\frac{a_1}{4}+\frac{a_2}{2}-1}~~&~~\mathbb{Q}(\sqrt{\alpha\beta})\nsubseteq K,\\ Y\log(X)^{\frac{a_1}{2}+\frac{a_2}{2}-1}~~&~~\mathbb{Q}(\sqrt{\alpha\beta})\subseteq K,\end{cases}\]
uniformly for $2\leq X\textup{exp}(-\sqrt{\log(X)})\leq Y\leq X$. 
\end{lemma}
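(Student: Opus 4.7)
The plan is to invoke Shiu's theorem for multiplicative functions in short intervals (\cite{MR552470}, or one of the usual references for this kind of estimate). First, I would extend the summand to all positive integers by defining
\[f(n) := \mu^2(n)\, a_0^{\omega_0(n)} a_1^{\omega_1(n)} a_2^{\omega_2(n)},\]
which is multiplicative since on a squarefree $n$ each prime divisor $p$ contributes exactly one of the factors $a_i$ according to which class $\mathcal{P}_i$ contains it, and which vanishes on non-squarefree $n$. This function clearly satisfies the Shiu hypotheses: $f(p^k)=0$ for $k\geq 2$, and $f(n)\ll_\epsilon n^\epsilon$ since $f(n)\leq C^{\omega(n)}$ where $C=\max(a_0,a_1,a_2,1)$.

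Next, Shiu's theorem yields, uniformly in the range $2\leq X\exp(-\sqrt{\log X})\leq Y\leq X$,
\[\sum_{X-Y<n\leq X}f(n)\;\ll\;\frac{Y}{\log X}\,\exp\!\left(\sum_{p\leq X}\frac{f(p)}{p}\right).\]
So the task reduces to estimating $\sum_{p\leq X}f(p)/p = \sum_{i=0}^2 a_i\sum_{p\leq X,\,p\in\mathcal{P}_i}\tfrac{1}{p}$. Each set $\mathcal{P}_i$ is defined by Frobenius conditions in the compositum $K\cdot\mathbb{Q}(\sqrt{\alpha\beta})$, so the Chebotarev density theorem (more concretely, a standard computation with the Dirichlet $L$-functions attached to the characters of $K$ and $\mathbb{Q}(\sqrt{\alpha\beta})$) shows that each sum is $\delta_i\log\log X + O(1)$ where $\delta_i$ is the Dirichlet density of $\mathcal{P}_i$, as recorded in Remark~5.6.

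Assembling these ingredients:  in the generic case $\mathbb{Q}(\sqrt{\alpha\beta})\nsubseteq K$ we obtain
\[\exp\!\left(\sum_{p\leq X}\frac{f(p)}{p}\right)\;\ll\;(\log X)^{\tfrac{a_0}{4}+\tfrac{a_1}{4}+\tfrac{a_2}{2}},\]
and in the degenerate case $\mathbb{Q}(\sqrt{\alpha\beta})\subseteq K$ (where $\mathcal{P}_0=\emptyset$, $\delta_1=\delta_2=1/2$) we get the exponent $\tfrac{a_1}{2}+\tfrac{a_2}{2}$ instead. In both cases, dividing by the extra factor of $\log X$ from Shiu's bound recovers precisely the two exponents in the statement. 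There is essentially no obstacle beyond correctly invoking Shiu's theorem and cleanly separating the two Chebotarev cases; the argument is purely bookkeeping once the multiplicative function is set up.
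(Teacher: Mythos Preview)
Your proposal is correct and follows essentially the same approach as the paper: define a suitable non-negative multiplicative function, verify Shiu's hypotheses, and compute $\sum_{p\leq X} f(p)/p$ via the known Dirichlet densities of the sets $\mathcal{P}_i$. The only cosmetic difference is that the paper extends $f$ to prime powers by setting $f(p^k)=a_i$ for all $k\geq 1$ (and $f(p)=1$ for $p\in\Sigma$), whereas you insert the factor $\mu^2(n)$ so that $f(p^k)=0$ for $k\geq 2$; either choice satisfies Shiu's conditions and dominates the original squarefree sum, so the argument goes through identically.
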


\begin{proof}
This follows from a (significantly more general)  result of Shiu \cite{MR552470}. Define the multiplicative function $f:\mathbb{Z}\rightarrow \mathbb{R}_{\geq 0}$ by setting, for any $k\geq 1$, $f(p^k)=a_i$ for $p\in \mathcal{P}_i$ ($i=0,1,2$), and taking  $f(p)=1$ for $p\in \Sigma$. We then wish to bound the sum $\sum_{X-Y<n\leq X}f(n)$.  It follows from \Cref{density remark} that we have
\[\sum_{p\leq X}\frac{f(p)}{p}\sim \begin{cases} \left(\frac{a_0}{4}+\frac{a_1}{4}+\frac{a_2}{2}\right)\log \log(X)~~&~~\mathbb{Q}(\sqrt{\alpha \beta})\subsetneq K,\\  \left(\frac{a_1}{2}+\frac{a_2}{2}\right) \log \log(X)~~&~~\mathbb{Q}(\sqrt{\alpha \beta})\subseteq K.\end{cases}\]
The result now follows from \cite{MR552470}*{Theorem 1} (the conditions (i) and (ii) needed for that theorem follow in our setting from well known bounds on the divisor function).
\end{proof}

\subsection{Reduction to computing a weighted average}

In order to prove \Cref{thing we average} we will compute bounds for a certain weighted average of $\#(S_d\setminus \{1\})$. Specifically will we prove:

\begin{proposition} \label{weighted average} 
For any $1<\gamma<7/8+\sqrt{17}/8=1.3903...$, we have
\begin{equation} \label{weighted average proposition}
\sum_{|d|<X,~d~\textup{sq. free}}\gamma^{\omega_2(d)-\omega_0(d)}(\# S_d-1)=o(X).\end{equation}
Moreover, for $\gamma=1/4+\sqrt{17}/4$ the left hand side of \Cref{weighted average proposition} is $\ll X\textup{log}(X)^{-0.0394}.$
\end{proposition}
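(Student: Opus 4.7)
The plan is to expand $\#S_d - 1$ as a sum over the non-identity elements $x \in S_d$ and to interchange the order of summation, so that the problem reduces to bounding, for each non-trivial squarefree integer $x$,
\[
T(x) := \sum_{\substack{|d| < X,\ \text{sq. free}\\ x \in S_d}} \gamma^{\omega_2(d) - \omega_0(d)},
\]
and then summing the resulting bounds over $x$. To set up this inner sum, I would use the constraint $x \mid Nd_0 d_1$ from \Cref{definition of Sd} to decompose $x$ uniquely as $x = x_N x_0 x_1$ with $x_N \mid N$, $x_0 \in \mathcal{F}_0$, and $x_1 \in \mathcal{F}_1$; then $x \in S_d$ forces the inclusions $x_0 \mid d_0$ and $x_1 \mid d_1$, together with splitting conditions for the remaining prime factors of $d$ with respect to $x$.

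The heart of the argument is the analytic estimation of $T(x)$ for fixed $x \neq 1$. Since $v_p(d\alpha) = 1$ for any $p \in \mathcal{P}_1$ dividing $d_1$ (as $\mathcal{P}_1$ is disjoint from $\Sigma$), the Hilbert symbol condition $(x, d\alpha)_p = 1$ collapses, up to a harmless unit twist, to a Jacobi symbol condition of the form $\chi_x(p) = 1$, where $\chi_x$ denotes the non-trivial Kronecker character attached to $x$. After writing $d = a x_0 x_1 \cdot d_0' d_1' d_2$ with $d_0', d_1', d_2$ squarefree and coprime to $x$, the associated Dirichlet series factors prime by prime into explicit Euler factors whose form depends only on which of $\mathcal{P}_0, \mathcal{P}_1, \mathcal{P}_2$ a prime lies in; one recognises a factorisation of the shape $\zeta(s)^{A(\gamma)} L(s, \chi_x)^{B(\gamma)} H(s;\chi_x)$ near $s = 1$ for explicit exponents $A(\gamma), B(\gamma)$ and a bounded holomorphic correction $H$. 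A standard Selberg--Delange / Perron argument, together with the classical zero-free region for $L(s, \chi_x)$ (handling any possible Siegel zero in the usual fashion), then yields an estimate
\[
T(x) \ll \frac{X}{|x_0 x_1|}\,(\log X)^{A(\gamma) - B(\gamma) - 1},
\]
the $-B(\gamma)$ saving reflecting the non-triviality of $\chi_x$.

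Finally, summing over the admissible $x$---noting that $x$ contributes a factor $1/|x_0 x_1|$ and that the number of available $x_0, x_1$ of given sizes is controlled by \Cref{shiu lemma} applied to $\mathcal{F}_0$ and $\mathcal{F}_1$---produces an overall bound of the form $X(\log X)^{F(\gamma)}$ for an explicit exponent $F(\gamma)$. After the arithmetic simplifies, the inequality $F(\gamma) < 0$ becomes equivalent to $4\gamma^2 - 7\gamma + 2 < 0$, that is, to $\gamma < (7 + \sqrt{17})/8$; the optimal balancing between the contributions of $\mathcal{P}_0$ and $\mathcal{P}_2$ to this exponent is realised at $\gamma = (1+\sqrt{17})/4$, producing the explicit bound $X\log(X)^{-0.0394}$. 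The main obstacle, as I expect it, will be securing the bound on $T(x)$ uniformly in $x$ with an acceptable dependence on the conductor of $\chi_x$, including the degenerate cases where $\chi_x$ has small conductor or where $x$ interacts with primes in $\Sigma$; this is the analytic core of the argument, which follows the strategy of Fouvry--Kl\"uners in \cite{MR2276261}*{\S5} essentially verbatim.
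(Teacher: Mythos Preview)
Your plan diverges from the paper's in a way that hides the main difficulty. The paper does \emph{not} fix $x$ and estimate $T(x)$ by Selberg--Delange. Instead (Lemma~\ref{big sum lemma}) it opens both the indicator of $x\in S_d$ \emph{and} the sum over $x$ simultaneously into Jacobi symbols, obtaining an $8$-variable bilinear sum $\mathcal{S}_\gamma(\lambda,\eta,X)$ which is then bounded by the Heath-Brown/Fouvry--Kl\"uners machinery: dyadic slicing of the ranges, Heath-Brown double oscillation for pairs of large linked variables, Siegel--Walfisz for one large/one small, and Shiu-type trivial bounds on the residual families. The constraint $4\gamma^2-7\gamma+2<0$ and the optimum $\gamma=(1+\sqrt{17})/4$ arise from balancing these trivial bounds across unlinked index sets (see \S\ref{Remaining families} and Remark~\ref{optimal gamma remark}); they are not the output of a single Dirichlet-series exponent.

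Your sketch has two concrete gaps. First, the assertion that for $p\mid d_1$ the condition $(x,d\alpha)_p=1$ collapses to $\chi_x(p)=1$ is false when $p\mid x_1$: there $v_p(x)=v_p(d\alpha)=1$, so the Hilbert symbol equals $\bigl(\tfrac{-(x/p)(d\alpha/p)}{p}\bigr)$, which is a condition on $d_0'd_1'd_2\pmod{p}$, not on $p$ modulo the conductor of $\chi_x$. This coupling of $d$ with the primes of $x_1$ is precisely what forces the extra variables $D_2,D_3,D_4,D_5$ in the paper's expansion. Second, and more seriously, even granting a pointwise asymptotic $T(x)\sim c(x)\,X/(x_0x_1)\,(\log X)^{e(x)}$, you cannot sum it over $x$ up to $X$: the Selberg--Delange error term has implicit dependence on the conductor of $\chi_x$ (and on the additional characters mod $x_1$), and this conductor can be as large as $X$. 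Controlling this uniformly is exactly the problem that the double-oscillation lemma solves, and it is why Fouvry--Kl\"uners do \emph{not} fix $x$ first---so ``following FK essentially verbatim'' already means abandoning the Selberg--Delange plan in favour of the paper's route. Finally, a correct execution of your heuristic (including the $2^{-\omega(x_1)}$ saving from the conditions at $p\mid x_1$) yields the exponent $(\gamma-1)(\gamma-2)/(4\gamma)$, not $4\gamma^2-7\gamma+2$; the latter, and the specific optimal $\gamma$, come from the paper's case analysis and are not recoverable from a single $L$-series factorisation.
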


We begin by showing that this is sufficient to prove \Cref{thing we average}.  

\begin{proof}[Proof of \Cref{thing we average} assuming \Cref{weighted average}]
We first show that the weights are at least $1$ for $100\%$ of squarefree $d$. That is, we claim that 
\[\#\{d~\textup{squarefree}~~\big\vert ~|d|\leq X, ~ \omega_0(d)\geq \omega_2(d)\}\ll X\log(X)^{-0.042}.\]
To see this, fixing any $\lambda>1$ we have
\[\#\{d~\textup{squarefree}~~\big\vert ~|d|\leq X, ~ \omega_0(d)\geq \omega_2(d)\}\leq 2\sum_{1\leq d\leq X}\lambda^{\omega_0(d)-\omega_2(d)}.\]
By \Cref{shiu lemma} the right hand side is $\ll X\log(X)^{\lambda/4+1/(2\lambda)-3/4}.$ Optimising over $\lambda$ we find that when $\lambda=\sqrt{2}$ the exponent is $1/\sqrt{2}-3/4<-0.042$, giving the claim. 

Now fix $1<\gamma<7/8+\sqrt{17}/8$. By the claim we have 
 \begin{eqnarray*}
\#\left\{|d|\leq X~~ \big \vert~~  S_d\neq 0\right\}&\leq &\#\left\{|d|\leq X~~\big \vert~~\omega_0(d)\geq \omega_2(d)\right\}+\\~~&~~&\#\left\{|d|\leq X~~\big \vert ~~  S_d\neq 0,~\omega_2(d)>\omega_0(d)\right\}\\
~~&\ll&X\log(X)^{-0.042}+\sum_{|d|\leq X}\gamma^{\omega_2(d)-\omega_0(d)}(\# S_d-1)
\end{eqnarray*}
 where  above $d$ is implicitly taken squarefree. The result now follows from \Cref{weighted average}.
\end{proof}

\begin{remark}
The reason for the introduction of the weight $\gamma$ is that, in passing from the group $\textup{Sel}_{\CCC_d}(\mathbb{Q},E_d[2])$ to the group $S_d$, we have thrown away the Selmer conditions coming from primes in $\mathcal{P}_0$ in favour of reducing the number of variables involved. This leads to twists having an abnormally large number of prime factors lying in $\mathcal{P}_0$ contributing a disproportionate amount to the average size of $S_d$. The weight $\gamma$ is introduced to compensate for this.  
\end{remark}

\subsection{Strategy of the proof of \Cref{weighted average}}

The proof of \Cref{weighted average} follows closely the argument of \cite{MR2276261}*{\S5}, which has its origins in the work of Heath-Brown \cites{MR1193603,MR1292115}. There, Fouvry--Kl\"uners determine asymptotics for the moments of $4$-ranks of class groups of quadratic fields. Our first step is to express the sum in \Cref{weighted average} as a sum of Jacobi symbols. We do this in \Cref{big sum lemma} below, using ideas from \cite{MR2276261}*{Lemma 16}. The resulting sum, given in  \Cref{the basic sum}, is structurally  similar to the one in \cite{MR2276261}*{Lemma 17}. We then adapt the techniques used by Fouvry--Kl\"uners to bound this sum. There are a couple of points at which the argument we give diverges from that of  Fouvry--Kl\"uners. First, whilst they compute higher moments of the sizes of class groups, we need only compute a (weighted version of) the first moment of the size of $S_d$. Thus the intricate study of `maximal unlinked subsets' undertaken in  \cite{MR2276261}*{\S5.6} can be avoided. On the other had, the variables $D_i$ in \cite{MR2276261}*{\S5} are allowed to vary over all positive squarefree integers, whilst ours are constrained to lie in the thin families $\mathcal{F}_j$. This necessitates changes to the argument in Fouvry--Kl\"uners' first and  fourth families, which correspond to our \Cref{Remaining families} and  \Cref{SW family} respectively. 

\subsection{Expressing the sum in terms of Jacobi symbols} \label{jacobi sums section}
We now begin preparations for the proof of \Cref{weighted average}  by expressing the relevant sum in terms of Jacobi symbols. We first define the following sums which will be ubiquitous in what follows.

\begin{defi}
Let $\lambda$ and $\eta$ be squarefree divisors (either positive or negative) of $N$. For a tuple $(D_i)_{0\leq i\leq 7}$ of coprime positive odd integers, write 
\begin{eqnarray*}
\mathcal{J}_{\eta,\lambda}((D_i)_{0\leq i\leq 7})&:=&\left(\frac{\eta}{D_2}\right)\left(\frac{\lambda}{D_4}\right)\left(\frac{\lambda}{D_6}\right)\left(\frac{D_4}{D_2}\right)
 \left(\frac{D_2}{D_4}\right)\left(\frac{D_6}{D_2}\right)\left(\frac{D_2}{D_6}\right)\\ & &   \times \left(\frac{D_1}{D_2}\right)\left(\frac{D_5}{D_2}\right)\left(\frac{D_7}{D_2}\right) \left(\frac{D_0}{D_4}\right)\left(\frac{D_3}{D_4}\right)\left(\frac{D_0}{D_6}\right)\left(\frac{D_3}{D_6}\right).
 \end{eqnarray*}
 Now for any real number $X>1$, and any positive real $\gamma$, define
 \begin{eqnarray*}
\mathcal{S}_\gamma(\lambda,\eta,X) :=\sum_{\substack{D_0,D_1\in \mathcal{F}_0\\ D_2,D_3,D_4,D_5\in \mathcal{F}_1\\ D_6,D_7\in \mathcal{F}_2\\\prod_iD_i\leq X\\D_i\textup{ coprime}}} \gamma^{-\omega(D_0D_1)}2^{-\omega(D_2D_3D_4D_5)}(2/\gamma)^{-\omega(D_6D_7)}\mathcal{J}_{\eta,\lambda}((D_i)_{0\leq i\leq 7}),
\end{eqnarray*}
with the additional condition that, if $\lambda=1$, then not all of $D_0,D_2$ and $D_3$ are equal to $1$ in the range of summation.
\end{defi}

\begin{lemma} \label{big sum lemma}
For any positive real number $\gamma$ we have
\begin{equation}
\sum_{|d|<X,~d~\textup{sq. free}}\gamma^{\omega_2(d)-\omega_0(d)}(\# S_d-1)=\sum_{a\mid N}\sum_{x_N\mid N}\mathcal{S}_\gamma(x_N,-ax_N\alpha,X/a)
\end{equation}
where the right hand sums run over both positive and negative divisors of $N$.
\end{lemma}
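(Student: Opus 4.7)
The plan is to unfold the definition of $S_d$ and apply the standard detector identities $\mathbbm{1}[\chi=1]=\tfrac{1+\chi}{2}$ to the Legendre- and Hilbert-symbol conditions defining $S_d$. Writing every squarefree $d$ with $|d|<X$ uniquely as $d=ad_0d_1d_2$ with $a$ a signed squarefree divisor of $N$ and $d_i\in\mathcal{F}_i$, and every $x\in S_d$ uniquely as $x=x_Nx_0x_1$ with $x_N$ a signed squarefree divisor of $N$, $x_0\mid d_0$ and $x_1\mid d_1$, the excluded value $x=1$ becomes the condition $(x_N,x_0,x_1)\neq(1,1,1)$.

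Next, I would detect conditions (ii) and (iii) of \Cref{definition of Sd}. Condition (ii) expands as $2^{-\omega(d_2)}\sum_{D_6\mid d_2}\left(\frac{x}{D_6}\right)$, while condition (iii) expands as $2^{-\omega(d_1)}\sum_{w\mid d_1}\prod_{p\mid w}(x,d\alpha)_p$. In the latter sum, I split $w=w_1w_2$ with $w_1\mid x_1$ and $w_2\mid d_1/x_1$, and rename variables so that $d_0=D_0D_1$ (with $x_0=D_0$), $d_1=D_2D_3D_4D_5$ (with $x_1=D_2D_3$, $w_1=D_2$, $w_2=D_4$), and $d_2=D_6D_7$ (with $s_2=D_6$). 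Pairwise coprimality across types is automatic since $\mathcal{F}_0,\mathcal{F}_1,\mathcal{F}_2$ lie in disjoint sets of primes, and the weights recombine correctly, since $\gamma^{\omega_2(d)-\omega_0(d)}\cdot 2^{-\omega(d_1)-\omega(d_2)}$ is equal to $\gamma^{-\omega(D_0D_1)}2^{-\omega(D_2D_3D_4D_5)}(2/\gamma)^{-\omega(D_6D_7)}$, exactly as in the definition of $\mathcal{S}_\gamma$.

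The heart of the computation is to identify the resulting product of symbols with $\mathcal{J}_{\eta,\lambda}((D_i))$. For primes $p\mid D_4$ or $p\mid D_6$ we have $p\nmid x$, so the Hilbert and Legendre symbols both collapse to $\left(\frac{x}{p}\right)$; multiplying over such $p$ and factoring $x=x_ND_0D_2D_3$ produces the rows of Jacobi symbols $\left(\frac{\lambda}{D_4}\right)\left(\frac{D_0}{D_4}\right)\left(\frac{D_2}{D_4}\right)\left(\frac{D_3}{D_4}\right)$ and $\left(\frac{\lambda}{D_6}\right)\left(\frac{D_0}{D_6}\right)\left(\frac{D_2}{D_6}\right)\left(\frac{D_3}{D_6}\right)$ with $\lambda=x_N$. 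The more delicate contribution comes from primes $p\mid D_2$, where $v_p(x)=v_p(d\alpha)=1$; applying the standard formula $(a,b)_p=(-1)^{\epsilon(p)}\left(\frac{a/p}{p}\right)\left(\frac{b/p}{p}\right)$, using $\prod_{p\mid D_2}(-1)^{\epsilon(p)}=\left(\frac{-1}{D_2}\right)$, and noting that the factor $D_2/p$ appears in both $x/p$ and $d\alpha/p$ so that $\prod_{p\mid D_2}\left(\frac{D_2/p}{p}\right)^2=1$, the surviving product collapses (modulo the squares $D_0^2$ and $D_3^2$) to $\left(\frac{-ax_N\alpha}{D_2}\right)\left(\frac{D_1D_4D_5D_6D_7}{D_2}\right)$, supplying the remaining six Jacobi symbols in $\mathcal{J}_{\eta,\lambda}$ with $\eta=-ax_N\alpha$.

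The main obstacle is the sign-and-square bookkeeping in the last step, and the need to observe that $\eta=-ax_N\alpha$ is, up to squares, a signed squarefree divisor of $N$ so that $\mathcal{J}_{\eta,\lambda}$ makes sense in the stated form---this uses crucially that every prime dividing $\alpha$ lies in $\Sigma$ by choice of $N$. Once the identification is in place, summing over all signed $a\mid N$ and $x_N\mid N$, together with the observation that $x=1$ corresponds precisely to $\lambda=1$ and $(D_0,D_2,D_3)=(1,1,1)$, yields the stated identity.
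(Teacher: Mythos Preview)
Your proof is correct and follows essentially the same approach as the paper's own argument: both parametrize $d=ad_0d_1d_2$ and $x=x_Nx_0x_1$, detect the conditions defining $S_d$ via the identities $\tfrac{1}{2}(1+\chi)$, and then relabel the resulting eightfold factorisation as $(D_0,\ldots,D_7)$. The only cosmetic difference is at primes $p\mid D_2$, where the paper uses the identity $(x,d\alpha)_p=(x,-xd\alpha)_p$ to reduce to a single Legendre symbol, while you invoke the explicit formula $(a,b)_p=(-1)^{\epsilon(p)}\bigl(\tfrac{a/p}{p}\bigr)\bigl(\tfrac{b/p}{p}\bigr)$; the two computations are equivalent and yield the same $\eta=-ax_N\alpha$.
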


\begin{proof}
Fix $d$ squarefree. As in \Cref{definition of Sd} we write  $d=ad_0d_1d_2$ where $d_i\in \mathcal{F}_i$ for $i=0,1,2$, so that 
\[S_d=\set{x~\textup{sq. free}~:~\substack{x \mid Nd_0d_1,\\\left(\frac{x}{p}\right)=1\textup{ for all } p\mid d_2,\\(x,d\alpha)_p=1\textup{  for all }p\mid d_1.}}.\]
Now fix $x\mid Nd_0d_1$ and note that we have
\begin{equation} \label{jacobi identity}
2^{-\omega(d_2)}\sum_{z_2\mid d_2}\left(\frac{x}{z_2}\right)=2^{-\omega(d_2)}\prod_{p\mid d_2}\left(1+\left(\frac{x}{p}\right)\right)=\begin{cases}1~~&~~\left(\frac{x}{p}\right)=1~~\textup{for all }p\mid d_2\\ 0~~&~~\textup{else,}\end{cases}
\end{equation}
where in the sum above $z_2$ runs over all \textit{positive} divisors of $d_2$.

To deal with the conditions at primes dividing $d_1$, we write $x$ uniquely as $x=x_Nx_0x_1$ where $x_N\mid N$ (and may be negative) $x_0\mid d_0$ and $x_1\mid d_1$. Say $d_0=x_0y_0$ and $d_1=x_1y_1$. Then for $p\mid d_1$, we have (noting that $d_1$ and $\alpha$ are coprime and that all $p\mid d_1$ are odd)
\[(x,d\alpha)_p=\begin{cases} \left(\frac{x}{p}\right)~~&~~p\mid y_1\\(x,-xd\alpha)_p=\left(\frac{-ax_Ny_0y_1d_2\alpha}{p}\right)~~&~~p\mid x_1.\end{cases}\]
Thus, similarly to \Cref{jacobi identity}, we have
\begin{equation} \label{jacobi identity 2}
2^{-\omega(x_1y_1)}\sum_{\substack{w_1\mid x_1\\ z_1\mid y_1}}\left(\frac{x}{z_1}\right)\left(\frac{-ax_Ny_0y_1d_2\alpha}{w_1}\right)=\begin{cases}1~~&~~(x,d\alpha)_p=1~~\textup{for all }p\mid d_1\\ 0~~&~~\textup{else.}
\end{cases}
\end{equation}
We now multiply \Cref{jacobi identity} and \Cref{jacobi identity 2}, write $d_2=z_2z_2'$, $x_1=w_1w_1'$, and $y_1=z_1z_1'$, and sum over all $x=x_Nx_0x_1$ dividing $Nd_0d_1$ to find 
\[\#S_d=\sum_{x_N\mid N}\sum_{\substack{x_0y_0=d_0\\ w_1w_1'z_1z_1'=d_1\\ z_2z_2'=d_2}}2^{-\omega(w_1w_1'z_1z_1'z_2z_2')}\left(\frac{x_Nx_0w_1w_1'}{z_2}\right)\left(\frac{x_Nx_0w_1w_1'}{z_1}\right)\left(\frac{-ax_Ny_0z_1z_1'z_2z_2'\alpha}{w_1}\right)\]
where $x_N$ may be negative but all other variables are positive and coprime. Note that we necessarily have $x_0,y_0\in \mathcal{F}_0$, $w_1,w_1',z_1,z_1'\in \mathcal{F}_1$ and $z_2,z_2'\in \mathcal{F}_2$, so that in particular $\omega_0(d)=\omega(x_0y_0)$ and $\omega_2(d)=\omega(z_2z_2')$. Moreover, the identity element in $S_d$ corresponds to $x_N=x_0=w_1=w_1'=1$, so that restricting the range of summation so that not all of these variables are $1$ counts $\#S_d-1$ instead.  To conclude we sum the resulting expression for  $\#S_d-1$ over all squarefree $d=ad_0d_1d_2$ with $|d|\leq X$, weighted by $\gamma^{\omega_2(d)-\omega_0(d)}$, and relabel variables $(x_0,y_0,w_1,w_1',z_1,z_1',z_2,z_2')=(D_0,D_1,D_2,D_3,D_4,D_5,D_6,D_7)$.
\end{proof}
\begin{remark}
The proof above shows that the reason for excluding the terms where $\lambda=D_0=D_2=D_3=1$ in the definition of $\mathcal{S}_\gamma(\lambda,\eta,X)$ above is to remove the identity element of $S_d$ from the count.	
\end{remark}

Now fix  $1<\gamma<7/8+\sqrt{17}/8$ as in the statement of \Cref{weighted average} . In light of \Cref{big sum lemma} we want to study the sums $\mathcal{S}_\gamma(\lambda,\eta,X)$. 

\begin{defi}
As a book-keeping device, we define the function $\Phi(i,j)$ ($0\leq i \neq j\leq 7$) by setting $\Phi(i,j)=1$ if the Jacobi symbol $\left(\frac{D_i}{D_j}\right)$ appears in the definition of $\mathcal{J}_{\eta,\lambda}((D_i)_{0\leq i\leq 7})$, and $0$ else. 
We say that two indices $i$ and $j$ are \textit{linked} if $\Phi(i,j)+\Phi(j,i)=1$. 
\end{defi}

Note that the sets of linked indices are  
\begin{equation}
\{1,2\}, \{2,5\}, \{2,7\}, \{0,4\}, \{3,4\}, \{0,6\}, \{3,6\}.
\end{equation}

\begin{notation}
To write the sums $\mathcal{S}_\gamma(\lambda,\eta,X)$ in a manageable way, set $\mu_i$ to be $1$ if $\left(\frac{\eta}{D_i}\right)$ appears in $\mathcal{J}_{\eta,\lambda}((D_i)_{0\leq i\leq 7})$ and $0$ else, and set $\nu_i$ to be $1$ if $\left(\frac{\lambda}{D_i}\right)$ appears in $\mathcal{J}_{\eta,\lambda}((D_i)_{0\leq i\leq 7})$ and $0$ else. Further, define 
\[\kappa_i:=\begin{cases} \gamma~~&~~i=0,1\\ 2 ~~&~~i=2,3,4,5~~ \\\frac{2}{\gamma}~~&~~i=6,7.\end{cases}\]
  Finally, we let $\mathscr{D}(X)$ denote the set of tuples of pairwise coprime positive  integers $(D_0,...,D_7)$ such that all of the following hold:
  \begin{itemize}
  \item  we have $D_0,D_1\in \mathcal{F}_0$, $D_2,D_3,D_4,D_5\in \mathcal{F}_1$, and $D_6,D_7\in \mathcal{F}_2$,
  \item we have  $\prod_{i=0}^7D_i\leq X$,
  \item if $\lambda=1$, then $D_0,D_2$ and $D_3$ are not all $1$.
  \end{itemize}
 We thus write
\begin{equation}\label{the basic sum}
\mathcal{S}_\gamma(\lambda,\eta,X)=\sum_{(D_i)\in \mathscr{D}(X) }\prod_{i}\kappa_i^{-\omega(D_i)}\prod_{i}\left(\frac{\eta}{D_i}\right)^{\mu_i}\left(\frac{\lambda}{D_i}\right)^{\nu_i}\prod_{i\neq j}\left(\frac{D_i}{D_j}\right)^{\Phi(i,j)}.
\end{equation}
We also define $n_i$ $(0\leq i\leq 7)$ so that the $D_i$ are required to lie in   $\mathcal{F}_{n_i}$ (e.g. $n_0=n_1=0$).
\end{notation}

\subsection{Bounds on the sums $\mathcal{S}_\gamma(\lambda,\eta,X)$}

\begin{proposition} \label{bounds on the sums}
For any $1<\gamma<7/8+\sqrt{17}/8$, and for any (positive or negative) divisors $\lambda$ and $\eta$ of $N$, we have $\mathcal{S}_\gamma(\lambda,\eta,X)=o(X)$.
Moreover, when $\gamma=1/4+\sqrt{17}/4$ we have
\[\mathcal{S}_\gamma(\lambda,\eta,X)\ll X\log(X)^{-0.0394}.\]
\end{proposition}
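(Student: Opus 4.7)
The plan is to adapt the multi-variable Jacobi symbol bounds of \cite{MR2276261}*{\S5} (which in turn go back to \cite{MR1193603}) to the weighted sums $\mathcal{S}_\gamma(\lambda,\eta,X)$, accounting for the thin-family constraints $D_i\in \mathcal{F}_{n_i}$. First I would perform a dyadic decomposition $M_i < D_i \leq 2M_i$ with $M_i = 2^{k_i}$ and $\prod_i M_i \leq X$, and then partition the resulting dyadic tuples $(M_0,\ldots,M_7)$ into a bounded collection of \emph{families} according to the relative sizes of the $M_i$ and which of the seven linked pairs $\{i,j\}$ contain a dominant variable. Throughout, the weights $\kappa_i^{-\omega(D_i)}$ and the membership conditions $D_i\in\mathcal{F}_{n_i}$ are controlled via \Cref{shiu lemma}, whose exponents already encode the densities of the $\mathcal{F}_i$.

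For any family in which some linked pair $\{i,j\}$ admits a dominant variable (say $M_i$ larger than the moduli of all other Jacobi symbols containing $D_i$), the inner sum over $D_i$ can be transformed via quadratic reciprocity into a sum of a Dirichlet character against the indicator of squarefree integers in $\mathcal{F}_{n_i}$ coprime to the remaining $D_k$. The condition $D_i\in \mathcal{F}_{n_i}$ is the principal novelty relative to \cite{MR2276261}; I would encode it by inserting the appropriate Frobenius character of $K(\sqrt{\alpha\beta})/\mathbb{Q}$, which augments the modulus of the character but preserves primitivity. A direct application of Siegel--Walfisz (or of P\'olya--Vinogradov when the modulus is small) then produces cancellation of size $\log(X)^{-A}$ for any fixed $A>0$, and this saving survives summation over the remaining variables against the weighted Shiu bound, provided $\gamma < 7/8 + \sqrt{17}/8$.

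The main obstacle will be the \emph{balanced family}, in which all $M_i \asymp X^{1/8}$ and no single variable is large enough for a one-variable oscillation bound to suffice. Here one must extract oscillation in several variables simultaneously, as in \cite{MR2276261}*{\S5.7--5.8}, by isolating a bilinear Jacobi symbol configuration (for example the pair $\left(\frac{D_2}{D_4}\right)\left(\frac{D_4}{D_2}\right)$ appearing in $\mathcal{J}_{\eta,\lambda}$) and applying a large-sieve type estimate to the resulting double character sum. Because we are only bounding a first moment of $\#S_d$ rather than the higher moments of \cite{MR2276261}, the elaborate combinatorial analysis of maximal unlinked subsets there simplifies considerably, and it suffices to use a single such bilinear pair.

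Finally, after bounding each family, summing the contributions and optimising over $\gamma$ yields the stated results: the condition $\gamma < 7/8 + \sqrt{17}/8$ ensures the $o(X)$ bound, while the extremal choice $\gamma = 1/4 + \sqrt{17}/4$ produces the explicit saving $\log(X)^{-0.0394}$ by balancing the two dominant error contributions, namely the weighted Shiu bound on families in which the $\mathcal{F}_2$-variables dominate against the multi-variable oscillation estimate for the balanced family.
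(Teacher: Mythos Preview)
Your proposal misidentifies both the main obstacle and the source of the exponent constraint on~$\gamma$. The case you call the ``balanced family'', with all $M_i\asymp X^{1/8}$, is in fact easy: once every $M_i$ exceeds a small power of $\log X$, any linked pair $\{i,j\}$ (e.g.\ $\{1,2\}$) has both variables large, and Heath--Brown's bilinear large-sieve bound \cite{MR1347489}*{Corollary~4} gives cancellation directly. Moreover, the specific pair you single out, $\left(\frac{D_2}{D_4}\right)\left(\frac{D_4}{D_2}\right)$, is \emph{unlinked}: both symbols appear in $\mathcal{J}_{\eta,\lambda}$, so by quadratic reciprocity their product collapses to a sign depending only on $D_2,D_4\pmod 4$ and carries no oscillation whatsoever.

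The genuine bottleneck, and the origin of the bound $\gamma<7/8+\sqrt{17}/8$, is the family in which the set of \emph{large} variables forms an unlinked subset $I\subseteq\{0,\dots,7\}$ and every variable linked to some element of $I$ is forced equal to~$1$. In that regime all Jacobi symbols disappear and one is left with a pure Shiu sum; the resulting exponent $i_0/(4\gamma)+i_1/8+\gamma i_2/4-1$ must be made negative uniformly over all unlinked $I$, and it is the case $I\subsetneq\{1,4,5,6,7\}$ with $|I\cap\{4,5\}|\le 1$ that produces the quadratic $4\gamma^2-7\gamma+2$ and hence the threshold $7/8+\sqrt{17}/8$. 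For the full maximal set $I=\{1,4,5,6,7\}$ this Shiu exponent is non-negative and one must instead extract cancellation from the remaining character $\left(\frac{\lambda}{D_6}\right)$ via Siegel--Walfisz; this works only because the cases $\lambda=1$ and $\lambda=\theta$ have already been excluded (they correspond respectively to the removed identity element and to a separate M\"obius cancellation handled at the outset), a point your outline does not address.
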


It's immediate from \Cref{big sum lemma} that \Cref{bounds on the sums} implies \Cref{weighted average} and so, via \Cref{thing we average}, we obtain \Cref{main statistical theorem}. The rest of the section is occupied with the proof of  \Cref{bounds on the sums}.

\subsubsection{The contribution from $D_0, D_2, D_3=1$ and $\lambda=\theta$} \label{lambda eq theta case}
Recall that $K=\mathbb{Q}(\sqrt{\theta})$ for some squarefree integer $\theta$ (necessarily dividing $N$). We first  show that the contribution to $\mathcal{S}_\gamma(\theta,\eta,X)$ coming from $D_0=D_2=D_3=1$ is negligible, since leaving this in would prevent a uniform argument at a later point.  Note that when $D_0=D_2=D_3=1$  all Jacobi symbols appearing in \Cref{the basic sum} are equal to $1$ except those that involve $\lambda=\theta$. Moreover, since elements of $\mathcal{F}_2$ are products of primes inert in $K$, any $n\in \mathcal{F}_2$ has $\left(\frac{\theta}{n}\right)=\mu(n)$. On the other hand, we similarly have $\left(\frac{\theta}{n}\right)=1$ for all $n\in \mathcal{F}_1$. Consequently, the contribution to $\mathcal{S}_\gamma(\theta,\eta,X)$ from tuples with $D_0=D_2=D_3=1$ is given by
\begin{equation}\label{mobius sum}
\sum_{\substack{(D_i)\in \mathscr{D}(X)\\ D_0,D_2,D_3=1}}\mu(D_6)\prod_{i\neq 0,2,3}\kappa_i^{-\omega(D_i)}=\sum_{\substack{r\in \mathcal{F}_0\cdot \mathcal{F}_1\\ r\leq X}}\gamma^{-\omega_0(r)}\sum_{\substack{n\in \mathcal{F}_2 \\ n\leq X/r}}\gamma^{\omega(n)}\sum_{m\mid n}\mu(m).
\end{equation}
In the above, to pass from the left hand side to the right hand side we have set $r=D_1D_4D_5$ and $n=D_6D_7$, noting that e.g. given $r\in \mathcal{F}_0\cdot \mathcal{F}_1$ there are $2^{\omega_1(r)}$ ways or writing $r$ as a product $D_1D_4D_5$ where $D_1\in \mathcal{F}_0$ and $D_4,D_5\in \mathcal{F}_1$, and that this multiplicity cancels the contribution of $\kappa_4^{-\omega(D_4)}  \kappa_5^{-\omega(D_5)}$.  Now since  $\sum_{m\mid n}\mu(m)$ is equal to $0$ if $n>1$,  and $1$ if $n=1$, we find  
\begin{equation} \label{contribution from the particular bad elements}
~~\phantom{hii}|\textup{RHS of }\Cref{mobius sum}|=\sum_{\substack{r\in \mathcal{F}_0\cdot \mathcal{F}_1 \\ r\leq X}}\gamma^{-\omega_0(r)}\ll X\log(X)^{-1/2}
\end{equation} 
where for the bound we are using \Cref{shiu lemma}.

\subsubsection{Number of prime factors of the variables} \label{prime factors of the variables}
We now show that the contribution coming from $D_i$ with a large number of prime factors is negligible. This will be important in \Cref{SW family}. Set $\Omega=4e\cdot (\log\log(X)+B_0)$ with $B_0$ as in \cite{MR2276261}*{Lemma 11}, and let $\Sigma_1$ be the contribution to $\mathcal{S}_\gamma(\lambda,\eta,X)$ from the tuples $(D_i)\in \mathscr{D}(X)$ satisfying
\begin{equation}
	\omega(D_i)\geq \Omega~~\textup{ for some }0\leq i \leq 7.
\end{equation}
Writing $n=\prod_i D_i$ we have 
\begin{align*}
	|\Sigma_1|&\ll\sum_{\substack{n\leq X\\ \omega(n)\geq \Omega}}\frac{2^{\omega_0(n)}4^{\omega_1(n)}2^{\omega_2(n)}}{\gamma^{\omega_0(n)}2^{\omega_1(n)}(2/\gamma)^{\omega_2(n)}}\mu^2(n)
	\\& \ll \sum_{\substack{n\leq X\\ \omega(n)\geq \Omega}}\mu^2(n) 2^{\omega(n)}.
\end{align*}

Applying the Cauchy--Schwarz inequality and arguing using \cite{MR2280878}*{Lemma A} as in \cite{MR2276261}*{\S5.3} (paragraph above Equation (30)) we find $\Sigma_1\ll X\log(X)^{-1}$.

\subsubsection{Ranges of the variables}

We now divide the ranges of summation into intervals, and treat these intervals separately. Specifically, we set
\begin{equation}
\Delta:=1+\frac{1}{\log(X)^2}
\end{equation}
and divide the ranges of the variables into intervals $[\Delta^n,\Delta^{n+1}]$ for $n=0,1,2,...$, noting that $1$ is the only integer in the $n=0$ interval.  For $i=0,...,7$ we let $A_i$ denote a number of the form $\Delta^n$ with $1\leq \Delta^n\leq X$, let $\textbf{A} =(A_i)_{0\leq i \leq 7}$, and define
\begin{equation} \label{the sum over small intervals}
\mathcal{S}_\gamma(\lambda,\eta,X,\textbf{A})=\sum_{\substack{(D_i)\in \mathscr{D}'(X)\\ A_i\leq D_i\leq \Delta A_i}}\prod_{i}\kappa_i^{-\omega(D_i)}\prod_{i}\left(\frac{\eta}{D_i}\right)^{\mu_i}\left(\frac{\lambda}{D_i}\right)^{\nu_i}\prod_{i\neq j}\left(\frac{D_i}{D_j}\right)^{\Phi(i,j)},
\end{equation}
 where, in light of \Cref{prime factors of the variables} and \Cref{contribution from the particular bad elements}, we define $\mathscr{D}'(X)$ to be the subset of $\mathscr{D}(X)$ consisting of tuples $(D_i)_i$ such that   $\omega(D_i)\leq\Omega$ for each $i$, and such that, if $\lambda=\theta$, then not all of $D_0$, $D_2$ and $D_3$ are equal to $1$.   Since for $\alpha$ small positive we have $\log(1+\alpha)\approx \alpha$, for $X$ large  $\log(X)/\log(\Delta)\approx \log(X)^{3}$, so there are order $\log(X)^{24}$ expressions \Cref {the sum over small intervals}  as $\textbf{A}$ varies.

Following \cite{MR2276261}*{\S5.4} we split the collection of all $\textbf{A}$ into families and treat each in turn. 

\subsubsection{First family: $\prod_i A_i$ large.}

In order to exploit oscillations of the Jacobi symbols it will be necessary to allow the variables $D_i$ to range (essentially) freely in the interval $[A_i,\Delta A_i]$. To this end, we first deal with the case where the product of the $A_i$ is large, where the condition $\Pi_iD_i\leq X$ is relevant. Specifically, the first family of the $\textbf{A}$ is defined by the condition
\begin{equation} \label{first family}
\prod_{0\leq i \leq 7}A_i\geq \Delta^{-8}X.
\end{equation}

The argument here is essentially identical to that occurring between Equations (33) and (34) of \cite{MR2276261}: we have 
\begin{eqnarray*}
\sum_{\textbf{A}\textup{ satisfies }\cref{first family}}|\mathcal{S}_\gamma(\lambda,\eta,X,\textbf{A})|&\leq &\sum_{\textbf{A}\textup{ satisfies }\cref{first family}} \sum_{\substack{(D_i)\in \mathscr{D}'(X)\\ A_i\leq D_i\leq \Delta A_i }} \prod_{i}\kappa_i^{-\omega(D_i)} \\ & \leq &\sum_{\Delta^{-8}X\leq n \leq X}2^{\omega(n)} \\&\ll &(1-\Delta^{-8})X\log(X)  \\ &\ll &X \log(X)^{-1}
\end{eqnarray*}
where for the last inequality we are using that
\[1-\Delta^{-8}=1-(1+\log(X)^{-2})^{-8}=1-(1-8\log(X)^{-2}+O(\log(X)^{-4}))\ll \log(X)^{-2}.\]

Note that if $\textbf{A}$ does not satisfy \Cref{first family} then the condition $\prod_i D_i\leq X$ is made automatic by the restrictions on the intervals   the $D_i$ lie in, and may henceforth be dropped. 

\subsubsection{Second family: two large factors corresponding to linked indices}

We introduce the parameter $X^\dagger:=\log(X)^{78},$
and consider the $\textbf{A}$ such that
\begin{equation} \label{second family}
\prod_{0\leq k \leq 7}A_k\leq \Delta^{-8}X,\textup{ and there exist linked indices }i\neq j \textup{ with }A_i,A_j \geq X^\dagger.
\end{equation} 
Here the argument is almost identical to that given between Equations (40) and (42) in \cite{MR2276261}, ultimately relying on a result of Heath-Brown exploiting double oscillations of characters \cite{MR1347489}*{Corollary 4}. For such $\textbf{A}$, since $i$ and $j$ are linked we have (swapping $i$ and $j$ if necessary)
\begin{equation*} \label{big double oscillation sum}
 |\mathcal{S}_\gamma(\lambda,\eta,X,\textbf{A})|\ll \sum_{\substack{ A_k\leq D_k\leq \Delta A_k\\ k\neq i,j}}\prod_{k\neq i,j}\kappa_k^{-\omega(D_k)}\big\vert \sum_{\substack{1\leq D_i \leq \Delta A_i\\ 1\leq D_j \leq \Delta A_j}} f(D_i;(D_k)_{k\neq i,j})g(D_j;(D_k)_{k\neq i,j})\left(\frac{D_i}{D_j}\right) \big\vert,
\end{equation*}
where in the inner  sum $D_i$ and $D_j$ are odd coprime integers with no further constraints,  
\begin{equation*}
f(D_i;(D_k)_{k\neq i,j})=\mathbbm{1}_{\substack{D_i\in \mathcal{F}_{n_i},\\D_i\geq A_i, \\ \omega(D_i)\leq \Omega }}\cdot \kappa_i^{-\omega(D_i)}\mu^2\Big(D_i\prod_{k\neq i,j}D_k\Big)\left(\frac{\eta}{D_i}\right)^{\mu_i}\left(\frac{\lambda}{D_i}\right)^{\nu_i}\prod_{k\neq i,j}\left(\frac{D_i}{D_k}\right)^{\Phi(i,k)}\left(\frac{D_k}{D_i}\right)^{\Phi(k,i)}
\end{equation*}
and  $g(D_j;(D_k)_{k\neq i,j})$ is defined in the same way but with $i$ and $j$ switched. The coefficients $f(D_i;(D_k)_{k\neq i,j})$ and $g(D_j;(D_k)_{k\neq i,j})$ are complex numbers with absolute value $<1$, so applying 
 \cite{MR2276261}*{Lemma 15} (with $\epsilon=1/6$) to the inner sum above, and summing over the remaining variables, gives

\begin{equation}
|\mathcal{S}_\gamma(\lambda,\eta,X,\textbf{A})|\ll \Delta^2A_i A_j (X^\dagger)^{-1/3}\cdot \prod_{k\neq i,j}\Delta A_k   \leq  X(X^\dagger)^{-1/3}.
\end{equation}
 Summing  over each of the $\ll\log(X)^{24}$ possibilities for $\textbf{A}$   we find 
\begin{equation}
\sum_{\textbf{A}\textup{ satisfies }\cref{second family}}|\mathcal{S}_\gamma(\lambda,\eta,X,\textbf{A})|\ll X\log(X)^{-1}.
\end{equation}

\subsubsection{Third family: one large and one small factor corresponding to linked indices} \label{SW family}

We introduce a further parameter $X^{\ddag}=\textup{exp}(\log(X)^{\epsilon})$ for   fixed  $\epsilon>0$ (to be chosen later). Note that for  $X$ sufficiently large we have $X^{\ddag}>X^\dagger$. 
The family of $\textbf{A}$ we now consider is given by
\begin{equation} \label{third family}
\textup{Neither }\cref{first family}\textup{ nor }\cref{second family}\textup{ hold, and }\exists ~ i\neq j \textup{ linked with } 1<A_j<X^\dagger\textup{ and }A_i\geq X^\ddag.
\end{equation}

This section of the argument corresponds to the treatment of Fouvry--Kl\"{u}ners fourth family \cite{MR2276261}*{Equations (43) to (47)}, and we similarly obtain cancellation from the Siegel--Walfisz theorem. However, the conditions that the $D_i$ lie in the thin families $\mathcal{F}_{n_i}$ necessitate some changes and the resulting argument is modelled on \cite{MR2726105}*{\S7.5}. 

Fix such an $\textbf{A}$. In the definition of $\mathcal{S}_\gamma(\lambda,\eta,X,\textbf{A})$ we  group all terms involving $D_i$.  Since $\eta$ and $\lambda$ divide $N$, for fixed $(D_k)_{k\neq i}$ there is a Dirichlet character $\chi_{i,(D_k)_{k\neq i}}$ modulo $4N$ with
\begin{equation} \label{second oscillation cancellation sum}
\left(\frac{\eta}{D_i}\right)^{\mu_i}\left(\frac{\lambda}{D_i}\right)^{\nu_j}\prod_{k\neq i}\left(\frac{D_i}{D_k}\right)^{\Phi(i,k)}\left(\frac{D_k}{D_i}\right)^{\Phi(k,i)}=\chi_{i,(D_k)_{k\neq i}}(D_i)\prod_{k\neq i}\left(\frac{D_i}{D_k}\right)^{\Phi(i,k)+\Phi(k,i)}
\end{equation}
where in the above we are using quadratic reciprocity for Jacobi symbols. From the definition of linked indices, writing  $d:=d((D_k)_{k\neq i})=\prod_{k \textup{ linked to } i}D_k$ (which is at least $3$ by assumption), we   have  
\begin{equation} \label{preparing SW}
|\mathcal{S}_\gamma(\lambda,\eta,X,\textbf{A})|\leq \sum_{\substack{A_k\leq D_k\leq \Delta A_k\\ k\neq i}}\prod_{k\neq i}\kappa_k^{-\omega(D_k)}\left \vert \sum_{A_i\leq D_i \leq \Delta A_i} \kappa_i^{-\omega(D_i)}\chi_{i,(D_k)_{k\neq i}}(D_i)\left(\frac{D_i}{d}\right)\right\vert 
\end{equation}
where in the inner sum  $D_i$ is in $\mathcal{F}_{n_i}$ and is coprime to the $D_k$ in the outer sum, and $\omega(D_i)  \leq \Omega$.
Now $d$ is odd and coprime to $N$  so
\[D_i \mapsto \chi_{i,(D_k)_{k\neq i}}(D_i)\left(\frac{D_i}{d}\right) \]
is a primitive Dirichlet character modulo $q$ for some $q$  divisible by $d$, and dividing $4Nd$. In particular,  $3\leq q \ll (\Delta X^\dagger)^7$ since   \Cref{second family} does not hold.

 Replacing the inner sum in \Cref{preparing SW} with its maximum possible value we have
\begin{equation} \label{the main term of sw}
|\mathcal{S}_\gamma(\lambda,\eta,X,\textbf{A})|\ll \frac{X}{\Delta A_i} \cdot  \max_{a,\chi,q}\big \vert \sum_{\substack{A_i \leq D_i \leq \Delta A_i\\ (a,D_i)=1\\  D_i\in \mathcal{F}_{n_i},~ \omega(D_i)\leq \Omega}}\kappa_i^{- \omega(D_i)} \chi(D_i)\big \vert,
\end{equation}
where the maximum is taken over all $1\leq a \leq X$, all   $3\leq q \ll (\Delta X^\dagger)^7$  which contain at least one prime factor coprime to $N$ , and all primitive Dirichlet characters $\chi$ modulo $q$. Here the condition $(a,D_i)=1$ takes care of the coprimality of $D_i$ with the remaining $D_k$ . We now partition the inner sum according to the number $1\leq l\leq \Omega$ of prime factors of $D_i$, write $D_i=np$ where $p$ is the largest prime factor of $D_i$, and denote by $P^+(n)$  the largest prime factor of the remaining integer $n$, giving
\begin{align} \label{annoying sw sum}
\max_{a,\chi,q}\big \vert \sum_{\substack{A_i \leq D_i \leq \Delta A_i\\ (a,D_i)=1\\ D_i\in \mathcal{F}_{n_i},~\omega(D_i)\leq \Omega}}\kappa_i^{- \omega(D_i)} \chi(D_i)\big \vert \leq \sum_{1\leq l \leq \Omega}\sum_{\substack{n\\\omega(n)=l-1}}\max_{a,\chi,q}\big \vert \sum_{\substack{\max(P^+(n),A_i/n)<p<\Delta A_i/n\\ (a,p)=1\\p\in \mathcal{P}_{n_i}}} \chi(p)\big \vert,
\end{align}
where we allow $n$ to range over arbitrary positive integers  with $l-1$ factors. To treat the innermost sum, first note that we can drop the condition $(a,p)=1$  at the expense of adding  
\[\vert\sum_{p\mid a}\mathbbm{1}_{\mathcal{P}_{n_i}}(p)\chi(p)\vert \leq \omega(a)\ll \log(X)\]
to its value.
Next, since $K/\mathbb{Q}$ and $\mathbb{Q}(\sqrt{\alpha \beta})/\mathbb{Q}$ ramify only at primes dividing $N$, a prime $p$ is in $\mathcal{P}_{n_i}$  if and only if $p~(\textup{mod }4N)$ lies in a certain subset of $(\mathbb{Z}/4N\mathbb{Z})^\times$. In particular we may express the indicator function $\mathbbm{1}_{\mathcal{P}_i}$ as a finite sum $\sum_{s}a_s\chi_s$ where each $\chi_s$ is a Dirichlet character modulo $4N$, and the $a_s$ are real numbers.
Since the modulus $q$ of any $\chi$ appearing in \Cref{annoying sw sum} contains at least one prime not dividing $N$ (coming from $D_j$), each $\chi_s\chi$ is a primitive Dirichlet character modulo $q'$ for some $3\leq q' \ll (\Delta X^\dagger)^7$ also. By the triangle inequality and \cite{MR2276261}*{Lemma 13} (a consequence of the Siegel--Walfisz theorem) we conclude that  for all constants $A>0$   we have

\begin{align} \label{sw}
\max_{a,\chi,q}\big \vert \sum_{\substack{
\max(P^+(n),A_i/n)<p<\Delta A_i/n\\(a,p)=1
\\p\in \mathcal{P}_{n_i}}} \chi(p)\big \vert&\ll&\max_{a,\chi,q}\big \vert \sum_{
\max(P^+(n),A_i/n)<p<\Delta A_i/n
}  \chi(p)\big \vert +\log(X)\\\ &\ll_A& (X^\dagger)^4\cdot \frac{\Delta A_i}{n} \cdot \log(A_i/n)^{-A}+\log(X). \nonumber
\end{align}

Now   $n$ has at most $\Omega$ prime factors, so the   sum on the left of \Cref{sw}  is non-empty only if $n\leq \Delta A_i^{1-1/\Omega}$, in which case
\[\log(A_i/n)^{-A}\ll \log(A_i^{1/\Omega})^{-A}\ll \left(\frac{1}{\Omega}\log(X)^\epsilon\right)^{-A}\ll \log(X)^{-\epsilon A}.\]
We now insert this into \cref{sw}, and insert the result into \cref{annoying sw sum} and finally \cref{the main term of sw}, to find 
\begin{eqnarray*}|\mathcal{S}_\gamma(\lambda,\eta,X,\textbf{A})| &\ll_A& \frac{X}{\Delta A_i} \cdot \sum_{1\leq n\leq  \Delta A_i^{1-1/\Omega}}\left[ (X^\dagger)^4\cdot \frac{\Delta A_i}{n} \cdot  \log(X)^{-\epsilon A}+\log(X) \right]\\ ~~& \ll_A & X\log(X)^{1-\epsilon A}(X^\dagger)^4+\frac{X\log(X)^1}{(X^\ddag)^{1/\Omega}}.
\end{eqnarray*}
Summing over the $\ll \log(X)^{24}$ possibilities for $\textbf{A}$ and recalling that $\Omega \ll \log\log(X)$, we find 
\begin{equation*}
\sum_{\textbf{A}\textup{ satisfies }\cref{third family}}|\mathcal{S}_\gamma(\lambda,\eta,X,\textbf{A})|\ll X\log(X)^{-1} 
\end{equation*}
  provided $A$ is chosen large enough (compared to $\epsilon)$.

\subsubsection{Remaining families}  \label{Remaining families}
We now consider those $\textbf{A}$ such that 
\begin{equation} \label{main term families}
\textup{None of }\cref{first family}, \cref{second family}, \textup{ or }\cref{third family} \textup{ hold.}
\end{equation}
 Here the argument deviates significantly from that in  \cite{MR2276261}. Fix such an $\textbf{A}$, and define 
\[\mathcal{I}_\textbf{A}:=\{0\leq i \leq 7~~\mid A_i\geq X^\ddag\}.\]
Recalling that $X^\ddag>X^\dagger$ (for sufficiently large $X$), it follows from the conditions on $\textbf{A}$ that
\begin{itemize}
\item $\mathcal{I}_{\textbf{A}}$ is unlinked, 
\item if $j\notin \mathcal{I}_{\textbf{A}}$  is linked to an element of $\mathcal{I}_{\textbf{A}}$ then $A_j=1$ (so in particular, if $D_j$ is such that $A_j\leq D_j \leq \Delta A_j$, then $D_j=1$). 
\end{itemize}

 We begin by discarding as many options for $\mathcal{I}_{\textbf{A}}$ as we can simply using the trivial bound
\begin{equation}
|\mathcal{S}_\gamma(\lambda,\eta,X,\textbf{A})|\leq \sum_{\substack{(D_i)\in \mathscr{D}'(X)\\ A_i\leq D_i\leq \Delta A_i}}\prod_{i}\kappa_i^{-\omega(D_i)}.
\end{equation}
Specifically, let $I$ be any (possibly empty) set of unlinked indices, and let $i_0=|I\cap \{0,1\}|$, $i_1=|\{2,3,4,5\}\cap I|$, and $i_2=|I\cap \{6,7\}|$. Then
\begin{equation} \label{preparing for shiu}
\sum_{\substack{\textbf{A} \textup{ satisfies }\cref{main term families}\\ \mathcal{I}_{\textbf{A}}=I}}|\mathcal{S}_\gamma(\lambda,\eta,X,\textbf{A})|\leq  \sum_{n\leq (\Delta X^\ddag)^8}2^{\omega(n)}\sum_{m\leq X/n}\frac{i_0^{\omega_0(m)}}{\gamma^{\omega_0(m)}}\cdot \frac{i_1^{\omega_1(m)}}{2^{\omega_1(m)}}\cdot \frac{i_2^{\omega_2(m)}}{ (2/\gamma)^{\omega_2(m)}}.
\end{equation}
Here in the above sum, if $i_j=0$ then we interpret $i_j^{\omega_j(m)}$ as being equal to $1$ when $m$ has no prime factors in $\mathcal{P}_j$. The right hand side is derived from the left by setting $n=\prod_{i\notin I}D_i$ and $m=\prod_{i\in I}D_i$. To treat the sum on the right hand side of \Cref{preparing for shiu} we apply \Cref{shiu lemma}. Here the argument diverges according to whether $\mathbb{Q}(\sqrt{\alpha\beta})\subseteq K$ or not. Since the former, somewhat degenerate, case is easier we make the following assumption, consigning the case $\mathbb{Q}(\sqrt{\alpha\beta})\subseteq K$ to \Cref{degenerate cases remark}. 
\begin{assumption} \label{non degeneracy assumption}
Assume henceforth that  $\mathbb{Q}(\sqrt{\alpha\beta})\nsubseteq K$.
\end{assumption}
 Applying \Cref{shiu lemma} to the right hand side of \Cref{preparing for shiu} we obtain
\begin{eqnarray*}
\sum_{\substack{\textbf{A} \textup{ satisfies }\cref{main term families}\\ \mathcal{I}_{\textbf{A}}=I}}\mathcal{S}_\gamma(\lambda,\eta,X,\textbf{A})&\ll &\sum_{n\leq (\Delta X^\ddag)^8}2^{\omega(n)} \frac{X}{n}\log(X/n)^{i_0/(4\gamma)+i_1/8+\gamma i_2/4-1}\\~~&\ll& X\log(X)^{i_0/(4\gamma)+i_1/8+\gamma i_2/4-1}\sum_{n\leq (\Delta X^\ddag)^8}\frac{2^{\omega(n)}}{n}\\
~~&\ll & X\log(X)^{i_0/(4\gamma)+i_1/8+\gamma i_2/4-1+2\epsilon}
\end{eqnarray*}
with the last $\ll$ following from the bound $\sum_{n\leq Y}\frac{2^{\omega(n)}}{n}\ll \log(Y)^2$ (to prove this e.g. square the bound $\sum_{n\leq Y}\frac{1}{n}\ll \log(Y)$). We now study the exponent $i_0/(4\gamma)+i_1/8+\gamma i_2/4-1+2\epsilon$ as we vary over unlinked sets $I$. Note that $I$ is contained in one of the maximal unlinked sets of indices
\[\mathcal{I}_1:=\{2,4,6\},~\mathcal{I}_2:=\{0,2,3\},~\mathcal{I}_3:=\{0,1,3,5,7\},~\mathcal{I}_4:=\{1,4,5,6,7\}.\]
 We then have (recall that we've fixed $1<\gamma<7/8+\sqrt{17}/8$):

\begin{itemize}
\item  $I\subseteq \mathcal{I}_1$. Here $i_0=0$, $i_1\leq 2$, $i_2\leq 1$ so that 
\[i_0/(4\gamma)+i_1/8+\gamma i_2/4-1+2\epsilon\leq -1/4+2\epsilon.\]
\item  $I\subseteq \mathcal{I}_2$. Here $i_0\leq 1$, $i_1\leq 2$ and $i_2=0$ so that 
\[i_0/(4\gamma)+i_1/8+\gamma i_2/4-1+2\epsilon\leq -1/2+2\epsilon.\] 
\item  $I\subseteq \mathcal{I}_3$. Here $i_0\leq 2$, $i_1\leq 2$ and $i_2\leq 1$. Then 
\[i_0/(4\gamma)+i_1/8+\gamma i_2/4-1+2\epsilon\leq1/(2\gamma)+\gamma/4-3/4+2\epsilon=\frac{(\gamma-1)(\gamma-2)}{4\gamma}+2\epsilon.\]
Note that as $1<\gamma<2$ this is strictly negative for sufficiently small $\epsilon$.
\item $I\subsetneq \mathcal{I}_4$. Since $I$ is properly contained in $\mathcal{I}_4$ we have $i_0\leq 1$, $i_1\leq 2$, $i_2\leq 2$, and at least one of these inequalities is strict. This leads to $3$ cases. First assume that $i_0=0$. Then 
\[i_0/(4\gamma)+i_1/8+\gamma i_2/4-1+2\epsilon\leq \gamma/2-3/4+2\epsilon.\]
This is strictly negative for sufficiently small $\epsilon>0$ since $\gamma <3/2$. Next, assume that $i_1\leq 1$. Then 
\[i_0/(4\gamma)+i_1/8+\gamma i_2/4-1+2\epsilon\leq 1/(4\gamma)+\gamma/2-7/8+2\epsilon=\frac{4\gamma^2-7\gamma+2}{8\gamma}+2\epsilon.\]
The numerator has roots at $\gamma=7/8\pm \sqrt{17}/8\approx 0.36,1.39$. This is strictly negative for sufficiently small $\epsilon>0$ since $\gamma<7/8+\sqrt{17}/8$ (which is why we have chosen this upper bound on $\gamma$). The final case is when $i_2\leq 1$ where we have
\[i_0/(4\gamma)+i_1/8+\gamma i_2/4-1+2\epsilon\leq 1/(4\gamma)+\gamma/4-3/4+2\epsilon=\frac{\gamma^2-3\gamma+1}{4\gamma}+2\epsilon.\]
In the range considered, the function $\frac{\gamma^2-3\gamma+1}{4\gamma}$ is always less that its value at e.g. $2$, where it is equal to $-1/8$. 
\end{itemize}
In conclusion, for all unlinked sets $I\neq \{1,4,5,6,7\}$, choosing $\epsilon$ sufficiently small, we have 
\begin{equation}
\sum_{\substack{\textbf{A} \textup{ satisfies }\cref{main term families}\\ \mathcal{I}_{\textbf{A}}=I}}|\mathcal{S}_\gamma(\lambda,\eta,X,\textbf{A})|\ll X\log(X)^{-r_\gamma}
\end{equation}
for some $r_\gamma>0$, provided that $1<\gamma<7/8+\sqrt{17}/8$.

\begin{remark} \label{optimal gamma remark}
Optimising the exponent $r_\gamma$ over $1<\gamma<7/8+\sqrt{17}/8$, we find that the best uniform upper  bound for $i_0/(4\gamma)+i_1/8+\gamma i_2/4-1+2\epsilon$ as we range over all  unlinked sets $I\neq \{1,4,5,6,7\}$ is obtained when  $(\gamma-1)(\gamma-2)/4\gamma=(4\gamma^2-7\gamma+2)/8\gamma$, which yields $\gamma=1/4+\sqrt{17}/4$. At this choice of  $\gamma$ we have \[i_0/(4\gamma)+i_1/8+\gamma i_2/4-1+2\epsilon=\frac{1}{16}(3\sqrt{17}-13)+2\epsilon=2\epsilon-0.0394...\]
\end{remark}

\subsubsection{Completing the argument} \label{Completion of the argument}
Finally, it remains to consider $\textbf{A}$ satisfying \cref{main term families} such that $\mathcal{I}_\textbf{A}=\{1,4,5,6,7\}$. Since $\mathcal{I}_{\textbf{A}}$ is a maximal unlinked subset, the assumptions on $\textbf{A}$ force $A_0=A_2=A_3=1$ so that also $D_0=D_2=D_3=1$. Note that the definition of $\mathscr{D}'(X)$ then excludes   $\lambda=1$ or  $\lambda=\theta$.   Putting $D_0=D_2=D_3=1$ into  the definition of $\mathcal{S}_\gamma(\lambda,\eta,X,\textbf{A})$ we find
\begin{equation} \label{hopefully final case}
\mathcal{S}_\gamma(\lambda,\eta,X,\textbf{A})=\sum_{\substack{(D_i)\in \mathscr{D}'(X)\\ A_i\leq D_i\leq \Delta A}}\left(\frac{\lambda}{D_4}\right)\left(\frac{\lambda}{D_6}\right)\prod_{i}\kappa_i^{-\omega(D_i)},\end{equation}
where $A_4,A_6\geq X^{\ddag}$ by assumption. We  get cancellation in this sum via the Siegel--Walfisz theorem as in \Cref{SW family}, although unlike the previous case we must be careful of potential interaction between the conditions defining the sets $\mathcal{F}_{i}$ and the Dirichlet characters appearing. Specifically, arguing as in \Cref{SW family}, we find 
\[|\mathcal{S}_\gamma(\lambda,\eta,X,\textbf{A})|\ll \frac{X\log(X)}{\Delta A_6}\max_{1\leq a \leq X} \big \vert \sum_{\substack{A_6 \leq D_6 \leq \Delta A_6\\ (a,D_6)=1}}\kappa_6^{- \omega(D_6)} \left(\frac{\lambda}{D_6}\right)\big \vert \]
and that the inner sum satisfies
\begin{equation} \label{annoying sw sum 2}
\big \vert \sum_{\substack{A_6 \leq D_6 \leq \Delta A_6\\ (a,D_6)=1}}\kappa_6^{- \omega(D_6)} \left(\frac{\lambda}{D_6}\right)\big \vert  \leq \sum_{1\leq l \leq \Omega}\sum_{\substack{n\\\omega(n)=l-1}}\big \vert \sum_{\substack{\max(P^+(n),A_6/n)<p<\Delta A_6/n\\ (a,p)=1\\ p\in \mathcal{P}_{3}}} \left(\frac{\lambda}{p}\right)\big \vert.
\end{equation}
As before we may remove the condition $(a,p)=1$ at the expense of an acceptable error term. To treat the condition that $p\in \mathcal{P}_2$, recall that $\mathcal{P}_2$ is the set of primes coprime to $N$  which are inert in $K/\mathbb{Q}$. In particular,  the indicator function $\mathbbm{1}_{\mathcal{P}_2}(p)$ is given by $\frac{1}{2}(1-\left(\frac{\theta}{p}\right))$. Inserting this into the sum, we may apply \cite{MR2276261}*{Lemma 13}   as in \Cref{SW family} since  $\lambda \neq 1,\theta$ means that both $D\mapsto \left(\frac{\lambda  }{D}\right)$ and $D\mapsto \left(\frac{\lambda \theta}{D}\right)$ are non-principal. Continuing to argue as in \Cref{SW family} yields

\begin{equation}
\sum_{\substack{\textbf{A} \textup{ satisfies }\cref{main term families}\\ \mathcal{I}_{\textbf{A}}=\{1,4,5,6,7\}}}|\mathcal{S}_\gamma(\lambda,\eta,X,\textbf{A})|\ll X\log(X)^{-1},
\end{equation}
which completes the proof of \Cref{bounds on the sums}.

\begin{remark} \label{degenerate cases remark}
Suppose instead (of \Cref{non degeneracy assumption}) we have $\mathbb{Q}(\sqrt{\alpha\beta})\subseteq K$. This time applying \Cref{shiu lemma} to the right hand side of  \Cref{preparing for shiu}  gives the bound  
\[\sum_{\substack{\textbf{A} \textup{ satisfies }\cref{main term families}\\ \mathcal{I}_{\textbf{A}}=I}}\mathcal{S}_\gamma(\lambda,\eta,X,\textbf{A})\ll X\log(X)^{i_1/4+  i_2\gamma/4-1+2\epsilon}.\]
Splitting into cases according to which maximal unlinked subset $I$ is contained in, one finds that in the first $3$ cases, namely $I\subseteq \mathcal{I}_i$ for $i=1,2,3$, the exponent satisfies
\[i_1/4+  i_2\gamma/4-1+2\epsilon \leq -1/8+2\epsilon \]
provided $\gamma<3/2$.  In the final case where $I\subseteq \mathcal{I}_4$, we note that  $\mathbb{Q}(\sqrt{\alpha\beta})\subseteq K$ forces $\mathcal{F}_0=\emptyset$, so that $D_1$ (and also $D_0$) is necessarily equal to $1$. Thus $I\subseteq \{4,5,6,7\}$. Now provided $I\neq \{4,5,6,7\}$,  the exponent is strictly negative (for sufficiently small $\epsilon$) for $\gamma<3/2$, and is e.g. equal to $(\sqrt{17}-5)/8=-0.1096...$ if one takes $\gamma=1/4+\sqrt{17}/4$ as in \Cref{optimal gamma remark}. We are thus left to deal with the case $I=\{4,5,6,7\}$. This forces $D_2=D_3=1$ (since they are both linked to elements of $I$), in addition to $D_0=D_1=1$. One may conclude  as in \Cref{Completion of the argument}.
\end{remark}

\section{Prime twists of the congruent number curve}
\label{sec:an_example_congruent_primes_over_qq_-3}

In this section we prove \Cref{main intro theorem congruent}. That is, we provide an example of a thin subfamily of quadratic twists for which the statistical behaviour of the 2-Selmer group differs from that of the family of all twists.  In particular, there is a non-trivial Galois action in a positive proportion of cases so that, by \Cref{what happens when selc vanishes}, $\sel{\CCC_d}(\QQ, E_d[2])$ is non-trivial for a positive proportion of $d$ in our thin subfamily.

We restrict our quadratic field $K=\QQ(\sqrt{\theta})$ to be an imaginary quadratic number field which has class number 1 and in which 2 is inert (so $-\theta\in\set{3, 11, 19, 43, 67, 163}$).  Write $\mathcal{O}_{K}$ for the ring of integers of $K$, and note that the only prime which ramifies in $K$ is $-\theta$. We take
\[E:y^2=x^3-x=x(x-1)(x+1),\]
to be the congruent number curve.  This has good reduction away from $2$. Taking $p\nmid 2\theta$ to be a rational prime, we will explicitly describe the group $\sel{}^2(E_p/K)$ as a $G=\gal(K/\QQ)$-module. 

For a place $v$ of $K$, we will identify the local Kummer images $\SSS_v(E_p/K)$ of \Cref{subsec:Kummer} with their image under the 2-descent map \eqref{elliptic curve kummer map} (in our case, $a_1=0, a_2=1, a_3=-1$), so that
\[\SSS_v(E_p/K)\subseteq K_v^\times/K_v^{\times 2}\times K_v^\times/K_v^{\times 2}.\]
We  view the Selmer group  $\sel{}^2(E_p/K)$ as a subgroup of $K^\times/K^{\times 2}$   similarly, noting that this identification respects the $G$-action.  
 
For a vector space $V$ and $v_1,\dots,v_n\in V$ we write $\gp{v_1,v_2,...,v_n}$ for the subspace generated by $v_1,\dots,v_n$.

\subsection{2-Descent}
Our primary goal is to characterise the groups $\sel{}^2(E_p/K)$ for $p$ prime, which we do via 2-Descent.  We first begin by identifying the local Kummer images at each prime. 
\begin{lemma}\label{lem:p twisting local conditions}
	Let $p\nmid 2\theta$ be a prime, and let $v$ be a place of $K$.  Then the local Kummer image at $v$ for $E_p$ is given by:
	\begin{enumerate}[label=(\roman*)]
		\item If $v\mid \infty$ then
		\[\SSS_v(E_p/K)=0.\]
		\item If $v\nmid 2p$ then
		\[\SSS_v(E_p/K)=\gp{(1,u),(u,1)}\]
		where $u$ is any nonsquare unit in $K_v$.

		\item If $v\mid p$, then
		\[\SSS_v(E_p/K)=\gp{(-1,-p), (p,2)}.\]
		\item If $v=2$ and $\zeta\in K_2$ is a primitive third root of unity, then
		\[\SSS_2(E_p/K) = \gp{T_1,T_2,T_3,T_4}\]
		where
		\begin{align*}
		T_1&:=\left(-1,-p\right), &T_3&:=\left(\zeta+3, \zeta+3(1+p)\right),\\
		T_2&:=\left(1,2\right), &T_4&:=\left(1, 4\zeta+5\right).
		\end{align*}
	\end{enumerate}
\end{lemma}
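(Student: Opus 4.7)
The plan is to work place by place, using the explicit $2$-descent map \eqref{elliptic curve kummer map} recalled in \S\ref{explicit conditions subsection}. In each case the dimension of $\SSS_v(E_p/K)$ is determined by the standard local formula: it vanishes for $v\mid\infty$, equals $\dim_{\F_2}E_p(K_v)[2]$ for $v\nmid 2\infty$, and equals $\dim_{\F_2}E_p(K_v)[2]+[K_v:\QQ_2]$ for $v\mid 2$. Since $E_p[2]\subseteq E_p(\QQ)\subseteq E_p(K_v)$ at every finite place, the relevant dimensions follow immediately, so it suffices in each case to verify that the stated generators lie in $\SSS_v(E_p/K)$ and are $\F_2$-linearly independent in the ambient group $(K_v^\times/K_v^{\times 2})^2$.

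Case (i) is immediate since $K$ is imaginary quadratic, so $K_v=\mathbb{C}$ and $E_p(\mathbb{C})$ is $2$-divisible. For case (ii), the curve $E_p$ has good reduction at $v\nmid 2p$, so by \Cref{selmer is max iso example} the Kummer image coincides with the unramified subgroup $H^1_{\textup{nr}}(K_v,E_p[2])$; under the identification of \S\ref{explicit conditions subsection} this is $(\mathcal{O}_{K_v}^\times/\mathcal{O}_{K_v}^{\times 2})^2$, which is $2$-dimensional (since the residue characteristic is odd) and visibly spanned by $(1,u)$ and $(u,1)$ for any non-square unit $u$. In case (iii) the residue characteristic at $v\mid p$ is still odd, so $\dim_{\F_2}\SSS_v(E_p/K)=\dim_{\F_2}E_p(K_v)[2]=2$. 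Applying \eqref{elliptic curve kummer map} to the $2$-torsion points $(0,0)$ and $(p,0)$ yields the listed elements; since $p$ is unramified in $K$ by hypothesis, their mod-$2$ valuation vectors $(0,1)$ and $(1,0)$ in $(K_v^\times/K_v^{\times 2})^2$ are $\F_2$-independent, so they span.

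The main obstacle is case (iv). Here $v=2$ is inert in $K$, so $K_2$ is the unramified quadratic extension of $\QQ_2$, with $[K_2:\QQ_2]=2$, residue field $\F_4$, and $\mu_3\subset K_2^\times$. The local formula gives $\dim_{\F_2}\SSS_2(E_p/K)=2+2=4$, matching the number of claimed generators. The verification breaks into two steps. First, we must check that each $T_i$ lies in $\SSS_2(E_p/K)$: $T_1$ is again the descent image of $(0,0)$; for each of $T_2,T_3,T_4$ we plan to construct an explicit point of $E_p(K_2)$ via Hensel's lemma, starting from a hand-chosen approximate solution in $\mathcal{O}_{K_2}/2^n\mathcal{O}_{K_2}$ for $n$ sufficiently large, engineered so that the formula $(x,x-p)$ returns the prescribed value of $T_i$ modulo squares.

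Second, we must verify that $\{T_1,T_2,T_3,T_4\}$ is $\F_2$-linearly independent in the $8$-dimensional space $(K_2^\times/K_2^{\times 2})^2$. We propose to do this via the filtration
\[K_2^\times \supseteq \mathcal{O}_{K_2}^\times \supseteq 1+2\mathcal{O}_{K_2}\supseteq 1+4\mathcal{O}_{K_2}\supseteq 1+8\mathcal{O}_{K_2},\]
whose graded pieces (together with the $2$-adic valuation) furnish a $4$-dimensional $\F_2$-basis of $K_2^\times/K_2^{\times 2}$. Projecting each of the eight entries of $T_1,\ldots,T_4$ to this basis produces a $4\times 8$ matrix over $\F_2$ whose rank one checks to be $4$. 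The principal bookkeeping difficulty is that both $T_1$ and $T_3$ depend on $p$ through its class in $\ZZ_2^\times/\ZZ_2^{\times 2}$, so the rank-$4$ assertion must be verified uniformly in this class; the formulas for $T_3$ and $T_4$ have evidently been engineered with precisely this compensation in mind.
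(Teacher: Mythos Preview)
Your proposal is correct and follows essentially the same route as the paper. The only cosmetic difference is in case (iv): rather than Hensel-lifting a separate point for each of $T_2,T_3,T_4$, the paper observes that $p\equiv\pm 1\pmod{K_2^{\times 2}}$ (since $K_2/\QQ_2$ is unramified), so the span of the $2$-torsion images $(-1,-p)$ and $(p,2)$ already equals $\langle T_1,T_2\rangle$; it then Hensel-lifts two explicit $x$-coordinates $x_3=-(\zeta+3)/3$ and $x_4=-(3\zeta+2)/3$, getting $T_3$ directly and $T_4$ as $T_1T_2\cdot\delta_2(P_4)$. Your plan to find points hitting $T_2,T_3,T_4$ individually would also work, and your filtration argument for independence is exactly the ``readily checked'' step the paper leaves implicit.
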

\begin{proof}
	Since $K$ is imaginary, if $v\mid \infty$ the group $H^1(K_v, E[2])$ is trivial and so (i) holds.  Lemma \ref{the local conditions lemma} then provides (ii) as $p\not\in\Sigma$. In order to prove (iii), it is enough to note that by Lemma \ref{lem:No4Torsion}, since $\dim\SSS_v(E_p/K)=2$, $\SSS_v(E_p/K)=\delta_v(E_p[2])$.

	For $v=2$, note firstly that $\dim((K_2^\times/K_2^{\times 2})^2)=8$, so, as in \Cref{selmer is max iso example}, since $\SSS_p(E_p/K)$ is maximal isotropic with respect to the local Tate pairing we have $\dim\SSS_2(E_p/K)=4$.

Let $x_3=-(\zeta+3)/3$ and $x_4=-(3\zeta+2)/3$.  It is elementary to compute that 
\[x_3^3-p^2x_3\equiv -3\zeta\mod 8\quad\quad\quad x_4^3-p^2x_4\equiv \zeta^2\mod 8.\]
Since $-3,\zeta$ and $\zeta$ are all square in $K_2$, by Hensel's lemma each $x_i^3-p^2x_i$ is then also a square in $K_2$. In particular, there are $y_3,y_4$ in $K_2$ such that $P_i=(x_i,y_i)$ lies in $E_p(K_2)$ for $i=3,4$.  We then have $\delta_2(P_3)=T_3$ since $-3$ is square in $K_2$ and moreover
\begin{align*}
	\delta_2(P_4)&=\left(3\zeta+2, 3\zeta+2+3p\right).
\end{align*}
Moreover, the space generated by the $\delta_2(P)$ for $P\in E_p[2]$ is $\gp{(p,2), (-1,-p)}$. Since $K_2/\mathbb{Q}_2$ is unramified of degree $2$, $p$ is congruent to $\pm 1$ modulo $K_2^{\times 2}$, so this space is spanned by $T_1$ and $T_2$. One then checks that 
\[T_1 \cdot T_2\cdot T_4=\delta_2(P_4)\]
inside $(K_2^\times/K_2^{\times 2})^2$, so that $T_4$ is in  $\SSS_2(E_p/K)$. Since $T_1, T_2, T_3$ and $T_4$ are readily checked to be linearly independent, the result follows. 
\end{proof}

In the case that $p$ is split in $K/\QQ$, we will need to understand the image of the primes over $p$ in the localisation at $2$, for which we will use the following result. As in 
\Cref{lem:p twisting local conditions}, $p\nmid 2\theta$ is a prime, and we denote by $\zeta$ a fixed primitive $3$rd root of unity in $K_2$. For $x$ in $K$ we denote its conjugate under the action of $G$ as $\bar{x}$. 

\begin{lemma}\label{lem:EpsilonMod8}
Suppose that $p$ splits in $K/\QQ$, and write $p=\epsilon\bar{\epsilon}$ for some  $\epsilon\in \mathcal{O}_K$.  Then in $K_2^\times$ we have  
\[\epsilon\equiv \pm (\zeta+2-p)\pmod{K_2^{\times 2}}.\]
(Since $-1$ is not a square in $K_2$, precisely one of these two possibilities occurs.)
\end{lemma}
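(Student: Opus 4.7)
The plan is to show that $A := \epsilon(\zeta + 2 - p) \in K_2^\times$ is congruent to $\pm 1$ modulo $K_2^{\times 2}$; this is equivalent to the stated lemma because multiplying through by $(\zeta + 2 - p)^2 \in K_2^{\times 2}$ exchanges the two formulations. The ``precisely one'' assertion is automatic once we note that $-1 \notin K_2^{\times 2}$, which holds because $\mathbb{Q}_2(i)/\mathbb{Q}_2$ is ramified whereas $K_2/\mathbb{Q}_2$ is unramified. The overall strategy has three parts: a norm computation, an application of Hilbert's Theorem 90, and a final identification of the image of $\mathbb{Z}_2^\times$ in $K_2^\times/K_2^{\times 2}$.

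The first step is to compute the norm of $A$ from $K_2$ to $\mathbb{Q}_2$. Writing $\sigma$ for the nontrivial element of $\gal(K_2/\mathbb{Q}_2)$, which sends the Teichm\"uller lift $\zeta$ to $\zeta^2 = -1 - \zeta$ so that $\sigma(\zeta + 2 - p) = 1 - \zeta - p$, a direct expansion using $\zeta + \zeta^2 = -1$ yields the identity $(\zeta + 2 - p)(1 - \zeta - p) = p^2 - 3p + 3$. Combining with $\epsilon\bar\epsilon = p$ gives $N_{K_2/\mathbb{Q}_2}(A) = p(p^2 - 3p + 3)$. This is a square in $\mathbb{Z}_2$: since $p^2 \equiv 1 \pmod 8$ for any odd $p$, one has $p(p^2 - 3p + 3) \equiv 4p - 3 \pmod 8$, and a check over $p \pmod 8 \in \{1,3,5,7\}$ shows this is always $\equiv 1 \pmod 8$. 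Hensel's lemma then produces $r \in \mathbb{Z}_2^\times$ with $N_{K_2/\mathbb{Q}_2}(A) = r^2$.

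Next, since $A/r \in K_2^\times$ has norm $1$, Hilbert's Theorem 90 for the cyclic quadratic extension $K_2/\mathbb{Q}_2$ produces $B \in K_2^\times$ with $A/r = \sigma(B)/B$. Multiplying numerator and denominator by $B$ and using $B^2 \in K_2^{\times 2}$ gives the congruence $A \equiv r \cdot N_{K_2/\mathbb{Q}_2}(B) \pmod{K_2^{\times 2}}$, so the class of $A$ in $K_2^\times/K_2^{\times 2}$ lies in the image of $\mathbb{Q}_2^\times$. Moreover, both $\epsilon$ and $\zeta + 2 - p$ are units in $\mathcal{O}_{K_2}$ (the latter because it reduces modulo $2$ to $\zeta + 1 = \zeta^2 \in \mathbb{F}_4^\times$), so $A \in \mathcal{O}_{K_2}^\times$ and its class lies in the image of $\mathbb{Z}_2^\times$.

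To finish, I would identify this image as $\{1,-1\}$. Since each allowed $\theta$ satisfies $\theta \equiv 5 \pmod 8$, the field $K_2 = \mathbb{Q}_2(\sqrt{\theta})$ coincides with the unique unramified quadratic extension $\mathbb{Q}_2(\sqrt{5})$, so $5$ becomes a square in $K_2$ while $-1$ does not. The four representatives $\{\pm 1, \pm 5\}$ of $\mathbb{Z}_2^\times/\mathbb{Z}_2^{\times 2}$ therefore collapse to $\{1,-1\}$ in $K_2^\times/K_2^{\times 2}$, and combining with the previous paragraph gives $A \equiv \pm 1 \pmod{K_2^{\times 2}}$, as required. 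The only conceptually non-trivial step is the Hilbert 90 move, which promotes the square-norm condition into membership in the image of $\mathbb{Q}_2^\times$; everything else is a direct computation or a standard fact about the $2$-adic arithmetic of $K_2$.
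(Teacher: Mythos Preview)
Your proof is correct and takes a genuinely different route from the paper. The paper's argument is purely computational: it first writes down an explicit set of representatives $a\pm\zeta$ (with $a\in\{\pm1,\pm5\}$) for $\mathcal{O}_{K_2}^\times/\mathcal{O}_{K_2}^{\times 2}$, expresses $\epsilon$ in this form, and then observes that $p=\epsilon\bar\epsilon$ forces $p\equiv (a\pm\zeta)(a\pm\zeta^2)=a^2\mp a+1\equiv 2\mp a\pmod{\mathbb{Q}_2^{\times 2}}$ (the congruence being in $\mathbb{Q}_2^\times$, not just $K_2^\times$, because the square factor is a norm), which pins down $a\pmod 8$ and hence the square class. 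Your approach replaces this explicit enumeration by a structural step: you check directly that $N_{K_2/\mathbb{Q}_2}(A)\in\mathbb{Q}_2^{\times 2}$ and then invoke Hilbert~90 to force the class of $A$ into the image of $\mathbb{Q}_2^\times$, after which the identification of that image as $\{\pm1\}$ finishes things off. The norm computation $N(\zeta+2-p)=p^2-3p+3$ and the check $p(p^2-3p+3)\equiv 1\pmod 8$ are clean, and the step from ``image of $\mathbb{Q}_2^\times$'' to ``image of $\mathbb{Z}_2^\times$'' is justified by the valuation parity (since $K_2/\mathbb{Q}_2$ is unramified and $A$ is a unit, any $c\in\mathbb{Q}_2^\times$ with $A\equiv c$ must have even $2$-adic valuation).

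What each approach buys: the paper's method is entirely elementary (just Hensel and a list of square classes) and gives, as a byproduct, a concrete handle on $\mathcal{O}_{K_2}^\times/\mathcal{O}_{K_2}^{\times 2}$ that is reused in the surrounding $2$-descent computations. Your method is slicker and more conceptual---it isolates exactly why the norm condition is the obstruction---and would generalise more readily, at the cost of importing Hilbert~90 for a statement that can be done by hand.
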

\begin{proof}
The ring of integers of $K_2$ is $\mathbb{Z}_2[\zeta]$ and by Hensel's lemma, an element of $\mathbb{Z}_2[\zeta]^\times$ is a square if and only if it is a square modulo $8$. Now using the fact that both $5$ and $\zeta=\zeta^4$ are squares in $K_2$, we find that any element of $\mathbb{Z}_2[\zeta]^\times/\mathbb{Z}_2[\zeta]^{\times 2}$ can be  written uniquely in the form 
$ a\pm \zeta$  for some $a\in \{\pm 1,\pm 5\}$ (in this representation, the trivial class is $-1-\zeta=\zeta^2$). Now writing $\epsilon \pmod{K_2^{\times 2}}$  in this form we find  that, in $K_2^\times/K_2^{\times 2}$, we have
\[p=N_{K_2/\mathbb{Q}_2}(\epsilon)=(a\pm \zeta)(a\pm \zeta^2)=2 \mp a.\]
Thus $a\equiv \pm (2-p)~(\textup{mod }8)$ and the result follows.  
\end{proof}
We are now ready to describe the Selmer groups. In the statement, all isomorphisms are as $\mathbb{F}_2[G]$-modules.

\begin{proposition}\label{prop:FullCharacterisationForPrimeTwists}
	Let $p$ be an odd prime not dividing $\theta$. Then
	\begin{enumerate}[label=(\roman*)]
		\item If $p$ is inert in $K/\QQ$ we have
		\[\sel{}^2(E_p/K)\cong \F_2^4.\]
		\item If $p$ is split in $K/\QQ$ and $\epsilon\in \mathcal{O}_K$ has norm $p$, we have
		\[\sel{}^2(E_p/K)\cong
		\begin{cases}
			\F_2^2\oplus\F_2[G]&p\equiv 5, 7\pmod 8,\\
			 
			\F_2^2&p\equiv 3\pmod 8,\\
			\F_2^2\oplus\F_2[G]^2&p\equiv 1\pmod 8\textnormal{ and }\bar{\epsilon}\in K_{\epsilon}^{\times 2},\\
			\F_2^4&p\equiv 1\pmod 8\textnormal{ and }\bar{\epsilon}\not\in K_{\epsilon}^{\times 2}.
		\end{cases}\]
	\end{enumerate}
	
\end{proposition}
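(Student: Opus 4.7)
The plan is to carry out explicit $2$-descent over $K$, using the local Kummer images computed in \Cref{lem:p twisting local conditions}. A Selmer class is identified with a pair $(d_1,d_2)\in (K^\times/K^{\times 2})^2$ satisfying local conditions at every place. Since $K$ has class number one and the unit group is finite, the unramified conditions at places $v\nmid 2p$ (part (ii) of \Cref{lem:p twisting local conditions}), together with the triviality of Selmer at the archimedean place (part (i)), force $d_1$ and $d_2$ to lie in the finite subgroup of $K^\times/K^{\times 2}$ supported on primes above $2$ and $p$. Thus it suffices to enumerate candidates from a small finite group and verify the remaining conditions at $v\mid p$ (from (iii)) and at $v=2$ (from (iv)). The $2$-torsion subgroup $E_p[2]\subseteq E_p(\mathbb{Q})$ always contributes a $G$-trivial $\mathbb{F}_2^2$ summand via the Kummer image $\langle (-1,-p),(p,2)\rangle$; the remaining work is to determine the extra Selmer elements and their $\mathbb{F}_2[G]$-module structure.

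For part (i), when $p$ is inert, the relevant global subgroup of $K^\times/K^{\times 2}$ is generated by $\{-1, 2, p\}$ (with a minor adjustment in the exceptional case $K=\mathbb{Q}(\sqrt{-3})$ coming from the extra sixth roots of unity, which is handled separately and shown to cause no change). The first observation is that the condition at the unique $v\mid p$ becomes vacuous under this global restriction: for every odd prime $p$ both $-1$ and $2$ are squares in the residue field $\mathbb{F}_{p^2}$, so by Hensel's lemma they are squares in $K_v$, and the Kummer image of (iii) coincides with the full subgroup of pairs of the form $(p^{c_1},p^{c_2})$. The essential work is then imposing the condition at $v=2$ via linear algebra in $(K_2^\times/K_2^{\times 2})^2\cong \mathbb{F}_2^8$, matching candidates against the basis $T_1,\dots,T_4$ from \Cref{lem:p twisting local conditions}(iv). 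A dimension count yields $\dim_{\mathbb{F}_2}\textup{Sel}^2(E_p/K)=4$, and the $G$-action is automatically trivial because each of $-1$, $2$, $p$ is $G$-fixed in $K^\times/K^{\times 2}$.

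For part (ii), with $p=\epsilon\bar{\epsilon}$ split, the relevant global subgroup is now generated by $\{-1, 2, \epsilon, \bar{\epsilon}\}$, and the $G$-action swaps $\epsilon\leftrightarrow\bar{\epsilon}$. I would enumerate candidate pairs $(d_1,d_2)$, impose the local conditions at the two primes above $p$ (each of $\mathbb{F}_2$-dimension $2$ inside a $4$-dimensional local space, given by (iii)), and impose the condition at $v=2$ using (iv). Here \Cref{lem:EpsilonMod8} is essential: it identifies $\epsilon$ as a class in $K_2^\times/K_2^{\times 2}$ purely in terms of $p\bmod 8$, which is precisely what controls whether candidates involving $\epsilon$ or $\bar\epsilon$ individually, and hence lying in free $G$-orbits, can satisfy the condition at $2$. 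The four subcases in the statement correspond to the distinct outcomes: for $p\equiv 5,7\pmod 8$ exactly one $\mathbb{F}_2[G]$-summand survives; for $p\equiv 3\pmod 8$ the subgroup $\langle T_3,T_4\rangle$ obstructs all $G$-non-invariant candidates; and for $p\equiv 1\pmod 8$ the additional condition $\bar\epsilon\in K_\epsilon^{\times 2}$ coming from (iii) at $\mathfrak{p}=(\epsilon)$ decides whether two $\mathbb{F}_2[G]$-summands or one extra trivial $\mathbb{F}_2^2$ summand appears beyond the $2$-torsion.

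The main obstacle is the bookkeeping in the split case: for each residue class of $p$ modulo $8$ one must compute the image of every candidate Selmer element in $(K_2^\times/K_2^{\times 2})^2$, check membership in $\langle T_1,T_2,T_3,T_4\rangle$, and simultaneously track how the surviving candidates assemble into $G$-invariants versus free $G$-orbits. Global consistency via Hilbert reciprocity provides a useful sanity check but does not itself close the argument. The identification of $\epsilon\pmod{K_2^{\times 2}}$ afforded by \Cref{lem:EpsilonMod8} is the single most important technical input, and the interplay between this local image and the $G$-action explains why the final answer depends both on $p\bmod 8$ and on whether $\bar\epsilon$ is a square locally at $\epsilon$.
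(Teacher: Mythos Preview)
Your proposal is correct and follows essentially the same approach as the paper: explicit $2$-descent over $K$, reducing to candidates supported on $\{-1,2,p\}$ (inert case) or $\{-1,2,\epsilon,\bar\epsilon\}$ (split case), then imposing the local conditions from \Cref{lem:p twisting local conditions} at the places above $p$ and at $2$, with \Cref{lem:EpsilonMod8} supplying the image of $\epsilon$ in $K_2^\times/K_2^{\times 2}$ and the resulting case split on $p\bmod 8$. The paper carries out exactly this computation, writing down explicit generators in each subcase; your parenthetical worry about extra units when $K=\mathbb{Q}(\sqrt{-3})$ is unnecessary since $\mathcal{O}_K^\times/\mathcal{O}_K^{\times 2}$ is still generated by $-1$ there.
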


\begin{proof}
Let $p\neq 2$ be inert in $K/\QQ$. Since $E_p$ has good reduction outside $2$ and $p$, the $2$-Selmer elements are units outside $2,p$.  As $K$ has class number 1  we thus want to find all $a_i,b_i\in\set{0,1}$ for which
\begin{equation}\label{eq:GenericInertPrimeTwistElement}
((-1)^{a_1}2^{a_2}p^{a_3}, (-1)^{b_1}2^{b_2}p^{b_3})
\end{equation}
lies in both of the local groups $\SSS_p(E_p/K)$ and $\SSS_2(E_p/K)$ described in \Cref{lem:p twisting local conditions}.  As $K_p/\QQ_p$ is unramified of degree 2, both $-1$ and $2$ are squares in $K_p$.  Thus all elements of the form \eqref{eq:GenericInertPrimeTwistElement} lie in $\SSS_p(E_p/K)$. We now apply the Selmer conditions at $2$. Since $p$ is odd we have $p\equiv \pm 1\pmod{K_2^{\times 2}}$. Consequently,  a global element of the form \eqref{eq:GenericInertPrimeTwistElement} which lies in $\sel{}^2(E_p/K)$ necessarily maps to the subspace of $\SSS_2(E_p/K)$ generated by $T_1=(-1,-p)$ and $T_2=(1,2)$. Restricting to elements of the form \eqref{eq:GenericInertPrimeTwistElement} which do map to this space gives
	\[\sel{}^2(E_p/K)=\gp{(p,2), (-1, -p), (1, (-1)^\delta p), ((-1)^\delta p, 1)}\cong \F_2^4\]
where $\delta=1$ if $p\not\in K_2^{\times 2}$ and $\delta=0$ otherwise.

Now  suppose   $p$ splits in $K/\QQ$, and fix $\epsilon\in K^\times$ such that $\epsilon \bar{\epsilon}=p$.  As above, the 2-Selmer elements are unramified outside $\set{2,\epsilon,\bar{\epsilon}}$, so we want to find all $a_i,b_i\in \set{0,1}$ for which
\begin{equation}\label{eq:GenericSplitPrimeTwistElement}
((-1)^{a_1}2^{a_2}\epsilon^{a_3}\bar{\epsilon}^{a_4}, (-1)^{b_1}2^{b_2}\epsilon^{b_3}\bar{\epsilon}^{b_4})
\end{equation}
lies in each of the groups $\SSS_\epsilon(E_p/K)$, $\SSS_{\bar{\epsilon}}(E_p/K)$ and $\SSS_2(E_p/K)$ described in \Cref{lem:p twisting local conditions}.  This is an elementary computation, which we do by treating each possibility for $p\pmod 8$ separately.  We repeat the local Kummer images from Lemma \ref{lem:p twisting local conditions}:
\begin{align*}
	\SSS_2(E_p/K)&=\gp{\left(-1,-p\right), \left(1,2\right), \left(\zeta+3, \zeta+3(1+p)\right), \left(1, 4\zeta+5\right)}
	\\\SSS_\epsilon(E_p/K)&=\gp{(-1,-\epsilon\bar{\epsilon}), (\epsilon\bar{\epsilon},2)},
	\\\SSS_{\bar{\epsilon}}(E_p)&=\gp{(-1,-\epsilon\bar{\epsilon}), (\epsilon\bar{\epsilon},2)}.
\end{align*}
We now break into cases. 

\underline{$\mathbf{p\equiv -1\pmod 8:}$} Here $-1$ is nonsquare in $K_\epsilon$. Replacing $\epsilon$ with $-\epsilon$ if necessary,  we  assume  $\bar{\epsilon}\in K_{\epsilon}^{\times2}$. Note also that $2$ is a square in $K_\epsilon$.  By symmetry, this gives $2,\epsilon\in K_{\bar{\epsilon}}^{\times2}$. The elements of the form \eqref{eq:GenericSplitPrimeTwistElement} which lie in $\SSS_\epsilon(E_p/K)$ are then those of the shape
\[\left((-1)^{a_1}2^{a_2}\epsilon^{a_3}\bar{\epsilon}^{a_4}, (-\epsilon)^{a_1}2^{b_2}\bar{\epsilon}^{b_4}\right).\]
Reducing further to those that satisfy the conditions of $\SSS_{\bar{\epsilon}}(E_p)$ we are left with elements of the shape
\begin{equation} \label{eq:candidate_-1_mod_8}
\left((-1)^{a_1}2^{a_2}\epsilon^{a_3}\bar{\epsilon}^{a_4}, (-\epsilon\bar{\epsilon})^{a_1}2^{b_2}\right).
\end{equation}
Finally, as $p\equiv -1 ~(\textup{mod } 8)$  we have  
\[\SSS_2(E_p/K)=\gp{\left(-1,1\right), \left(1,2\right), \left(\zeta+3, 1\right), \left(1, 4\zeta+5\right)}. \]
Since the first coordinate of each of these basis vectors has valuation $0$, we must have $a_2=0$. Further, Lemma \ref{lem:EpsilonMod8} gives $\epsilon\equiv\pm(\zeta+2-p)\equiv \pm(\zeta+3)$ in $K_2^\times/K_2^{\times 2}$, and since $\epsilon\bar{\epsilon}=p\equiv -1\pmod{K_2^{\times 2}}$ we have $\bar{\epsilon}\equiv \mp (\zeta+3)$.  It follows that each of the elements 
\[(\epsilon, 1), (\bar{\epsilon}, 1), (1, 2), (-1, -\epsilon\bar{\epsilon}) \]
are in $\sel{}^2(E_p/K)$. Since each element of the form \eqref{eq:candidate_-1_mod_8} with $a_2=0$ can be written as a linear combination of these Selmer elements, we have
\begin{align*}
\sel{}^2(E_p/K)&=\gp{(1, 2), (-1, -\epsilon\bar{\epsilon}), (\epsilon, 1), (\epsilon\bar{\epsilon}, 1)}
\\&\cong \F_2^2\oplus\F_2[G].
\end{align*}

\underline{$\mathbf{p\equiv 3\pmod 8:}$} Again, $-1$ is nonsquare in $K_\epsilon$ so we assume $\bar{\epsilon}\in K_{\epsilon}^{\times 2}$.  Additionally, $2$ is nonsquare in $K_\epsilon$, hence $-2$ is a square.  With $\epsilon$ and $\bar{\epsilon}$ swapped this all remains true.
 
The elements of the form \eqref{eq:GenericSplitPrimeTwistElement} which lie in $\SSS_\epsilon(E_p/K)$ are thus those of the shape
\[\left((-2)^{a_2}\bar{\epsilon}^{a_4}(-1)^{b_3}\epsilon^{a_3}, (-2)^{b_2}\bar{\epsilon}^{b_4}(-1)^{a_3}(-\epsilon)^{b_3}\right).\]
Reducing further to those that satisfy the conditions of $\SSS_{\bar{\epsilon}}(E_p)$ we are left with
\begin{equation}\label{eq:p=3mod8Twist}
\left((-2)^{a_2}(-1)^{b_3}(\epsilon\bar{\epsilon})^{a_3}, (-2)^{b_2}(-\epsilon\bar{\epsilon})^{b_3}(-1)^{a_3}\right).
\end{equation}

Finally, we apply the conditions at $2$. By Lemma \ref{lem:EpsilonMod8} we have $\epsilon\equiv\pm(\zeta-1)\pmod{K_2^{\times 2}}$.  As $p\equiv 3~(\textup{mod }8)$ we have
\[\SSS_2(E_p/K)=\gp{\left(-1,1\right), \left(1,2\right), \left(\zeta+3, \zeta+4\right), \left(1, 4\zeta+5\right)}.\]
Since the first coordinate of each basis element is a unit, we must have $a_2=0$. Considering the second coordinate, and noting that $\epsilon\bar{\epsilon}\equiv -1\pmod{K_2^{\times 2}}$, we find $a_3=b_2$. This leaves  a $2$-dimensional space of candidate Selmer elements. However, since the elements $(p,2)$ and $(-1,-p)$ (which correspond to the $2$-torsion points) lie in the Selmer group,  we have
\begin{align*}
\sel{}^2(E_p/K)&=\gp{(-1,-p), (p, 2)}
\\&\cong \F_2^2.
\end{align*}

\underline{$\mathbf{p\equiv 5\pmod 8:}$} Here $-1$ is square in both $K_\epsilon$ and $K_{\bar{\epsilon}}$, and $2$ is a nonsquare unit in both $K_\epsilon$ and $K_{\bar{\epsilon}}$.  We now split into two cases according to whether $\bar{\epsilon}$ is in $(K_{\epsilon}^\times)^2$.  To capture this, we fix 
\[\delta=\begin{cases}
	1&\bar{\epsilon}\not\in K_{\epsilon}^{\times 2}
	\\0&\text{else.}
\end{cases}\]
Note that if $\bar{\epsilon}\not\in K_{\epsilon}^{\times 2}$ then we necessarily have $2\equiv\bar{\epsilon}\pmod{K_{\epsilon}^{\times 2}}$.  Acting by $\gal(K/\QQ)$, we  see that $\bar{\epsilon}$ is in $(K_{\epsilon}^\times)^2$ if and only if $\epsilon$ is in $(K_{\bar{\epsilon}}^\times)^2$.

The elements of the form \eqref{eq:GenericSplitPrimeTwistElement} which lie in $\SSS_\epsilon(E_p/K)$ are thus those of the shape
\[\left((-1)^{a_1}(2^\delta\bar{\epsilon})^{a_4}(2^\delta\epsilon)^{a_3},(-1)^{b_1}(2^\delta\bar{\epsilon})^{b_4} 2^{a_3}(2^\delta\epsilon)^{b_3}\right).\]
	Reducing further to those that lie in $\SSS_{\bar{\epsilon}}(E_p)$ forces $a_3=a_4$, leaving those of the shape
	\begin{equation}\label{eq:p=5mod8Twistnonsqeps}
	\left((-1)^{a_1}(\epsilon\bar{\epsilon})^{a_3},(-1)^{b_1}(2^\delta\bar{\epsilon})^{b_4} 2^{a_3}(2^\delta\epsilon)^{b_3}\right).
	\end{equation}
Finally, we apply the conditions at $2$. By Lemma \ref{lem:EpsilonMod8} we have $\epsilon\equiv \pm (\zeta-3)\equiv \mp (4\zeta+5)\pmod{K_2^{\times 2}}$.  Moreover, as $p\equiv 5~(\textup{mod }8)$ we have
\[\SSS_2(E_p/K)=\gp{\left(-1,-1\right), \left(1,2\right), \left(\zeta+3, \zeta+2\right), \left(1, 4\zeta+5\right)}.\]
Since $\epsilon\bar{\epsilon}=p\equiv 1\pmod{K_2^{\times 2}}$, we have $\bar{\epsilon}\equiv \epsilon\equiv \mp (4\zeta+5)$.   Thus the   elements
\[(-1,-1), (\epsilon\bar{\epsilon}, 2),(1,\mp  2^\delta\epsilon),(1,\mp  2^\delta\bar{\epsilon})\]
all lie in $\in \sel{}^2(E/K)$, and are visibly linearly independent. Noting that $(1,-1)$ is not in $\SSS_2(E_p/K)$, we conclude that
\begin{align*}
\sel{}^2(E_p/K)&=\gp{(-1,-1), (\epsilon\bar{\epsilon}, 2),(1,\mp  2^\delta\epsilon),(1,\mp  2^\delta\bar{\epsilon})}
\\&\cong\F_2^2\oplus\F_2[G].
\end{align*}

\underline{$\mathbf{p\equiv 1\pmod 8:}$} Here both $-1$ and $2$ are squares in both $K_\epsilon$ and $K_{\bar{\epsilon}}$.    As before, set
\[\delta=\begin{cases}
	1&\bar{\epsilon}\not\in K_{\epsilon}^{\times 2}
	\\0&\text{else.}
\end{cases}\]
The elements of the form \eqref{eq:GenericSplitPrimeTwistElement} which lie in $\SSS_\epsilon(E_p/K)$ are those of the shape
\begin{equation}\label{eq:p=1mod8Twisteps}
	\left((-1)^{a_1}2^{a_2}(\epsilon\bar{\epsilon}^\delta)^{c_1}\bar{\epsilon}^{(1-\delta)c_2},(-1)^{b_1}2^{b_2}(\epsilon\bar{\epsilon}^\delta)^{d_1}\bar{\epsilon}^{(1-\delta)d_2} \right),
\end{equation}
for some $c_1,c_2,d_1,d_2$ in $\{0,1\}$.
For either value of $\delta$ these elements all   lie in $\SSS_{\bar{\epsilon}}(E_p)$.

Finally, we apply the conditions at $2$.  By Lemma \ref{lem:EpsilonMod8} we have 
\[\epsilon\equiv \pm ( -\zeta-1)\equiv \pm \zeta^2\equiv \pm 1 \pmod{K_2^{\times 2}},\] 
and as $\epsilon \bar{\epsilon}=p$ with have $\epsilon \equiv  \bar{\epsilon}\pmod{K_2^{\times 2}}$. Moreover, with $p\equiv 1~(\textup{mod }8)$ we have
\[\SSS_2(E_p/K)=\gp{\left(-1,-1\right), \left(1,2\right), \left(\zeta+3, \zeta+6\right), \left(1, 4\zeta+5\right)}.\]
As the first coordinate of each of these basis elements has trivial valuation, we have $a_2=0$.

Suppose that $\delta=1$. Then we see that an element of the form\Cref{eq:p=1mod8Twisteps} is in the Selmer group if and only if, in addition to $a_2=0$, we have   $a_1=b_1$. Thus we find
\begin{align*}
 	\sel{}^2(E_p/K)&=\gp{(-1,-1), (1,2), (1,\epsilon\bar{\epsilon}), (\epsilon\bar{\epsilon},1)}
 	\\&\cong \F_2^4.
\end{align*} 

Now suppose that $\delta=0$. Setting $a_2=0$ in \eqref{eq:p=1mod8Twisteps}  leaves a $7$-dimensional space of candidate Selmer elements. Further, one readily checks that $(-1,1)$, which has the form \eqref{eq:p=1mod8Twisteps} for $a_1=1$ and all other variables $0$, is not in $\SSS_2(E_p/K)$. Thus $\sel{}^2(E_p/K)$ is at most $6$ dimensional.  However, using the fact that $\epsilon\equiv \bar{\epsilon}\equiv  \pm 1\pmod{K_2^{\times 2}}$, one readily checks that the $6$ linearly independent elements 
\[\{(-1,-1), (1,2), (1,\pm \epsilon), (\pm \epsilon,1), (1, \pm\bar{\epsilon}), (\pm\bar{\epsilon},1)\},\] 
each of which are of the form \eqref{eq:p=1mod8Twisteps}, map to $\SSS_2(E_p/K)$ after localising at $2$. Thus,
\begin{align*}
 	\sel{}^2(E_p/K)&=\gp{(-1,-1), (1,2), (1,\pm \epsilon), (\pm \epsilon,1), (1, \pm\bar{\epsilon}), (\pm\bar{\epsilon},1)}
 	\\&\cong \F_2^2\oplus \F_2[G]^2.
\end{align*}
This completes the proof.
\end{proof}

\begin{remark}
	The proof of part (i) shows that the conditions at inert primes impose no restrictions. Using this observation, one sees similarly that if $d$ is odd and divisible only by inert primes, then
	\[\sel{}^2(E_d/K)\cong \F_2^{2+2\omega(d)}.\]
	This gives a concrete instance of the growth of $\sel{}^2(E_d/K)$ seen also in e.g.\Cref{Genus Theory as 2-Torsion Frobenii}.
\end{remark}

\subsection{Statistics}
Here we use R\'{e}dei symbols alongside the Chebotarev density theorem to determine the statistical behaviour of $\sel{}^2(E_p/K)$ from Proposition \ref{prop:FullCharacterisationForPrimeTwists}.  We refer the reader to \cite{stevenhagen2018redei} for definitions concerning  R\'{e}dei symbols.
\begin{lemma}\label{lem:R\'{e}deiDeterminesSquareSplitting}
Let   $p\equiv 1\pmod 8$ be a prime which splits in $K/\mathbb{Q}$, and let $\epsilon\in \mathcal{O}_K$ have norm $p$.   Then $\bar{\epsilon}\in (K_{\epsilon}^\times)^2$ if and only if the R\'edei symbol $[\theta,-\theta,p]$ is trivial.
\end{lemma}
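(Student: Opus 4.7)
The plan is to reduce both sides of the claimed equivalence to a single Legendre symbol over $\mathbb{F}_p$, and then invoke the definition of the Rédei symbol as an Artin symbol in a $C_4$-governing field to match the two.

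For the left hand side, since $p$ splits as $\epsilon\bar{\epsilon}$ in $\mathcal{O}_K$, the completion $K_\epsilon$ is isomorphic to $\mathbb{Q}_p$ and has residue field $\mathcal{O}_K/\epsilon\cong\mathbb{F}_p$ of odd cardinality. As $\epsilon$ and $\bar{\epsilon}$ generate distinct primes, $\bar{\epsilon}$ maps to a unit in $\mathbb{F}_p$, so by Hensel's lemma $\bar{\epsilon}\in (K_\epsilon^\times)^2$ iff its reduction is a square modulo $p$. Writing $\epsilon = u+v\sqrt{\theta}$ with $u,v\in\mathbb{Z}$ (or $\epsilon = (u+v\sqrt{\theta})/2$ in the case $\theta=-3$), the congruence $\epsilon\equiv 0\pmod{\epsilon}$ forces $\sqrt{\theta}\equiv -u/v$, hence
\[
\bar{\epsilon} = u-v\sqrt{\theta} \equiv 2u \pmod{\epsilon}.
\]
Because $p\equiv 1\pmod 8$ gives $2\in(\mathbb{F}_p^\times)^2$, this yields the criterion: $\bar{\epsilon}\in(K_\epsilon^\times)^2$ if and only if $\left(\tfrac{u}{p}\right) = 1$.

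For the right hand side, I would unfold the construction of $[\theta,-\theta,p]$ from \cite{stevenhagen2018redei}. Since $\theta\cdot(-\theta) = -\theta^2$ equals $-1$ modulo rational squares, the governing field of the Rédei symbol is a $C_4$-extension $L/\mathbb{Q}$ containing $\mathbb{Q}(\sqrt{-1})$, constructed from a ``Rédei generator'' in $\mathbb{Z}[i]$ and ramified only at $2$ and the rational prime $-\theta$. One first checks that the hypotheses $p\equiv 1\pmod 8$ and $p$ split in $K/\mathbb{Q}$ imply the admissibility conditions required for $[\theta,-\theta,p]$ to be defined (namely that $\theta$ and $-\theta$ are both squares modulo $p$, and $p$ is congruent to 1 mod the relevant power of $2$), and then identifies $[\theta,-\theta,p]$ with the Artin symbol $\left(\tfrac{L/\mathbb{Q}}{p}\right)$.

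The final step is to show that this Artin symbol equals $\left(\tfrac{u}{p}\right)$. Here one uses Stevenhagen's explicit description: the Frobenius of $p$ in $L$ can be read off from any representation $p = u^2 + |\theta|v^2$, and the resulting value coincides with $\left(\tfrac{u}{p}\right)$, with the corrections at $2$ vanishing precisely because $p\equiv 1\pmod 8$. Combining this with the computation of step one proves the claim.

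The principal technical obstacle is step three: extracting from Stevenhagen's Rédei symbol the concrete formula matching $\left(\tfrac{u}{p}\right)$, and carefully handling the case $\theta=-3$ where $\mathcal{O}_K\supsetneq\mathbb{Z}[\sqrt{\theta}]$ so that the decomposition of $p$ must be taken in the form $4p = a^2 + 3b^2$ instead of $p = u^2 + 3v^2$ when $\epsilon$ lies outside $\mathbb{Z}[\sqrt{-3}]$. In all such cases the identification is ultimately a reciprocity computation in the $C_4$-extension $L$, simplified greatly by the hypothesis $p\equiv 1\pmod 8$ which kills the wild correction at $2$.
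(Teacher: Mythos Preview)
Your route is genuinely different from the paper's, and your step one---reducing $\bar\epsilon\in(K_\epsilon^\times)^2$ to the Legendre condition $\left(\tfrac{u}{p}\right)=1$ via $\bar\epsilon\equiv 2u\pmod\epsilon$---is a clean and correct observation. Two minor corrections: every $\theta$ on the list satisfies $\theta\equiv 1\pmod 4$, so the half-integer form $\epsilon=(u+v\sqrt{\theta})/2$ can occur for all of them, not just $\theta=-3$ (your computation still yields $\bar\epsilon\equiv u\pmod\epsilon$ in that case); and $F_{\theta,-\theta}$ is cyclic of degree $4$ over $\mathbb{Q}(\sqrt{-1})$ but dihedral of order $8$ over $\mathbb{Q}$, not $C_4/\mathbb{Q}$.

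The gap is step three. You assert that the Frobenius of $p$ in $F_{\theta,-\theta}$ ``can be read off'' from $p=u^2+|\theta|v^2$ and equals $\left(\tfrac{u}{p}\right)$, citing only ``Stevenhagen's explicit description''. But this identity is exactly the content of the lemma (combined with your step one), and Stevenhagen's framework does not hand it to you for free: the field $F_{\theta,-\theta}$ is built from a rational point on the conic $X^2-\theta Y^2+\theta Z^2=0$, which has nothing to do with $\epsilon$ or $p$. The paper circumvents this by working instead with the R\'edei field $F_{\theta,p}$, which \emph{is} built from $\epsilon$---indeed it is exactly $F=\mathbb{Q}(\sqrt{\theta},\sqrt{\epsilon},\sqrt{\bar\epsilon})$---so that the relevant Artin symbol in $F/\mathbb{Q}(\sqrt{\theta p})$ is tautologically the condition $\bar\epsilon\in K_\epsilon^{\times 2}$. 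After checking that $F/\mathbb{Q}(\sqrt{\theta p})$ is everywhere unramified (hence minimally ramified), this identifies the left-hand side with $[\theta,p,p]$; only then does the paper invoke R\'edei reciprocity and trilinearity, together with the vanishing of $[p,\theta,-\theta p]$, to pass to $[\theta,-\theta,p]$. If you attempt to complete your step three honestly you will end up either constructing $F_{\theta,-\theta}$ explicitly for each $\theta$ and computing Frobenius by hand, or applying reciprocity to move $p$ into the first two slots---at which point you have reproduced the paper's argument.
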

\begin{proof}
 Note that $-1$ is a square in $K_\epsilon$ since $p\equiv 1\pmod 8$. In particular, the statement is unchanged upon replacing $\epsilon$ with $-\epsilon$. By \Cref{lem:EpsilonMod8} we may thus assume that we have
 \[\bar{\epsilon}\equiv -(\zeta+1)=\zeta^2\equiv 1\pmod{K_2^{\times 2}}.\]
	Now  consider the diagram of fields
	\[\begin{tikzpicture}
		\node at (0,3) (F) {$F=\QQ(\sqrt{\theta}, \sqrt{\epsilon}, \sqrt{\bar{\epsilon}})$};
		\node at (-2, 2) (KrtEps) {$K(\sqrt{\bar{\epsilon}})$};
		\node at (1, 2) (E) {$L=\QQ(\sqrt{\theta}, \sqrt{p})$};
		\node at (-1, 1) (K) {$K=\QQ(\sqrt{\theta})$} ;
		\node at (2, 1) (Kp) {$K'=\QQ(\sqrt{\theta p})$};
		\node at (0, 0) (Q) {$\QQ$};

		\draw (Q) -- (Kp);
		\draw (Q) -- (K);
		\draw (K) -- (E);
		\draw (K) -- (KrtEps);
		\draw (Kp) -- (E);
		\draw (KrtEps) -- (F);
		\draw (E) -- (F);
		\draw (Q) -- (Kp);
	\end{tikzpicture}\]
Since $\epsilon$ ramifies in $L/K$, we see that $\bar{\epsilon}\in (K_{\epsilon}^\times)^2$ if and only if the unique prime of $L$ lying over $\epsilon$ splits in $F/L$. Let $\mathfrak{p}$ denote the unique prime of $K'$ lying over $p$. Since $p$ splits in $K/\mathbb{Q}$, we see that $\mathfrak{p}$ splits in $L/K'$.   Further, $\bar{\epsilon}$ ramifies in $L/K$ and hence has even valuation (either 0 or 2) at any prime $\mathfrak{p}'\mid \mathfrak{p}$ of $L$. In particular, the extension $F=L(\sqrt{\bar{\epsilon}})/L$ is unramified at such $\mathfrak{p}'$.  Thus $F/K'$ is unramified at $\mathfrak{p}$.  We now conclude that $\bar{\epsilon}\in (K_{\epsilon}^\times)^2$ if and only if the Artin symbol $\art{F/K'}{\mathfrak{p}}$ is trivial. Before relating this to a R\' {e}dei symbol, it will be useful to prove the following two claims. 

\textbf{Claim 1: The field $F/K'$ is everywhere unramified.}  That $F'/K'$ is unramified at primes not dividing $2p\theta$ is clear, and we have already shown that the unique prime of $K'$ dividing $p$ is unramified in $F'/K'$.  For primes over $2$ note that $K$ and $K'$ are unramified at 2, and so $L/\QQ$ is unramified at $2$ also.  Further, having chosen $\bar{\epsilon}$ to be a square in $K_2$, the extension  $K(\sqrt{\bar{\epsilon}})/K$ is split at 2. Thus, as the compositum of $\K(\sqrt{\bar{\epsilon}})$ and $L$,  the full extension $F/\mathbb{Q}$ is unramified at $2$.  Now note that  $\ell=-\theta$ is an odd prime. Since $p$ has trivial $l$-adic valuation, the extension $F=K'(\sqrt{p},\sqrt{\epsilon})/K'$ is unramified at (the unique prime of $K'$ over) $l$. This proves the claim. 

\textbf{Claim 2: For each prime  $q$, the Hilbert symbols $(p,\theta)_q$ and $(p,p)_q$ are trivial.}
By assumption, $p$ is a norm from $K=\mathbb{Q}(\sqrt{\theta})$, so that $(p,\theta)_q$ is trivial for all $q$.  Next, for each prime $q$ we have $(p,p)_q=(p,-1)_q$. That this latter symbol is trivial for $q\neq 2,p$ is immediate, whilst for $q=2,p$ it is trivial since $p\equiv 1\pmod 8$. This proves the claim.
 
Returning to the proof, by Claim 2 the  R\'edei symbol $[\theta,p,p]$ exists (see \cite{stevenhagen2018redei}*{Definition 7.8}). Writing $\epsilon=x+y\sqrt{\theta}$ for $x,y$ in $\mathbb{Q}$, we have $x^2-\theta y^2=p$ by assumption. The field $F$ is then given by adjoining to $L$ the element 
\[\sqrt{\epsilon}=\sqrt{x+y\sqrt{\theta}}.\]
Further,  by Claim 1 the extension $F/K'$ is minimally ramified in the sense of \cite{stevenhagen2018redei}*{Definition 7.6}. Thus we may take  $a=\theta$, $b=p$ and  $F_{a,b}=F$ in \cite{stevenhagen2018redei}*{Definition 7.8}, giving $[\theta,p,p]=\art{F/K'}{\mathfrak{p}}$.   Consequently, we see that $\bar{\epsilon}\in K_{\epsilon}^{\times 2}$ if and only if the R\'edei symbol $[\theta,p,p]$ is trivial.

By \cite{stevenhagen2018redei}*{Proposition 7.10} the R\'edei symbol $[p,\theta,-\theta p]$ exists and is trivial (to see that $\theta p$ is a second kind decomposition, use \cite{stevenhagen2018redei}*{Prop 4.2 (4)} and our computations of Hilbert symbols above).  Now, using the trilinearity and reciprocity of R\'edei symbols \cite{stevenhagen2018redei}*{Theorem 1.1} we have
\begin{align*}
	[\theta,p,p]&=[p,\theta,p]+[p,\theta,-\theta p]
	\\&=[p,\theta,-\theta]
	\\&=[\theta,-\theta,p]
\end{align*}
as required.
\end{proof}

This allows us to give a complete statistical description of the $\mathbb{F}_2[G]$-module $\sel{}^2(E_p/K)$. First we introduce some notation. 

\begin{notation}
For $p$ a prime, we define non-negative integers $e_1(E_p/K)$ and $e_2(E_p/K)$ such that we have a $G$-module isomorphism
\[\textup{Sel}^2(E_p/K)\cong \F_2^{  e_1(E_p/K)}\oplus\F_2[G]^{  e_2(E_p/K)}.\]
\end{notation}

\begin{theorem} \label{thm:density_for_CNC}
	The  density of primes $p$ for which $e_1(E_p/K)=e_1$ and $e_2(E_p/K)=e_2$ is as follows:
	\[\lim_{X\to\infty}\frac{\#\set{p\leq X\textnormal{ prime}~:~e_1(E_p/K)=e_1\textnormal{ and }e_2(E_p/K)=e_2}}{\#\set{p\leq X\textnormal{ prime}}}=
	\begin{cases}
		9/16&\textnormal{if }(e_1,e_2)=(4,0),
		\\1/16&\textnormal{if }(e_1,e_2)=(2,2),
		\\1/4&\textnormal{if }(e_1,e_2)=(2,1),
		\\1/8&\textnormal{if }(e_1,e_2)=(2,0).
	\end{cases}\]
\end{theorem}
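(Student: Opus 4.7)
The proof reduces to a density computation using the Chebotarev density theorem, combined with the complete description of $\textup{Sel}^2(E_p/K)$ as a $G$-module given by Proposition~\ref{prop:FullCharacterisationForPrimeTwists}. That proposition partitions the primes $p\nmid 2\theta$ into five classes according to the splitting of $p$ in $K/\QQ$, the residue of $p$ modulo $8$, and (when $p\equiv 1\pmod 8$ is split in $K$) whether $\bar\epsilon\in K_\epsilon^{\times 2}$; each class determines the pair $(e_1(E_p/K), e_2(E_p/K))$. Hence it suffices to compute the density of each class.

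For the first four classes I would apply Chebotarev to the compositum $L:=K\cdot \QQ(\zeta_8)$. Since $K=\QQ(\sqrt{\theta})$ is ramified only at the odd prime $-\theta$ while $\QQ(\zeta_8)$ is ramified only at $2$, the two fields are linearly disjoint over $\QQ$ and $\gal(L/\QQ)\cong (\ZZ/2\ZZ)^3$. In particular the splitting behaviour of $p$ in $K$ is independent of $p\pmod 8$, and each of the eight Frobenius classes in $\gal(L/\QQ)$ has density $1/8$. This accounts for density $1/2$ of primes inert in $K$ (all contributing $(4,0)$), density $1/8$ of split primes with $p\equiv 3\pmod 8$ (contributing $(2,0)$), density $1/4$ of split primes with $p\equiv 5,7\pmod 8$ (contributing $(2,1)$), and leaves the density $1/8$ of split primes with $p\equiv 1\pmod 8$ still to be subdivided.

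The remaining task is to show that within this final density $1/8$ subset, the condition $\bar\epsilon\in K_\epsilon^{\times 2}$ holds for density exactly $1/16$; combined with the above, this gives the $1/16$ contribution to $(2,2)$ and completes the $(4,0)$ density to $1/2+1/16=9/16$. By Lemma~\ref{lem:R\'{e}deiDeterminesSquareSplitting} the condition $\bar\epsilon\in K_\epsilon^{\times 2}$ is equivalent to the vanishing of the R\'edei symbol $[\theta,-\theta,p]$. Exploiting the trilinearity and reciprocity of R\'edei symbols established in \cite{stevenhagen2018redei}*{Theorem 1.1}, I would identify a fixed finite Galois extension $M/\QQ$ --- a governing field for the pair $(\theta,-\theta)$ --- containing $L$ as a subfield, such that $p\mapsto [\theta,-\theta,p]$ factors through the Frobenius map to $\gal(M/\QQ)$ and cuts out a non-trivial quadratic character of $\gal(M/L)$. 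Chebotarev applied to $M$ then produces the desired $1/16$-equidistribution.

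\textbf{Main obstacle.} The crux is verifying that the governing field strictly enlarges $L$, so that $[\theta,-\theta,p]$ is not already determined by $p\pmod 8$ and the splitting of $p$ in $K$. In practice this can be done case-by-case: for each of the six admissible values of $\theta$, one exhibits a pair of primes $p_1,p_2$, both split in $K$ and congruent to $1$ modulo $8$, realising both values $\pm 1$ of the R\'edei symbol. Once such non-triviality is confirmed for every $\theta$, Chebotarev applied to the quadratic extension of $L$ inside $M$ yields the equidistribution, hence the densities stated in the theorem.
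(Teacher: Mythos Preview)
Your proposal follows essentially the same route as the paper: reduce to Proposition~\ref{prop:FullCharacterisationForPrimeTwists}, compute the densities of the first four classes via Chebotarev in $K(\zeta_8)$, invoke Lemma~\ref{lem:R\'{e}deiDeterminesSquareSplitting} to translate the remaining condition into the R\'edei symbol $[\theta,-\theta,p]$, and then establish equidistribution of that symbol among split primes $p\equiv 1\pmod 8$.

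The only substantive difference is in the last step. You propose to verify non-triviality of the symbol case-by-case over the six values of $\theta$ by exhibiting witness primes. The paper avoids this entirely: it unwinds the definition of the R\'edei symbol $[\theta,-\theta,p]$ as the Artin symbol $\bigl(\frac{F_{\theta,-\theta}/\QQ(\sqrt{-1})}{\mathfrak p}\bigr)$, where $F_{\theta,-\theta}$ is the minimally ramified field attached to the pair $(\theta,-\theta)$ as in \cite{stevenhagen2018redei}. This $F_{\theta,-\theta}$ is dihedral of degree $8$ over $\QQ$, hence cannot be contained in the abelian field $L=\QQ(\sqrt\theta,\zeta_8)\cong(\ZZ/2\ZZ)^3$; consequently $F_{\theta,-\theta}(\zeta_8)/L$ is a genuine quadratic extension, and Chebotarev applied to the degree-$16$ field $F_{\theta,-\theta}(\zeta_8)$ gives the $1/2$ split immediately and uniformly in $\theta$. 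So your ``main obstacle'' dissolves once you recognise that the governing field is explicitly $F_{\theta,-\theta}(\zeta_8)$ and is non-abelian over $\QQ$.
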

\begin{proof}
	As a consequence of  Lemma \ref{lem:R\'{e}deiDeterminesSquareSplitting}, and the Chebotarev density theorem applied to Proposition \ref{prop:FullCharacterisationForPrimeTwists},   it suffices to show that $[\theta,-\theta,p]$ is trivial for precisely half  of the primes   $p \equiv 1\pmod8$ which 
	split in $K/\QQ$ (with respect to the natural density).

Fix a prime $p\nmid 2\theta$. In the notation of \cite{stevenhagen2018redei}*{Definitions 7.6, 7.8}, let $F_{\theta,-\theta}$ be minimally ramified over $\mathbb{Q}(\sqrt{\theta},\sqrt{-1})$, so that by definition the R\'{e}dei symbol $[\theta,-\theta,p]$ is equal to the Artin symbol 
\begin{equation} \label{eq:displayed_artin_symbol}
 \art{F_{\theta,-\theta}/\QQ(\sqrt{-1})}{\mathfrak{p}},
 \end{equation}
where $\mathfrak{p}$ is any ideal of $\mathbb{Q}(\sqrt{-1})$ of norm $p$.  The field $F_{\theta,-\theta}$ is a cyclic degree 4 extension of $\QQ(\sqrt{-1})$ fitting into the diagram below. It is dihedral of degree $8$ over $\mathbb{Q}$ and contains $\mathbb{Q}(\sqrt{\theta},\sqrt{-1})$ as a subfield.
	\[\begin{tikzpicture}
		\node at (3, 3) (E8) {$\QQ(\sqrt{\theta}, \sqrt{-1},\sqrt{2})$};
		\node at (3, 4) (F8) {$ F_{\theta,-\theta}(\zeta_8)$};

		\node at (0,3) (F) {$F_{\theta,-\theta}$};
		\node at (0, 2) (E) {$\QQ(\sqrt{\theta}, \sqrt{-1})$};
		\node at (0, 1) (QQ-1) {$\QQ(\sqrt{-1})$} ;
		\node at (0, 0) (Q) {$\QQ$};

		\draw (Q) -- (QQ-1);
		\draw (QQ-1) -- (E);
		\draw (E) -- (F);
		\draw (F) -- (F8);
		\draw (E) -- (E8);
		\draw (E8) -- (F8);
 	\end{tikzpicture}\]
 	The field $F_{\theta,-\theta}(\zeta_8)/\mathbb{Q}$ is Galois of degree $16$.  Now $p$ both splits in $K/\mathbb{Q}$ and is congruent to $1$ modulo $8$ if and only if it splits completely in $\mathbb{Q}(\sqrt{\theta},\sqrt{-1},\sqrt{2})=\QQ(\sqrt{\theta},\zeta_8)$. On the other hand, the Artin symbol \eqref{eq:displayed_artin_symbol} is trivial if and only if $p$ splits completely in $F_{\theta,-\theta}$. 
 	
 	Consequently, we wish to compute the density of primes which split completely in $F_{\theta,-\theta}(\zeta_8)$, amongst those that split completely in $\mathbb{Q}(\sqrt{\theta},\zeta_8)$.  By the Chebotarev density theorem, this is equal to $1/2$.
\end{proof}

\bibliographystyle{plain}

\bibliography{references}

\end{document}